\newtheorem{thm}{Theorem}[section]
\newtheorem{prop}[thm]{Proposition}
\newtheorem{lem}[thm]{Lemma}
\newtheorem{cor}[thm]{Corollary}
\numberwithin{equation}{section}
\DeclareSymbolFontAlphabet{\mathbb}{AMSb} 
\DeclareSymbolFontAlphabet{\mathbbl}{bbold}
\theoremstyle{definition}
\newtheorem{construction}[thm]{Construction}
\newtheorem{notation}[thm]{Notation}
\newtheorem{convention}[thm]{Convention}
\newtheorem{question}[thm]{Question}
\newtheorem{dfn}[thm]{Definition}
\newtheorem{exam}[thm]{Example}
\newtheorem{rmk}[thm]{Remark}
\definecolor{ghcolor}{RGB}{0, 150, 200} 
\definecolor{winestain}{rgb}{0.5,0,0}
\begin{document}

\newcommand{\fraka}{{\mathfrak a}}
\newcommand{\frakb}{{\mathfrak b}}
\newcommand{\frakc}{{\mathfrak c}}
\newcommand{\frakd}{{\mathfrak d}}
\newcommand{\frake}{{\mathfrak e}}
\newcommand{\frakf}{{\mathfrak f}}
\newcommand{\frakg}{{\mathfrak g}}
\newcommand{\frakh}{{\mathfrak h}}
\newcommand{\fraki}{{\mathfrak i}}
\newcommand{\frakj}{{\mathfrak j}}
\newcommand{\frakk}{{\mathfrak k}}
\newcommand{\frakl}{{\mathfrak l}}
\newcommand{\frakm}{{\mathfrak m}}
\newcommand{\frakn}{{\mathfrak n}}
\newcommand{\frako}{{\mathfrak o}}
\newcommand{\frakp}{{\mathfrak p}}
\newcommand{\frakq}{{\mathfrak q}}
\newcommand{\frakr}{{\mathfrak r}}
\newcommand{\fraks}{{\mathfrak s}}
\newcommand{\frakt}{{\mathfrak t}}
\newcommand{\fraku}{{\mathfrak u}}
\newcommand{\frakv}{{\mathfrak v}}
\newcommand{\frakw}{{\mathfrak w}}
\newcommand{\frakx}{{\mathfrak x}}
\newcommand{\fraky}{{\mathfrak y}}
\newcommand{\frakz}{{\mathfrak z}}

\newcommand{\frakA}{{\mathfrak A}}
\newcommand{\frakB}{{\mathfrak B}}
\newcommand{\frakC}{{\mathfrak C}}
\newcommand{\frakD}{{\mathfrak D}}
\newcommand{\frakE}{{\mathfrak E}}
\newcommand{\frakF}{{\mathfrak F}}
\newcommand{\frakG}{{\mathfrak G}}
\newcommand{\frakH}{{\mathfrak H}}
\newcommand{\frakI}{{\mathfrak I}}
\newcommand{\frakJ}{{\mathfrak J}}
\newcommand{\frakK}{{\mathfrak K}}
\newcommand{\frakL}{{\mathfrak L}}
\newcommand{\frakM}{{\mathfrak M}}
\newcommand{\frakN}{{\mathfrak N}}
\newcommand{\frakO}{{\mathfrak O}}
\newcommand{\frakP}{{\mathfrak P}}
\newcommand{\frakQ}{{\mathfrak Q}}
\newcommand{\frakR}{{\mathfrak R}}
\newcommand{\frakS}{{\mathfrak S}}
\newcommand{\frakT}{{\mathfrak T}}
\newcommand{\frakU}{{\mathfrak U}}
\newcommand{\frakV}{{\mathfrak V}}
\newcommand{\frakW}{{\mathfrak W}}
\newcommand{\frakX}{{\mathfrak X}}
\newcommand{\frakY}{{\mathfrak Y}}
\newcommand{\frakZ}{{\mathfrak Z}}

\newcommand{\bA}{{\mathbb A}}
\newcommand{\bB}{{\mathbb B}}
\newcommand{\bC}{{\mathbb C}}
\newcommand{\bD}{{\mathbb D}}
\newcommand{\bE}{{\mathbb E}}
\newcommand{\bF}{{\mathbb F}}
\newcommand{\bG}{{\mathbb G}}
\newcommand{\bH}{{\mathbb H}}
\newcommand{\bI}{{\mathbb I}}
\newcommand{\bJ}{{\mathbb J}}
\newcommand{\bK}{{\mathbb K}}
\newcommand{\bL}{{\mathbb L}}
\newcommand{\bM}{{\mathbb M}}
\newcommand{\bN}{{\mathbb N}}
\newcommand{\bO}{{\mathbb O}}
\newcommand{\bP}{{\mathbb P}}
\newcommand{\bQ}{{\mathbb Q}}
\newcommand{\bR}{{\mathbb R}}
\newcommand{\bS}{{\mathbb S}}
\newcommand{\bT}{{\mathbb T}}
\newcommand{\bU}{{\mathbb U}}
\newcommand{\bV}{{\mathbb V}}
\newcommand{\bW}{{\mathbb W}}
\newcommand{\bX}{{\mathbb X}}
\newcommand{\bY}{{\mathbb Y}}
\newcommand{\bZ}{{\mathbb Z}}

\newcommand{\calA}{{\mathcal A}}
\newcommand{\calB}{{\mathcal B}}
\newcommand{\calC}{{\mathcal C}}
\newcommand{\calD}{{\mathcal D}}
\newcommand{\calE}{{\mathcal E}}
\newcommand{\calF}{{\mathcal F}}
\newcommand{\calG}{{\mathcal G}}
\newcommand{\calH}{{\mathcal H}}
\newcommand{\calI}{{\mathcal I}}
\newcommand{\calJ}{{\mathcal J}}
\newcommand{\calK}{{\mathcal K}}
\newcommand{\calL}{{\mathcal L}}
\newcommand{\calM}{{\mathcal M}}
\newcommand{\calN}{{\mathcal N}}
\newcommand{\calO}{{\mathcal O}}
\newcommand{\calP}{{\mathcal P}}
\newcommand{\calQ}{{\mathcal Q}}
\newcommand{\calR}{{\mathcal R}}
\newcommand{\calS}{{\mathcal S}}
\newcommand{\calT}{{\mathcal T}}
\newcommand{\calU}{{\mathcal U}}
\newcommand{\calV}{{\mathcal V}}
\newcommand{\calW}{{\mathcal W}}
\newcommand{\calX}{{\mathcal X}}
\newcommand{\calY}{{\mathcal Y}}
\newcommand{\calZ}{{\mathcal Z}}

\newcommand{\rA}{{\mathrm A}}
\newcommand{\rB}{{\mathrm B}}
\newcommand{\rC}{{\mathrm C}}
\newcommand{\rD}{{\mathrm D}}
\newcommand{\rd}{{\mathrm d}}
\newcommand{\rE}{{\mathrm E}}
\newcommand{\rF}{{\mathrm F}}
\newcommand{\rG}{{\mathrm G}}
\newcommand{\rH}{{\mathrm H}}
\newcommand{\rI}{{\mathrm I}}
\newcommand{\rJ}{{\mathrm J}}
\newcommand{\rK}{{\mathrm K}}
\newcommand{\rL}{{\mathrm L}}
\newcommand{\rM}{{\mathrm M}}
\newcommand{\rN}{{\mathrm N}}
\newcommand{\rO}{{\mathrm O}}
\newcommand{\rP}{{\mathrm P}}
\newcommand{\rQ}{{\mathrm Q}}
\newcommand{\rR}{{\mathrm R}}
\newcommand{\rS}{{\mathrm S}}
\newcommand{\rT}{{\mathrm T}}
\newcommand{\rU}{{\mathrm U}}
\newcommand{\rV}{{\mathrm V}}
\newcommand{\rW}{{\mathrm W}}
\newcommand{\rX}{{\mathrm X}}
\newcommand{\rY}{{\mathrm Y}}
\newcommand{\rZ}{{\mathrm Z}}

\newcommand{\rmd}{{\mathrm d}}
\newcommand{\Zp}{{\bZ_p}}
\newcommand{\Zl}{{\bZ_l}}
\newcommand{\Qp}{{\bQ_p}}
\newcommand{\Ql}{{\bQ_l}}
\newcommand{\Cp}{{\bC_p}}
\newcommand{\Fp}{{\bF_p}}
\newcommand{\Fl}{{\bF_l}}

\newcommand{\Binf}{{\mathrm{B_{inf}}}}
\newcommand{\Acris}{{\mathrm{A_{cris}}}}
\newcommand{\Bcrisp}{{\mathrm{B_cris}^+}}
\newcommand{\Bcris}{{\mathrm{B_{cris}}}}
\newcommand{\Bstp}{{\mathrm{B_{st}^+}}}
\newcommand{\Bst}{{\mathrm{B_{st}}}}

\newcommand{\Ainf}{{\mathbf{A}_{\mathrm{inf}}}}
\newcommand{\BdRp}{{\mathbf{B}_{\mathrm{dR}}^+}}
\newcommand{\BdR}{{\mathbf{B}_{\mathrm{dR}}}}

\newcommand{\Rinf}{{\widehat R_{\infty}}}
\newcommand{\Rinfp}{{\widehat R_{\infty}^+}}

\newcommand{\OXp}{{\widehat \calO_X^+}}
\newcommand{\OX}{{\widehat \calO_X}}
\newcommand{\AAinf}{{\bA_{\mathrm{inf}}}}            
\newcommand{\BBinf}{{\bB_{\mathrm{inf}}}}            
\newcommand{\BBbd}{{\bB^{\mathrm{bd}}}}              
\newcommand{\BBdRp}{{\bB_{\mathrm{dR}}^+}}           
\newcommand{\BBdR}{{\bB_{\mathrm{dR}}}}              
\newcommand{\OAinf}{{\calO\bA_{\mathrm{inf}}}}         
\newcommand{\OBinf}{{\calO\bB_{\mathrm{inf}}}}         
\newcommand{\OBdRp}{{\calO\bB_{\mathrm{dR}}^+}}        
\newcommand{\OBdR}{{\calO\bB_{\mathrm{dR}}}}           
\newcommand{\OC}{{\calO\bC}}                           


\newcommand{\Bun}{{\mathrm{Bun}}}           
\newcommand{\Cech}{{\check{H}}}             
\newcommand{\Coker}{{\mathrm{Coker}}}       
\newcommand{\colim}{{\mathrm{colim}}}       
\newcommand{\dlog}{{\mathrm{dlog}}}         
\newcommand{\End}{{\mathrm{End}}}           
\newcommand{\Exp}{{\mathrm{Exp}}}           
\newcommand{\Ext}{{\mathrm{Ext}}}           
\newcommand{\Fil}{{\mathrm{Fil}}}           
\newcommand{\Fitt}{{\mathrm{Fitt}}}         
\newcommand{\Frac}{{\mathrm{Frac}}}         
\newcommand{\Frob}{{\mathrm{Frob}}}         
\newcommand{\Gal}{{\mathrm{Gal}}}           
\newcommand{\Gr}{{\mathrm{Gr}}}             
\newcommand{\Hom}{{\mathrm{Hom}}}           
\newcommand{\HIG}{{\mathrm{HIG}}}           
\newcommand{\id}{{\mathrm{id}}}             
\newcommand{\Ima}{{\mathrm{Im}}}            
\newcommand{\Isom}{{\mathrm{Isom}}}         
\newcommand{\Ker}{{\mathrm{Ker}}}           
\newcommand{\Lie}{{\mathrm{Lie}}}           
\newcommand{\Log}{{\mathrm{Log}}}           
\newcommand{\MIC}{{\mathrm{MIC}}}
\newcommand{\Mod}{{\mathrm{Mod}}}           
\newcommand{\Perf}{{\mathrm{Perf}}}         
\newcommand{\pr}{{\mathrm{pr}}}             
\newcommand{\Rep}{{\mathrm{Rep}}}           
\newcommand{\Res}{{\mathrm{Res}}}           
\newcommand{\RGamma}{{\mathrm{R\Gamma}}}    
\newcommand{\rk}{{\mathrm{rk}}}             
\newcommand{\Rlim}{{\mathrm{R}\underleftarrow{\lim}}} 
\newcommand{\sgn}{{\mathrm{sgn}}}           
\newcommand{\Sh}{{\mathrm{Shv}}}             
\newcommand{\Sht}{{\mathrm{Sht}}}           
\newcommand{\Spa}{{\mathrm{Spa}}}           
\newcommand{\Spf}{{\mathrm{Spf}}}           
\newcommand{\Spec}{{\mathrm{Spec}}}         
\newcommand{\Strat}{{\mathrm{Strat}}}       
\newcommand{\Sym}{{\mathrm{Sym}}}           
\newcommand{\Tor}{{\mathrm{{Tor}}}}         
\newcommand{\Tot}{{\mathrm{Tot}}}           
\newcommand{\Vect}{{\mathrm{Vect}}}         


\newcommand{\GL}{{\mathrm{GL}}}             
\newcommand{\SL}{{\mathrm{SL}}}             


\newcommand{\aff}{{\mathrm{aff}}}          
\newcommand{\an}{{\mathrm{an}}}             
\newcommand{\can}{{\mathrm{can}}}           
\newcommand{\cl}{{\mathrm{cl}}}             
\newcommand{\cofib}{{\mathrm{cofib}}}
\newcommand{\cts}{{\mathrm{cts}}}           
\newcommand{\cris}{{\mathrm{cris}}}         
\newcommand{\cyc}{{\mathrm{cyc}}}           
\newcommand{\dR}{{\mathrm{dR}}}             
\newcommand{\et}{{\mathrm{\acute{e}t}}}    
\newcommand{\fib}{{\mathrm{fib}}}
\newcommand{\fl}{{\mathrm{fl}}}             
\newcommand{\fppf}{{\mathrm{fppf}}}         
\newcommand{\geo}{{\mathrm{geo}}}           
\newcommand{\gp}{{\mathrm{gp}}}             
\newcommand{\la}{{\mathrm{la}}}
\newcommand{\nil}{{\mathrm{nil}}}
\newcommand{\pd}{{\mathrm{pd}}}
\newcommand{\perf}{\mathrm{perf}}           
\newcommand{\proet}{{\mathrm{pro\acute{e}t}}}
\newcommand{\sm}{{\mathrm{sm}}}             
\newcommand{\st}{{\mathrm{st}}}             
\newcommand{\tor}{{\mathrm{tor}}}           
\newcommand{\Zar}{{\mathrm{Zar}}}           


\newcommand{\Prism}{{\mathlarger{\mathbbl{\Delta}}}} 

\newcommand{\OPrism}{{\calO_{\Prism}}}
\newcommand{\IPrism}{{\calI_{\Prism}}}

\newcommand{\ya}{{\rangle}}
\newcommand{\za}{{\langle}}


\renewcommand{\log}{{\mathrm{log}}}
\newcommand{\mic}{\mathrm{MIC}}

 \newcommand{\gh}[1]{{\color{blue}#1}}
\newcommand{\my}[1]{{\color{red}#1}}
\newcommand{\wyp}[1]{{\color{magenta}#1}}

\newcommand{\brig}{{\B_{\rig, \kinfty}^+}}

\newcommand{\BP}{{\mathrm{BP}}}

\newcommand{\smat}[1]{\left( \begin{smallmatrix} #1 \end{smallmatrix} \right)}
\newcommand{\dacc}[1]{\{\!\{ #1 \}\!\}}
 \newcommand{\bm}{{\mathbb M}}

\newcommand{\bdrpluskpinfty}{{\B_{\dR, \kpinfty}^+}}

 \newcommand{\obnht}{\mathcal{O}\mathbf{B_{\mathrm{nHT}}}}
\newcommand{\nht}{{\mathrm{nHT}}}
\newcommand{\ndR}{{\mathrm{ndR}}}
\newcommand{\Conn}{\mathrm{Conn}}
  \newcommand{\kpinftyt}{{\kpinfty[[t]]}}
 \newcommand{\kinftylambda}{{\kinfty[[\lambda]]}}
  \newcommand{\bdrplusm}{{\mathbf{B}^+_{\dR, m}}}
\newcommand{\bdrplusml}{{\mathbf{B}^+_{\dR, m, L}}}
\newcommand{\bdrplusl}{{\mathbf{B}^+_{\dR,  L}}}
 \newcommand{\bdrpluslhatgpa}{{(\B_{\dR, L}^+)^{\hat{G}\dpa}}}

 \newcommand{\fg}{{\mathrm{fg}}}
 \newcommand{\Endo}{\mathrm{Endo}}

\newcommand{\gal}{\mathrm{Gal}}

\def \inj {\hookrightarrow }
\def \into {\hookrightarrow }
\def \to {\rightarrow}
\def \onto {\twoheadrightarrow}
\renewcommand{\projlim}{\varprojlim}
\renewcommand{\injlim}{\varinjlim}


\newcommand{\tr}{\operatorname*{tr}}
\newcommand{\Tr}{\operatorname*{Tr}}
\newcommand{\vect}{\mathrm{Vect}}

\newcommand{\gld}{\mathrm{GL}_d}
\newcommand{\gln}{\mathrm{GL}_n}
\newcommand{\Aut}{\mathrm{Aut}}

\def\Mat{\mathrm{Mat}}
\def\mat{\mathrm{Mat}}
\def\det{\mathrm{det}}
\def\Der{\mathrm{Der}}

\def \Md {{\mathrm M}_d}
\newcommand{\col}{\mathrm{col}}
\newcommand{\row}{\mathrm{row}}

\newcommand{\loc}{{\mathrm{loc}}}
\newcommand{\new}{{\mathrm{new}}}
\newcommand{\isoto}{\stackrel{\sim}{\rightarrow}}
\def\val{\mathrm val}

\newcommand{\fr}{\mathrm{fr}}
\newcommand{\nc}{{\mathrm{nc}}}

\newcommand{\rep}{\mathrm{Rep}}

\renewcommand{\hom}{\mathrm{Hom}}

\def\cont{\mathrm{cont}}
\def\inf{{\mathrm{inf}}}
\def\sup{\mathrm{sup}}

\renewcommand{\Im}{\textnormal{Im}}

\renewcommand{\sp}{Sp}

\newcommand{\rgamma}{\mathrm{R}\Gamma}

\def\ad{\mathrm{ad}}
\def\an{\mathrm{an}}

\def\can{\mathrm{can}}
\def\et{{\mathrm{\acute{e}t}}}
\def\eh{{\mathrm{\acute{e}h}}}
\def\proet{{\mathrm{pro\acute{e}t}}}
\def\profet{{\mathrm{prof\acute{e}t}}}
\def\proket{{\mathrm{prok\acute{e}t}}}
\def\alg{{\mathrm alg}}

\newcommand{\dotimes}{{\otimes}^{\mathbb L}}
\newcommand{\hotimes}{\widehat{\otimes}}
\newcommand{\hatotimes}{\widehat{\otimes}}
\def\cycl{\mathrm{cycl}}
\def\ord{\mathrm{ord}}

\newcommand{\nr}{\mathrm{nr}}
\newcommand{\unr}{\mathrm{unr}}
\newcommand{\un}{\mathrm{un}}
\newcommand{\ur}{\mathrm{ur}}
\newcommand{\sep}{\mathrm{sep}}
\newcommand{\Kpinfty}{{K_{p^\infty}}}
\newcommand{\kpinfty}{{K_{p^\infty}}}
\newcommand{\hatkpinfty}{{\widehat{K_{p^\infty}}}}
\newcommand{\Kinfty}{{K_{\infty}}}
\newcommand{\kinfty}{{K_{\infty}}}
\newcommand{\hatkinfty}{{\widehat{K_{\infty}}}}
\newcommand{\gammak}{{\Gamma_K}}
\newcommand{\gammabbk}{{\Gamma_{\mathbb{K}}}}
\newcommand{\gk}{{G_K}}
\newcommand{\gkinfty}{{G_{K_\infty}}}
\newcommand{\gkpinfty}{{G_{K_{p^\infty}}}}

\newcommand{\pa}{{\mathrm{pa}}}
\newcommand{\dan}{\text{$\mbox{-}\mathrm{an}$}}
\newcommand{\dla}{\text{$\mbox{-}\mathrm{la}$}}
\newcommand{\dfin}{\text{$\mbox{-}\mathrm{fin}$}}
\newcommand{\dpa}{\text{$\mbox{-}\mathrm{pa}$}}

\def\cris{{\mathrm{cris}}}
\def\crys{{\mathrm{crys}}}
\def\st{{\mathrm{st}}}
\def\pst{{\mathrm{pst}}}
\def\dR{{\mathrm{dR}}}
\def\dr{{\mathrm{dR}}}
\def\HT{{\mathrm{HT}}}
\def\Sen{{\mathrm{Sen}}}
\def\Bri{{\mathrm{Bri}}}
\def\sen{{\mathrm{Sen}}}
\def\bri{{\mathrm{Bri}}}
\def\dif{{\mathrm{dif}}}
\def\rig{{\mathrm{rig}}}

\newcommand{\ainf}{{\mathbf{A}_{\mathrm{inf}}}}
\newcommand{\acris}{{\mathbf{A}_{\mathrm{cris}}}}
\newcommand{\Bcrisplus}{{\mathbf{B}^+_{\mathrm{cris}}}}
\newcommand{\bcris}{{\mathbf{B}_{\mathrm{cris}}}}
\newcommand{\bcrisplus}{{\mathbf{B}^+_{\mathrm{cris}}}}
\newcommand{\bst}{{\mathbf{B}_{\mathrm{st}}}}
\newcommand{\bdrplus}{{\mathbf{B}^+_{\mathrm{dR}}}}
\newcommand{\bdrplusnabla}{{\mathbf{B}^{+, \nabla}_{\mathrm{dR}}}}

\newcommand{\bdr}{{\mathbf{B}_{\mathrm{dR}}}}
\newcommand{\bdrnabla}{{\mathbf{B}^{\nabla}_{\mathrm{dR}}}}

\newcommand{\bdrL}{{\mathbf{B}_{\mathrm{dR}, L}}}
\newcommand{\bht}{{\mathbf{B}_{\mathrm{HT}}}}

\newcommand{\bbcris}{{\mathbb{B}_{\mathrm{cris}}}}
\newcommand{\bbst}{{\mathbb{B}_{\mathrm{st}}}}
\newcommand{\bbdr}{{\mathbb{B}_{\mathrm{dR}}}}
\newcommand{\bbdrplus}{{\mathbb{B}_{\mathrm{dR}}^{+}}}
\newcommand{\bbdrplusm}{{\mathbb{B}_{\mathrm{dR}, m}^{+}}}

\newcommand{\bbht}{{\mathbb{B}_{\mathrm{HT}}}}
\newcommand{\obbdrplus}{{\mathcal{O}\bbdrplus}}
\newcommand{\obbdr}{{\mathcal{O}\bbdr}}
\newcommand{\obbht}{{\mathcal{O}\bbht}}
\newcommand{\obbc}{{\mathcal{O}\mathbb C}}

\newcommand{\genrep}{\mathrm{GenRep}}
\newcommand{\locsys}{\mathrm{LocSys}}

\newcommand{\ox}{{\mathcal{O}_X}}
\newcommand{\hatox}{{\hat{\mathcal O}_X}}
\newcommand{\oxhat}{{\hat{\mathcal O}_X}}

 \newcommand{\AcrisL}{\mathbf{A}_{\mathrm{cris}, L}}
\newcommand{\bdrplusL}{\mathbf{B}^+_{\mathrm{dR}, L}}

 \newcommand{\A}{ {\mathbf{A}}   }

\newcommand{\B}{  {\mathbf{B}}  }

\newcommand{\wtA}{   {\widetilde{{\mathbf{A}}}}  }
\newcommand{\wta}{   {\widetilde{{\mathbf{A}}}}  }
\newcommand{\wtadagger}{   {\widetilde{{\mathbf A}}^\dagger}  }
\newcommand{\wtbdagger}{   {\widetilde{{\mathbf A}}^\dagger}  }
\newcommand{\wtb}{   {\widetilde{{\mathbf{B}}}}  }
\newcommand{\wtbrig}{   {\widetilde{{\mathbf{B}}}_\rig^\dagger}  }
\newcommand{\wtbrigrn}{   {\widetilde{{\mathbf{B}}}_\rig^{\dagger, r_n}  }}
\newcommand{\wtbrigL}{   {\widetilde{{\mathbf{B}}}_{\rig, L}^\dagger}  }
\newcommand{\wtblog}{   {\widetilde{{\mathbf{B}}}_\log^\dagger}  }
\newcommand{\wtB}{   {\widetilde{{\mathbf{B}}}}  }
\newcommand{\wtE}{   {\widetilde{{\mathbf{E}}}}  }
\newcommand{\wte}{   {\widetilde{{\mathbf{E}}}}  }

 \def \E   {{\mathcal E}}
 \def \O {{\mathcal{O}}}

 \renewcommand{\OE}{{\mathcal{O}_{\mathcal E}}}
 \newcommand{\bigOE}{{\mathcal{O}_{\mathcal{E}}}}
\def \ku {k \llbracket u\rrbracket}
\def \uu {\llbracket u\rrbracket}

\def \OK {{\mathcal{O}_K}}
\def \ok {{\mathcal{O}_K}}
\def \ko {{K_{0}}}
\def \oc {{\mathcal{O}_C}}
\def \obbc {{\mathcal{O}_{\mathbb C}}}
\def \OKO {{\mathcal{O}_{K_0}}}
\def \oko {{\mathcal{O}_{K_0}}}
\def \obbko {{\mathcal{O}_{\mathbb{K}_0}}}
\def \OKbar {{\mathcal{O}_{\overline{K}}}}

\newcommand{\llb}{\llbracket}  
\newcommand{\rrb}{\rrbracket}
\newcommand*{\wt}[1]{\widetilde{#1}}
\newcommand*{\wh}[1]{\widehat{#1}}
\newcommand*{\mc}[1]{\mathcal{#1}}
\newcommand*{\mf}[1]{\mathfrak{#1}}
\newcommand*{\mbb}[1]{\mathbb{#1}}
\newcommand*{\mbf}[1]{\mathbf{#1}}
\newcommand{\dtilde}{\widetilde{D}}

\newcommand{\bba}{{\mathbb{A}}}
\newcommand{\bbb}{{\mathbb{B}}}
  \newcommand{\bbc}{{\mathbb{C}}}
    \newcommand{\bbC}{{\mathbb{C}}}
  \newcommand{\bbd}{{\mathbb{D}}}
   \newcommand{\bbD}{{\mathbb{D}}}
 \newcommand{\bbK}{{\mathbb{K}}}

 \newcommand{\Q}{{\mathbb{Q}}}
 \newcommand{\bbq}{{\mathbb{Q}}}
 \newcommand{\bbr}{{\mathbb{R}}}
 \newcommand{\Z}{{\mathbb{Z}}}
  \newcommand{\bbz}{{\mathbb{Z}}}
   \newcommand{\bbn}{\mathbb{N}}
\newcommand{\bbN}{{\mathbb{N}}}
\newcommand{\bbF}{{\mathbb{F}} }
\newcommand{\bbf}{{\mathbb{F}} }
 \newcommand{\bbk}{{\mathbb{K}}}

\newcommand{\zp}{{\mathbb{Z}_p}}
\newcommand{\qp}{{\mathbb{Q}_p}}
\newcommand{\fp}{{\mathbb{F}_p}}
\newcommand{\Qpbar}{{\overline{\mathbb{Q}}_p}}
\newcommand{\qpbar}{{\overline{\mathbb{Q}}_p}}
\newcommand{\barqp}{{\overline{\mathbb{Q}}_p}}

\newcommand{\Fpbar}{{\overline{\mathbb{F}}_p}}
\newcommand{\fpbar}{{\overline{\mathbb{F}}_p}}
\newcommand{\barfp}{{\overline{\mathbb{F}}_p}}

 \newcommand{\Fq}{{\mathbb{F}_q}}
\newcommand{\fq}{{\mathbb{F}_q}}

\newcommand{\repc}{\mathrm{Rep}_{\gk}(C)}
\newcommand{\repqp}{\mathrm{Rep}_{\gk}(\qp)}
\newcommand{\repbdrplus}{\mathrm{Rep}_{\gk}(\bdrplus)}
\newcommand{\repbdr}{\mathrm{Rep}_{\gk}(\bdr)}
\newcommand{\barK}{{\overline{K}}}

 \renewcommand*{\u}[1]{\underline{#1}}
\def\ueps{\underline \varepsilon}
\def\upi{\underline \pi}

\def \cp {{\mathcal P}}
\def \cw {{\mathcal W}}
\def \ct {{\mathcal T} }

\newcommand{\cala}{{\mathcal A}}
\newcommand{\cA}{{\mathcal A}}
\newcommand{\cB}{{\mathcal B}}
\newcommand{\cC}{{\mathcal C}}
\newcommand{\calc}{{\mathcal C}}
\newcommand{\cD}{{\mathcal D}}
  \newcommand{\cd}{{\mathcal{D}}}
    \newcommand{\cald}{{\mathcal{D}}}
\newcommand{\bigD}{{\mathcal{D}}}
\newcommand{\cE}{{\mathcal E}}
\newcommand{\ce}{{\mathcal E}}
\newcommand{\cale}{{\mathcal{E}}}
\newcommand{\cF}{{\mathcal F}}
\newcommand{\cG}{{\mathcal G}}
\newcommand{\cH}{{\mathcal H}}
\newcommand{\ch}{{\mathcal H}}
\newcommand{\cI}{{\mathcal I}}
\newcommand{\cJ}{{\mathcal J}}
\newcommand{\cK}{{\mathcal K}}
\newcommand{\cL}{{\mathcal L}}
\newcommand{\CL}{{\mathcal{L}}}
\def \cl {{\mathcal L} }
\newcommand{\cM}{{\mathcal M}}
\newcommand{\cm}{{\mathcal{M}} }
\newcommand{\calm}{\mathcal{M}}
\newcommand{\bigM}{\mathcal{M}}
\newcommand{\cN}{{\mathcal N}}
\newcommand{\cn}{{\mathcal N}}
\newcommand{\bigN}{\mathcal{N}}
\newcommand{\cO}{{\mathcal O}}
\newcommand{\co}{{\mathcal O}}
\newcommand{\bigO}{{\mathcal{O}}}
  \def \calo {{\mathcal{O}}}
   \def \calO {{\mathcal{O}}}
\newcommand{\cP}{{\mathcal P}}
\newcommand{\cQ}{{\mathcal Q}}
\newcommand{\cR}{{\mathcal R}}
 \def \calr {{\mathcal R}}
 \def \calR {{\mathcal R}}
\newcommand{\cS}{{\mathcal S}}
\newcommand{\cT}{{\mathcal T}}
\newcommand{\cU}{{\mathcal U}}
\newcommand{\cV}{{\mathcal V}}
\newcommand{\cW}{{\mathcal W}}
\newcommand{\cX}{{\mathcal X}}
\newcommand{\cx}{{\mathcal X}}
\newcommand{\calx}{{\mathcal X}}
\newcommand{\cY}{{\mathcal Y}}
\newcommand{\caly}{{\mathcal Y}}
\newcommand{\cZ}{{\mathcal Z}}
\newcommand{\calf}{\mathcal{F}}

\newcommand{\gA}{{\mathfrak{A}}}
\newcommand{\gB}{{\mathfrak{B}}}

\newcommand{\gc}{{\mathfrak{c}}}
\newcommand{\gs}{{\mathfrak{S}}}
\newcommand{\gS}{{\mathfrak{S}}}
\newcommand{\gt}{{\mathfrak{t}}}
\newcommand{\gu}{{\mathfrak{u}}}

\newcommand{\gL}{{\mathfrak{L}}}
\newcommand{\gm}{{\mathfrak{M}}}
\newcommand{\m}{{\mathfrak{M}}}
\newcommand{\M}{{\mathfrak{M}}}
\newcommand{\gM}{{\mathfrak{M}}}
\newcommand{\gn}{{\mathfrak{N}}}
\newcommand{\fkx}{{\mathfrak{X}}}
\newcommand{\fky}{{\mathfrak{Y}}}

\newcommand{\minf}{{\mathfrak{M}^{\mathrm{inf}}}}

\newcommand{\fkc}{{\mathfrak{c}}}
\newcommand{\fkb}{{\mathfrak{b}}}
\newcommand{\fke}{{\mathfrak{e}}}
\newcommand{\fkf}{{\mathfrak{f}}}
\newcommand{\fkg}{{\mathfrak{g}}}
\newcommand{\fkh}{{\mathfrak{h}}}
\newcommand{\fki}{{\mathfrak{i}}}
\newcommand{\fkj}{{\mathfrak{j}}}
\newcommand{\fkk}{{\mathfrak{k}}}
\newcommand{\fkm}{{\mathfrak{m}}}
\newcommand{\fkn}{{\mathfrak{m}}}
\newcommand{\fkp}{{\mathfrak{p}}}
\newcommand{\fkq}{{\mathfrak{q}}}
\newcommand{\fkt}{{\mathfrak{t}}}

\renewcommand{\oe}{{\mathcal{O}_{\mathcal{E}}}}
\newcommand{\oen}{{\mathcal{O}_{\mathcal{E}, n}}}

 \renewcommand{\wr}{{W(R)}}

\def \hR {{\widehat{\mathcal R}} }
\def \hM {{\widehat{\mathfrak{M}}} }
\def \hm {{\widehat{\mathfrak{M}}} }
\newcommand{\hatm}{\hat{\mathfrak{M}}}
\newcommand{\hatM}{\hat{\mathfrak{M}}}

\newcommand{\hatl}{\hat{\mathfrak{L}}}
\newcommand{\hatL}{\hat{\mathfrak{L}}}
\newcommand{\hatN}{\hat{\mathfrak{N}}}
\newcommand{\hatn}{\hat{\mathfrak{n}}}
\newcommand{\hattimes}{\widehat{\otimes}}
\newcommand{\barchi}{\overline{\chi}}
\newcommand{\barrho}{\overline{\rho}}
\newcommand{\chibar}{\overline{\chi}}
\newcommand{\rhobar}{\overline{\rho}}
\newcommand{\taubar}{\overline{\tau}}
\newcommand{\bolde}{\boldsymbol{e}}
\newcommand{\boldd}{\boldsymbol{d}}

\newcommand{\wtO}{   {\widetilde{{\mathcal{O}}}}  }

\newcommand{\bfD}{ {\mathbf{D}}}
\newcommand{\bftD}{ {\widetilde{\mathbf{D}}}}
\newcommand{\bftd}{ {\widetilde{\mathbf{D}}}}
\newcommand{\bfd}{ {\mathbf{D}}}
\newcommand{\bfm}{ {\mathbf{M}}}
\newcommand{\cbf}{\mathbf{c}}
\newcommand{\ibf}{\mathbf{i}}
\newcommand{\jbf}{\mathbf{j}}
\newcommand{\kbf}{\mathbf{k}}
\newcommand{\bfk}{\mathbf{k}}
\newcommand{\ybf}{\mathbf{y}}
\newcommand{\vece}{\vec{e}}
\newcommand{\bfa}{\mathbf{A}}
\newcommand{\bfb}{\mathbf{B}}
\newcommand{\bfB}{\mathbf{B}}
\newcommand{\wtbfd}{\wt{\bfd}}
\newcommand{\wtd}{\wt{\bfd}}

\def \ku {k \llbracket u\rrbracket}
\newcommand{\otimesvarphi}{\otimes_{\varphi}}

  \newcommand{\kpf}{{K^{\mathrm{pf}}}}
\newcommand{\pf}{{\mathrm{pf}}}
\newcommand{\bbko}{{\mathbb{K}_0}}
\newcommand{\bbkinfty}{{\bbk_\infty}}
\newcommand{\hatbbkinfty}{{\widehat{\bbk_\infty}}}
\newcommand{\bbkpinfty}{{\bbk_{p^\infty}}}
\newcommand{\gbbkinfty}{G_{\mathbb{K}_\infty}}
 \newcommand{\bbl}{{\mathbb{L}}}
  \newcommand{\bbL}{{\mathbb{L}}}

\newcommand{\ra}{\rightarrow}

\newcommand{\pibar}{\overline{\pi}}
\newcommand{\logpi}{\log[\overline{\pi}]}

\newcommand{\btst}[2]{\widetilde{\mathbf{B}}^{\dagger #1}_{\mathrm{log} #2}}
\newcommand{\btrig}[2]{\widetilde{\mathbf{B}}^{\dagger  #1}_{\mathrm{rig}  #2}}
\newcommand{\btlog}[2]{\widetilde{\mathbf{B}}^{\dagger #1}_{\mathrm{log} #2}}
\newcommand{\bbrig}[2]{ {\mathbb{B}}^{\dagger #1}_{\mathrm{rig} #2}}
\newcommand{\bfrig}[2]{ {\mathbf{B}}^{\dagger #1}_{\mathrm{rig} #2}}

\newcommand{\bnst}[2]{\mathbf{B}^{\dagger #1}_{\mathrm{log} #2}}
\newcommand{\bnrig}[2]{\mathbf{B}^{\dagger #1}_{\mathrm{rig} #2}}
\newcommand{\btstplus}[1]{\widetilde{\mathbf{B}}^{+}_{\mathrm{log} #1}}
\newcommand{\btrigplus}[1]{\widetilde{\mathbf{B}}^{+}_{\mathrm{rig} #1}}
\newcommand{\btlogplus}[1]{\widetilde{\mathbf{B}}^{+}_{\mathrm{log} #1}}
\newcommand{\blogplusnew}{ {\mathbf{B}}^+_{\mathrm{log}}}

\newcommand{\atst}[2]{\widetilde{\mathbf{A}}^{\dagger #1}_{\mathrm{log} #2}}
\newcommand{\atrig}[2]{\widetilde{\mathbf{A}}^{\dagger #1}_{\mathrm{rig} #2}}
\newcommand{\anst}[2]{\mathbf{A}^{\dagger #1}_{\mathrm{log} #2}}
\newcommand{\anrig}[2]{\mathbf{A}^{\dagger #1}_{\mathrm{rig} #2}}
\newcommand{\dnst}[1]{\mathbf{D}^{\dagger #1}_{\mathrm{log}}}
\newcommand{\dnrig}[1]{\mathbf{D}^{\dagger #1}_{\mathrm{rig}}}
\newcommand{\bmax}{\mathbf{B}_{\mathrm{max}}}
\newcommand{\bcont}{\mathbf{B}_{\mathrm{cont}}}

\newcommand{\ddR}{\mathbf{D}_{\mathrm{dR}}}
\newcommand{\dpst}{\mathbf{D}_{\mathrm{pst}}}
\newcommand{\ddif}{\mathbf{D}_{\mathrm{dif}}}
\newcommand{\dsen}{\mathbf{D}_{\mathrm{Sen}}}

\renewcommand{\ddag}[1]{\mathbf{D}^{\dagger #1}}

\newcommand{\vale}{v_\mathbf{E}}

\newcommand{\FF}{{\mathbb{F}}}
\newcommand{\NN}{{\mathbb{N}}}
\newcommand{\ZZ}{{\mathbb{Z}}}
\newcommand{\OO}{{\mathcal{O}}}
\renewcommand{\div}{{\mathrm{div}} }
\newcommand{\Nm}{{\mathrm{N}} }

\newcommand{\diam}{{\Diamond}}


\newcommand{\prism}{{\mathlarger{\mathbbl{\Delta}}}}

\newcommand{\xprism}{{X_\prism}}
\newcommand{\okprism}{{(\ok)_\prism}}
\newcommand{\okpris}{{(\mathcal{O}_K)_\prism}}

\newcommand{\okprislog}{{(\mathcal{O}_K)_{\prism, \mathrm{log}}}}
\newcommand{\okprisast}{{(\mathcal{O}_K)_{\prism, \ast}}}
\newcommand{\oklogpris}{{(\mathcal{O}_K)_{\prism, \mathrm{log}}}}
\newcommand{\okprisperf}{(\calO_K)^{\perf}_{\Prism}}

\newcommand{\okprislogperf}{(\calO_K)^{\perf}_{\Prism, \log}}
\newcommand{\oprism}{{\mathcal{O}_\prism}}
\newcommand{\opris}{{\mathcal{O}_\prism}}
\newcommand{\oprismplus}{\O_{\prism, \rig}^+}
\newcommand{\baroprism}{{\overline{\O}_\prism}}

\newcommand{\oprisdrplus}{ \calO_{\prism, \dR}^+  }

\newcommand{\oprisdr}{ \calO_{\prism, \dR}  }

 \newcommand{\strat}{\mathrm{Strat}}
\newcommand{\TorStrat}{\mathrm{TorStrat}}

 \newcommand{\DM}{\mathrm{DM}}
\newcommand{\DF}{\mathrm{DF}}

\newcommand{\pris}{{\mathrm{pris}}}
\newcommand{\qsp}{\mathrm{QRSPerfd}}
\newcommand{\qrsp}{{\mathrm{qrsp}}}

 \newcommand{\qs}{{\mathrm{QSyn}}}

 \newcommand{\gstwo}{{\mathfrak{S}^{(2)}}}


\newcommand{\gmast}{\gm^{\ast}}

 \newcommand{\oepi}{{\mathcal{O}_{\mathcal{E}, \vec{\pi}}}}
\newcommand{\gspi}{{\gs}_{\vec{\pi}}}
\newcommand{\wtl}{{\wt{\mathcal{L}}}}
\renewcommand{\wtA}{{\widetilde{A}}}

\newcommand{\cflat}{{C^\flat}}
\newcommand{\ocflat}{{\mathcal{O}_C^\flat}}
\newcommand{\obbcflat}{{\mathcal{O}_{\mathbb C}^\flat}}

\title[]{$p$-adic Simpson correspondence via prismatic crystals}

\author[]{Yu Min}
\address{Y. Min, Department of Mathematics, Imperial College London, London SW7 2RH, UK.}
\email{y.min@imperial.ac.uk}

\author[]{Yupeng Wang}
\address{Y. Wang, Beijing International Center of Mathematical Research, Peking University, YiHeYuan Road 55, Beijing, 100190, China.}
\email{2306393435@pku.edu.cn}

\subjclass[2020]{Primary  14G22, 14F30, 14G45}

\keywords{rational Hodge--Tate crystal, enhanced Higgs bundle, p-adic Simpson correspondence, Sen theory}

\begin{abstract}
Let $\frakX$ be a smooth $p$-adic formal scheme over $\calO_K$ with adic generic fiber $X$. We obtain a global equivalence between the category $\Vect((\frakX)_{\Prism},\overline\calO_{\Prism}[\frac{1}{p}])$ of rational Hodge--Tate crystals on the absolute prismatic site $(\frakX)_{\Prism}$ and the category $\HIG^{\nil}_*(X)$ of enhanced Higgs bundles on $X$. Along the way, we construct an inverse Simpson functor from $\HIG^{\nil}_*(X)$ to the category $\Vect(X_{\proet},\widehat\calO_X)$ of generalised representations on $X$, which turns out to be fully faithful.
\end{abstract}


\maketitle
\setcounter{tocdepth}{1}
\tableofcontents

\section{Introduction}
\subsection{Overview}

  In their groundbreaking work \cite{BS22}, Bhatt and Scholze introduced the prismatic cohomology,  which is ``motivic" in the sense that it can recover most existing $p$-adic cohomology theories (e.g. crystalline cohomology, de Rham cohomology and \'etale cohomology). Recently, the coefficient theory of the prismatic cohomology has caught a lot of attention. For example, one can recover \'etale $\Zp$-local systems on the generic fibers of bounded $p$-adic formal schemes $\frakX$ by studying the \emph{Laurent $F$-crystals} on the absolute prismatic site $(\frakX)_{\Prism}$ of $\frakX$ (cf. \cite{Wu21}, \cite{BS23}, \cite{MW21a}). When $\frakX$ is smooth over $\calO_K$, the ring of integers of a $p$-adic field $K$, there exists a nice equivalence between the category of crystalline $\Zp$-local systems on its generic fiber and that of \emph{(analytic) prismatic $F$-crystals} on $(\frakX)_{\Prism}$  (cf. \cite{ALB19}, \cite{BS23}, \cite{DL23}, \cite{DLMS24}, \cite{GR}). If  $\frakX$ is smooth over $\calO_{\widehat{\overline K}}$, then the theory of prismatic crystals provides a $q$-deformation of local $p$-adic Simpson correspondence (cf. \cite{GLSQ19}, \cite{MT}, \cite{GLSQ23}, etc.). 
  
  In this paper, we will focus on the theory of \emph{Hodge--Tate prismatic crystal}. Note that in the geometric setting, Hodge--Tate crystals on the relative prismatic site can be {\bf locally}  understood as certain Higgs bundles (cf. \cite{Tia23}). This phenomenon also appears in characteristic $p$ and one can understand Hodge--Tate crystals as nilpotent Higgs bundles for ``good'' smooth scheme over $\Fp$ (cf. \cite{Og22}). In the arithmetic setting, Hodge--Tate crystals on the absolute prismatic site of $\calO_K$ can be understood as certain semi-linear $\widehat{\overline{K}}$-representations of $\Gal(\overline{K}/K)$ and closely related with classical Sen theory (cf. \cite{GMW-HT}, \cite{BL22a}, \cite{GMW}, \cite{AHLB22}, etc.).

  Continuing the work \cite{GMW-HT} (joint with Hui Gao), we are going to combine the two aspects, i.e. the geometric side and the arithmetic side, of Hodge--Tate crystals mentioned above. More precisely, for a smooth $p$-adic formal scheme $\frakX$ over $\calO_K$ with generic fiber $X$, we will study the category $\Vect((\frakX)_{\Prism},\overline\calO_{\Prism}[\frac{1}{p}])$ of rational Hodge--Tate crystal on $(\frakX)_{\Prism}$ and investigate how the arithmetic part and the geometric part interact with each other. In fact, we will establish a \emph{global} equivalence between the category of rational Hodge--Tate crystals and the category of enhanced Higgs bundles, which can be understood as a hybrid of Higgs bundles and
  (arithmatic) Sen operators. Along the way, we will obtain a fully faithful functor from the category of enhanced Higgs bundles on $X$ to the category of generalised representations on $X_{\proet}$, which we call the \emph{inverse Simpson functor}. 

\subsection{Main result}
  We freely use notations introduced in \S\ref{Intro-Notations}. In particular, $K$ is a complete discretely valued field of mixed characteristic $(0,p)$ with ring of integers $\calO_K$ and perfect residue field $\kappa$ and $C = \widehat{\overline K}$ is a fixed $p$-complete algebraic closure of $K$. 
\subsubsection{Statement of main result}
 We first introduce some notions that will be considered in this paper. Let $\Vect((\frakX)_{\Prism},\overline \calO_{\Prism}[\frac{1}{p}])$ (resp. $\Vect((\frakX)^{\perf}_{\Prism},\overline \calO_{\Prism}[\frac{1}{p}])$) be the category of rational Hodge--Tate crystals on the {\bf absolute} prismatic site $(\frakX)_{\Prism}$ (resp, the absolute prismatic site of perfect prisms) of $\frakX$ (cf. Definition \ref{Dfn-HT Crystal}). Let $\Vect(X_{\proet},\OX)$ denote the category of generalised representations (i.e. locally finite free $\widehat \calO_X$-modules) on $X_{\proet}$ (cf. Example \ref{Exam-GRep}) and $\HIG^{\nil}_*(X)$ be the category of enhanced Higgs bundles on $X$; that is, Higgs bundles defined on $X$ with a Sen operator satisfying certain conditions (cf. Definition \ref{Dfn-EnhancedHiggsBundle}). Let $\HIG_{G_K}(X_C)$ denote the category of $G_K$-Higgs bundles on $X_C$; that is, Higgs bundles on $X_C$ together with compatible $G_K$-actions (cf. Definition \ref{Dfn-G-HiggsBundle}).
  
  Then our main theorem is as follows.
  \begin{thm}[Theorem \ref{Thm-HTasHiggs-Global}]\label{Intro-MainTheorem}
   Let $\frakX$ be a smooth $p$-adic formal scheme over $\calO_K$ with rigid analytic generic fiber $X$.
   Then there exists an equivalence of categories
   \[\rho:\Vect((\frakX)_{\Prism},\overline \calO_{\Prism}[\frac{1}{p}])\simeq \HIG^{\nil}_*(X),\]
   which preserves ranks, tensor products and dualities. Moreover, this equivalence fits into the following commutative diagram of fully faithful functors:
   \begin{equation}\label{Intro-Diag-CommutativeDiagram}
       \xymatrix@C=0.45cm{
         \Vect((\frakX)_{\Prism},\overline \calO_{\Prism}[\frac{1}{p}])\ar[rrrr]^{\rho}_{\simeq}\ar[d]^{\Res}&&&&\HIG^{\nil}_*(X)\ar[d]^{\rF}\ar[lld]_{\rF_{S}}\\
         \Vect((\frakX)_{\Prism}^{\perf},\overline \calO_{\Prism}[\frac{1}{p}])\ar[rr]^{\simeq}
         &&\Vect(X_{\proet},\OX)\ar[rr]^{\simeq}&&\HIG_{G_K}(X_C),
       }
   \end{equation}
   where $\Res$ is induced by inclusion $(\frakX)_{\Prism}^{\perf}\subset(\frakX)_{\Prism}$ of sites, $\rF$ will be defined in Construction \ref{Construction-F} and all arrows with ``$\simeq$'' are equivalences of categories. All functors above are natural in the sense that they do not depend on other choices and are functorial in $\frakX$. In particular, we obtain a fully faithful inverse Simpson functor 
   \[\rF_S:\HIG^{\nil}_*(X)\to \Vect(X_{\proet},\OX).\]

   As we will see, the equivalences appearing in the bottom line of Diagram (\ref{Intro-Diag-CommutativeDiagram}) preserve cohomologies (cf. Theorem \ref{Intro-EtaleRealisation} and Theorem \ref{Intro-Simpson}(3)). When $\frakX=\Spf(R^+)$ is small affine (see paragraphs above Convention \ref{Convention-Crystal}), the equivalence $\rho$ also preserves cohomologies even on the integral level (cf. Theorem \ref{Intro-HTasHiggs-Local}(1)).
 \end{thm}

 \begin{rmk} \label{rem: history}

     We comment on the history and independent works related with Theorem \ref{Intro-MainTheorem}. We refer the readers to the cited papers for more details and comparison of approaches.
     \begin{enumerate}
         \item When $\frakX=\Spf(\calO_K)$ (the point case), these results (including the log-prismatic case) are proved in our joint work with Gao \cite{GMW-HT}. Independently,   these results  are also proved by Ansch\"utz--Heuer--Le Bras \cite{AHLB22} using a stacky approach, building on works of Drinfeld \cite{Dri20} and Bhatt--Lurie \cite{BL22a, BL22b}. 
         \item For smooth $\frakX$, again Ansch\"utz--Heuer--Le Bras independently prove relevant results in \cite{AHLB-b,AHLB23c}, using the stacky approach.
We note that they can also describe the essential image of $\rF_S$ (cf. Remark \ref{rmk:essential image}). 

\item One can define a notion of $\BBdRp$- prismatic crystals, which are infinitesimal thickenings of Hodge--Tate crystals. In the point case, they are studied in \cite{GMW}, generalizing the work of \cite{GMW-HT}. Higher dimensional cases of these crystals will be discussed in the forthcoming joint work with Hui Gao \cite{GMWdRrel}.
     \end{enumerate}
 \end{rmk}
 
 \begin{rmk}\label{Intro-Rmk-Main-I}
     The relation between Hodge--Tate crystals and Higgs bundles is not new: indeed, if one fixes a transversal prism $(A,I)$ and considers a small (cf. \cite[Def. 8.5]{BMS18}) affine formal scheme $\frakX = \Spf(R^+)$ over $A/I$, then there exists an equivalence between the category of topologically nilpotent Higgs bundles over $\frakX$ and the category of Hodge--Tate crystals on the {\bf relative} prismatic site $(\frakX/(A,I))_{\Prism}$ of $\frakX$ (cf. \cite[Thm. 5.12]{Tia23}), which is non-canonical and actually depends on the choice of charts. Our improvement is that if we consider the {\bf absolute} prismatic site of $\frakX$ and rational Hodge--Tate crystals, then the Higgs bundles coming from rational Hodge--Tate crystals can be equipped with an additional  Sen operator so that we can establish a global theory. We emphasize that both inverting $p$ and considering the absolute prismatic site are essential in our construction. 
 \end{rmk}

 \begin{rmk}\label{Intro-Rmk-Main-II}
     In the classical theory of Simpson correspondence over the field of complex numbers $\bC$, a nilpotent Higgs bundle $(\calH,\theta_{\calH})$ over a smooth projective variety $X$ may not induce a representation of the fundamental group $\pi_1(X(\bC))$ or a $\bG_m$-action on $(\calH,\theta_{\calH})$. However, in the setting of Theorem \ref{Intro-MainTheorem}, if a nilpotent Higgs bundle $(\calH,\theta_{\calH})$ is enhanced, then it will induce a generalised representation and a $G_K$-action on its base-change to $C$, the field of $p$-adic complex numbers. We point out that the desired $G_K$ on the base-change of $(\calH,\Theta_{\calH})$ is {\bf not} the naive base-change ``$(\calH,\theta_{\calH})\otimes_{K}C$'' (in which $G_K$ only acts on $C$). See Construction \ref{Construction-F} for details.
 \end{rmk}

 \begin{rmk}\label{Intro-Rmk-Main-III}
    Theorem \ref{Intro-MainTheorem} sheds light on studying $p$-adic Simpson correspondence via prismatic crystals. Indeed, our approach is also compatible Faltings' $p$-adic Simpson correspondence (cf. \cite{Fal,AGT,Wan23}) when $\frakX$ is smooth over $\calO_C$ with a lifting to $A_{\inf}/\xi^2$.
    More precisely, in this case, one can firstly check that when $\frakX = \Spf(R^+)$ is small affine, after identifying $\Vect((\frakX/(\Ainf,\xi))_{\Prism},\overline \calO_{\Prism}[\frac{1}{p}])$ with the category of topologically nilpotent Higgs bundles on $\frakX$ (cf. \cite[Thm. 5.12]{Tia23}), the composed functor 
    \[\Vect((\frakX/(\Ainf,\xi))_{\Prism},\overline \calO_{\Prism}[\frac{1}{p}])\to\Vect((\frakX/(\Ainf,\xi))^{\perf}_{\Prism},\overline \calO_{\Prism}[\frac{1}{p}])\to\Vect(X,\OX)\]
    in Theorem \ref{Intro-MainTheorem} is compatible with Faltings' local Simpson functor (e.g. \cite[Thm. 4.3]{Wan23}). Then using Faltings' $p$-adic Simpson correspondence (e.g. \cite[Thm. 1.1]{Wan23}) instead of $\rF_S$ in (\ref{Intro-Diag-CommutativeDiagram}), our approach can show that the local construction in \cite{Tia23} glues if one restricts to the categories of small Higgs bundles and ``small'' Hodge--Tate crystals (which induces small generalised representations via the above composed functor). Eventually, one can establish the equivalences among the categories of small Higgs bundles, small Hodge--Tate crystals, and small generalised representations. We leave the details to the interested readers. 
 \end{rmk}
 
  Next, we are going to explain our strategy of proving Theorem \ref{Intro-MainTheorem}. We will first introduce the key tools used in the globalisation process and then talk about the local constructions and complete the strategy of the proof.
  
\subsubsection{$p$-adic Simpson correspondence and Hodge--Tate crystals on perfect site}

Similar to \cite{Tia23}, we can get a local correspondence between Hodge--Tate crystals and Higgs bundles in the absolute case.
The main difficulty of getting a global one then lies in comparing local constructions. Our strategy is to compare local data in a bigger category through a fully faithful functor. The key ingredient is the arithmetic $p$-adic Simpson correspondence (cf. Theorem \ref{Intro-Simpson}).
  
  Based on the previous work of Scholze \cite{Sch-IHES,Sch-Pi}, Liu--Zhu \cite{LZ} assigned to each \'etale $\Qp$-local system on a smooth rigid analytic space over $K$ a nilpotent Higgs field by using decompletion theory in \cite{KL16} and got the rigidity of de Rham local systems. Their method also works in the logarithmic case \cite{DLLZ}. We will work on their arenas and prove the following result, in which $X$ is not required to be a rigid generic fiber of a smooth $p$-adic formal scheme.

 \begin{thm}[Theorem \ref{Thm-Simpson}]\label{Intro-Simpson}
   Let $X$ be a smooth rigid space over $K$ and $\nu:X_{\proet}\to X_{C,\et}$ be the natural map of sites. Then there is a period sheaf $\OC$ together with a natural Higgs field $\Theta$ (cf. Equation (\ref{Equ-Theta})) on $X_{\proet}$, and for any generalised representation $\calL\in \Vect(X_{\proet},\OX)$, if we put $\Theta_{\calL}:=\id_{\calL}\otimes\Theta$ (cf. Equation (\ref{Equ-Theta_L})), then the rule
   \[\calL\mapsto (\calH(\calL),\theta_{\calH(\calL)}):= (\nu_*(\calL\otimes_{\OX}\OC),\nu_*(\Theta_{\calL}))\]
   induces a rank-preserving equivalence from the category $\Vect(X_{\proet},\OX)$ of generalised representations on $X_{\proet}$ to the category $\HIG_{G_K}(X_{C})$ of $G_K$-Higgs bundles on $X_{C,\et}$, which preserves tensor products and dualities. Moreover, the following assertions are true:
   
   \begin{enumerate}
       \item For any $i\geq 1$, the higher direct image $\rR^i\nu_*(\calL\otimes_{\OX}\OC) = 0$.
       
       \item Let $\Theta_{\calH(\calL)}:=\theta_{\calH(\calL)}\otimes\id_{\OC}+\id_{\calH(\calL)}\otimes\Theta$ (cf. Equation (\ref{Equ-Theta_H})). Then there exists a natural isomorphism
       \[(\calH\otimes_{\calO_{X_{C}}}\OC_{\mid X_{C}},\Theta_{\calH(\calL)}) \xrightarrow{\cong} (\calL\otimes_{\OX}\OC,\Theta_{\calL})_{\mid X_{C}}\]
       of Higgs fields.
       
       \item Let $(\HIG(\calH(\calL),\theta_{\calH(\calL)})$ denote the Higgs complex induced by $(\calH(\calL),\theta_{\calH(\calL)})$. Then there exists a natural quasi-isomorphism
       \[\rR\Gamma(X_{C,\proet},\calL)\cong \rR\Gamma(X_{C,\et},(\HIG(\calH(\calL),\theta_{\calH(\calL)}))\]
       which is compatible with $G_K$-actions. As a consequence, we get a quasi-isomorphism
       \[\rR\Gamma(X_{\proet},\calL)\cong \rR\Gamma(G_K,\rR\Gamma(X_{C,\et},(\HIG(\calH(\calL),\theta_{\calH(\calL)}))).\]
       
       \item Let $X'\to X$ be a smooth morphism of rigid spaces over $K$. Then the equivalence in (3) is compatible with pull-back along $f$. In other words, for any $\calL\in\Vect(X_{\proet},\OX)$ with corresponding $(\calH,\theta_{\calH})\in\HIG_{G_K}(X_C)$, we have 
       \[(\calH(f^*\calL),\theta_{\calH(f^*\calL)})\cong (f^*\calH,f^*\theta_{\calH}).\]
   \end{enumerate}
 \end{thm}

 An immediate corollary is the following.
 
 \begin{cor}[Corollary \ref{Cor-Simpson}]\label{Intro-Cor-Simpson}
    Let $d$ be the dimension of $X$ over $K$. 
    
    \begin{enumerate}
       \item Assume $X$ is quasi-compact. Then $\rR\Gamma(X_{C,\proet},\calL)$ is concentrated in degree $[0,2d]$.
       
       \item If moreover $X$ is proper, then $\rR\Gamma(X_{C,\proet},\calL)$ is a perfect complex of $C$-representations of $G_K$ and $\rR\Gamma(X_{\proet},\calL)$ is a perfect complex of $K$-vector spaces concentrated in degree $[0,2d+1]$. 
    \end{enumerate}
  \end{cor}

  \begin{rmk}\label{Intro-Rmk-Simpson-I}
     By almost \'etale descent, one can replace $C$ in Theorem \ref{Intro-Simpson} and Corollary \ref{Intro-Cor-Simpson} by its any perfectoid sub-field containing $\zeta_{p^\infty}$ (cf. Remark \ref{Rmk-Simpson}). By analytic--\'etale comparison, one can use the analytic site instead of the \'etale site above.
  \end{rmk}

  \begin{rmk}\label{Intro-Rmk-Simpson-II}
      (1) When $X$ is affine, Theorem \ref{Intro-Simpson} was achieved by Tsuji \cite[Thm. 15.2]{Tsu} by choosing a certain integral model (with log structure) of $X$, which is not necessary in our approach.

      (2) For generalised representations $\calL$ coming from $\Qp$-local systems $\bL$ (i.e. $\calL=\OX\otimes_{\Qp}\bL$), Theorem \ref{Intro-Simpson} was proved by Liu--Zhu as \cite[Thm. 2.1]{LZ} by using decompletion theory for certain overconvergent period rings ``$\widetilde{\mathbf B}^{\dagger}$''. Our proof was inspired by theirs and works for any generalised representations by using decompletion theory developed in \cite{DLLZ} (cf.\S \ref{Sec-DLLZ}).

      (3) Note that when $X$ admits a good reduction $\frakX$ over $\rW(\kappa)$, the base-change along $\rW(\kappa) \to \bf A_{\inf}$ induces a lifting $\widetilde \frakX$ of $\frakX_C:=\frakX\times_{\rW({\kappa})}\calO_C$. So for a small generalised representations $\calL$, the work of Abbes--Gros--Tsuji \cite{AGT} also produces a Higgs bundle on $X_C$ with $G_K$-action. Theorem \ref{Intro-Simpson} should be compatible with theirs. Note that when $X$ has semi-stable reduction $\frakX$ over $\rW(\kappa)$ merely, then the base-change of $\frakX$ to $\bf A_{\inf}$ is not a lifting of $\frakX_C$ as log schemes. Indeed, we do not know whether a lifting of $\frakX_C$ to $\bf A_{\inf}$ (as log scheme) exists except the curve case and the affine small case. So it seems not easy to apply results in \cite{AGT} directly in this case.

      (4) When $X$ is either abeloid or $X$ is curve and $\calL$ is a line bundle, Theorem \ref{Intro-Simpson} can be also deduced from \cite{Heu} and \cite{HMW} by noting that $\mathrm{HT}\log$ in loc.cit. is $G_K$-equivariant. We thank Ben Heuer for pointing out this to us. In general, if $X$ is furthermore proper, then Theorem \ref{Intro-Simpson} is a consequence of a very recent work \cite{Heu-Simpson} of Heuer as well.
  \end{rmk}

  Before explaining how to apply Theorem \ref{Intro-Simpson}, let us exhibit the relation between rational Hodge--Tate crystals and generalised representations. We again assume $X$ is the rigid generic fiber of a smooth $p$-adic formal scheme $\frakX$ over $\calO_K$. The key observation is that both $X_{\proet}$ (or $X_v$) and $(\frakX)_{\Prism}^{\perf}$ have a basis consisting of perfectoid algebras. Using this, we can prove the following \'etale comparison theorem:
  
   \begin{thm}[Theorem \ref{Thm-EtaleRealisation}]\label{Intro-EtaleRealisation}
     There exists a natural equivalence of categories 
     \[\rL:\Vect((\frakX)_{\Prism}^{\perf},\overline \calO_{\Prism}[\frac{1}{p}])\to \Vect(X_{\proet},\OX),\]
     which preserves ranks, tensor products and dualities, such that for any rational Hodge--Tate crystal $\bM$ on $(\frakX)_{\Prism}^{\perf}$, we have a quasi-isomorphism
     \[\rR\Gamma((\frakX)_{\Prism}^{\perf},\bM)\cong \rR\Gamma(X_{\proet},\rL(\bM)),\]
   which is functorial in $\bM$.
  \end{thm}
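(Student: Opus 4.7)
The plan is to exploit the common perfectoid basis of the two sites. By the tilting equivalence, sending a perfectoid $\calO_K$-algebra $R^+$ to the perfect prism $(\AAinf(R^+),\ker\theta)$ gives a bijection between integral perfectoid algebras over $\frakX$ and perfect prisms over $\frakX$. Under this bijection, the structure sheaf $\overline{\calO}_{\Prism}$ (which evaluates to $A/I$) corresponds to the integral pro-\'etale structure sheaf $\OXp$ (which evaluates to $R^+$), and after inverting $p$ one obtains a correspondence $\overline{\calO}_{\Prism}[\tfrac{1}{p}] \leftrightarrow \OX$. Affinoid perfectoid objects form a basis for $X_{\proet}$ by \cite{Sch-IHES}; on the other side, perfect prisms tautologically form a basis for $(\frakX)_{\Prism}^{\perf}$. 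The functor $\rL$ will be built by transporting the data of a rational Hodge--Tate crystal across this identification of bases.

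Concretely, given $\bM \in \Vect((\frakX)_{\Prism}^{\perf}, \overline{\calO}_{\Prism}[\tfrac{1}{p}])$, I would define a presheaf on the basis of affinoid perfectoid objects of $X_{\proet}$ by sending $U = \Spa(R, R^+) \to X$ to the finite projective $R$-module $\bM((\AAinf(R^+), \ker\theta))$. The crystal condition on $\bM$ ensures compatibility with pullbacks along morphisms of perfectoids, so the resulting presheaf sheafifies to a locally finite free $\OX$-module, i.e.\ an object of $\Vect(X_{\proet}, \OX)$. The quasi-inverse is defined symmetrically by evaluating a generalised representation on affinoid perfectoids and reindexing by the corresponding perfect prisms. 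Compatibility with ranks, tensor products and dualities is automatic, since all three operations are formed on the same finite projective modules over the same rings viewed under two different parametrisations.

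For the cohomology comparison, I would proceed via \v{C}ech cohomology along a perfectoid pro-\'etale cover $Y \to X$. On the pro-\'etale side, $\OX$ has vanishing higher cohomology on affinoid perfectoids (the rational version of Scholze's almost purity), so $\rR\Gamma(X_{\proet}, \rL(\bM))$ is computed by the \v{C}ech complex of $\rL(\bM)$ on the nerve of $Y \to X$. The corresponding nerve of perfectoids lifts to a nerve of perfect prisms over $\frakX$, and rational Hodge--Tate cohomology on perfect prisms is likewise computed by the \v{C}ech complex of $\overline{\calO}_{\Prism}[\tfrac{1}{p}]$ (higher terms are almost zero integrally, hence zero after inverting $p$). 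Under the identification of structure sheaves above, these two \v{C}ech complexes agree term by term, giving the asserted quasi-isomorphism; functoriality in $\bM$ is clear from the construction.

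The most delicate step will be the precise matching of the two Grothendieck topologies: verifying that pro-\'etale covers of affinoid perfectoids correspond to faithfully flat covers of the associated perfect prisms, and that the self-products needed to form the \v{C}ech nerves in each site agree under the tilting correspondence. This is essentially a rephrasing of the tilting equivalence in the language of sites, but some care is needed because we are working on the \emph{absolute} prismatic site, so we must track that the structure maps to $\frakX$ are preserved under all identifications.
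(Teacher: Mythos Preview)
Your approach is essentially the same as the paper's: both build $\rL$ by evaluating a crystal at the perfect prism $(\AAinf(R^+),\ker\theta)$ associated to an affinoid perfectoid $U=\Spa(R,R^+)$, construct the quasi-inverse by the reverse assignment, and compare cohomology by identifying \v Cech complexes. The paper works on $X_{v,\aff,\perf}$ rather than $X_{\proet}$ directly (invoking \cite[Thm.~3.5.8]{KL16} for the equivalence), but this is a minor technical variation.

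The one place where your sketch is too optimistic is the ``delicate step'' you flag at the end. You write that the matching of \v Cech nerves is ``essentially a rephrasing of the tilting equivalence,'' but in fact the paper does not prove that arbitrary fibre products of affinoid perfectoids over $X$ correspond to fibre products of perfect prisms over $\frakX$. Instead, it fixes a \emph{specific} cover---the perfectoid towers $X_{i,C,\infty}$ coming from an \'etale cover of $\frakX$ by small affines---and proves in Lemma~\ref{Lem-Fibreproduct} that for this cover the fibre products on both sides exist and agree. The argument passes through the \'etale fibre product $\frakX_J$, forms the relevant tensor product of perfectoid rings, observes it is quasi-regular semi-perfectoid, and invokes \cite[Cor.~7.3]{BS19} to extract the initial perfectoid algebra over it. This is genuine work, not a formal consequence of tilting, because the integral structures on the two sides only match up to almost isomorphism (as the paper remarks), and because fibre products in $(\frakX)_{\Prism}^{\perf}$ are not given for free. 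Your proposal would go through once you supply this lemma, but as written the last paragraph underestimates what is needed.
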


  Due to Theorem \ref{Intro-EtaleRealisation}, the inclusion of sites $(\frakX)^{\perf}_{\Prism}\subset(\frakX)_{\Prism}$ induces a natural functor 
  \[\Res:\Vect((\frakX)_{\Prism},\overline \calO_{\Prism}[\frac{1}{p}])\to \Vect((\frakX)_{\Prism}^{\perf},\overline \calO_{\Prism}[\frac{1}{p}])\simeq \Vect(X_{\proet},\OX).\]
  We will see that this functor is fully faithful (cf. Proposition \ref{Prop-F-FullyFaithful}) and then use it to compare local constructions that we are going to explain right now.
 
\subsubsection{Local constructions}
   Let $\frakX = \Spf(R^+)$ be small affine. By ($p$-completely) faithfully flat descent and \cite[Prop. 2.7]{BS23}, one can study (rational) Hodge--Tate crystals in terms of stratifications by choosing a certain cover $(\frakS(R^+),(E))$ (depending on the choice of charts on $R^+$) of the final object of $\Sh((R^+)_{\Prism})$. In this case, parts of Theorem \ref{Intro-Simpson} upgrade to the integral level and one can give an explicit description of the functor $\Res$. Also, $\Vect(X_{\proet},\OX)$ is equivalent to the category $\Rep_{\Gamma(\overline K/K)}(\widehat R_{C,\infty})$ of semi-linear $\widehat R_{C,\infty}$-representations of $\Gamma(\overline K/K)\cong(\oplus_{i=1}^d\Zp\gamma_i)\rtimes G_K$ (cf. Notation \ref{Notation-LocalChart-II} \& Lemma \ref{Lem-EvaluateGRep}). Let $c:G_K\to\Zp$ be the function determined by $g(\pi^{\flat}) = \epsilon^{c(g)}\pi^{\flat}$ for any $g\in G_K$ and $\lambda$ be the reduction of $\frac{\xi}{E([\pi^{\flat}])}\in \Ainf$ modulo $\xi$ (See the paragraphs above Convention \ref{Convention-LinearIndependence}).
   \begin{thm}\label{Intro-HTasHiggs-Local}
     Keep notations as above and fix a chart $\Box:\frakX\to\Spf(\calO_K\za T_1^{\pm 1},\dots,T_d^{\pm 1}\ya)$.
     \begin{enumerate}
         \item \emph{[Theorem \ref{Thm-HTasHiggs-Local}]}
         The evaluation at $(\frakS(R^+),(E))$ induces an equivalence of categories 
         \[\rho:\Vect((R^+)_{\Prism},\overline \calO_{\Prism})\to\HIG_*^{\nil}(R^+)~(\text{resp.}~\rho:\Vect((R^+)_{\Prism},\overline \calO_{\Prism}[\frac{1}{p}])\to\HIG_*^{\nil}(R)),\]
        which preserves ranks, tensor products and dualities, such that for any Hodge--Tate crystal (resp. rational Hodge--Tate crystal) $\bM$ with associated enhanced Higgs module (cf. Definition \ref{Dfn-EnhancedHiggsMod-Local}) $(H,\theta_H,\phi_H)$ over $R^+$ (resp. $R$), there exists a quasi-isomorphism
        \[\rR\Gamma((R^+)_{\Prism},\bM)\cong \HIG(H,\theta_H,\phi_H),\]
        which is functorial in $\bM$.

         \item \emph{[Theorem \ref{Lem-Restriction}]} For any $(H,\theta_H,\phi_H)\in\HIG^{\nil}_*(R)$ corresponding to the rational Hodge--Tate crystal $\bM$ with induced $\widehat R_{C,\infty}$-representation $\Res(\bM)$ of $\Gamma(\overline K/K)$, we have $\Res(\bM) = H\otimes_R\widehat R_{C,\infty}$ such that for any $1\leq i\leq d$, any $g\in G_K$ and any $x\in H$, 
         \[\gamma_i(x) = \exp(-(\zeta_p-1)\lambda\theta_i)(x)~\text{and}~g(x) = (1+\pi E'(\pi)(\zeta_p-1)\lambda c(g))^{-\frac{\phi_H}{E'(\pi)}}(x).\]
         Here, $\theta_i\in \End_{R}(H)$ are the components of $\theta_H$ with respect to the fixed chart $\Box$; that is, $\theta_H = \sum_{i=1}^d\theta_i\otimes\dlog T_i\{-1\}$.
     \end{enumerate}
 \end{thm}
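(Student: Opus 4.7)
The plan is to prove (1) via faithfully flat descent along the cover $(\frakS(R^+),(E))$ of the final object of $\Sh((R^+)_{\Prism})$, reducing Hodge--Tate crystals to stratification data. Following \cite[Prop. 2.7]{BS21}, a Hodge--Tate crystal $\bM$ is equivalent to a finite projective $\overline\calO_{\Prism}(\frakS(R^+),(E))$-module $H$ together with a stratification with respect to the cosimplicial prism $\frakS(R^+)^{(\bullet+1)}$, satisfying the usual cocycle condition. The first step is therefore to compute the self-products $\frakS(R^+)^{(n+1)}$ in the absolute prismatic site: since $R^+$ is small with a fixed framing, these will be explicit formal power series rings adjoining both geometric deformation variables $u_1,\ldots,u_d$ (one for each coordinate of the chart) and one ``arithmetic'' deformation variable, say $v$, arising from the Breuil--Kisin variable. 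Reducing modulo $E$ gives $\overline\calO_{\Prism}(\frakS(R^+)^{(2)})$ as a specific $R^+$-algebra, and the stratification is an $R^+$-linear isomorphism $\epsilon\colon H\otimes_{R^+,p_0}\overline\calO_{\Prism}(\frakS(R^+)^{(2)}) \isoto H\otimes_{R^+,p_1}\overline\calO_{\Prism}(\frakS(R^+)^{(2)})$ satisfying the cocycle identity.

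Next, I would Taylor-expand $\epsilon$ in the variables $u_1,\ldots,u_d,v$ (viewed as topologically nilpotent elements of the self-product). The linear terms in $u_i$ produce $R^+$-linear endomorphisms $\theta_i$ of $H$, and the linear term in $v$ produces another endomorphism $\phi_H$. The cocycle identity modulo degree $2$ in the $u_i$'s forces the $\theta_i$'s to commute, giving a Higgs field $\theta_H=\sum \theta_i\,\rd\log T_i$; the mixed quadratic relations between the $u_i$'s and $v$ give the compatibility between $\theta_H$ and $\phi_H$ required in the definition of $\HIG^{\nil}_*(R^+)$; and the convergence of the exponential formula that will appear in (2) forces the nilpotence condition. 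Conversely, starting from an enhanced Higgs module $(H,\theta_H,\phi_H)$, the formula $\epsilon = \exp(\sum u_i\theta_i)\cdot (1+v\cdot(\text{something involving }\phi_H))$ will produce the required stratification once one checks the cocycle identity; this is the content of part (1), and the cohomology comparison $\rR\Gamma((R^+)_{\Prism},\bM)\simeq \HIG(H,\theta_H,\phi_H)$ follows by identifying the \v{C}ech--Alexander complex of $\bM$ along $\frakS(R^+)^{(\bullet+1)}$ with a Koszul-type complex in the derivations dual to $u_1,\ldots,u_d,v$, which is precisely the enhanced Higgs complex.

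For part (2), the restriction functor $\Res$ is computed by evaluating the stratification at the perfection map $\frakS(R^+)\to \Ainf(\widehat R^+_{C,\infty})$ (and its iterated self-products). Under this map, the variables $u_i$ and $v$ specialise: $u_i$ maps to an explicit element whose image modulo $E$ is $[\underline\varepsilon_i]-1$ in the perfection (essentially $\zeta_p-1$ up to the Fontaine parameter $\lambda$), and $v$ maps to the element encoding the Galois cocycle $c(g)$ (this is the standard computation relating the Breuil--Kisin variable to the Fontaine period $\pi$ and its Galois translates). Plugging these specialisations into the exponential formula for $\epsilon$ in the previous paragraph immediately yields the stated formulas $\gamma_i(x) = \exp(-(\zeta_p-1)\lambda\theta_i)(x)$ and $g(x) = (1+\pi E'(\pi)(\zeta_p-1)\lambda c(g))^{-\phi_H/E'(\pi)}(x)$.

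The main obstacle, I expect, is the explicit computation of the self-product $\frakS(R^+)^{(2)}$ as an absolute prism (not a prism over a fixed base) and the identification of the precise $R^+$-algebra structure of its quotient modulo $E$; in particular, getting the correct normalisations so that the $\theta_i$'s, the operator $\phi_H$, and the scalar $E'(\pi)$ appear with exactly the constants that match the Galois formula in (2). The secondary difficulty is verifying that the cocycle identity for $\epsilon$ on the triple self-product is equivalent, in the Taylor-expanded form, to the full set of axioms for $\HIG^{\nil}_*(R^+)$ (commuting Higgs fields, compatibility with the Sen operator, and nilpotence); once this is done, functoriality, tensor products, duals and rank preservation are formal, and inverting $p$ for the rational version is routine.
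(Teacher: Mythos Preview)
Your overall strategy matches the paper's: reduce to stratifications via \cite[Prop.~2.7]{BS21}, compute the reduced self-products explicitly, extract $(\theta_H,\phi_H)$ from the Taylor coefficients of $\varepsilon$, and for part (2) specialise the stratification variables under the map to the perfect cover. The paper does exactly this.

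There are, however, three places where your sketch is imprecise enough that it would likely cost you real time. First, the reduced self-product $\overline\calO_{\Prism}(\frakS(R^+)^{(2)})$ is not a ring in topologically nilpotent variables but a \emph{$p$-completed pd-polynomial ring} $R^+\{X_1,Y_{1,1},\dots,Y_{d,1}\}^{\wedge}_{\pd}$ in the variables $X_1=\frac{u_0-u_1}{E(u_0)}$ and $Y_{s,1}=\frac{T_{s,0}-T_{s,1}}{E(u_0)T_{s,0}}$; the divided-power structure is what makes the formulas below converge and is the correct setting for the Taylor expansion. Second, the arithmetic factor of $\varepsilon$ is not a plain exponential in $v$: the face map $p_0$ sends $X_j\mapsto (X_{j+1}-X_1)(1-E'(\pi)X_1)^{-1}$ (from Taylor-expanding $E(u_0)/E(u_1)$), and the cocycle condition then forces
\[
\varepsilon(x)=\exp\!\Big(\sum_{i=1}^d\theta_iY_{i,1}\Big)\,(1-E'(\pi)X_1)^{-\phi_H/E'(\pi)}(x),
\]
with $\phi_{k,\underline n}=\prod_i\theta_i^{n_i}\prod_{j=0}^{k-1}(\phi_H+jE'(\pi))$. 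The commutator relation $[\phi_H,\theta_i]=-E'(\pi)\theta_i$ and the convergence condition $\prod_{j=0}^{n-1}(\phi_H+jE'(\pi))\to 0$ drop out of this, not from low-order truncations of the cocycle identity. Third, the cohomology comparison is not literally a Koszul identification: the paper separates the arithmetic ($X$) and geometric ($Y$) directions into a \emph{bicosimplicial} object, applies Eilenberg--Zilber to recover the \v Cech--Alexander complex as its diagonal, uses Tian's result for the geometric direction and the $\calO_K$-case for the arithmetic direction, and only then lands on $\HIG(H,\theta_H,\phi_H)$. Your ``Koszul-type'' shortcut will not work directly because the $p_0$ face map couples $X$ and $Y$ through the factor $(1-E'(\pi)X_1)^{-1}$.

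For part (2) your idea is exactly right; the paper's computation gives $X_1(\gamma_1^{n_1}\cdots\gamma_d^{n_d}g)=-\pi(\zeta_p-1)\lambda\,c(g)$ and $Y_{s,1}(\gamma_1^{n_1}\cdots\gamma_d^{n_d}g)=-n_s(\zeta_p-1)\lambda$, and substituting into the $\varepsilon$-formula above yields the stated Galois action.
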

 \begin{rmk} \label{rem: BL work}
     Note that one can naturally assign to each Hodge--Tate crystal $\bM\in \Vect((R^+)_{\Prism},\overline \calO_{\Prism})$  a Hodge--Tate crystal, still denoted by $\bM$ for short, on the relative prismatic site $(R^+/(\frakS,(E)))_{\Prism}$ by considering the natural functor $(R^+/(\frakS,(E)))_{\Prism}\to(R^+)_{\Prism}$. One can apply \cite[Thm. 5.12]{Tia23} to the resulting $\bM$ and obtain a topologically nilpotent Higgs field over $R^+$, which turns out to be the underlying Higgs field $(H,\theta_H)$ of $\rho(\bM)$.
 \end{rmk}
 \begin{rmk}
     When $\calO_K = \rW(\kappa)$, the first part of Theorem \ref{Intro-HTasHiggs-Local} was also obtained by Bhatt--Lurie \cite{BL22b} by using the local splitting of the Hodge--Tate structure map ${\rm WCart}_{\frakX}^{\rm HT}\to \frakX$. Up to now, it is still a problem to achieve a global theory as their method needs a global “Frobenius endomorphism” (cf. \cite[\S 9]{BL22b}), which is a very restrictive condition. However, our Theorem \ref{Intro-MainTheorem} suggests it should be reasonable to ask whether the ``generic fiber" of the Hodge--Tate structure map ${\rm WCart}_{\frakX}^{\rm HT}[\frac{1}{p}]\to \frakX[\frac{1}{p}]$ admits a global splitting.  
 \end{rmk}

   Inspired by Theorem \ref{Intro-HTasHiggs-Local} (2), one can assign to each enhanced Higgs bundle $(\calH,\theta_{\calH},\phi_{\calH})$ over $X$ (which is necessary to be affine) a $G_K$-Higgs bundle over $X_C$ by letting $g\in G_K$ act via the cocycle $(1+\pi E'(\pi)(\zeta_p-1)\lambda c(g))^{-\frac{\phi_H}{E'(\pi)}}$ and hence get the global functor $\rF$ in Theorem \ref{Intro-MainTheorem}. To prove $\rF$ is fully faithful (cf. Proposition \ref{Prop-F-FullyFaithful}), we need to relate prismatic theory to classical Sen theory, which will be discussed in \S \ref{Sen-Intro}.
   
   Now since everything can be described explicitly, one can check directly that Theorem \ref{Intro-MainTheorem} holds true in the affine small case. In particular, Diagram (\ref{Intro-Diag-CommutativeDiagram}) is commutative in this case. To complete the proof of Theorem \ref{Intro-MainTheorem}, we need to check that for rational Hodge--Tate crystals, the local equivalence $\rho$ in Theorem \ref{Intro-HTasHiggs-Local} (1) is independent of the choice of charts on $R^+$. Since $\rF$ is fully faithful, it is enough to show $\rF\circ\rho$ is independent of the choice of charts. But this is obvious now as all functors except $\rho$ in Diagram (\ref{Intro-Diag-CommutativeDiagram}) are globally defined and Diagram (\ref{Intro-Diag-CommutativeDiagram}) is commutative in affine case.

\subsubsection{Sen operators}\label{Sen-Intro}

Finally, we explain how our work is related with Sen theory. Recall that for any semi-linear $C$-representation $V$ of $G_K$, by considering the cyclotomic extension, Sen constructed an endomorphism $\Theta_V$, which is called \emph{Sen operator}, defined over $K$ satisfying certain properties. His result was then generalised in several different settings, and we would like to list some relevant results in the following:
\begin{itemize}
 \item In \cite{BC16}, Berger and Colmez dealt with the Galois extensions of $K$ whose Galois group $G$ is a $p$-adic Lie group of arbitrary dimension. Using their method, for a modular curve $X$, Pan defined a geometric Sen operator on the ``local analytic'' part of the pro-\'etale structure sheaf $\OX$, and applied it to the Fontaine--Mazur conjecture \cite{Pan}.

 \item Still considering the cyclotomic extension, it was proved by Shimizu \cite{Shi} and Petrov \cite{Pet} that the Sen theory also holds true for the arithmetic families. More precisely, for any quasi-compact smooth rigid space $X$ over $K$ and any $G_K$-equivariant vector bundle $\calE$ over $X_{C,\et}$, they constructed an $\calO_{\calX}$-vector bundle over the ringed space $\calX = (X,\calO_{\calX}=\calO_X\otimes_KK_{\cyc})$ together with a Sen operator satisfying certain properties.
\end{itemize}

The constructions above are both compatible with that of Sen \cite{Sen}. The readers are referred to \S\ref{Sect-Sen operator} for further discussion.

  Now by Theorem \ref{Intro-Simpson} together with the result of Petrov (cf. Proposition \ref{Prop-Petrov}), we can assign to each  generalised representation $\calL$ over $X_{\proet}$ a Higgs bundle with a Sen operator $(\calH(\calL),\theta_{\calH(\calL)},\phi_{\calH(\calL)})$ over $\calX$. We also call $\phi_{\calH(\calL)}$ the \emph{Sen operator of $\calL$} and denote it by $\phi_{\calL}$. After base changing to $X_C$, this will give rise to an \emph{arithmetic Higgs bundle} over $X_C$ (cf. Definitions \ref{Dfn-ArithmeticHiggsBundle} and \ref{Dfn-ArithmeticHiggsBundle-II}). In other words, there is a functor
  \[
\rD:\Vect(X_{\proet},\OX)\to\HIG^{\rm arith}(X_C)
  \]
  which is closely related with the cyclotomic extension (cf. Corollary \ref{Cor-GReptoArith}). Before moving on, we raise the following question.

  \begin{question}\label{Intro-Question}
         Which arithmetic Higgs bundles come from  generalised
         representations under the functor $\rD$?
     \end{question}
In fact, it is not clear to us how to describe the essential image of the functor $\rD$ in general. When $\frakX = \Spf(\calO_K)$, Fontaine gave a complete answer to Question \ref{Intro-Question} after classifying all semi-linear $C$-representations of $G_K$ (cf. \cite[Thm. 2.14]{Fon}) by working with a certain algebraic group. But his method looks very difficult to be generalised to the higher dimensional case. We will give a partial answer to this question provided by the theory of Hodge--Tate crystals at the end of this subsection.

  Now we move back to investigate the relation between prismatic theory and Sen theory. Note that we have a local equivalence between rational Hodge--Tate crystals and enhanced Higgs bundles at the moment. The latter can give rise to arithmetic Higgs bundles as follows.
  
  \begin{exam}[Construction \ref{Construction-TwoFunctor} (1)]\label{Intro-ExamArith}
      Assume $(\calH,\theta_{\calH},\phi_{\calH})$ is an enhanced Higgs bundle over $X$. Then $(\calH\otimes_{\calO_{X}}\calO_{X_C},-(\zeta_p-1)\lambda\theta_{\calH},-\frac{\phi_{\calH}}{E'(\pi)})$ is an arithmetic Higgs bundle over $X_C$, where $\lambda$ is a certain unit in $\calO_C$. We denote the induced functor by $\rF_{\infty}:\HIG^{\nil}_*(X)\to \HIG^{\rm arith}(X_C)$.
  \end{exam}

  Given a rational Hodge--Tate crystal, we now have two ways to construct arithmetic Higgs bundles: The one is induced by the composites of the following functors
  \[\Vect((\frakX)_{\Prism},\overline \calO_{\Prism}[\frac{1}{p}])\xrightarrow{\Res}\Vect((\frakX)_{\Prism}^{\perf},\overline \calO_{\Prism}[\frac{1}{p}])\simeq\Vect(X_{\proet},\OX)\xrightarrow{\rD}\HIG^{\rm arith}(X_C).\]
  We call this the \emph{cyclotomic way} as the functor $\rD$ is defined via the decompletion theory corresponding to the cyclotomic extenion of $K$ (See \S\ref{SSec-cyclotomic Sen}). The other is induced by the composite of the following functors
  \[\Vect((\frakX)_{\Prism},\overline \calO_{\Prism}[\frac{1}{p}])\xrightarrow{\rho}\HIG_*^{\nil}(X)\xrightarrow{\rF_{\infty}}\HIG^{\rm arith}(X_C).\]
  We call this the \emph{Kummer way} as it indeed corresponds to the Kummer extension of $K$ (See \S\ref{SSec-Kummer Sen}). It turns out that the two constructions coincide with each other. Namely, we have the following result:

   \begin{thm}[Theorem \ref{Kummer Sen} and \ref{Prismatic-Sen}]\label{Intro-Sen}
       Let $\bM$ be a rational Hodge--Tate crystal on $(\frakX)_{\Prism}$, where $\frakX=\Spf(R^+)$ is small affine, with induced enhanced Higgs bundle $(\calH,\theta_{\calH},\phi_{\calH})$ and generalised representation $\calL$. Then the Sen operator $\phi_{\calL}$ of $\calL$ is exactly $-\frac{\phi_{\calH}}{E'(\pi)}$.
   \end{thm}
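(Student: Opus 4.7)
The assertion is local on $\frakX$, so I would first reduce to the small affine setting $\frakX = \Spf(R^+)$. There Theorem \ref{Intro-HTasHiggs-Local}(2) provides the explicit formula $g(x) = (1 + \pi E'(\pi)(\zeta_p - 1)\lambda c(g))^{-\phi_H/E'(\pi)}(x)$ for the $G_K$-action on $\Res(\bM) = H \otimes_R \widehat{R}_{C,\infty}$, while $\phi_{\calL}$ is the Sen operator of the underlying generalised representation in the sense of Petrov-Shimizu, extracted from the cyclotomic tower $K_{\cyc}/K$.

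The plan is to prove the equality in two steps, corresponding to the two labels \ref{Kummer Sen} and \ref{Prismatic-Sen} in the statement. First, I would compute a ``Kummer Sen operator'' by taking the logarithmic derivative of the cocycle formula at the identity: since the exponent expands as $-\frac{\phi_H}{E'(\pi)} \cdot \pi E'(\pi)(\zeta_p-1)\lambda c(g) + O(c(g)^2)$, the leading infinitesimal action of $g$ on $H$ is $-\phi_H \pi(\zeta_p-1)\lambda c(g)$. Normalizing by the Kummer period $\pi(\zeta_p-1)\lambda$ then identifies the Kummer Sen operator with $-\phi_H/E'(\pi)$ as an endomorphism of $H$.

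Second, I would argue that this Kummer Sen operator coincides with the cyclotomic Sen operator $\phi_{\calL}$. Both arise as directional derivatives of the $G_K$-action on a locally analytic decompletion of $\Res(\bM)$, using the Berger-Colmez style decompletion recalled in \S\ref{Sec-DLLZ}. Since $-\phi_H/E'(\pi)$ is already defined on the $R$-module $H$ and is $R$-linear and $\Gamma$-equivariant, the uniqueness of the Sen operator as a functorial invariant of the generalised representation would force the two derivatives to coincide. Via the Simpson correspondence of Theorem \ref{Intro-Simpson} this transports to give $\phi_{\calL} = -\phi_H/E'(\pi)$ on $\calH(\calL) \cong H$.

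The main obstacle is the period bookkeeping in the first step: one must verify that $\pi E'(\pi)(\zeta_p - 1)\lambda$ matches, up to a specific unit in $\calO_C$, the distinguished element of $A_{\inf}$ encoding the Kummer direction, so that after the logarithmic derivative and division by the appropriate period only $-1/E'(\pi)$ remains. This hinges on the normalization of $\lambda$ in terms of $t = \log[\epsilon]$ and the Breuil-Kisin relation $E(u) = \xi \cdot \text{unit}$, and is what cleanly produces $E'(\pi)$ in the denominator rather than an unspecified unit.
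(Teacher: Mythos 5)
Your first step is essentially the paper's: the directional derivative of the cocycle $g\mapsto (1+\pi E'(\pi)(\zeta_p-1)\lambda c(g))^{-\phi_H/E'(\pi)}$ in the $\tau$-direction is $\nabla_\tau = -\pi(\zeta_p-1)\lambda\,\phi_H$ on $H$, exactly as computed in the proof of Theorem \ref{Prismatic-Sen}. But note an internal inconsistency: dividing $-\pi(\zeta_p-1)\lambda\,\phi_H$ by the period $\pi(\zeta_p-1)\lambda$ yields $-\phi_H$, not $-\phi_H/E'(\pi)$. The correct normalization is by $p\,\theta(u\tilde\lambda'\mathfrak t)$ (Construction \ref{two operators}), and the whole point of the period bookkeeping you flag at the end is the identity $p\,\theta(u\tilde\lambda'\mathfrak t)=\pi E'(\pi)(\zeta_p-1)\lambda$, which is what manufactures the extra $E'(\pi)$ in the denominator. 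You correctly identify this as delicate, but the normalization as written in your second paragraph does not produce the claimed answer.

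The genuine gap is in your third step. You assert that the Kummer-direction derivative and the cyclotomic Sen operator $\phi_{\calL}$ must coincide ``by uniqueness of the Sen operator as a functorial invariant.'' This is not an argument: the cyclotomic Sen operator is by definition $\nabla_\gamma=\frac{\log g}{\log\chi(g)}$ acting on the cyclotomic decompletion $D_{\mathrm{Sen},K_{\cyc}}(W)$, whereas your operator is a normalization of $\nabla_\tau$ acting on the Kummer decompletion $D_{\mathrm{Sen},K_\infty}(W)=H\otimes_R R_{K_\infty}$; these are a priori different operators on different lattices inside $W^{\hat G\text{-la}}$, and the uniqueness statement in Proposition \ref{Prop-Petrov} only characterizes $\phi_{\calL}$ within the cyclotomic framework. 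The comparison is precisely the content of Theorem \ref{Kummer Sen}, whose proof requires a nontrivial input from the theory of locally analytic vectors (Berger--Colmez, Gao): the combination $\nabla_{\mathrm{Sen}}:=\frac{1}{\theta(u\tilde\lambda')}N_\nabla+\nabla_\gamma$ is \emph{linear} over $R_{\widehat K_{\cyc,\infty}}^{\hat G\text{-la}}$ (each summand separately is only a derivation), so it is determined by its restriction to either decompletion; since $\nabla_\gamma$ kills $D_{\mathrm{Sen},K_\infty}(W)$ and $N_\nabla$ kills $D_{\mathrm{Sen},K_{\cyc}}(W)$, the normalized Kummer operator and the cyclotomic Sen operator are both equal to $\nabla_{\mathrm{Sen}}$ after extension of scalars. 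Without this linearity lemma (and the decompletion statements of Proposition \ref{R-la} identifying $R_{\widehat K_{\cyc,\infty}}^{\tau\text{-la},\gamma=1}$ with $R_{K_\infty}$, so that $H\otimes_R R_{K_\infty}$ really is the Kummer decompletion), your step three does not go through.
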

   \begin{rmk}
       The above theorem is due to Hui Gao. When $R^+ = \calO_K$, the result was already proved in \cite{GMW-HT}. When $R^+ = \rW(\kappa)$ is absolutely unramified extension of $\Zp$, the result was also obtained in \cite[\S 3.8]{BL22a}.
   \end{rmk}

   We will prove Theorem \ref{Intro-Sen} by following the strategy in \cite{GMW-HT}; that is, we will apply the theory of locally analytic vectors of Berger--Colmez \cite{BC16} to compare Sen operators obtained from cyclotomic extension and Kummer extension. As a consequence, we also conclude the full faithfulness of $\rF:\HIG^{\nil}_*(X)\to \HIG_{G_K}(X_C) $ which will complete the proof of Theorem \ref{Intro-MainTheorem}.
   
   Let $\rF_{\cyc}$ denote the composition of functors $\HIG^{\nil}_*(X)\xrightarrow{\rF_S} \Vect(X_{\proet},\OX)\xrightarrow{\rD}\HIG^{\rm arith}(X_C)$. Then Theorem \ref{Intro-Sen} implies that there is an equivalence of functors $\rF_{\cyc}\simeq \rF_{\infty}:\HIG^{\nil}_*(X)\to \HIG^{\rm arith}(X_C)$. As an application, we get a partial answer to Question \ref{Intro-Question}.

 \begin{thm}[Corollary \ref{Cor-AnwserQuestion}]\label{Intro-Answer}

      If an arithmetic Higgs bundle is induced by an enhanced Higgs bundle under the functor $\rF_{\infty}$, then it comes from a generalised representation over $X_{\proet}$.
   \end{thm}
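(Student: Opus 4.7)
The statement is essentially a formal consequence of the global equivalence of functors $\rF_{\cyc}\simeq \rF_{\infty}$ asserted just before, so I would organize the proof as a short diagram chase combining Theorem~\ref{Intro-MainTheorem}, Theorem~\ref{Intro-EtaleRealisation} and the globalised form of Theorem~\ref{Intro-Sen}.

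First, suppose $(\calH_0,\theta_0,\phi_0)\in\HIG^{\rm arith}(X_C)$ lies in the essential image of $\rF_{\infty}$, so that there exists an enhanced Higgs bundle $(\calH,\theta_{\calH},\phi_{\calH})\in\HIG^{\nil}_*(X)$ with $\rF_{\infty}(\calH,\theta_{\calH},\phi_{\calH})\simeq (\calH_0,\theta_0,\phi_0)$. By Theorem~\ref{Intro-MainTheorem}, the inverse equivalence $\rho^{-1}$ produces a rational Hodge--Tate crystal $\bM\in \Vect((\frakX)_{\Prism},\overline\calO_{\Prism}[\frac{1}{p}])$ associated to $(\calH,\theta_{\calH},\phi_{\calH})$. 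Applying the functor $\Res$ followed by the equivalence of Theorem~\ref{Intro-EtaleRealisation} --- equivalently, applying the inverse Simpson functor $\rF_S$ read off from the commutative diagram (\ref{Intro-Diag-CommutativeDiagram}) --- I obtain a generalised representation $\calL\in\Vect(X_{\proet},\OX)$, which is the desired candidate.

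Next, I would apply the Sen functor $\rD$ to $\calL$ and check that the resulting arithmetic Higgs bundle is isomorphic to $(\calH_0,\theta_0,\phi_0)$. By the definition of $\rF_{\cyc}$ as the composition
\[
\HIG^{\nil}_*(X)\xrightarrow{\rho^{-1}}\Vect((\frakX)_{\Prism},\overline\calO_{\Prism}[\tfrac{1}{p}])\xrightarrow{\Res}\Vect(X_{\proet},\OX)\xrightarrow{\rD}\HIG^{\rm arith}(X_C),
\]
we have $\rD(\calL)=\rF_{\cyc}(\calH,\theta_{\calH},\phi_{\calH})$. The global equivalence of functors $\rF_{\cyc}\simeq\rF_{\infty}$ then gives $\rD(\calL)\simeq\rF_{\infty}(\calH,\theta_{\calH},\phi_{\calH})\simeq (\calH_0,\theta_0,\phi_0)$, which is precisely the statement.

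The real work is therefore concentrated in establishing the global equivalence $\rF_{\cyc}\simeq\rF_{\infty}$ rather than in the present corollary, and this will be the main obstacle. On small affine opens $\Spf(R^+)\subset\frakX$ the identification reduces to the local Theorem~\ref{Intro-Sen}, whose proof goes through the locally analytic vectors machinery of \cite{BC16} to compare the cyclotomic and Kummer Sen operators; once this is granted, the fact that both $\rF_{\cyc}$ and $\rF_{\infty}$ are globally defined via the globally defined functors of (\ref{Intro-Diag-CommutativeDiagram}) (in particular $\rF_S=\Res\circ\rho^{-1}$, which was shown to be independent of framings in the proof of Theorem~\ref{Intro-MainTheorem}) allows the local isomorphism to be upgraded to a global natural isomorphism by covering $\frakX$ by small affines and appealing to functoriality in Theorem~\ref{Intro-Simpson}(4). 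With this in place, the corollary itself is a one-line diagram chase.
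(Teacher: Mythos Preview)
Your proposal is correct and follows essentially the same argument as the paper: start from an enhanced Higgs bundle, pass via $\rho^{-1}$ to a rational Hodge--Tate crystal, apply $\Res$ and the \'etale comparison to produce $\calL$, and then invoke the equivalence $\rF_{\cyc}\simeq\rF_{\infty}$ (Theorem~\ref{Thm-SenOperator}) to identify $\rD(\calL)$ with the original arithmetic Higgs bundle. You also correctly locate the substantive input in the local-to-global comparison of Sen operators, which the paper packages as Theorem~\ref{Thm-SenOperator}.
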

   
  In summary, we have the following commutative diagram. (Recall $\rF_{\cyc}= \rD\circ\rF_S$.) 
   \begin{equation}\label{Diag-WholeDiagram}
    \xymatrix@C=0.45cm{
    \Vect((\frakX)_{\Prism},\overline \calO_{\Prism}[\frac{1}{p}])\ar[rrrr]^{\rho}_{\rm{Th.}\ref{Thm-HTasHiggs-Global}}\ar[d]^{\Res}&&&&\HIG^{\nil}_*(X)\ar[d]^{{\rm Const.}\ref{Construction-F}}_{\rF}\ar[lld]_{\rF_{S}}\ar[rrdd]_{\rF_{\infty}}^{\rm{Const.}\ref{Construction-TwoFunctor}}\\
         \Vect((\frakX)_{\Prism}^{\perf},\overline \calO_{\Prism}[\frac{1}{p}])\ar[rr]^{\simeq}_{\rm{Th.}\ref{Thm-EtaleRealisation}}
         &&\Vect(X_{\proet},\OX)\ar[rr]^{\simeq}_{\rm{Th.}\ref{Thm-Simpson}}\ar[rrrrd]_{\rD}^{{\rm Cor.}\ref{Cor-GReptoArith}}&&\HIG_{G_K}(X_C)\\
    &&&&&&\HIG^{\rm arith}(X_C).
    }
   \end{equation}
   
\subsection{Organization}
  We review the decompletion theory of \cite{DLLZ} in \S \ref{Sec-DLLZ} and use it to establish the $p$-adic Simpson correspondence (i.e. Theorem \ref{Intro-Simpson}) in \S \ref{Sec-Simpson}. In \S \ref{Sec-LocalConstruction}, we construct the equivalence $\rho$ in Theorem \ref{Intro-MainTheorem} in the small affine case. In \S \ref{Sec-EtaleComparison}, we prove Theorem \ref{Intro-EtaleRealisation}. The \S \ref{Sec-Global} is devoted to proving Theorem \ref{Intro-MainTheorem} by leaving the full faithfulness of $\rF$ to \S \ref{Sect-Sen operator}. Finally, we show that $\rF$ is fully faithful by using Sen theory in \S \ref{Sect-Sen operator} and prove Theorem \ref{Intro-Answer} as an application.

\subsection{Acknowledgement}
   The authors would like to thank Bhargav Bhatt, Hui Gao, Ben Heuer, Ruochuan Liu and Takeshi Tsuji for their interest and valuable comments on the early draft of this paper. The authors also want to thank Hui Gao for sharing his result (i.e. Theorem \ref{Intro-Sen}) in this paper. The work was finished when the authors stayed in Morningside Center of Mathematics as postdocs, and they thank the institute for providing the opportunity of cooperation. The authors would like to thank the anonymous referee for the careful reading and useful feedbacks.
   Yu Min was supported by China Postdoctoral Science Foundation E1900503. 
   Yupeng Wang is partially supported by CAS Project for Young Scientists in Basic Research, Grant No. YSBR-032.

\subsection{Conventions and Notations}\label{Intro-Notations}
  Let $K$ be a complete discretely valued field of mixed characteristic $(0,p)$ with ring of integers $\calO_K$ and perfect residue field $\kappa$. We fix an algebraic closure $\overline K$ of $K$ and let $C$ denote the $p$-adic completion of $\overline K$. For any Galois extension $L/K$ in $\overline K$, we denote by $\Gal(L/K)$ the corresponding Galois group. We fix a uniformizer $\pi\in \calO_K$ and denote its minimal polynoimal over $\rW(\kappa)$ by $E(u)$. Then $E(u)\in\frakS:=\rW(\kappa)[[u]]$.
  
  Let $\{\zeta_{p^n}\}_{n\geq 0}$ (resp. $\{\pi^{\frac{1}{p^n}}\}_{n\geq 0}$) be a compatible sequence $\{\zeta_{p^n}\}_{n\geq 0}$ of primitive $p^n$-th units (resp. of $p^n$-th roots of $\pi$) in $\overline K$ and let $\epsilon = (1,\zeta_p,\dots)$ (resp. $\pi^{\flat} = (\pi,\pi^{\frac{1}{p}},\dots)$) be the induced element in $C^{\flat}$, the tilting of $C$. Let $K_{\cyc}:=\cup_{n\geq 0}K(\zeta_{p^n})$ (resp. $K_{\infty} = \cup_{n\geq 0}K(\pi^{\frac{1}{p^n}})$, resp. $K_{\cyc,\infty} = K_{\cyc}K_{\infty}$) with the $p$-adic completion $\widehat K_{\cyc}$ (resp. $\widehat K_{\infty}$, resp. $\widehat K_{\cyc,\infty}$) in $C$. Then $\widehat K_{\cyc}$, $\widehat K_{\infty}$ and $\widehat K_{\cyc,\infty}$ are all perfectoid fields. Let $\chi:G_K\to \Gal(K_{\cyc}/K)\to \bZ_p^{\times}$ be the $p$-adic cyclotomic character and let $c:G_K\to \Gal(K_{\cyc,\infty}/K)\to \Zp$ be the function such that for any $g\in G_K$, $g(\pi^{\flat}) = \epsilon^{c(g)}\pi^{\flat}$. Let $t = \log [\epsilon]$ be the $p$-adic analogue of ``$2\pi i$''.

  Put $\Ainf = \rW(\calO_C^{\flat})$ and $\xi = \frac{[\epsilon]-1}{[\epsilon]^{\frac{1}{p}}-1}$. Then $\xi$ is a generator of Fontaine's map $\theta:\Ainf\to \calO_C$ and $(\Ainf,(\xi))$ is a prism. We equip $\frakS$ with a $\delta$-structure such that the induced Frobenius on $\frakS$ carries $u$ to $u^p$. Then $(\frakS,(E))$ is also a prism and the morphism $\frakS\to\Ainf$ sending $u$ to $[\pi^{\flat}]$ defines a morphism $(\frakS,(E))\to(\Ainf,(\xi))$ of prisms in $(\calO_K)_{\Prism}$.
  
  \begin{convention}\label{Convention-LinearIndependence}
     In this paper, we always assume $K_{\cyc}\cap K_{\infty} = K$. This is always the case for $p\geq 3$ and for $p = 2$ when $K = K(\zeta_4)$ (cf. \cite[Lem. 5.1.2]{Liu08}). In general, for $p=2$, the assumption is satisfied for a suitable choice of $\pi$ (cf. \cite[Lem. 2.1]{WangX}). Under this assumption, we have 
     \[\Gal(K_{\cyc,\infty}/K) \cong \Zp\tau\rtimes \Gal(K_{\cyc}/K),\]
     where $\tau\in \Gal(K_{\cyc,\infty}/K_{\cyc})$ such that $\tau(\pi^{\flat}) = \epsilon\pi^{\flat}$ (i.e. $c(\tau) = 1$). Then for any 
     \[\gamma\in \Gal(K_{\cyc,\infty}/K_{\infty})\cong \Gal(K_{\cyc}/K),\]
     we have $\gamma\tau\gamma^{-1} = \tau^{\chi(\gamma)}$. Moreover, one can identify $\Gal(K_{\cyc,\infty}/K)$ with an open subgroup of 
     $\{\smat{a & b \\ c & d}\in \GL_2(\Zp)\mid c= 0, d=1\}$ 
     via the homomorphism 
     \[X:\Gal(K_{\cyc,\infty}/K)\to \GL_2(\Zp)\]
     sending each $ g\in \Gal(K_{\cyc,\infty}/K)$ to $X(g) = \smat{\chi(g) & c(g) \\ 0 & 1}$.
  \end{convention}

  Let $Y,Y_1,\dots,Y_d$ be free variables. For any $n\geq 0$, we define
  \[\binom{Y}{n}:=\frac{Y(Y-1)\cdots (Y-n+1)}{n!}\]
  and
  \[Y^{[n]} := \frac{Y^{n}}{n!}.\]
  For any $\underline n = (n_1,\dots,n_d)\in\bN^d$, we then define 
  \[\binom{\underline Y}{\underline n} = \prod_{i=1}^d\binom{Y_i}{n_i}\]
  and
  \[\underline Y^{\underline n} = \prod_{i=1}^dY_i^{[n_i]}.\]
  Put $\underline 0 = (0,0,\dots,0)\in\bZ^d$ and for any $1\leq i\leq d$, put $\underline 1_i= (0,\dots,1,\dots,0)$ with $1$ appearing exactly in the $i$-th component. For any $\underline m = (m_1,\dots,m_d), \underline n = (n_1,\dots,n_d)\in\bN^d$, define
  \[\binom{\underline m+\underline n}{\underline n} = \prod_{i=1}^d\binom{m_i+n_i}{n_i}\]
  and 
  \[|\underline n| = n_1+n_2+\cdots+n_d.\]

  We will also use the language of stratifications to study certain vector bundles on sites.
  \begin{convention}\label{Convention-Strat}
     Let $S^{\bullet}$ be a cosimplicial ring with face maps $p_i^n$'s and degeneracy maps $\sigma_i^n$'s; that is, $p_i^n:S^n\to S^{n+1}$ is induced by the injection $[n]\to [n+1]\setminus\{i\}$ and that $\sigma_i^n:S^{n+1}\to S^n$ is induced by the surjection $[n+1]\to[n]$ satisfying $(\sigma^{n}_i)^{-1}(i) = \{i,i+1\}$. Let $q^n_i:S^0\to S^n$ be the morphism induced by the inclusion $[0]\xrightarrow{0\mapsto i}[n]$ of simplices.
     We often write $p_i$ (resp. $\sigma_i$, resp. $q_i$) instead of $p_i^n$ (resp. $\sigma_i^n$, resp. $q_i^n$) for short when its source and target are clear. 
     
     By a \emph{stratification} with respect to $S^{\bullet}$, we mean a finite projective $S^0$-module together with an $S^1$-linear isomorphism \[\varepsilon:M\otimes_{S^0,p_0}S^1\xrightarrow{\cong}M\otimes_{S^0,p_1}S^1.\]
     Sometimes, we also say $\varepsilon$ is a stratification on $M$.
     We say a stratification $(M,\varepsilon)$ satisfies the \emph{cocycle condition}, if the following assertions are true:
     \begin{enumerate}
         \item $p_2^*(\varepsilon)\circ p_0^*(\varepsilon) = p_1^*(\varepsilon):M\otimes_{S^0,q_2}S^2\to M\otimes_{S^0,q_0}S^2$.
         
         \item $\sigma_0^*(\varepsilon) = \id_M:M\to M$.
     \end{enumerate}
     We denote by $\Strat(S^{\bullet})$ the category of stratifications with respect to $S^{\bullet}$ satisfying the cocycle condition.
  \end{convention}
  
  We also consider representations of several topological groups over certain topological rings.
  
  \begin{convention}\label{Convention-TopoRep}
    Let $R$ be a topological ring with a continuous action of a topological group $G$. By a \emph{representation of $G$ over $R$ of rank $l$}, we mean a finite projective $R$-module $M$ of rank $l$ together with a semi-linear continuous action of $G$; that is, for any $r\in R$, $m\in M$ and $g\in G$,
    \[g(rm) = g(r)g(m).\]
    Sometimes, we also call $M$ an \emph{$R$-representation of $G$ of rank $l$}.
    We say an $R$-representation $M$ of $G$ is \emph{free}, if $M$ is finite free over $R$.
    We denote by $\Rep_G(R)$ the category of representations of $G$ over $R$ and denote by $\Rep_G^{\rm free}(R)$ the full subcategory of free representations.
  \end{convention}

  Assume $G$ acts on $R$ trivially and that $M$ is an topological $R$ module equipped with a continuous action of $G$. We denote by $\rR\Gamma(G,M)$ the continuous group cohomology of $M$, which is computed by the complex $C(G^{\bullet},M)$. 
  Here and from now on, we always denote by $C(G,M)$ the group of continuous functions from $G$ to $M$.
  For any closed normal subgroup $H<G$ with quotient group $G/H$, in general, there is no Hochschild spectral sequence for $\rR\Gamma(G,M)$. However, 
  if $G\to G/H$ admits a continuous cross-section (which is not necessary an homomorphism), then Hochschild--Serre spectral sequence holds true (See the proof of \cite[Lem 3.3]{Ked16}, \cite[Chap. V (3.2), pp. 193]{Laz} for more details). In this paper, we always deal with the case for $H$ open or $G\cong H\rtimes G/H$ and hence can use Hochschild--Serre spectral sequence freely.
  
  Let $X = \Spa(R,R^+)$ be a smooth affinoid space of dimension $d$ over $K$. By a \emph{toric chart} on $X$, we mean an \'etale morphism 
  \[\Box: X\to\bG_m^d:=\Spa(K\za T_1^{\pm 1},\dots,T_d^{\pm 1}\ya,\calO_K\za T_1^{\pm 1},\dots,T_d^{\pm 1}\ya).\]
  Note that toric charts always exists \'etale locally on $X$.
  Let $\frakX = \Spf(R)$ be a smooth\footnote{In this paper, smooth $p$-adic formal schemes are always assumed to be separated and of topologically finite type.} $p$-adic formal scheme of dimension $d$ over $\calO_K$. We say it is \emph{small}, if there exists a \emph{framing} on $\frakX$; that is, an \'etale morphism
  \[\Box: \frakX\to\Spf(\calO_K\za T_1^{\pm 1},\dots,T_d^{\pm 1}\ya).\]
  Note that in this case, $\Box$ induces a toric chart on the rigid generic fibre of $\frakX$.
  Clearly, framings always exist \'etale locally on $\frakX$. However, it is worth pointing out that framings even exist Zariski locally on $\frakX$ by \cite[Lem. 4.9]{Bha16}.
  By abuse of notations, we still denote by $\Box$ the induced morphism on the rings of coordinates.
  
  We call an \'etale morphism of rigid spaces \emph{standard \'etale}, if it is a composition of rational localisations and finite \'etale morphisms.
  
  \begin{convention}\label{Convention-Crystal}
      Let $\calC$ be a site and $\bA$ be a sheaf of rings on $\calC$. By an \emph{$\bA$-crystal} on $\calC$, we mean a sheaf $\bM$ of $\bA$-modules such that 
      \begin{enumerate}
          \item For any object $C\in\calC$, the $\bM(C)$ is a a finite projective $\bA(C)$-module of rank $r$.

          \item For any arrow $C_1\to C_2$ in $\calC$, it induces a canonical isomorphism 
          \[\bM(C_2)\otimes_{\bA(C_2)}\bA(C_1)\xrightarrow{\cong}\bM(C_1)\]
          of $\bA(C_1)$-modules.
      \end{enumerate}
      We denote by $\Vect(\calC,\bA)$ the category of $\bA$-crystals on $\calC$.
  \end{convention}
  \begin{exam}\label{Exam-GRep}
      For any rigid analytic variety $X$ over $K$, let $X_{\proet}$ be the pro-\'etale site of $X$ in the sense of \cite{Sch-Pi}. In particular, $X_{\proet}$ has affinoid perfectoid spaces as a basis for the topology (cf. \cite[Prop. 4.8]{Sch-Pi}). Let $X_{\proet}^{{\rm aff},\perf}$ be the full subcategory of $X_{\proet}$ consisting of affinoid perfectoid spaces. By \cite[Thm. 3.5.8]{KL16} and $v$-descent (and hence pro-\'etale descent) for vector bundles \cite[Lem. 17.1.8]{SW}, an $\widehat \calO_X$-crystal on $X_{\proet}^{{\rm aff},\perf}$ is exactly a \emph{generalised representation} (i.e. a locally finite free $\widehat \calO_X$-module) on $X_{\proet}$. For this reason, by abuse of notations, we denote by $\Vect(X_{\proet},\widehat \calO_X)$ the category of generalised representation on $X_{\proet}$. When $X = \Spa(K,\calO_K)$, this is exactly the category of finite dimensional continuous $C$-representations of $G_K$.
  \end{exam}
  The main object we will study in this paper is (rational) Hodge--Tate crystals.
  \begin{dfn}\label{Dfn-HT Crystal}
      Let $\frakX$ be a smooth $p$-adic  formal scheme over $\calO_K$. A \emph{Hodge--Tate crystal} (resp. a rational Hodge--Tate crystal) is an $\overline \calO_{\Prism}$-crystal (resp. $\overline \calO_{\Prism}[\frac{1}{p}]$) on $(\frakX)_{\Prism}$. The category of Hodge--Tate crystals (resp. rational Hodge--Tate crystals) on $(\frakX)_{\Prism}$ is denoted by $\Vect((\frakX)_{\Prism},\overline \calO_{\Prism})$ (resp. $\Vect((\frakX)_{\Prism},\overline \calO_{\Prism}[\frac{1}{p}]$). We will also consider the rational Hodge--Tate crystals (i.e. $\overline \calO_{\Prism}[\frac{1}{p}]$-crystals) on the sub-site of perfect prism $(\frakX)_{\Prism}^{\perf}$ and denote the corresponding category by $\Vect((\frakX)_{\Prism}^{\perf},\overline \calO_{\Prism}[\frac{1}{p}])$.

      When $\frakX = \Spf(R^+)$ is small affine, we also denote $\Vect((\frakX)_{\Prism},\overline \calO_{\Prism})$ by $\Vect((R^+)_{\Prism},\overline \calO_{\Prism})$, and similarly define $\Vect((R^+)_{\Prism},\overline \calO_{\Prism}[\frac{1}{p}])$ and $\Vect((R^+)_{\Prism}^{\perf},\overline \calO_{\Prism}[\frac{1}{p}])$.
  \end{dfn}

  \section{Recollection of the decompletion theory of Diao--Lan--Liu--Zhu}\label{Sec-DLLZ}
 In this section, we recall the decompletion theory formulated in \cite{DLLZ} and provide some examples, which will be used in the next sections. 
 
 \subsection{Decompletion systems in \cite[Appendix A]{DLLZ}}
 We give a quick review of decompletion theory developed in \cite[Appendix A]{DLLZ} in this subsection.
 
 Let $\{A_i\}_{i\in I}$ be a direct system of topological rings with a small filtered index category $I$, the $\widehat A_{\infty}$ be a complete topological ring with compatible homomorphisms $A_i\to \widehat A_{\infty}$ such that the induced map $\varinjlim_iA_i\to \widehat A_{\infty}$ has dense image, and $\Gamma$ be a topological group acting continuously and compatibly on $A_i$'s and $\widehat A_{\infty}$.
 
 \begin{dfn}[\emph{\cite[Def. A.1.2]{DLLZ}}]\label{Decompletion system}
   We call the triple $(\{A_i\}_{i\in I},\widehat A_{\infty},\Gamma)$ a \emph{decompletion system} (resp. \emph{weak decompletion system}) if the following two conditions hold:
   
   \begin{enumerate}
       \item For any (resp. free) $\widehat A_{\infty}$-representation $L_{\infty}$ of $\Gamma$, there exists some $i\in I$, some (resp. free) $A_i$-representation of $\Gamma$, and some $\Gamma$-equivariant continuous $A_i$-linear morphism $\iota_i: L_i\to L_{\infty}$ which induces an isomorphism
       $L_i\otimes_{A_i}\widehat A_{\infty}\xrightarrow{\iota_i\otimes\id}L_{\infty}$
       of representations of $\Gamma$ over $\widehat A_{\infty}$. We call such a pair $(L_i,\iota_i)$ a \emph{model} of $L_{\infty}$ over $A_i$.
       
       \item For each model $L_i$ over $A_i$, there exists some $i_0\geq i$ such that for any $i'\geq i_0$, the model $(L_{i'},\iota_{i'}):=(L_i\otimes_{A_i}A_{i'},\iota_i\otimes\id)$ is \emph{good} in the sense that $\iota_{i'}$ induces a quasi-isomorphism $\rR\Gamma(\Gamma,L_{i'})\to\rR\Gamma(\Gamma,L_{\infty})$.
   \end{enumerate}
 \end{dfn}
 \begin{rmk}[\emph{\cite[Rem. A.1.3]{DLLZ}}]\label{Rmk-Decompletion}
    If $(\{A_i\}_{i\in I},\widehat A_{\infty},\Gamma)$ is a (weak) decompletion system, then the natural functor $\varinjlim_i\Rep_{\Gamma}(A_i)\to\Rep_{\Gamma}(\widehat A_{\infty})$ (resp. $\varinjlim_i\Rep^{\rm free}_{\Gamma}(A_i)\to\Rep^{\rm free}_{\Gamma}(\widehat A_{\infty})$) is an equivalence and for any two models $(L_{i,1},\iota_{i,1})$ and $(L_{i,2},\iota_{i,2})$ of $L_{\infty}$ over $A_i$, there exists some $i'\geq i$ such that $(L_{i,1}\otimes_{A_i}A_{i'},\iota_{i,1}\otimes\id)\cong(L_{i,2}\otimes_{A_i}A_{i'},\iota_{i,2}\otimes\id)$.
 \end{rmk}
 
 The main results in \cite[Appendix A]{DLLZ} give some sufficient conditions to clarify whether a triple $(\{A_i\}_{i\in I},\widehat A_{\infty},\Gamma)$ is a (weak) decompletion system. Recall a complex $(C^{\bullet},\rd)$ of Banach modules over a Banach ring $A$ is called \emph{uniformly strict exact} with respect to some $c\geq 0$, if for any cocycle $f\in C^s$, there exists a cochain $g\in C^{s-1}$ satisfying $|g|\leq c|f|$ such that $f = \rd(g)$.
 
 \begin{dfn}[\emph{\cite[Def. A.1.6]{DLLZ}}]\label{WeakDecompletingSystem}
   Let $(\{A_i\}_{i\in I},\widehat A_{\infty},\Gamma)$ be a triple as above. Assume that $A_i\to \widehat A_{\infty}$ are closed embeddings and that $\Gamma$ is profinite. We say $(\{A_i\}_{i\in I},\widehat A_{\infty},\Gamma)$ is \emph{weakly decompleting} if there exists a norm $|\cdot|$ on $\widehat A_{\infty}$ making it a Banach ring (and hence making $A_i$'s Banach subrings) and an inverse system $\{\Gamma_i\}_{i\in I}$ of closed normal subgroup converging to $1$ such that the projection $\Gamma\to\Gamma/\Gamma_i$ admits a continuous cross-section (which is not necessary a homomorphism) for each $i\in I$, satisfying the following conditions:
   
   \begin{enumerate}
       \item The action of $\Gamma$ on $\widehat A_{\infty}$ is isometric.
       
       \item For each $i$, the projection $\widehat A_{\infty}\to \widehat A_{\infty}/A_i$ admits an isometric section of Banach $A_i$-modules.
       
       \item There exists some $c>0$ such that the complex $C(\Gamma_i^{\bullet},\widehat A_{\infty}/A_i)$ is uniformly strict exact with respect to $c$, where we equip $C(\Gamma_i^s,\widehat A_{\infty}/A_i)$ with the supreme norm $|f|:=\sup_{\gamma\in\Gamma_i^s}|f(\gamma)|$ for each $s$.
   \end{enumerate}
 \end{dfn}
 Then the first main result in \cite[Appendix A]{DLLZ} is 
 \begin{thm}[\emph{\cite[Thm. A.1.8]{DLLZ}}]\label{Thm-DLLZ-I}
   A weakly decompleting triple is a weak decompletion system.
 \end{thm}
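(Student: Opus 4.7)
The plan is to construct a model $(L_j, \iota_j)$ for each free $\widehat A_\infty$-representation $L_\infty$ of $\Gamma$ by a Sen-style decompletion, and then verify goodness via Hochschild--Serre. Fixing a basis of $L_\infty$, the $\Gamma$-action is encoded by a continuous cocycle $U\colon \Gamma \to \GL_n(\widehat A_\infty)$. Since $\Gamma_i \to 1$ and $U$ is continuous, for $i$ sufficiently large $U(\Gamma_i)$ lies in a small neighborhood of the identity. The goal is to change basis so that $U|_{\Gamma_i}$ takes values in $\GL_n(A_i)$, after which we descend the resulting $\Gamma_i$-stable $A_i$-lattice through the finite quotient $\Gamma/\Gamma_i$.

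First I would carry out the iterative descent. Using the isometric section of $\widehat A_\infty \twoheadrightarrow \widehat A_\infty/A_i$ from Definition \ref{WeakDecompletingSystem}(2), decompose $U|_{\Gamma_i} = U^{(i)} + V^{(i)}$ with $U^{(i)}$ valued in $M_n(A_i)$ and $V^{(i)}$ valued in $M_n(\widehat A_\infty/A_i)$. When $|U - 1|$ is small, the cocycle identity for $U$ reduces modulo higher-order terms to the statement that $V^{(i)}$ is an approximate $1$-cocycle on $\Gamma_i$ valued in $M_n(\widehat A_\infty/A_i)$. By uniform strict exactness (Definition \ref{WeakDecompletingSystem}(3)), this is a coboundary: there exists $B \in M_n(\widehat A_\infty/A_i)$ with $|B| \leq c\,|V^{(i)}|$ whose lift (via the isometric section) conjugates $U$ into a new cocycle whose off-diagonal part has strictly smaller norm by a definite factor. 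Because the $\Gamma$-action is isometric and $c$ is a uniform constant, iterating this construction is a contraction in the Banach metric on $M_n(\widehat A_\infty)$ and converges to a basis change $P \in \GL_n(\widehat A_\infty)$ in which $\Gamma_i$ acts through $\GL_n(A_i)$. The resulting $A_i$-submodule $L_i^{\Gamma_i} := P \cdot A_i^n$ is $\Gamma_i$-stable, finite free over $A_i$, and satisfies $L_i^{\Gamma_i} \otimes_{A_i} \widehat A_\infty \cong L_\infty$.

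Next I would promote this to a $\Gamma$-stable model. The finite quotient $\Gamma/\Gamma_i$ acts on $L_i^{\Gamma_i}$ semi-linearly over the residual $\Gamma/\Gamma_i$-action on $A_i$, using the continuous cross-section of $\Gamma \to \Gamma/\Gamma_i$ to lift group elements. Enlarging $i$ to some $j \geq i$ in the filtered system (using the filtered colimit structure of $I$ and the semi-linearity), we obtain a $\Gamma$-equivariant $A_j$-model $L_j := L_i^{\Gamma_i} \otimes_{A_i} A_j$ with the required isomorphism $\iota_j$. For goodness, I would apply Hochschild--Serre to the open normal subgroup $\Gamma_i \subset \Gamma$ (which exists in the form of a spectral sequence since the cross-section is available): it suffices to show $\rR\Gamma(\Gamma_i, L_\infty/L_j) = 0$, and since $L_j$ is finite free and $\Gamma_i$ acts through $\GL_n(A_j)$ on $L_j$, this reduces after tensoring with a basis to the vanishing of $\rR\Gamma(\Gamma_i, \widehat A_\infty/A_j)$, which is exactly the content of the uniform strict exactness hypothesis.

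The main obstacle I expect is the convergence of the iteration in the second paragraph. One must arrange that the successive approximate cocycles $V^{(i)}_k$ satisfy $|V^{(i)}_{k+1}| \leq \alpha\,|V^{(i)}_k|$ for some $\alpha < 1$ independent of $k$, which requires the uniform constant $c$ from Definition \ref{WeakDecompletingSystem}(3) and the initial choice of $i$ to be made carefully relative to $c$ and the norm of $U - 1$ on $\Gamma_i$. The isometry of the $\Gamma$-action is what keeps the norms of the corrected cocycles controlled through the infinite product of basis changes, ensuring that the limiting $P$ lies in $\GL_n(\widehat A_\infty)$ rather than only in $M_n(\widehat A_\infty)$.
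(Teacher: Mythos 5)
This theorem is quoted verbatim from \cite[Thm.\ A.1.8]{DLLZ}; the paper does not reprove it, so there is no in-paper argument to compare against. Your strategy --- successive approximation of the cocycle using the isometric splitting of $\widehat A_\infty$ over $A_i$ together with the uniform strict exactness, followed by a Hochschild--Serre computation for goodness --- is exactly the Tate--Sen-style argument of the cited proof, and the division of labour among hypotheses (1)--(3) of Definition \ref{WeakDecompletingSystem} is correct.

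Three steps are glossed over and would need to be filled in. First, $\Gamma/\Gamma_i$ is not assumed finite: Definition \ref{WeakDecompletingSystem} only provides a continuous cross-section of $\Gamma\to\Gamma/\Gamma_i$ (the quotient happens to be finite in the examples of this paper, but not in the general statement), so "descend through the finite quotient" is not available as stated. Second, the contraction produces an $A_i$-lattice $P\cdot A_i^n$ stable under $\Gamma_i$ only; its stability under all of $\Gamma$ is not automatic. One must use that $\Gamma_i$ is normal, so that for $g\in\Gamma$ the translate $g(P\cdot A_i^n)$ is another $\Gamma_i$-stable $A_i$-model inside $L_\infty$, then prove a uniqueness-of-models statement (by the same approximation technique applied to the intertwining matrix, using strict exactness in degree $0$) and use compactness of $\Gamma$ to find a single $j\geq i$ over which all translates coincide. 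Third, in the goodness step the coefficient module is $L_j\otimes_{A_j}(\widehat A_\infty/A_j)$ with $\Gamma_i$ acting through a nontrivial $\GL_n(A_j)$-valued cocycle, so it does not literally reduce "after tensoring with a basis" to copies of $\widehat A_\infty/A_j$; one needs the perturbation argument that uniform strict exactness with constant $c$ survives twisting the differential by a cocycle sufficiently close to $1$, which is also why the definition only demands goodness for $i'\geq i_0\gg i$. None of this invalidates your approach --- these points are precisely the technical content of the proof in \cite{DLLZ}.
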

 Recall a Banach ring $A$ is \emph{stably uniform} if it is either the underlying ring of a stably uniform Huber pair in the sense of \cite[Def. 5.2.4]{SW} or a stably uniform adic Banach ring in the sense of \cite[Rem. 2.8.5]{KL15} over a nonarchimedean field.
 \begin{dfn}[\emph{\cite[Def. A.1.9]{DLLZ}}]\label{StablyDecompletingSystem}
   Let $(\{A_i\}_{i\in I},\widehat A_{\infty},\Gamma)$ be a triple as above. Assume that $A_i\to \widehat A_{\infty}$ are closed embeddings and that $\Gamma$ is profinite. We say $(\{A_i\}_{i\in I},\widehat A_{\infty},\Gamma)$ is \emph{stably decompleting} if the following conditions are true:
   
   \begin{enumerate}
       \item $A_i$'s and $\widehat A_{\infty}$ are stably uniform over a nonarchimedean field.
       
       \item Each rational subset $U$ of $\Spa(A_i,A_i^{\circ})$ is stablized by some open normal subgroup $\Gamma_U$ of $\Gamma$; and the pull-back of $(\{A_j\}_{j\geq i},\widehat A_{\infty},\Gamma_U)$ to each such $U$ is weakly decompleting.
   \end{enumerate}
 \end{dfn}
 Then the second main result in \cite[Appendix]{DLLZ} is 
 \begin{thm}[\emph{\cite[Thm. A.1.10]{DLLZ}}]\label{Thm-DLLZ-II}
   A stably decompleting triple is a decompletion system.
 \end{thm}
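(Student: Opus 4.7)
\medskip

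\noindent\textbf{Proof plan.} The difference between a decompletion system and a weak one is that the former handles all finite projective representations whereas Theorem~\ref{Thm-DLLZ-I} only yields models for \emph{free} representations. The plan is therefore to bootstrap from free to finite projective by localising on $\Spa(A_i, A_i^\circ)$ using the stably uniform hypothesis, and then to glue the local free models equivariantly. First I would unwind what stable uniformity buys us: any finite projective $\widehat A_{\infty}$-module $L_{\infty}$ corresponds to a vector bundle on $\Spa(\widehat A_{\infty}, \widehat A_{\infty}^{\circ})$, and (by the Kiehl/Kedlaya--Liu theory for stably uniform adic Banach rings) such vector bundles are trivialisable on a finite rational cover. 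Pulling such a trivialising cover back along $\Spa(\widehat A_{\infty}, \widehat A_{\infty}^{\circ})\to \Spa(A_i, A_i^\circ)$ for some $i\in I$, I would obtain a finite rational cover $\calU=\{U_k\}$ of $\Spa(A_i,A_i^\circ)$ such that $L_{\infty}\hatotimes_{A_i}\calO(U_k)\hatotimes_{A_i}\widehat A_\infty$ is free.

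Next, I would make the cover $\Gamma$-stable. Since each $U_k$ is stabilised by an open normal subgroup $\Gamma_{U_k}$ and $\Gamma$ is profinite, only finitely many $\Gamma$-translates of each $U_k$ occur, so I may refine $\calU$ to a $\Gamma$-stable finite rational cover; the ``finite'' part is important for gluing. On each member $U_k$, by hypothesis~(2) of Definition~\ref{StablyDecompletingSystem}, the pulled-back triple is weakly decompleting, so Theorem~\ref{Thm-DLLZ-I} produces a free model $L_{j,k}$ over $\calO(U_k)\hatotimes_{A_i}A_j$ of the restriction of $L_\infty$, compatibly with the $\Gamma_{U_k}$-action, for all $j$ sufficiently large.

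Then I would assemble these free pieces. The uniqueness part of Remark~\ref{Rmk-Decompletion} applied to the weakly decompleting triples on the pairwise intersections $U_k\cap U_{k'}$ (which inherit stable uniformity) provides, after enlarging $j$ once more, canonical isomorphisms between the two restrictions of $L_{j,k}$ and $L_{j,k'}$; the triple overlaps $U_k\cap U_{k'}\cap U_{k''}$ give the cocycle condition by the same uniqueness argument. Kiehl's gluing in the stably uniform setting then produces a finite projective $A_j$-module $L_j$ with $L_j\otimes_{A_j}\widehat A_\infty \simeq L_\infty$. For the $\Gamma$-action, I would use that $\Gamma$ permutes the $U_k$ and that each $\Gamma_{U_k}$-structure glues canonically; the finitely many transition data over $\Gamma/\Gamma_{U_k}$ then upgrade $L_j$ to a $\Gamma$-equivariant representation, after possibly passing to a still larger $j$.

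Finally, for condition~(2) of Definition~\ref{Decompletion system}, I would compute $\rR\Gamma(\Gamma, L_j)$ and $\rR\Gamma(\Gamma, L_\infty)$ via the Čech-to-group-cohomology double complex associated to $\calU$: on each intersection the cohomology agrees by Theorem~\ref{Thm-DLLZ-I} applied to the weakly decompleting restriction, and the Čech direction is controlled by Tate acyclicity (which holds since everything is stably uniform). The expected main obstacle is the equivariant gluing: producing a $\Gamma$-stable rational cover whose free models over each piece really do patch canonically (as $\Gamma_{U_k}$-representations) requires both the uniqueness-up-to-refinement from Remark~\ref{Rmk-Decompletion} and enough control over how $\Gamma$ permutes the cover, and this is where the interplay between stable uniformity, profiniteness of $\Gamma$, and the continuous cross-sections to $\Gamma/\Gamma_i$ all get used simultaneously.
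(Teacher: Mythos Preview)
The paper does not supply a proof of this statement at all: it is simply quoted from \cite[Thm.~A.1.10]{DLLZ} as an input, so there is nothing in the present paper to compare your proposal against. Your outline is, however, a faithful sketch of the argument actually given in \cite[Appendix~A]{DLLZ}: one localises on $\Spa(A_i,A_i^{\circ})$ to trivialise the finite projective module (using stable uniformity and Kiehl--Kedlaya--Liu gluing), passes to a $\Gamma$-stable finite rational cover via the open-stabiliser hypothesis, applies Theorem~\ref{Thm-DLLZ-I} on each member and on overlaps, and glues via the uniqueness statement in Remark~\ref{Rmk-Decompletion}; the cohomological comparison in condition~(2) is then a \v Cech argument using Tate acyclicity. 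So your plan is correct and matches the source, but be aware that for the purposes of this paper the theorem is a black box.
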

 
 We give some examples in the next subsection.
 
\subsection{Examples of decompletion system}

\subsubsection{Generalised arithmetic tower}\label{Sec-GeneralArithtower}
 Assume $d\geq 0$ and define $\Gamma_{\geo} = \oplus_{i=1}^d\Zp\gamma_i$, which is a finite free $\Zp$-module with basis $\gamma_1,\dots,\gamma_d$.
 Let $\Gamma : = \Gamma_{\geo}\rtimes\Gal(K_{\cyc}/K)$ such that for any $g\in \Gal(K_{\cyc}/K)$ and any $1\leq i\leq d$, $g\gamma_ig^{-1} = \gamma_i^{\chi(g)}$.
 
 Let $(R,R^+)$ be an affinoid Tate algebra over $K$. For any $n\geq 1$, let $R_n^+:= R\otimes_{\calO_K}\calO_{K(\zeta_{p^n})}$, $R_n:= R\otimes_KK(\zeta_{p^n})$, and $\Gamma_n: = \Gamma_{\geo}^{p^n}\rtimes\Gal(K_{\cyc}/K(\zeta_{p^n}))$. Put $\widehat R_{\infty}^+: = R^+\widehat \otimes_{\calO_K}\calO_{K_{\cyc}}$ and $\widehat R_{\infty}: = R\widehat \otimes_KK_{\cyc}$. Then $\Gamma$ acts on $R_n$'s and $\widehat R_{\infty}$ compatibly and continuously via the quotient map $\Gamma\to\Gal(K_{\cyc}/K)$.
 
 \begin{lem}\label{Lem-arithdecompletion}
   The triple $(\{R_n\}_{n\geq 0},\widehat R_{\infty},\Gamma)$ is stably decompleting.
 \end{lem}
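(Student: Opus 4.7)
The plan is to verify the two conditions of Definition \ref{StablyDecompletingSystem} for the triple $(\{R_n\}_{n\geq 0},\widehat R_{\infty},\Gamma)$. The key observation driving everything is that $\Gamma_{\geo}$ acts trivially on every ring in sight, so the $\Gamma$-action on $R_n$ factors through the finite quotient $\Gal(K(\zeta_{p^n})/K)$ and the action on $\widehat R_{\infty}$ factors through $\Gal(K_{\cyc}/K)$. Stable uniformity of the $R_n$ (affinoid Tate algebras over the discretely valued field $K(\zeta_{p^n})$) and of $\widehat R_{\infty}$ (a Tate Banach algebra over the perfectoid field $\widehat K_{\cyc}$) is standard, and the inclusions $R_n \hookrightarrow \widehat R_{\infty}$ are closed embeddings for the spectral norm.

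For the second condition of Definition \ref{StablyDecompletingSystem}, given a rational $U \subseteq \Spa(R_n,R_n^+)$, I would take $\Gamma_U := \Gamma_n$. This subgroup is open normal in $\Gamma$, and because it acts trivially on $R_n$ it stabilizes $U$ automatically. It remains to check that the pullback $(\{R_m\langle U\rangle\}_{m \geq n}, \widehat R_{\infty}\langle U\rangle, \Gamma_n)$ is weakly decompleting in the sense of Definition \ref{WeakDecompletingSystem}, with respect to the descending family of normal subgroups $\Gamma_m = \Gamma_{\geo}^{p^m} \rtimes \Gal(K_{\cyc}/K(\zeta_{p^m}))$, $m \geq n$. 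The isometric-action condition is immediate from the fact that $\Gal(K_{\cyc}/K)$ acts by isometries on $\widehat K_{\cyc}$ and that $\Gamma_{\geo}$ acts trivially. For the isometric Banach-module section, Tate's normalized trace $\widehat K_{\cyc} \to K(\zeta_{p^m})$, after completed tensoring with $R$ and rational localization at $U$, yields a contracting $R_m\langle U\rangle$-linear projection $\widehat R_{\infty}\langle U\rangle \to R_m\langle U\rangle$, whose kernel provides the required isometric section of the quotient map.

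The heart of the argument is uniform strict exactness of the cochain complex $C(\Gamma_m^{\bullet}, \widehat R_{\infty}\langle U\rangle / R_m\langle U\rangle)$, with a constant independent of $m$. Since $\Gamma_{\geo}^{p^m}$ acts trivially and is closed normal in $\Gamma_m$ with a continuous semidirect-product section, Hochschild--Serre applies and identifies this complex, up to controlled distortion, with the combination of the cyclotomic complex $C(\Gal(K_{\cyc}/K(\zeta_{p^m}))^{\bullet}, \widehat R_{\infty}\langle U\rangle / R_m\langle U\rangle)$ and the Koszul complex for $\Zp^d \cong \Gamma_{\geo}^{p^m}$ acting trivially. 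The former is uniformly strict exact by the Tate--Sen estimates for the $\Zp$-tower $K_{\cyc}/K(\zeta_{p^m})$, as formulated in \cite[\S 3]{BC16}, with a constant independent of $m$ once $m$ is sufficiently large; base changing along $R\langle U\rangle \widehat\otimes -$ preserves both the estimate and its constant. The Koszul part contributes only a constant depending on $d$.

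The main technical obstacle is the uniform bookkeeping of exactness constants through the Hochschild--Serre assembly, since the semidirect-product structure mixes the Galois and Koszul directions via the cyclotomic character. This is, however, entirely parallel to the analogous verifications carried out for the toric tower in \cite[\S A.2]{DLLZ}, and the constants obtained there can be imported with only minor modifications.
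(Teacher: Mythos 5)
Your proposal is correct and follows essentially the same route as the paper: both reduce, via Hochschild--Serre and the triviality of the $\Gamma_{\geo}$-action (which makes the geometric Koszul complex have zero differentials), to the one-dimensional cyclotomic Tate--Sen estimate of \cite[Prop. 2.1.1]{DLLZ}. The only difference is that the paper sidesteps your "main technical obstacle" of tracking exactness constants by working integrally, showing $\rH^n(\Gamma_l,\widehat R_{\infty}^+/R_l^+)$ is killed by $p^2$ for $l\gg 0$, which yields uniform strict exactness after inverting $p$ with no bookkeeping.
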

 \begin{proof}
   By proceeding as in the proof of \cite[Prop. A.2.1.1]{DLLZ}, it suffices to show that for sufficiently large $l$ and for any $n\geq 0$, $\rH^n(\Gamma_l,\widehat R_{\infty}^+/R_l^+)$ is killed by $p^2$. Note that by \cite[Lem. 7.3]{BMS18}, one may compute $\rR\Gamma(\Gamma_{\geo},\widehat R_{\infty}^+/R_l^+)$ by using Koszul complex $\rK(\gamma_1-1,\dots,\gamma_d-1;\widehat R_{\infty}^+/R_l^+)$:
   \[\widehat R_{\infty}^+/R_l^+\xrightarrow{(\gamma_1-1,\dots,\gamma_d-1)}(\widehat R_{\infty}^+/R_l^+)^d\to\cdots\to \widehat R_{\infty}^+/R_l^+.\]
   Since $\Gamma_{\geo}$ acts on $\widehat R_{\infty}^+/R_l^+$ trivially, $\rR\Gamma(\Gamma_{\geo},\widehat R_{\infty}^+/R_l^+)$ is computed by 
   $\rK(0,\dots,0;\widehat R_{\infty}^+/R_l^+)$.
   By using Hochschild--Serre spectral sequence, in order to conclude our result, we have to show that $\rR\Gamma(\Gal(K_{\cyc}/K(\zeta_{p^l})),\widehat R_{\infty}^+/R_l^+)$ is killed by $p^2$, which was confirmed in the proof of \cite[Prop. A.2.1.1]{DLLZ}.
 \end{proof}

 \begin{thm}\label{Thm-ArithDecompletion}
   The triple $(\{R_n\}_{n\geq 0},\widehat R_{\infty},\Gamma)$ is a decompletion system.
 \end{thm}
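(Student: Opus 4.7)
The plan is to obtain this theorem as an essentially immediate consequence of Lemma \ref{Lem-arithdecompletion} together with Theorem \ref{Thm-DLLZ-II}. The previous lemma has already verified that the triple $(\{R_n\}_{n\geq 0},\widehat R_{\infty},\Gamma)$ is \emph{stably decompleting} in the sense of Definition \ref{StablyDecompletingSystem}: the stable uniformity of the Banach rings $R_n$ and $\widehat R_{\infty}$ is automatic since they are affinoid Tate algebras over the nonarchimedean field $K$, and the weakly decompleting property of the pull-back to any rational subset $U \subset \Spa(R_n,R_n^{+})$ reduces, via Hochschild--Serre for the semidirect decomposition $\Gamma \cong \Gamma_{\geo} \rtimes \Gal(K_{\cyc}/K)$ and the Koszul complex computation of $\rR\Gamma(\Gamma_{\geo},-)$, to the cyclotomic estimate that $\rH^n(\Gamma_l,\widehat R_{\infty}^+/R_l^+)$ is killed by $p^2$ for sufficiently large $l$, as supplied by \cite[Prop.~2.1.1]{DLLZ}.

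Given this stably decompleting property, I would simply invoke Theorem \ref{Thm-DLLZ-II}, which asserts that every stably decompleting triple is a decompletion system. This yields, for any $\widehat R_{\infty}$-representation $L_{\infty}$ of $\Gamma$, the existence of an integer $n$, an $R_n$-representation $L_n$ of $\Gamma$, and a $\Gamma$-equivariant $R_n$-linear map $\iota_n \colon L_n \to L_{\infty}$ inducing an isomorphism after base change to $\widehat R_{\infty}$, and further guarantees that after replacing $n$ by some $n' \geq n$ the induced map $\rR\Gamma(\Gamma,L_{n'}) \to \rR\Gamma(\Gamma,L_{\infty})$ is a quasi-isomorphism. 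There is no genuine obstacle to this step: the entire substance of the theorem has been absorbed into the abstract machinery of Theorem \ref{Thm-DLLZ-II}, and Lemma \ref{Lem-arithdecompletion} supplied the single nontrivial cohomological input needed to activate that machinery in the present generalised arithmetic setting.
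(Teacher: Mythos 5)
Your proposal is correct and is exactly the paper's argument: Theorem \ref{Thm-ArithDecompletion} is deduced by combining Lemma \ref{Lem-arithdecompletion} (the stably decompleting property) with the general Theorem \ref{Thm-DLLZ-II}. The additional detail you give about how the lemma is verified simply restates its own proof, so nothing is missing.
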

 \begin{proof}
   This follows from Theorem \ref{Thm-DLLZ-II} together with Lemma \ref{Lem-arithdecompletion}.
 \end{proof}
 \begin{rmk}
   In \cite{Shi}, Shimizu essentially showed that the triple $(\{R_n\}_{n\geq 0},\widehat R_{\infty},\Gamma)$ is a weak decompletion system by using the formalism of Tate--Sen theory developed in \cite{BC}. When $d = 0$, the result was first proved by Sen in \cite{Sen}.
 \end{rmk}
 
 \begin{prop}\label{Prop-Unipotent}
   For any $\widehat R_{\infty}$-representation $M$ of $\Gamma$, the subgroup $\Gamma_{\geo}$ acts on $M$ quasi-unipotently.
 \end{prop}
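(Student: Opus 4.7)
The plan is to exploit the decompletion result of Theorem 2.3 to descend $M$ to a finite level, and then extract quasi-unipotence from a soft continuity argument. By Theorem 2.3, there exist $n\geq 0$ and a model $(M_n,\iota_n)$ over $R_n$, i.e.\ a finite projective $R_n$-representation $M_n$ of $\Gamma$ together with a $\Gamma$-equivariant isomorphism $\iota_n:M_n\otimes_{R_n}\widehat R_\infty\xrightarrow{\sim} M$. Since $\Gamma_{\geo}$ lies in the kernel of the quotient $\Gamma\to\Gal(K_{\cyc}/K)$, it acts trivially on both $R_n$ and $\widehat R_\infty$. In particular, the induced action of $\Gamma_{\geo}$ on $M_n$ is $R_n$-linear, and it suffices to show that some open subgroup of $\Gamma_{\geo}$ acts on $M_n$ by (commuting) topologically unipotent $R_n$-linear automorphisms, since the claim for $M$ then follows by $\widehat R_\infty$-linear extension.

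Next, fix a Banach norm on the finite projective $R_n$-module $M_n$. The continuity of the representation $\Gamma_{\geo}\to\Aut^{\cts}_{R_n}(M_n)$, together with $\Gamma_{\geo}\cong\Zp^d$, yields an integer $N\geq 1$ such that
\[
\|\gamma_i^{p^N}-1\|_{\mathrm{op}}<p^{-1/(p-1)}\qquad\text{for all } 1\leq i\leq d.
\]
This is precisely the convergence radius of the $p$-adic logarithm, so the series
\[
N_i:=\log(\gamma_i^{p^N})=-\sum_{k\geq 1}\frac{(1-\gamma_i^{p^N})^k}{k}
\]
defines a topologically nilpotent, continuous $R_n$-linear endomorphism of $M_n$, with $\gamma_i^{p^N}=\exp(N_i)$. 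Because the $\gamma_i$'s commute pairwise, so do the $N_i$'s, so the open subgroup $p^N\Gamma_{\geo}\subset\Gamma_{\geo}$ acts through the commuting topologically unipotent operators $\exp(N_i)$. This is exactly the desired quasi-unipotence of the $\Gamma_{\geo}$-action on $M_n$, and hence on $M$.

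The only non-formal input is the operator-norm estimate $\|\gamma_i^{p^N}-1\|_{\mathrm{op}}<p^{-1/(p-1)}$, which is the mild technical heart of the argument; but this is a standard consequence of the continuity of a $\Zp^d$-action on a Banach module together with the fact that $M_n$ admits a $\Gamma_{\geo}$-stable lattice (obtained by scaling any finitely generated $R_n^+$-sublattice to absorb the action of a topological generating set). Everything else is bookkeeping around the decompletion isomorphism $\iota_n$ and the triviality of the $\Gamma_{\geo}$-action on the coefficient rings.
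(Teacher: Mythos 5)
There is a genuine gap in the last step. Your descent to a model $M_n$ over $R_n$ via Theorem \ref{Thm-ArithDecompletion} matches the paper's first move, and the norm estimate $\|\gamma_i^{p^N}-1\|_{\mathrm{op}}<p^{-1/(p-1)}$ obtained from continuity and a stable lattice is fine. But from that estimate you only get that $N_i=\log(\gamma_i^{p^N})$ is \emph{topologically} nilpotent, and "topologically unipotent" is strictly weaker than the quasi-unipotence the proposition asserts. In this paper quasi-unipotent means (see the proof of Proposition \ref{Prop-UnipotentEquiv} and Remark \ref{Rmk-QuasiUnip}) that $M$ decomposes into generalized eigenspaces for \emph{finite-order} characters $\tau$ of $\Gamma_{\geo}$, with $(\gamma-\tau(\gamma))^m=0$ for some finite $m$; equivalently, $\log(\gamma_i^{p^N})$ must be genuinely nilpotent. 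A rank-one module on which $\gamma_1$ acts by the scalar $1+p$ is a perfectly continuous $\Zp^d$-representation satisfying every estimate in your argument ($\log((1+p)^{p^N})=p^N\log(1+p)$ is topologically nilpotent), yet it is not quasi-unipotent since $1+p$ is not a root of unity. So the conclusion cannot follow from continuity of the $\Gamma_{\geo}$-action alone.

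The missing input is the arithmetic half of $\Gamma$: you never use the semidirect-product relation $g\gamma_i g^{-1}=\gamma_i^{\chi(g)}$. Since $\Gamma'=\Gamma_{\geo}\rtimes\Gal(K_{\cyc}/K(\zeta_{p^n}))$ acts $R_n$-linearly on the model, this relation gives $gN_ig^{-1}=\chi(g)N_i$ for $g$ close to $1$, and the linear-algebra lemma of Liu--Zhu (\cite[Lem. 2.15]{LZ}: an endomorphism conjugate to a non-root-of-unity multiple of itself is nilpotent) then forces $N_i$ to be nilpotent, not merely topologically nilpotent. That is exactly the step the paper invokes and the one your argument omits; in the example above it is what rules out the eigenvalue $1+p$ while permitting the root-of-unity eigenvalue of Remark \ref{Rmk-QuasiUnip}.
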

 \begin{proof}
   By Theorem \ref{Thm-ArithDecompletion}, there exists an $n\geq 1$ and a representation $N$ of $\Gamma$ over $R_n$ such that $M\cong N\otimes_{R_n}\widehat R_{\infty}$. Since $\Gamma':=\Gamma_{\geo}\rtimes\Gal(K_{\cyc}/K(\zeta_{p^n}))$ acts on $R_n$ trivially. We see that $N$ is a linear representation of $\Gamma'$. Then the result follows from the same proof of \cite[Lem. 2.15]{LZ}.
 \end{proof}
 
 \begin{rmk}\label{Rmk-QuasiUnip}   
   Proposition \ref{Prop-Unipotent} is optimal in the sense that one can not expect that $\Gamma_{\geo}$ acts on any $\widehat R_{\infty}$-representation $M$ of $\Gamma$ unipotently in general. For example, we may assume $M = R_{\widehat K_{\cyc}}e$ and $\Gamma = \Zp\gamma\rtimes\Gal(K_{\cyc}/K)$ such that $\gamma$ acts $e$ via the scalar $\zeta_{p^n}$ for some fixed $n\geq 1$ and $\Gal(K_{\cyc}/K)$ acts on $e$ trivially. Then $M\in \Rep_{\Gamma}(R_{\widehat K_{\cyc}})$ on which the $\Gamma_{\geo}$-action is not unipotent.
 \end{rmk}

\subsubsection{Toric tower}\label{Sec-Toric tower}
 Let $X = \Spa(R,R^+)$ be a smooth affinoid space of dimension $d$ over $K$ which admits a toric chart $\Box$. Let $X_{\widehat K_{\cyc}}: = \Spa(R_{\widehat K_{\cyc}},R_{\widehat K_{\cyc}}^+)$ denote the base-change of $X$ along $K\hookrightarrow \widehat K_{\cyc}$.
 \begin{notation}\label{Notation-LocalChart}
   For any $n\geq 0$, let 
   $R_n^+:= R^+\widehat\otimes_{\calO_K\za T^{\pm 1}_1,\dots,T^{\pm 1}_d\ya,\Box}\calO_{K(\zeta_{p_n})}\za T^{\pm \frac{1}{p^n}}_1,\dots,T^{\pm \frac{1}{p^n}}_d\ya$, $R_{\infty}^+ = \varinjlim_nR_n^+$, and $\widehat R_{\infty}^+ = (\varinjlim_nR_n^+)^{\wedge}_p$ be the $p$-adic completion of $R_{\infty}^+$. Put $R_n = R_n^+[\frac{1}{p}]$ and $\widehat R_{\infty}: = \widehat R_{\infty}^+[\frac{1}{p}]$. Then both of them are stably uniform adic Banach rings. Let $X_n$ (resp. $X_{\infty}$) be the base-change (resp. the perfectoid space corresponding to the base-change) of $X$ along 
  \[\Spa(K(\zeta_{p^n})\za T^{\pm \frac{1}{p^n}}_1,\dots,T^{\pm \frac{1}{p^n}}_d\ya,\calO_{K(\zeta_{p^n})}\za T^{\pm \frac{1}{p^n}}_1,\dots,T^{\pm \frac{1}{p^n}}_d\ya)\to \bG_m^d\]
  \[(\text{resp.}~ \Spa(\widehat K_{\cyc}\za T^{\pm \frac{1}{p^{\infty}}}_1,\dots,T^{\pm \frac{1}{p^{\infty}}}_d\ya,\calO_{\widehat K_{\cyc}}\za T^{\pm \frac{1}{p^{\infty}}}_1,\dots,T^{\pm \frac{1}{p^{\infty}}}_d\ya\to \bG_m^d).\]
  Then $R_n$ (resp. $\widehat R_{\infty}$) is the ring of regular functions on $X_n$ (resp. $X_{\infty}$). For any $n\geq 0$, the natural map $X_{\infty}\to X_n$ is a Galois cover with Galois group $\Gamma_n$. In particular, we denote $\Gamma:=\Gamma_0$. The map $X_{\infty}\to X_{\widehat K_{\cyc}}$ is also a Galois cover with Galois group 
  \begin{equation}\label{Equ-GaloisGroup}
      \Gamma_{\geo}\cong \Zp\gamma_1\oplus\cdots\oplus\Zp\gamma_d
  \end{equation}
  where for any $1\leq i,j\leq d$ and any $n\geq 0$, $\gamma_i(T_j^{\frac{1}{p^n}}) = \zeta_{p^n}^{\delta_{ij}}T_j^{\frac{1}{p^n}}$ and $\delta_{ij}$ denotes Kronecker's delta. Then for any $n\geq 0$, we have an exact sequence
  \begin{equation}\label{ExactSeqofGalGroup}
     1\to \Gamma_{\geo}^{p^n}\to \Gamma_n\to \Gal(K_{\cyc}/K(\zeta_{p^n}))\to 1,
  \end{equation}
  which splits and induces an isomrphism $\Gamma_n \cong \Gamma_{\geo}^{p^n}\rtimes\Gal(K_{\cyc}/K(\zeta_{p^n}))$ such that $g\gamma_i^{p^n}g^{-1} = \gamma_i^{p^n\chi(g)}$ for any $g\in \Gal(K_{\cyc}/K(\zeta_{p^n}))$ and any $1\leq i\leq d$.
 \end{notation}
 
 \begin{lem}\label{Lem-toricdecompletion}
   The triple $(\{R_n\}_{n\geq 0},\widehat R_{\infty},\Gamma)$ is stably decompleting.
 \end{lem}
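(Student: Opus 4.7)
The plan is to verify the two conditions of Definition \ref{StablyDecompletingSystem}. The stable uniformity of $R_n$ and $\widehat R_\infty$ over a nonarchimedean field is already recorded in Notation \ref{Notation-LocalChart}. For the rational-subset condition, note that the open normal subgroup $\Gamma_n = \Gamma_{\geo}^{p^n}\rtimes\Gal(K_{\cyc}/K(\zeta_{p^n}))$ acts trivially on $R_n$: indeed $\gamma_i^{p^n}$ fixes each $T_j^{1/p^n}$ and $\Gal(K_{\cyc}/K(\zeta_{p^n}))$ fixes the coefficient field $K(\zeta_{p^n})$. Hence $\Gamma_n$ stabilises every rational subset $U\subset \Spa(R_n,R_n^+)$, and the pullback triple on $U$ is obtained from $(\{R_m\}_{m\geq n},\widehat R_\infty,\Gamma_n)$ by flat base change along $R_n\to\calO(U)$. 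It therefore suffices to verify that $(\{R_n\}_{n\geq 0},\widehat R_\infty,\Gamma)$ itself is weakly decompleting in the sense of Definition \ref{WeakDecompletingSystem}, since the conditions there are preserved under such a base change.

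First I would handle the trivial case $X=\bG_m^d$ with $R^+=\calO_K\za T_1^{\pm1},\dots,T_d^{\pm1}\ya$. Here $\widehat R_\infty$ admits a completed orthogonal decomposition with respect to the Gauss norm,
$$\widehat R_\infty \;=\; \widehat{\bigoplus}_{\underline\alpha \in (\bZ[1/p^\infty]/\bZ)^d}\, T^{\underline\alpha}\cdot \widehat{K_{\cyc}}\za T_1^{\pm1},\dots,T_d^{\pm1}\ya,$$
with $R_n$ corresponding to the sub-summands indexed by $\underline\alpha\in\tfrac{1}{p^n}\bZ^d/\bZ^d$ and coefficients in $K(\zeta_{p^n})$. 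This immediately provides the isometric $R_n$-linear section of $\widehat R_\infty\twoheadrightarrow \widehat R_\infty/R_n$ demanded by condition (2), and the isometry of the $\Gamma$-action (condition (1)) is clear because the generators act by roots of unity and fractional shifts of $T$. For condition (3), I would invoke Hochschild--Serre for the split extension $\Gamma_n \cong \Gamma_{\geo}^{p^n}\rtimes\Gal(K_{\cyc}/K(\zeta_{p^n}))$: on each monomial summand $T^{\underline\alpha}\widehat{K_{\cyc}}$ with $\underline\alpha \notin \tfrac{1}{p^n}\bZ^d$, some $\gamma_i^{p^n}$ acts by a nontrivial $p$-power root of unity, so $\gamma_i^{p^n}-1$ is invertible with operator norm bounded uniformly in $\underline\alpha$; on the complementary summands $\Gamma_{\geo}^{p^n}$ acts trivially and the residual $\Gal(K_{\cyc}/K(\zeta_{p^n}))$-cohomology of $\widehat R_\infty/R_n$ is controlled, after inverting a bounded power of $p$, by the arithmetic computation underlying Lemma \ref{Lem-arithdecompletion}.

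To pass to general small $X$, I would use that the chart $\Box$ realises $R^+$ as $p$-completely \'etale over the polydisc algebra $\calO_K\za T_1^{\pm1},\dots,T_d^{\pm1}\ya$, whence $\widehat R_\infty \cong R\,\widehat\otimes_{K\za T^{\pm1}\ya}\widehat R_{\infty,\mathrm{triv}}$. \'Etale base change along this flat map preserves the Banach norm, the orthogonal monomial decomposition, the isometric section, and the bounded inverse of $\gamma_i^{p^n}-1$ on each monomial summand, yielding the uniform strict exactness in the general case.

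The main obstacle I anticipate is keeping the exactness constant $c$ uniform \emph{simultaneously} across all rational subsets of $\Spa(R_n,R_n^+)$, as Definition \ref{StablyDecompletingSystem} demands. The key observation that makes this work is that $c$ depends only on the ramification of $K/\bQ_p$ and on $p$, $n$: the geometric contribution is bounded by the minimal depth of a nontrivial $p$-power root of unity occurring in the monomial decomposition (an absolute bound), while the arithmetic contribution from Lemma \ref{Lem-arithdecompletion} is intrinsic to $K_{\cyc}/K(\zeta_{p^n})$ and hence unchanged by a rational localisation on the geometric side.
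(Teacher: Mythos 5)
Your proposal is correct and follows essentially the same route as the paper: reduce stable decompleting to weak decompleting via the observation about rational localisations, use the completed monomial decomposition of $\widehat R_\infty$ for conditions (1)--(2), kill the nontrivial monomial summands by the invertibility of $\gamma_i^{p^n}-1$ (with constant bounded by $|\zeta_p-1|^{-1}$), and reduce the trivial summand to the arithmetic tower computation of Lemma \ref{Lem-arithdecompletion}. The only cosmetic difference is that you first treat the torus and then base-change along the \'etale chart, whereas the paper works directly with the decomposition for a general small affinoid, which already incorporates that base change.
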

 \begin{proof}
   It suffces to show that any $(\{R_n\}_{n\geq 0},\widehat R_{\infty},\Gamma)$ is weakly decompleting, as its pullbacks to rational localizations of $X_n$ satisfy the same assumptions. We remark that $\Gamma \to \Gamma/\Gamma_n$ admits a continuous cross-section as the target is finite. 
   
   We endow $\widehat R_{\infty}$ with the spectral norm and then the condition (1) of Definition \ref{WeakDecompletingSystem} is satisfied by \cite[Rem. 2.8.3(a)]{KL15}. For condition (2), by noting that for any $n\geq 0$, the $\widehat R_{\infty}$ admits a $\Gamma$-equivariant decomposition
   \begin{equation}\label{Equ-TopoDecomposition}
       \widehat R_{\infty} = \widehat \bigoplus_{(\alpha_1,\dots,\alpha_d)\in(\bN[\frac{1}{p}]\cap[0,1))^d}R_{\widehat K_{\cyc},n}T_1^{\frac{1}{p^n}\alpha_1}\cdots T_d^{\frac{1}{p^n}\alpha_d},
   \end{equation}
   where $R_{\widehat K_{\cyc},n} = R_{\widehat K_{\cyc},n}^+[\frac{1}{p}]$ and $R_{\widehat K_{\cyc},n}^+ = R_n^+\widehat \otimes_{\calO_{K(\zeta_{p^n})}}\calO_{\widehat K_{\cyc}}$, it suffices to check $R_{\widehat K_{\cyc},n}\to R_{\widehat K_{\cyc},n}/R_n$ admits an isometric section as Banach $R_n$-modules, which can be verified as in the proof of \cite[Prop. A.2.1.1]{DLLZ}. 
   
   It remains to check condition (3) of Definition \ref{WeakDecompletingSystem}. In other words, we have to show that there exists some $c>0$ such that for any $n\geq 0$, the complex $C(\Gamma_n^{\bullet},\widehat R_{\infty}/R_n)$ is uniformly strict exact with respect to $c$.

   Similar to decomposition (\ref{Equ-TopoDecomposition}), we see that 
   \[\widehat R_{\infty}^+/R_n^+ = R_{\widehat K_{\cyc},n}^+/R_n^+\oplus \widehat \bigoplus_{\underline 0\neq (\alpha_1,\dots,\alpha_d)\in(\bN[\frac{1}{p}]\cap[0,1))^d}R_{\widehat K_{\cyc},n}^+T_1^{\frac{1}{p^n}\alpha_1}\cdots T_d^{\frac{1}{p^n}\alpha_d},\]
   which induces a decomposition of complex
   \[C(\Gamma_n^{\bullet},\widehat R_{\infty}/R_n) = C(\Gamma_n^{\bullet},R_{\widehat K_{\cyc},n}/R_n)\oplus C(\Gamma_n^{\bullet},\widehat \bigoplus_{\underline 0\neq (\alpha_1,\dots,\alpha_d)\in(\bN[\frac{1}{p}]\cap[0,1))^d}R_{\widehat K_{\cyc},n}T_1^{\frac{1}{p^n}\alpha_1}\cdots T_d^{\frac{1}{p^n}\alpha_d}).\]
   
   By Lemma \ref{Lem-arithdecompletion}, there exists some $c_1>0$ such that for any $m\geq 0$, the complex $C(\Gamma_m^{\bullet},R_{\widehat K_{\cyc},m}/R_m)$ is uniformly strict exact with respect to $c_1$.
   
   To conclude, it suffices to show that $C(\Gamma_n^{\bullet},\widehat \bigoplus_{\underline 0\neq (\alpha_1,\dots,\alpha_d)\in(\bN[\frac{1}{p}]\cap[0,1))^d}R_{\widehat K_{\cyc},n}T_1^{\frac{1}{p^n}\alpha_1}\cdots T_d^{\frac{1}{p^n}\alpha_d})$ is uniformly strict exact with respect to $c_2=p^{\frac{2}{p-1}}$. (So we may finally choose $c = \max(c_1,c_2)$.)
   For this purpose, we first claim that 
   \[\rR\Gamma(\Gamma_n,\widehat \bigoplus_{\underline 0\neq (\alpha_1,\dots,\alpha_d)\in(\bN[\frac{1}{p}]\cap[0,1))^d}R_{\widehat K_{\cyc},n}^+T_1^{\frac{1}{p^n}\alpha_1}\cdots T_d^{\frac{1}{p^n}\alpha_d})\]
   is killed by $\zeta_p-1$.
   
   By \cite[Lem. 7.3]{BMS18}, $\rR\Gamma(\Gamma_{\geo}^{p^n},R_{\widehat K_{\cyc},n}^+T_1^{\frac{1}{p^n}\alpha_1}\cdots T_d^{\frac{1}{p^n}\alpha_d})$ can be computed via the Koszul complex $\rK(\gamma_1^{p^n}-1,\dots,\gamma_d^{p^n}-1;R_{\widehat K_{\cyc},n}^+T_1^{\frac{1}{p^n}\alpha_1}\cdots R_{\widehat K_{\cyc},n}T_d^{\frac{1}{p^n}\alpha_d})$:
   \[R_{\widehat K_{\cyc},n}^+T_1^{\frac{1}{p^n}\alpha_1}\cdots T_d^{\frac{1}{p^n}\alpha_d}\xrightarrow{(\gamma_1^{p^n}-1,\dots,\gamma_d^{p^n}-1)}(R_{\widehat K_{\cyc},n}^+T_1^{\frac{1}{p^n}\alpha_1}\cdots T_d^{\frac{1}{p^n}\alpha_d})^d\to\cdots\to R_{\widehat K_{\cyc},n}^+T_1^{\frac{1}{p^n}\alpha_1}\cdots T_d^{\frac{1}{p^n}\alpha_d}.\]
   Then for $\underline \alpha\neq \underline 0$, we argue as in the proof of \cite[Lem. 5.5]{Sch-Pi} to conclude that \[\rR\Gamma(\Gamma_{\geo}^{p^n},R_{\widehat K_{\cyc},n}^+T_1^{\frac{1}{p^n}\alpha_1}\cdots T_d^{\frac{1}{p^n}\alpha_d})\]
   is concentrated in degree $\geq 1$ and is annihilated by $\zeta_p-1$. Therefore $\rR\Gamma(\Gamma_n,R_{\widehat K_{\cyc},n}^+T_1^{\frac{1}{p^n}\alpha_1}\cdots T_d^{\frac{1}{p^n}\alpha_d})$ is also concentrated in degree $\geq 1$ and is annihilated by $\zeta_p-1$ and hence so is \[\rR\Gamma(\Gamma_n,\widehat \bigoplus_{\underline 0\neq (\alpha_1,\dots,\alpha_d)\in(\bN[\frac{1}{p}]\cap[0,1))^d}R_{\widehat K_{\cyc},n}^+T_1^{\frac{1}{p^n}\alpha_1}\cdots T_d^{\frac{1}{p^n}\alpha_d})\]
   as desired. This completes the proof of claim.

   Now, let us go back to show the uniformly strict exactness of 
   \[C^{\bullet}:=C(\Gamma_n^{\bullet},\widehat \bigoplus_{\underline 0\neq (\alpha_1,\dots,\alpha_d)\in(\bN[\frac{1}{p}]\cap[0,1))^d}R_{\widehat K_{\cyc},n}T_1^{\frac{1}{p^n}\alpha_1}\cdots T_d^{\frac{1}{p^n}\alpha_d})\]
   with respect to $c_2 = p^{\frac{2}{p-1}}$. In other words, we need to prove that for any cocycle $f\in C^s$, there is a cochain $g\in C^{s-1}$ satisfying $|g|\leq p^{\frac{2}{p-1}}|f|$ such that $f = \rd(g)$. 
   Define 
   \[C^{\bullet,+}:= C(\Gamma_n^{\bullet},\widehat \bigoplus_{\underline 0\neq (\alpha_1,\dots,\alpha_d)\in(\bN[\frac{1}{p}]\cap[0,1))^d}R_{\widehat K_{\cyc},n}^+T_1^{\frac{1}{p^n}\alpha_1}\cdots T_d^{\frac{1}{p^n}\alpha_d}).\]
   As we consider the supreme norm on $C^{\bullet}$ (cf. Definition \ref{WeakDecompletingSystem}(3)), for any $h\in C^{s}$, we have
   \[h\in C^{s,+} \text{~if and only if~}|h|\leq 1.\]
   Replacing $f$ by $(\zeta_p-1)^mf$ for some $m\in \bZ$, we may assume $|\zeta_p-1|<|f|\leq 1$ and thus it is a cocycle
   \[f\in C^{s,+}:= C(\Gamma_n^{s},\widehat \bigoplus_{\underline 0\neq (\alpha_1,\dots,\alpha_d)\in(\bN[\frac{1}{p}]\cap[0,1))^d}R_{\widehat K_{\cyc},n}^+T_1^{\frac{1}{p^n}\alpha_1}\cdots T_d^{\frac{1}{p^n}\alpha_d}).\]
   As $C^{\bullet,+}$ represents $\rR\Gamma(\Gamma_n,\widehat \bigoplus_{\underline 0\neq (\alpha_1,\dots,\alpha_d)\in(\bN[\frac{1}{p}]\cap[0,1))^d}R_{\widehat K_{\cyc},n}^+T_1^{\frac{1}{p^n}\alpha_1}\cdots T_d^{\frac{1}{p^n}\alpha_d})$, applying the above claim, we conclude that $(\zeta_p-1)f$ is a coboundary in $C^{s,+}$. So there exists a cochain $h\in C^{s-1,+}$ such that $\rd(h) = (\zeta_p-1)f$. Put $g = \frac{1}{\zeta_p-1}h\in C^{s-1}$. Then we have $\rd(g) = f$ and that 
   \[|g| = |\frac{1}{\zeta_p-1}h|\leq p^{\frac{1}{p-1}}|h| \leq p^{\frac{1}{p-1}} = p^{\frac{2}{p-1}}|\zeta_p-1|<p^{\frac{2}{p-1}}|f|\]
   as desired. The proof is complete.
 \end{proof}
 \begin{thm}\label{Thm-ToricDecompltion}
   The triple $(\{R_n\}_{n\geq 0},\widehat R_{\infty},\Gamma)$ is a decompletion system.
 \end{thm}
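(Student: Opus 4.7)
The statement is essentially a formal consequence of what has already been set up: Lemma \ref{Lem-toricdecompletion} has just verified that the triple $(\{R_n\}_{n\geq 0},\widehat R_{\infty},\Gamma)$ is \emph{stably decompleting} in the sense of Definition \ref{StablyDecompletingSystem}, and Theorem \ref{Thm-DLLZ-II} (quoted from \cite[Thm. A.1.10]{DLLZ}) asserts that every stably decompleting triple is automatically a decompletion system. So my plan is simply to invoke these two ingredients in sequence; there is no additional computation to perform at this stage.

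In more detail, to match the hypotheses of Definition \ref{StablyDecompletingSystem} one needs (i) that $R_n$ and $\widehat R_{\infty}$ are stably uniform adic Banach rings over a nonarchimedean field, and (ii) that for any rational subset $U\subset \Spa(R_n,R_n^+)$ stabilised by an open normal subgroup $\Gamma_U\triangleleft\Gamma$, the pulled-back triple over $U$ is weakly decompleting. Point (i) is noted in Notation \ref{Notation-LocalChart}, and point (ii) is exactly what Lemma \ref{Lem-toricdecompletion} established (the proof of that lemma was already written so as to apply after any such rational localisation, since smoothness and the toric decomposition (\ref{Equ-TopoDecomposition}) are stable under rational localisation and the Koszul/Hochschild--Serre analysis goes through verbatim over $U$).

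Given these two inputs, Theorem \ref{Thm-DLLZ-II} applies directly and yields the conclusion, in complete parallel with the proof of Theorem \ref{Thm-ArithDecompletion}. There is no real obstacle: the entire substance of the statement was absorbed into Lemma \ref{Lem-toricdecompletion}, whose key technical input was the vanishing (up to $p^2$-torsion) of $\rR\Gamma(\Gamma_n,\widehat R_{\infty}^+/R_n^+)$ for $n$ sufficiently large, proved via the $\Gamma$-equivariant decomposition of $\widehat R_{\infty}$ into a ``trivial weight zero'' piece (handled by reduction to the arithmetic case, Lemma \ref{Lem-arithdecompletion}) and a sum of ``nontrivial weight'' pieces (killed after taking Koszul cohomology by $\zeta_p-1$, via the standard argument of \cite[Lem. 6.18]{Sch-Pi}).
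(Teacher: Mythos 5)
Your proposal is correct and is exactly the paper's argument: the proof of Theorem \ref{Thm-ToricDecompltion} consists precisely of combining Lemma \ref{Lem-toricdecompletion} (stable decompleteness) with Theorem \ref{Thm-DLLZ-II}. The additional remarks you make about verifying the hypotheses of Definition \ref{StablyDecompletingSystem} are consistent with how the paper handles this inside the proof of Lemma \ref{Lem-toricdecompletion} itself.
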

 \begin{proof}
   This follows from Theorem \ref{Thm-DLLZ-II} together with Lemma \ref{Lem-toricdecompletion}.
 \end{proof}
 \begin{notation}
   Let $\Rep_{\Gamma}^{\rm uni}(R_{\widehat K_{\cyc}})$ be the full subcategory of $\Rep_{\Gamma}(R_{\widehat K_{\cyc}})$ of representations on which $\Gamma_{\geo}$ acts unipotently.
 \end{notation}
 The following result is a direct consequence of Theorem \ref{Thm-ToricDecompltion}.
 \begin{prop}\label{Prop-UnipotentEquiv}
   There exists an equivalence of categories
   \[\Rep_{\Gamma}^{\rm uni}(R_{\widehat K_{\cyc}}) \to \Rep_{\Gamma}(\widehat R_{\infty})\]
   via base-change such that for any $M\in \Rep_{\Gamma}^{\rm uni}(R_{\widehat K_{\cyc}})$ with associated $M_{\infty}\in \Rep_{\Gamma}(\widehat R_{\infty})$, there exist quasi-isomorphisms
   \[\rR\Gamma(\Gamma_{\geo},M)\to \rR\Gamma(\Gamma_{\geo},M_{\infty})\]
   and 
   \[\rR\Gamma(\Gamma,M)\to \rR\Gamma(\Gamma,M_{\infty}).\]
   The equivalence is compatible with the standard \'etale localisation of $X$.
 \end{prop}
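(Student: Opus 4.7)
The plan is to combine the toric decompletion (Theorem \ref{Thm-ToricDecompltion}) with the topological eigendecomposition of $\widehat R_\infty$ as a $\Gamma$-equivariant $R_{\widehat K_\cyc}$-module, using this latter decomposition both to compute cohomology and to untwist the quasi-unipotent action of $\Gamma_{\geo}$ guaranteed by Proposition \ref{Prop-Unipotent}. First I would set up the base-change functor $F(M) := M\otimes_{R_{\widehat K_\cyc}}\widehat R_\infty$ with diagonal $\Gamma$-action; continuity and projectivity are clear. The key input is the $\Gamma$-equivariant topological decomposition
\[\widehat R_\infty = R_{\widehat K_\cyc} \oplus \widehat\bigoplus_{\underline\alpha\in(\bN[1/p]\cap[0,1))^d,\;\underline\alpha\neq \underline 0} R_{\widehat K_\cyc}\cdot T^{\underline\alpha}\]
already used in the proof of Lemma \ref{Lem-toricdecompletion} (taking $n=0$ there), where each summand is $\Gamma$-stable, $\Gamma_{\geo}$ acts on $T^{\underline\alpha}$ through the character $\gamma_i\mapsto \zeta_{p^{n(\alpha)}}^{\alpha_i}$, and $\Gal(K_\cyc/K)$ acts through its natural action on $R_{\widehat K_\cyc}$.

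For fully faithfulness and the cohomology comparison, since internal $\Hom$ of unipotent representations remains unipotent it suffices to show that for any $M\in\Rep_\Gamma^{\rm uni}(R_{\widehat K_\cyc})$ the natural map $\rR\Gamma(\Gamma_{\geo},M)\to\rR\Gamma(\Gamma_{\geo},F(M))$ is a quasi-isomorphism. Applying the decomposition above gives $F(M)=M\oplus\widehat\bigoplus_{\underline\alpha\neq\underline 0}M\cdot T^{\underline\alpha}$, so it remains to show that each $\rR\Gamma(\Gamma_{\geo},M\cdot T^{\underline\alpha})$ vanishes for $\underline\alpha\neq\underline 0$. For such $\underline\alpha$, pick $i$ with $\alpha_i\neq 0$; then $\gamma_i$ acts on $M\cdot T^{\underline\alpha}$ as $\zeta_{p^{n(\alpha)}}^{\alpha_i}\cdot U_i$ for a unipotent $U_i$, so $\gamma_i-1=(\zeta^{\alpha_i}-1)+\zeta^{\alpha_i}(U_i-1)$ is a unit in $\End(M\cdot T^{\underline\alpha})$ (since $\zeta^{\alpha_i}-1$ is invertible after inverting $p$ and $U_i-1$ is nilpotent). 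Hence the Koszul complex for $\gamma_1-1,\dots,\gamma_d-1$ on this summand is acyclic, yielding the required vanishing. The full $\Gamma$-cohomology statement then follows from Hochschild--Serre, and full faithfulness from applying the vanishing to $\Hom(M_1,M_2)=M_1^\vee\otimes M_2$.

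For essential surjectivity, given $M_\infty\in\Rep_\Gamma(\widehat R_\infty)$, Theorem \ref{Thm-ToricDecompltion} provides a model $N\in\Rep_\Gamma(R_n)$ for some $n$, and Proposition \ref{Prop-Unipotent} says $\Gamma_{\geo}$ acts quasi-unipotently on $M_\infty$, so I may enlarge $n$ and assume $\Gamma_{\geo}^{p^n}$ acts unipotently on $N$. I would then construct $M$ by absorbing the remaining character part of the $\Gamma_{\geo}/\Gamma_{\geo}^{p^n}$-action into a $T^{\underline\beta/p^n}$-twist: decompose $N$ inside $M_\infty$ into isotypic components $N_{\underline\beta}$ for the finite character group $\Gamma_{\geo}/\Gamma_{\geo}^{p^n}\cong(\bZ/p^n\bZ)^d$, replace each $N_{\underline\beta}$ by $N_{\underline\beta}\cdot T^{-\underline\beta/p^n}\subset M_\infty$, and take $M$ to be the $R_{\widehat K_\cyc}$-span of these twists inside $M_\infty$. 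By construction $\Gamma_{\geo}$ acts unipotently on $M$, and one checks $F(M)\cong M_\infty$ using that $T^{\underline\beta/p^n}\in\widehat R_\infty^\times$ together with the decomposition above.

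The main obstacle I anticipate is verifying that this twisted descent is $\Gal(K_\cyc/K)$-equivariant, because the semidirect relation $g\gamma_ig^{-1}=\gamma_i^{\chi(g)}$ forces the arithmetic Galois group to permute the isotypic pieces $N_{\underline\beta}$ rather than preserve them individually. Resolving this requires organising the twisting into $\Gal(K_\cyc/K)$-orbits of characters and checking that the resulting $R_{\widehat K_\cyc}$-span is stable under the arithmetic action; this bookkeeping step ultimately reduces to the arithmetic decompletion of Theorem \ref{Thm-ArithDecompletion} applied to the untwisted piece. Finally, compatibility with standard \'etale localisation of $X$ is automatic because both Theorem \ref{Thm-ToricDecompltion} and the eigendecomposition above are preserved under rational subsets by Definition \ref{StablyDecompletingSystem}(2).
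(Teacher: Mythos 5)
Your first half (the cohomology comparison and full faithfulness) is exactly the paper's argument: decompose $M_\infty$ via the $\Gamma$-equivariant eigendecomposition (\ref{Equ-TopoDecomposition}) at $n=0$ and kill the nonzero summands by observing that $\gamma_i-1$ acts invertibly there, being a nonzero scalar plus a nilpotent. No issues.

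For essential surjectivity, your construction is in fact the paper's construction in disguise, but you have not closed the one point you yourself flag, and your proposed repair points at the wrong tool. Note that multiplication by $T^{\pm\underline\beta/p^n}\in R_n$ gives mutually inverse bijections between the isotypic pieces, so $N_{\underline\beta}\cdot T^{-\underline\beta/p^n}$ is \emph{equal} to the trivial isotypic component $N_{\underline 0}$ for every $\underline\beta$; your ``$R_{\widehat K_{\cyc}}$-span of the twists'' is therefore simply $N_{\underline 0}\cdot R_{\widehat K_{\cyc}}$, which is what the paper takes as $M$ (namely $M_{\widehat K_{\cyc},n,1}$, the generalised eigenspace for the trivial character). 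Once you see this, the ``main obstacle'' evaporates: the conjugation action $g\gamma_ig^{-1}=\gamma_i^{\chi(g)}$ sends the $\tau$-generalised eigenspace to the $\tau^{\chi(g)^{-1}}$-one, and the trivial character is the unique fixed point of this permutation, so $N_{\underline 0}$ (hence $M$) is automatically stable under all of $\Gamma$. No organisation into $\Gal(K_{\cyc}/K)$-orbits is needed, and the stability does not ``reduce to'' the arithmetic decompletion of Theorem \ref{Thm-ArithDecompletion}; as written, that part of your argument is a gap, since the individual sets $N_{\underline\beta}\cdot T^{-\underline\beta/p^n}$ are indeed permuted nontrivially by $\Gal(K_{\cyc}/K)$ and you give no actual verification that the span is stable. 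Surjectivity of $M\otimes_{R_{\widehat K_{\cyc}}}R_{\widehat K_{\cyc},n}\to M_{\widehat K_{\cyc},n}$ (equivalently, that $F(M)\cong M_\infty$) then follows because every $T^{\underline\beta/p^n}$ lies in $R_{\widehat K_{\cyc},n}$, exactly as in the paper. I would rewrite the essential surjectivity step by defining $M$ directly as the trivial generalised eigenspace and recording the one-line stability check above.
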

 \begin{proof}
   Let $M\in \Rep_{\Gamma}^{\rm uni}(R_{\widehat K_{\cyc}})$ with $M_{\infty} = M\otimes_{R_{\widehat K_{\cyc}}}\widehat R_{\infty}$.
   Using decomposition (\ref{Equ-TopoDecomposition}) for $n=0$, we obtain a $\Gamma$-equivariant decomposition
   \begin{equation}\label{Equ-TopoDecomposition-II}
       M_{\infty} = \widehat \bigoplus_{(\alpha_1,\dots,\alpha_d)\in(\bN[\frac{1}{p}]\cap[0,1))^d}MT_1^{\alpha_1}\cdots T_d^{\alpha_d}.
   \end{equation}
   Since $\Gamma_{\geo}$ acts on $M$ unipotently, if $\alpha_i \neq 0$ for some $1\leq i\leq d$, we deduce that $\gamma_i-1$ acts on $MT_1^{\alpha_1}\cdots T_d^{\alpha_d}$ invertibly. Therefore, we see that
   \[\rR\Gamma(\Gamma_{\geo},\widehat \bigoplus_{\underline 0\neq (\alpha_1,\dots,\alpha_d)\in(\bN[\frac{1}{p}]\cap[0,1))^d}MT_1^{\alpha_1}\cdots T_d^{\alpha_d}) = 0,\]
   which implies that the natural map 
   \[\rR\Gamma(\Gamma_{\geo},M)\to \rR\Gamma(\Gamma_{\geo},M_{\infty})\]
   is a quasi-isomorphism and hence so is
   \[\rR\Gamma(\Gamma,M)\to \rR\Gamma(\Gamma,M_{\infty}).\]
   In particular, we see the functor $\Rep_{\Gamma}^{\rm uni}(R_{\widehat K_{\cyc}}) \to \Rep_{\Gamma}(\widehat R_{\infty})$ is fully faithful.
   
   We have to check the essential surjectivity of the above functor. Fix an $M_{\infty} \in \Rep_{\Gamma}(\widehat R_{\infty})$. By Theorem \ref{Thm-ToricDecompltion}, it admits a good model $M_n$ over $R_n$ for some $n\gg 0$.
   We claim that $\Gamma_{\geo}$ acts on $M_n$ and hence $M_{\widehat K_{\cyc},n}:=M_n\otimes_{R_n}R_{\widehat K_{\cyc},n}$ quasi-unipotently in the sense of \cite[Lem. 2.15]{LZ}.

   Note that $\Gamma_n\cong\Gamma_{\geo}^{p^n}\rtimes\Gal(K_{\cyc}/K(\zeta_{p^n}))$ acts $R_n$-linearly on $M_n$. 
   To prove the claim, we need to show there exists some $m\gg 0$ and $N \gg 0$ such that for any $1\leq i\leq d$, the $R_n$-linear morphism $((\gamma_i^{p^n})^{p^m}-1)^N$ acts as the zero map on $M_n$. Up to an \'etale localisation of $R_n$, we may assume $M_n$ is finite free of rank $r$ and $R_n$ is an integral domain. Fix an $R_n$-basis of $M_n$, and then we get a homomorphism $\rho:\Gamma_n\to \GL_r(R_n)$. As for any $g\in \Gal(K_{\cyc}/K(\zeta_{p^m}))$ and any $1\leq i\leq d$, $g\gamma_ig^{-1} = \gamma_i^{\chi(g)}$, we have $\rho(g)\rho(\gamma_i^{p^n})\rho(g)^{-1} = \rho(\gamma_i^{p^n})^{\chi(g)}$. So if $a$ is an eigenvalue of $\rho(\gamma_i^{p^n})$ in the algebraic closure $\overline{\Frac(R_n)}$ of the fractional field of $R_n$, then so is $a^{\chi(g)}$. As $\chi(\Gal(K_{\cyc}/K(\zeta_{p^n})))$ is an open subgroup of $\bZ_p^{\times}$, we know that $a^{p^{m_i}} = 1$ for some $m_i\gg 0$. So there exists some $N_i\gg 0$ such that $(\rho(\gamma_i^{p^n})^{p^{m_i}}-1)^{N_i} = 0$. Now, we can conclude by letting $m = \max(m_1,\dots,m_d)$ and $N = \max(N_1,\dots,N_d)$.

   Using the above claim, the arguments in the paragraph below \cite[Lem. 2.15]{LZ} applies. But for the convenience of reader, we repeat the details as follows:
   
   Since $\Gamma_{\geo}$ acts on $M_{\widehat K_{\cyc},n}$ quasi-unipotently, we have a decomposition 
   \[M_{\widehat K_{\cyc},n} = \oplus_{\tau}M_{\widehat K_{\cyc},n,\tau}\]
   where $\tau$'s are finite characters of $\Gamma_{\geo}$ and 
   \[M_{\widehat K_{\cyc},n,\tau} = \{x\in M_{\widehat K_{\cyc},n}\mid (\gamma-\tau(\gamma))^m(x) = 0~\text{for} ~m\gg 0,\forall \gamma\in \Gamma_{\geo}\}\]
   denotes the corresponding generalised eigenspaces. Each $M_{\widehat K_{\cyc},n,\tau}$ is finite projective over $R_{\widehat K_{\cyc}}$ as $M_{\widehat K_{\cyc},n}$ is. 
   After enlarging $n$ if necessary, we may assume the orders of all characters appearing in the above decomposition divide $p^n$. Then for each $\tau$, there exists a $T_1^{\alpha_1}\cdots T_d^{\alpha_d}\in R_{\widehat K_{\cyc},n}$ on which $\Gamma_{\geo}$ acts via $\tau$. Denote by $M:= M_{\widehat K_{\cyc},n,1}$ the generalised eigenspace corresponding the trivial character. Then $M$ is stable by the action of $\Gamma$ and the natural map $M\otimes_{R_{\widehat K_{\cyc}}}R_{\widehat K_{\cyc},n}\to M_{\widehat K_{\cyc},n}$ is surjective and hence an isomorphism. Then the essential surjectivity follows as \[M\otimes_{R_{\widehat K_{\cyc}}}\widehat R_{\infty}\cong M_{\widehat K_{\cyc},n}\otimes_{R_{\widehat K_{\cyc}}}\widehat R_{\infty} \cong M_{\infty}\]
   and $\Gamma_{\geo}$ acts on $M$ unipotently.
   
   Finally, we complete the proof by noting that all constructions above are compatible with standard \'etale localisations of $X$.
 \end{proof}
 
 \begin{rmk}
   Proposition \ref{Prop-Unipotent} shows that for any $M\in \Rep_{\Gamma}(R_{\widehat K_{\cyc}})$, the group $\Gamma_{\geo}$ acts quasi-unipotently and Remark \ref{Rmk-QuasiUnip} shows that the inclusion $\Rep^{\rm uni}_{\Gamma}(R_{\widehat K_{\cyc}})\hookrightarrow\Rep_{\Gamma}(R_{\widehat K_{\cyc}})$ is not an equivalence. For any $M\in \Rep_{\Gamma}(R_{\widehat K_{\cyc}})$, we describe its image via the composition
   \[\Box^{\rm uni}:\Rep_{\Gamma}(R_{\widehat K_{\cyc}}) \to \Rep_{\Gamma}(\widehat R_{\infty})\xrightarrow{\simeq} \Rep_{\Gamma}^{\rm uni}(R_{\widehat K_{\cyc}}),\]
   where the first arrow is induced by base-change, as follows:
   
   Since $\Gamma_{\geo}$ acts on $M$ quasi-unipotently, $M$ admits a decomposition
   \[M = \oplus_{\tau}M_{\tau}\]
   where $\tau$'s are finite characters on $\Gamma_{\geo}$ and $M_{\tau}$ is the corresponding generalised eigenspace as above. Let $\underline T^{\tau}\in \widehat R_{\infty}$ be of the form $T_1^{\alpha_1}\cdots T_d^{\alpha_d}$ on which $\Gamma_{\geo}$ acts via $\tau^{-1}$. Then 
   \[M':= \oplus_{\tau}M_{\tau}\underline T^{\tau}\subset M\otimes_{R_{\widehat K_{\cyc}}}\widehat R_{\infty}\]
   such that $\Gamma_{\geo}$ acts on $M'$ unipotently and that 
   \[M'\otimes_{R_{\widehat K_{\cyc}}}\widehat R_{\infty} = M\otimes_{R_{\widehat K_{\cyc}}}\widehat R_{\infty}.\]
   It is easy to see that $M^{\rm uni}:=M'$ is the image of $M$ under $\Box^{\rm uni}$.
 \end{rmk}
 \begin{rmk}
   Theorem \ref{Thm-ToricDecompltion} and Proposition \ref{Prop-UnipotentEquiv} were proved in \cite[\S 14]{Tsu} by working with a suitable integral model (with some certain log structure) of $X$.
 \end{rmk}
 \begin{notation}\label{Notation-LocalChart-II}
   For any $p$-adically complete field $F$ containing $\widehat K_{\cyc}$, let $R_{F,n}$ and $\widehat R_{F,\infty}$ be the $p$-complete base-changes of $R_{\widehat K_{\cyc},n}$ and $\widehat R_{\infty}$ along the inclusion $\widehat K_{\cyc}\to F$, respectively.
   Let $L/K$ be a Galois extension containing $K_{\cyc}$ in $\overline K$ and let $\Gamma(L/K)$ be the Galois group of the cover $X_{\widehat L,\infty} = \Spa(\widehat R_{\widehat L,\infty},\widehat R_{\widehat L,\infty}) \to X$. Then we have 
   \[\Gamma(L/K)\cong \Gamma_{\geo}\rtimes\Gal(L/K)\]
   such that $g\gamma_ig^{-1} = \gamma_i^{\chi(g)}$ for any $1\leq i\leq d$ and any $g\in \Gal(L/K)$. Clearly, $\Gamma(K_{\cyc}/K) = \Gamma$.
 \end{notation}
 \begin{rmk}\label{Rmk-Ax--Tate--Sen}
     Let $X$ be a rigid analytic space over $K$. For any $p$-adic complete field extension $E$ of $K$, let $X_E$ be the base-change of $X$ along $K\to E$. Let $L$ be a Galois extension of $K$ in $\overline K$ containing $K_{\cyc}$. Then the classical Tate--Sen theory gives rise to an equivalence of categories
     \[\Vect_{\Gal(K_{\cyc}/K)}(X_{\widehat K_{\cyc}})\xrightarrow{\simeq} \Vect_{\Gal(L/K)}(X_{\widehat L}),\]
     which is induced by linear-extension and has quasi-inverse induced by taking $\Gal(L/K_{\cyc})$-invariants.
     Here $\Vect_{\Gal(K_{\cyc}/K)}(X_{\widehat K_{\cyc}})$ denotes the category of $\Gal(K_{\cyc}/K)$-equivariant analytic vector bundles on $X_{\widehat K_{\cyc}}$ and $\Vect_{\Gal(L/K)}(X_{\widehat L})$ is defined similarly. 
     
     To see this, by \'etale descent (cf. Lemma \ref{lem: analytic descent}), it is enough to show that when $X = \Spa(R,R^+)$, for any finite free $V\in \Rep^{\rm free}_{\Gal(L/K)}(R_{\widehat L})$, the $V^{\Gal(L/K_{\cyc})}$ is a finite projective $R_{\widehat K_{\cyc}}$-module such that
     \[V^{\Gal(L/K_{\cyc})}\otimes_{R_{\widehat K_{\cyc}}}R_{\widehat L}\cong V.\]
 
     Let $r$ be the rank of $V$.
     By \cite[Prop. 3.1.4, Prop. 4.1.1 and Cor. 3.2.2]{BC}, there exists a Galois extension $F\subset L$ of $K$, which is finite over $K_{\cyc}$, such that
     $\rH^1(\Gal(L/F),\GL_r(R_{\widehat L})) = 1$. In particular, we see that 
     \[W:=V^{\Gal(L/F)}\in \Rep_{\Gal(F/K)}^{\rm free}(R_{\widehat F})\]
     satisfying $W\otimes_{R_{\widehat F}}R_{\widehat L}\cong V$. As $R_{\widehat K_{\cyc}}\to  R_{\widehat F}$ is a finite \'etale morphism with Galois group $\Gal(F/K_{\cyc})$, by \'etale descent, we see that $W^{\Gal(F/K_{\cyc})}$ is a finite projective $R_{\widehat K_{\cyc}}$-module satisfying 
     \[W^{\Gal(F/K_{\cyc})}\otimes_{R_{\widehat K_{\cyc}}}R_{\widehat F}\cong W.\]
     Now, we can conclude by noting that $W^{\Gal(F/K_{\cyc})} = V^{\Gal(L/K_{\cyc})}$. 
 \end{rmk}

 \begin{lem}\label{lem: analytic descent}
     Let $X = \Spa(R,R^+)$ be affinoid over $K$. Let $L$ be a Galois extension of $K$ containing $K_{\cyc}$. Then for any $V\in \Vect_{\Gal(L/K)}(X_{\widehat L})$, there exists a finite \'etale covering $Y \to X$ such that the pull-back of $V$ along $Y_{\widehat L}\to X_{\widehat L}$ is trivial. 
 \end{lem}
 \begin{proof}

 We first choose an analytic covering $g:Z_{\widehat L} \to X_{\widehat L}$ trivializing $V$ over $Z_{\widehat L}$. Now by \cite[Lem. 2.1.3(i)]{Ber}, there exists a finite Galois extension $K^{\prime}/K$ in $L$ together with a finite \'etale covering $Z \to X_{K^{\prime}}$ such that $Z_{\widehat L} = X_{\widehat L}\times_{X_{K^{\prime}}}Z$.
 Let $Q\subset \Gal(L/K)$ be a set of lifting of elements in $\Gal(K^{\prime}/K)$. For any $\tau\in Q$, define 
 \[Z_{\tau}= Z\times_{X_{K^{\prime}},\tau}X_{K^{\prime}} \text{ and }Z_{\tau,\widehat L}:=X_{\widehat L}\times_{X_{K^{\prime}}}Z_{\tau},\]
 and let $g_{\tau}:Z_{\tau,\widehat L}\to X_{\widehat L}$ be the natural projection.
 As $V$ is $\Gal(L/K)$-equivariant, we see that $g_{\tau}^*V$ is also trivial over $Z_{\tau,\widehat L}$. Now, we define
 \[Z_Q:=\sqcup_{\tau\in Q} Z_{\tau} ,\text{} Z_{Q,\widehat L}:=\sqcup_{\tau\in Q} Z_{\tau,\widehat L},\text{ and } g_{Q}:=\sqcup_{\tau\in Q}g_{\tau}: Z_{Q,\widehat L}\to X_{\widehat L}.\]
 Then $g_Q$ trivializes $V$ over $Z_{Q,\widehat L}$ and is induced by a $\Gal(K^{\prime}/K)$-equivariant covering $Z_{Q}\to X_{K^{\prime}}$. As $X_{K^{\prime}}\to X$ is a finite \'etale Galois covering with Galois group $\Gal(K^{\prime}/K)$, by \'etale descent, there exists a finite \'etale covering $Y\to X$ such that $Z_Q = Y\times_XX_{K^{\prime}}$.
 As
 \[Z_{Q,\widehat L} = Z_Q\times_{X_{K^{\prime}}}X_{\widehat L} = Y\times_XX_{K^{\prime}} \times_{X_{K^{\prime}}}X_{\widehat L}=Y_{\widehat L},\]
 one can conclude by checking that $Y$ is desired.
 \end{proof}
 
 \begin{rmk}\label{Rmk-UnipotentEquiv}
   Note that for any Galois extension $L/K$ containing $K_{\cyc}$ in $\overline K$, the 
   pro-\'etale covering $\Spa(\widehat R_{\widehat L,\infty},\widehat R_{\widehat L,\infty}^+)\to \Spa(\widehat R_{\infty},\widehat R_{\infty}^+)$ is a Galois covering with Galois group $\Gal(L/K_{\cyc})$. By descent of vector bundles \cite[Lem.17.1.8]{SW} (and Lemma \ref{Lem-EvaluateGRep}), the base-change along $\widehat R_{\infty}\to \widehat R_{\widehat L,\infty}$ induces an equivalence of categories
   \[\Rep_{\Gamma}(\widehat R_{\infty})\xrightarrow{\simeq}\Rep_{\Gamma(L/K)}(\widehat R_{\widehat L,\infty}).\]
   As $\Gamma_{\geo}$ commutes with $\Gal(L/K_{\cyc})$, by Remark \ref{Rmk-Ax--Tate--Sen}, the scalar extension induces the following equivalence of categories
   \[\Rep_{\Gamma}^{\rm uni}(\widehat R_{\widehat K_{\cyc}})\xrightarrow{\simeq}\Rep_{\Gamma(L/K)}^{\rm uni}(\widehat R_{\widehat L}).\]
   Therefore, by Proposition \ref{Prop-Unipotent}, we have the following commutative diagram of equivalences of categories:
   \[\xymatrix@C=0.5cm{
   \Rep_{\Gamma}^{\rm uni}(\widehat R_{\widehat K_{\cyc}})\ar[rrrr]^{\simeq}\ar[d]^{\simeq}&&&&\Rep_{\Gamma}(\widehat R_{\infty})\ar[d]^{\simeq}\\
    \Rep_{\Gamma(L/K)}^{\rm uni}(\widehat R_{\widehat L})\ar[rrrr]^{\simeq}&&&&\Rep_{\Gamma(L/K)}(\widehat R_{\widehat L,\infty}),
   }\]
   which is induced by the scalar extensions.
 \end{rmk}
 A similar but much easier argument for the proof of Lemma \ref{Lem-toricdecompletion} also shows the following.
 \begin{lem}\label{Lem-geotoricdecompleting}
   The triple $(\{R_{F,n}\},\widehat R_{F,\infty},\Gamma_{\geo})$ is stably decompleting.
 \end{lem}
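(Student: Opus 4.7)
The plan is to adapt the proof of Lemma \ref{Lem-toricdecompletion} in the strictly easier setting where the coefficient field $F$ already contains $\widehat K_{\cyc}$, so $\Gamma_{\geo} \cong \Zp^d$ acts on $F$ trivially. First, I would observe that $R_{F,n}$ and $\widehat R_{F,\infty}$ are stably uniform Banach rings over the nonarchimedean field $F$, and that the pull-back of the triple to any rational subset $U \subset \Spa(R_{F,n}, R_{F,n}^+)$ is a toric tower of the same shape. Hence by Theorem \ref{Thm-DLLZ-II} and Definition \ref{StablyDecompletingSystem}, it suffices to verify that the triple $(\{R_{F,n}\}, \widehat R_{F,\infty}, \Gamma_{\geo})$ is weakly decompleting, where I take the filtration $\Gamma_{\geo,n} := \Gamma_{\geo}^{p^n}$, noting that the (finite) quotients $\Gamma_{\geo}/\Gamma_{\geo,n}$ trivially admit continuous cross-sections.

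Next I would verify conditions (1), (2) of Definition \ref{WeakDecompletingSystem}. Both follow from the $\Gamma_{\geo}$-equivariant decomposition
\[
\widehat R_{F,\infty} \;=\; \widehat{\bigoplus}_{\underline\alpha \in (\bN[\tfrac{1}{p}]\cap[0,1))^d} R_{F,0}\cdot T_1^{\alpha_1}\cdots T_d^{\alpha_d},
\]
or its variant refined over $R_{F,n}$. Since each $\gamma_i$ acts on $T_j^{\alpha_j}$ by multiplication by the root of unity $\zeta^{\delta_{ij}\alpha_j}$ (which has norm $1$), the action is isometric; the obvious projection onto the terms with $\alpha_i \in p^{-n}\bN[\tfrac{1}{p}]\cap[0,1/p^n)$ and $\underline\alpha \neq \underline 0$ (after rearranging into $R_{F,n}$-cosets) provides the required isometric section.

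The main content is condition (3). I would compute $\rR\Gamma(\Gamma_{\geo}^{p^n}, \widehat R_{F,\infty}/R_{F,n})$ using the Koszul complex $\rK(\gamma_1^{p^n}-1,\dots,\gamma_d^{p^n}-1;-)$. Crucially, unlike in the arithmetic case, the ``trivial eigenspace'' contribution $R_{F,n}/R_{F,n}$ \emph{vanishes}, so the whole quotient splits as a $\widehat{\bigoplus}$ over strictly non-trivial indices $\underline\alpha \neq \underline 0$. On each such eigensummand $R_{F,n}\cdot T^{\underline\alpha/p^m}$ some $\gamma_i^{p^n}-1$ acts by a scalar of the form $\zeta - 1$ with $\zeta$ a non-trivial $p$-power root of unity, so by the argument of \cite[Lem. 6.18]{Sch-Pi} (cf. the proof of Lemma \ref{Lem-toricdecompletion}) the Koszul cohomology is concentrated in degree $\geq 1$ and annihilated by $\zeta_p - 1$. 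Taking the completed direct sum yields that $\rR\Gamma(\Gamma_{\geo}^{p^n}, \widehat R_{F,\infty}/R_{F,n})$ is killed by $\zeta_p-1$, and uniform strict exactness of $C(\Gamma_{\geo,n}^\bullet, \widehat R_{F,\infty}/R_{F,n})$ with constant $c = |\zeta_p-1|^{-1}$ follows.

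The proof has essentially no real obstacle: everything reduces to the non-zero eigenspace case of the Koszul computation already carried out in the proof of Lemma \ref{Lem-toricdecompletion}, and the annoying Hochschild--Serre step needed there to handle $\Gal(K_{\cyc}/K(\zeta_{p^n}))$-cohomology of the invariant part disappears completely because the invariant part $R_{F,n}/R_{F,n}$ is zero in the present setting. The only mild care needed is to ensure the decomposition is indexed consistently over $R_{F,n}$ (rather than over $R_{F,0}$) so that the isometric section in condition (2) is $R_{F,n}$-linear.
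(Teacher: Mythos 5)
Your proof is correct and is exactly the argument the paper has in mind: the paper itself omits the proof with the remark that it is "a similar but much easier argument" than that of Lemma \ref{Lem-toricdecompletion}, and your write-up carries this out, correctly identifying that the only simplification is the disappearance of the invariant summand $R_{F,n}/R_{F,n}$ (and hence of the Hochschild--Serre step for $\Gal(K_{\cyc}/K(\zeta_{p^n}))$), leaving only the non-trivial eigenspace Koszul computation killed by $\zeta_p-1$. No gaps.
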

 We leave its proof to readers as we will not use this lemma in this paper.
 \begin{thm}\label{Thm-GeoToricDecompletion}
   The triple $(\{R_{F,n}\},\widehat R_{F,\infty},\Gamma_{\geo})$ is a decompletion system.
 \end{thm}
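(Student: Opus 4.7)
The plan is essentially to combine Lemma \ref{Lem-geotoricdecompleting} with Theorem \ref{Thm-DLLZ-II}: once the triple $(\{R_{F,n}\},\widehat R_{F,\infty},\Gamma_{\geo})$ is known to be stably decompleting, the general result from \cite[Appendix A]{DLLZ} immediately upgrades it to a decompletion system. So the substantive content is the verification of the stably decompleting condition, which is what Lemma \ref{Lem-geotoricdecompleting} records and what I would unpack in a proof.

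To prove Lemma \ref{Lem-geotoricdecompleting}, I would mimic the strategy of Lemma \ref{Lem-toricdecompletion} but in a strictly simpler setting: since the coefficient field $F$ already contains $\widehat K_{\cyc}$, the arithmetic Galois factor $\Gal(K_{\cyc}/K)$ has been absorbed and only the geometric torus action by $\Gamma_{\geo}$ remains. First I would reduce to checking the weakly decompleting conditions of Definition \ref{WeakDecompletingSystem} for each rational subdomain of $\Spa(R_{F,n},R_{F,n}^+)$, using that $R_{F,n}$ and $\widehat R_{F,\infty}$ are stably uniform over the perfectoid field $F$.

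For each such rational subdomain, the three conditions of Definition \ref{WeakDecompletingSystem} go through essentially verbatim. Condition (1) (isometric action) is immediate from the supremum norm. Condition (2) (existence of an isometric $R_{F,n}$-linear section of $\widehat R_{F,\infty}\to\widehat R_{F,\infty}/R_{F,n}$) follows from the $\Gamma_{\geo}$-equivariant topological decomposition
\[
\widehat R_{F,\infty} = \widehat\bigoplus_{(\alpha_1,\dots,\alpha_d)\in(\bN[\tfrac{1}{p}]\cap[0,1))^d} R_{F,n}\, T_1^{\frac{1}{p^n}\alpha_1}\cdots T_d^{\frac{1}{p^n}\alpha_d},
\]
which is the base change along $\widehat K_{\cyc}\hookrightarrow F$ of the decomposition used in the proof of Lemma \ref{Lem-toricdecompletion}. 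Condition (3) (uniform strict exactness) reduces via the Koszul complex $\rK(\gamma_1^{p^n}-1,\dots,\gamma_d^{p^n}-1;-)$ to checking that on each isotypic component with $\underline\alpha\neq\underline 0$ at least one $\gamma_i^{p^n}-1$ acts invertibly with a uniform bound on the inverse (up to a factor annihilated by $\zeta_p-1$), exactly as in \cite[Lem.~6.18]{Sch-Pi} and in the proof of Lemma \ref{Lem-toricdecompletion}. The key simplification is that we do not need the additional step treating $\rR\Gamma(\Gal(K_{\cyc}/K(\zeta_{p^l})),\widehat R_{\infty}^+/R_l^+)$, since that factor is absent here.

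I do not expect any serious obstacle: the main point is bookkeeping, namely checking that passing from $\widehat K_{\cyc}$ to the larger perfectoid coefficient field $F$ preserves the topological direct sum decomposition, the orthogonal projection onto $R_{F,n}$, and the uniform bounds on the Koszul differentials. Each of these is a formal consequence of the fact that the extension $\widehat K_{\cyc}\to F$ is a perfectoid extension and all rings involved are $F$-Banach in a compatible way. Granted Lemma \ref{Lem-geotoricdecompleting}, Theorem \ref{Thm-GeoToricDecompletion} follows directly from Theorem \ref{Thm-DLLZ-II}.
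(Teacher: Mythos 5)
Your proposal matches the paper's proof exactly: Theorem \ref{Thm-GeoToricDecompletion} is obtained by combining Lemma \ref{Lem-geotoricdecompleting} with Theorem \ref{Thm-DLLZ-II}, and the paper explicitly leaves the lemma to the reader as "a similar but much easier argument" than Lemma \ref{Lem-toricdecompletion}, which is precisely the simplification (dropping the arithmetic Galois factor) that you carry out.
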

 \begin{proof}
   Just combine Theorem \ref{Thm-DLLZ-II} with Lemma \ref{Lem-geotoricdecompleting}.
 \end{proof}
 \begin{rmk}
   \begin{enumerate}
       \item Theorem \ref{Thm-GeoToricDecompletion} was proved essentially in \cite{AGT} and \cite{Tsu}.
       
       \item Unlike Proposition \ref{Prop-UnipotentEquiv}, an $\widehat R_{L,\infty}$-representation of $\Gamma_{\geo}$ may not admits a model over $R_L$. However, if $M$ is small, which means that the $\Gamma_{\geo}$-action on $M$ is ``close'' to the identity, then it admits a good model over $R_L$. See \cite[Chap. II.14]{AGT}, \cite[\S 12]{Tsu} or \cite[\S 3]{Wan23} for details.
   \end{enumerate}
 \end{rmk}

\section{A $p$-adic Simpson correspondence for smooth rigid analytic varieties over $K$}\label{Sec-Simpson}
 In this section, we construct a $p$-adic Simpson correspondence for generalised representations on $X_{\proet}$ for \emph{smooth} rigid analytic varieties $X$, by working on the arenas of \cite{LZ} and \cite{DLLZ}. The main ingredient is a period sheaf $\OC$ on $X_{\proet}$, which was firstly studied in \cite{Hy} in the local case. It is defined as the graded piece $\Gr^0\calO\BBdR$ of the de Rham period sheaf on $X_{\proet}$ in \cite[\S 6]{Sch-Pi} and was used to study $p$-adic Simpson correspondence in \cite{LZ}. In particular, the period sheaf $\OC$ is equipped with a natural Higgs field
 \begin{equation}\label{Equ-Theta}
     \Theta : \OC\to \OC\otimes_{\calO_X}\Omega^1_X(-1)
 \end{equation}
 such that the induced Higgs complex $\HIG(\OC,\Theta)$ is an resolution of $\OX$.
 As remarked in \cite[Rem. 2.1]{LZ}, let
 \[0\to\OX\to\calE\to\OX\otimes_{\calO_X}\Omega^1_X(-1)\to 0\]
 denote the Faltings' extension (which can be found in \cite[Cor. 6.14]{Sch-Pi} up to a Tate twist). Then 
 \[\OC = \varinjlim_n\Sym^n\calE\]
 where the map $\Sym^n\calE\to\Sym^{n+1}\calE$ sends each local section $x_1\otimes\cdots\otimes x_n\in\Sym^n\calE$ to $1\otimes x_1\otimes\cdots\otimes x_n\in \Sym^{n+1}\calE$ and the natural Higgs field $\Theta$ is induced by sending each local section $x_1\otimes\cdots\otimes x_n$ of $\Sym^n\calE$ to the local section $-\sum_{i=1}^nx_1\otimes\cdots x_{i-1}\otimes x_{i+1}\otimes\cdots\otimes x_n\otimes \overline x_i$ of $\Sym^{n-1}\calE\otimes_{\calO_X}\Omega^1_X(-1)$\footnote{Our Higgs field $\Theta$ differs from that in \cite{Hy} by a sign but is compatible with $\Gr^0\nabla$ in \cite[\S 6]{Sch-Pi}.}, where $\overline x_i$ denotes the image of $x_i$ under the projection $\calE\to \OX\otimes_{\calO_X}\Omega^1_X(-1)$. See \cite{Hy} for more details.
 \begin{dfn}\label{Dfn-G-HiggsBundle}
   Let $L/K$ be an algebraic Galois extension containing $\mu_{p^{\infty}}$ with Galois group $\Gal(L/K)$ and $\widehat L$ be the completion of $L$ in $C$. 
   By a \emph{$\Gal(L/K)$-Higgs bundle of rank $l$} on $X_{\widehat L,\et}$, we mean a Higgs bundle $(\calH,\theta_{\calH})$ with $\theta_{\calH}$ nilpotent of rank $l$ on $X_{\widehat L,\et}$ together with a continuous $\Gal(L/K)$-action on $\calH$ with respect to the $p$-adic topology on $\calH$ such that the morphism
   \[\calH\xrightarrow{\theta_{\calH}}\calH\otimes_{\calO_X}\Omega^1_X(-1)\]
   is $\Gal(L/K)$-equivariant. 
   We denote by $\HIG_{\Gal(L/K)}(X_{\widehat L})$ the category of $\Gal(L/K)$-Higgs bundles on $X_{\widehat L,\et}$. 
   In particular, when $L = \overline K$ (and hence $\widehat L = C$), we denote $\HIG_{\Gal(\overline K/K)}(X_{\widehat{\overline K}})$ by $\HIG_{G_K}(X_C)$.
   
   In particular, when $X = \Spa(R,R^+)$ is smooth affinoid, one can define \emph{$\Gal(L/K)$-Higgs modules $(H,\theta_H:H\to H\otimes_{R}\Omega^1_R(-1))$ of rank $l$} over $R_{\widehat L}$ similarly and denote the category of those $(H,\theta_H)$ by $\HIG_{\Gal(L/K)}(R_{\widehat L})$. We remark that in this case, taking global sections on $X$ induces an equivalence of categories $\HIG_{\Gal(L/K)}(X_{\widehat L})\xrightarrow{\simeq} \HIG_{\Gal(L/K)}(R_{\widehat L})$ sending each $(\calH,\theta_{\calH})$ to $(H,\theta_H)$ with $H = \calH(X)$ and $\theta_H$ being induced by $\theta_{\calH}$.
 \end{dfn}
 \begin{rmk}
   We claim that the nilpotency condition on $\theta_{\calH}$ in Definition \ref{Dfn-G-HiggsBundle} can be deduced from other assumptions. To see this, we may assume $X = \Spa(R,R^+)$ admits a toric chart and assume $(\calH,\theta_{\calH})$ is induced by a $\Gal(K_{\cyc}/K)$-Higgs module $(H,\theta_H)$ over $R_{\widehat K_{\cyc}}$. By Theorem \ref{Thm-ArithDecompletion}, we may further assume $(H,\theta_H)$ is defined over $R_{K(\zeta_{p^n})}$ for some $n\geq 0$. By abuse of notation, we now assume $H$ is itself a finite projective $R_{K(\zeta_{p^n})}$-module. Write $\theta_H = \sum_{i=1}^d\theta_i\otimes\frac{\dlog T_i}{t}$ with $\theta_i$'s belonging to $\End_{R_{K(\zeta_{p^n})}}(H)$. Then for any $x\in H$ and any $g\in \Gal(K_{\cyc}/K)$, we have 
   \[g(\theta_H(x)) = g(\sum_{i=1}^d\theta_i(x)\otimes\frac{\dlog T_i}{t}) = \chi(g)^{-1}\sum_{i=1}^dg(\theta_i(x))\otimes\frac{\dlog T_i}{t}\]
   and that
   \[\theta_H(g(x)) = \sum_{i=1}^d\theta_i(g(x))\otimes\frac{\dlog T_i}{t}.\]
   So we get $g\theta_ig^{-1} = \chi(g)\theta_i$ for any $1\leq i\leq d$. To see $\theta_i$'s are nilpotent, we may apply the proof of \cite[Lem. 2.15]{LZ}, just like the third paragraph in the proof of Proposition \ref{Prop-UnipotentEquiv}. Roughly, up to an \'etale localisation of $R_{K(\zeta_{p^n})}$, we may assume that $H$ is finite free over $R_{K(\zeta_{p^n})}$ and $R_{K(\zeta_{p^n})}$ is an integral domain, and reduce the problem to showing that the eigenvalues of $\theta_i$ are all zero. Note that for any $g\in \Gal(K_{\cyc}/K(\zeta_{p^n}))$, it acts $R_{K(\zeta_{p^n})}$-linearly on $H$. As $g\theta_ig^{-1} = \chi(g)\theta_i$, we see that if $a$ is an eigenvalue of $\theta_i$, then so is $\chi(g)a$ for any $g\in \Gal(K_{\cyc}/K(\zeta_{p^n}))$. This forces that $a = 0$ as $\chi(\Gal(K_{\cyc}/K(\zeta_{p^n})))$ is an open subgroup of $\bZ_p^{\times}$.
 \end{rmk}
 Before we move on, we introduce some notations which will appear several times in this section.
 For a generalised representation $\calL\in\Vect(X_{\proet},\OX)$, put
 \begin{equation}\label{Equ-Theta_L}
     \Theta_{\calL} := \id_{\calL}\otimes\Theta:\calL\otimes_{\OX}\OC\to\calL\otimes_{\OX}\OC\otimes_{\calO_X}\Omega^1_X(-1).
 \end{equation}
 The $\Theta_{\calL}$ satisfies $\Theta_{\calL}\wedge\Theta_{\calL} = 0$ as this holds true for $\Theta$. For a Higgs bundle $(\calH,\theta_{\calH})$ on $X_{C,\et}$, put
 \begin{equation}\label{Equ-Theta_H}
   \Theta_{\calH} := \theta_{\calH}\otimes\id_{\OC}+\id_{\calH}\otimes\Theta:\calH\otimes_{\calO_{X_C}}\OC_{\mid X_C}\to\calH\otimes_{\calO_{X_C}}\OC_{\mid X_C}\otimes_{\calO_X}\Omega_X^1(-1).
 \end{equation}
 The $\Theta_{\calH}$ also satisfies $\Theta_{\calH}\wedge\Theta_{\calH} = 0$ as this holds true for both $\theta_{\calH}$ and $\Theta$.
 
 Then our main result can be stated as follows:
 
 \begin{thm}\label{Thm-Simpson}
   Let $\nu:X_{\proet}\to X_{C,\et}$ be the natural map of sites and let $(\OC,\Theta)$ be as above. 
   For any generalised representation $\calL\in \Vect(X_{\proet},\OX)$, let $\Theta_{\calL}=\id_{\calL}\otimes\Theta$, then the rule
   \[\calL\mapsto (\calH(\calL),\theta_{\calH(\calL)}):= (\nu_*(\calL\otimes_{\OX}\OC),\nu_*(\Theta_{\calL}))\]
   induces a rank-preserving equivalence from the category $\Vect(X_{\proet},\OX)$ of generalised representations on $X_{\proet}$ to the category $\HIG_{G_K}(X_{C})$ of $G_K$-Higgs bundles on $X_{C,\et}$, which preserves tensor products and dualities. Moreover, the following assertions are true:
   
   \begin{enumerate}
       \item For any $i\geq 1$, the higher direct image $\rR^i\nu_*(\calL\otimes_{\OX}\OC) = 0$.
       
       \item Let $\Theta_{\calH(\calL)}=\theta_{\calH(\calL)}\otimes\id_{\OC}+\id_{\calH(\calL)}\otimes\Theta$. Then there exists a natural isomorphism
       \[(\calH(\calL)\otimes_{\calO_{X_{C}}}\OC_{\mid X_{C}},\Theta_{\calH(\calL)}) \xrightarrow{\cong} (\calL\otimes_{\OX}\OC,\Theta_{\calL})_{\mid X_{C}}\]
       of Higgs fields.
       
       \item Let $(\HIG(\calH(\calL),\theta_{\calH(\calL)})$ denote the Higgs complex induced by $(\calH(\calL),\theta_{\calH(\calL)})$. Then there exists a natural $G_K$-equivariant quasi-isomorphism
       \[\rR\Gamma(X_{C,\proet},\calL)\cong \rR\Gamma(X_{C,\et},(\HIG(\calH(\calL),\theta_{\calH(\calL)}))\]
       which is compatible with $G_K$-actions. As a consequence, we get a quasi-isomorphism
       \[\rR\Gamma(X_{\proet},\calL)\cong \rR\Gamma(G_K,\rR\Gamma(X_{C,\et},(\HIG(\calH(\calL),\theta_{\calH(\calL)}))).\]
       
       \item Let $X'\to X$ be a smooth morphism of rigid analytic varieties over $K$. Then the equivalence in (3) is compatible with pull-back along $f$. In other words, for any $\calL\in\Vect(X_{\proet},\OX)$ with corresponding $(\calH,\theta_{\calH})\in\HIG_{G_K}(X_C)$, we have 
       \[(\calH(f^*\calL),\theta_{\calH(f^*\calL)})\cong (f^*\calH,f^*\theta_{\calH}).\]
   \end{enumerate}
 \end{thm}
  \begin{rmk}\label{Rmk-Simpson}
       By Remark \ref{Rmk-Ax--Tate--Sen}, for any Galois extension $L$ of $K$ containing $K_{\cyc}$, there is an equivalence of categories
       \[\HIG_{\Gal(L/K)}(X_{\widehat L})\to\HIG_{G_K}(X_C)\]
       induced by the scalar extension, whose quasi-inverse is induced by taking $\Gal(\overline K/L)$-invariants.
       For this reason, one can replace $C$ and $G_K$ by $\widehat L$ and $\Gal(L/K)$ respectively in the statement of Theorem \ref{Thm-Simpson} and then conclude an equivalence of categories
       \[\Vect(X_{\proet},\OX)\simeq \HIG_{\Gal(L/K)}(X_{\widehat L})\]
       such that the corresponding assertions (1)-(4) hold true, and are compatible with Theorem \ref{Thm-Simpson} via the base-change along $\widehat L\to C$.
 \end{rmk}
 \begin{rmk}\label{Rmk-AnalyticEtaleComparison}
    By analytic-\'etale comparison (cf. \cite[Prop. 8.2.3]{FvdP}), we may replace \'etale site $X_{C,\et}$ by analytic site $X_{C,\an}$ in the statement of Theorem \ref{Thm-Simpson} such that all results still hold true. The same remark also applies to Remark \ref{Rmk-Simpson}.
 \end{rmk}
 \begin{rmk}
   \begin{enumerate}
       \item When $X$ is affine, Theorem \ref{Thm-Simpson} was also achieved by Tsuji \cite[Thm 15.2]{Tsu} by choosing a certain integral model of $X$ (and considering a certain log structure on the chosen model), which is not necessary in our approach.
       
       \item When $\calL = \bL\otimes_{\bZ}\OX$ is induced by a $\Qp$-local system on $X_{\et}$, Theorem \ref{Thm-Simpson} reduces to \cite[Thm 2.1]{LZ} by studying decompletion theory for relative analogue of the overconvergent period ring $\widetilde \bfB^{\dagger}$. 
       Our proof is inspired of the work in \cite{LZ} and improve theirs to any generalised representations by using decompletion theory in the previous section.
       
       \item Theorem \ref{Thm-Simpson} can be also deduced from \cite{Heu} when $X$ is a curve and \cite{HMW} when $X$ is abeloid by noticing that the morphism ``$\rH\rT\log$'' in loc.cit. is $G_K$-equivariant.

       \item[(4)] As the canonical map $K\to\BdRp$ gives rise to a $G_K$-equivariant lifting of $X_C$ to $\BdRp/t^2$, the equivalence and cohomological comparison in Theorem \ref{Thm-Simpson} can be deduced from a very recent work of Heuer \cite{Heu-Simpson} when $X$ is furthermore proper. It is still a problem to compare two constructions in this paper and in loc.cit..
   \end{enumerate}
 \end{rmk}
  Before moving on, let us give a immediate corollary of Theorem \ref{Thm-Simpson}.
  
  \begin{cor}\label{Cor-Simpson}
    Let $d$ be the dimension of $X$ over $K$. 
    
    \begin{enumerate}
       \item Assume $X$ is quasi-compact. Then $\rR\Gamma(X_{C,\proet},\calL)$ is concentrated in degree $[0,2d]$.
       
       \item If moreover $X$ is proper, then $\rR\Gamma(X_{C,\proet},\calL)$ is a perfect complex of $C$-representations of $G_K$ and $\rR\Gamma(X_{\proet},\calL)$ is a perfect complex of $K$-vector spaces concentrated in degree $[0,2d+1]$. 
    \end{enumerate}
  \end{cor}
  \begin{proof}
     We first prove Item (1). Since $X$ is quasi-compact and locally noetherian, it is a noetherian space. Then we obtain that $\rR\Gamma(X_{C,\proet},\calL)$ is concentrated in degree $[0,2d]$ by combining Remark \ref{Rmk-AnalyticEtaleComparison} with Grothendieck's vanishing theorem \cite[Thm 3.6.5]{Gro}.

     Now, we prove Item (2). The first assertion follows from the properness of $X$ and Item (1). It remains to show that $\rR\Gamma(X_{\proet},\calL)$ is a perfect complex of $K$-vector spaces concentrated in degree $[0,2d+1]$.
     Due to Remark \ref{Rmk-Simpson}, one may replace $\widehat K_{\cyc}$ and $\Gal(K_{\cyc}/K)$ instead of $C$ and $G_K$ in Theorem \ref{Thm-Simpson} and deduce that $\rR\Gamma(X_{\widehat K_{\cyc},\proet},\calL)$ is a perfect complex of $\widehat K_{\cyc}$-representations of $\Gal(K_{\cyc}/K)$  concentrated in degree $[0,2d]$. 
     Thanks to Theorem \ref{Thm-Simpson} (3), we can conclude by the following well-known fact:
     For any finite dimensional representations $V$ of $\Gal(K_{\cyc}/K)$ over $K_{\cyc}$, the $\rR\Gamma(\Gal(K_{\cyc}/K),V)$ is concentrated in degree $[0,1]$ (for example see \cite[Thm. 7.17]{GMW-HT}).
  \end{proof}
  
\subsection{A local version of Theorem \ref{Thm-Simpson}}

 This subsection is devoted to a local version of Theorem \ref{Thm-Simpson}. More precisely, we assume $X = \Spa(R,R^+)$ is smooth affinoid of dimension $d$ over $K$, which admits a toric chart, and keep the notations in Notation \ref{Notation-LocalChart} and Notation \ref{Notation-LocalChart-II}.
 
 The following lemma is well-known:
 \begin{lem}\label{Lem-EvaluateGRep}
   The evaluation at $X_{\widehat L,\infty}$ induces an equivalence from the category $\Vect(X_{\proet},\OX)$ to the category $\Rep_{\Gamma(L/K)}(\widehat R_{\widehat L,\infty})$ of representations of $\Gamma(L/K)$ over $\widehat R_{\widehat L,\infty}$, which preserves tensor products and dualities. Moreover, for any $\calL\in\Vect(X_{\proet},\OX)$, there exists a natural quasi-isomorphism
   \[\rR\Gamma(X_{\proet},\calL)\simeq \rR\Gamma(\Gamma(L/K),\calL(X_{\widehat L,\infty})).\]
 \end{lem}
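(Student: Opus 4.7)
The strategy is standard pro-étale Galois descent combined with the almost vanishing of higher cohomology of $\widehat{\mathcal{O}}_X$ on affinoid perfectoids. The cover $X_{\widehat L,\infty}\to X$ is an affinoid perfectoid object of $X_{\proet}$, and it is pro-étale Galois with group $\Gamma(L/K)$, since by construction $X_{\widehat L,\infty}$ is the inverse limit of the finite étale covers $X_n\to X_{\widehat L}$ composed with the pro-finite-étale base change $X_{\widehat L}\to X$. In particular, $\widehat{\mathcal{O}}_X(X_{\widehat L,\infty})=\widehat R_{\widehat L,\infty}$ carries a continuous semilinear $\Gamma(L/K)$-action, so the evaluation functor $\mathcal{L}\mapsto \mathcal{L}(X_{\widehat L,\infty})$ lands in $\mathrm{Rep}_{\Gamma(L/K)}(\widehat R_{\widehat L,\infty})$ and manifestly preserves ranks, tensor products and duals.

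First I would check full faithfulness and essential surjectivity. Since $X_{\widehat L,\infty}$ and its pullbacks along standard étale maps $U\to X$ form a basis of $X_{\proet}/X$ consisting of affinoid perfectoids, any generalised representation $\mathcal{L}$ is determined by its values on this basis together with the patching data, which is exactly the $\Gamma(L/K)$-action together with compatibility under standard étale localisation. Concretely, given $M\in\mathrm{Rep}_{\Gamma(L/K)}(\widehat R_{\widehat L,\infty})$, one defines a presheaf on the site of affinoid perfectoids over $X$ by $U_\infty\mapsto M\otimes_{\widehat R_{\widehat L,\infty}}\widehat{\mathcal{O}}_X(U_\infty)$; by faithfully flat descent along the Galois cover (equivalently, by Scholze--Kedlaya--Liu descent \cite[Thm.~3.5.8]{KL16}) this sheafifies to a locally finite free $\widehat{\mathcal{O}}_X$-module on $X_{\proet}$ whose value at $X_{\widehat L,\infty}$ recovers $M$ with its action. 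This gives the quasi-inverse.

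Next I would establish the cohomological comparison via the Cartan--Leray / Hochschild--Serre spectral sequence associated to the Galois cover $X_{\widehat L,\infty}\to X$:
\[
E_2^{i,j}=\rH^i(\Gamma(L/K),\rH^j(X_{\widehat L,\infty},\mathcal{L}))\Longrightarrow \rH^{i+j}(X_{\proet},\mathcal{L}).
\]
The key input is that $\rH^j(X_{\widehat L,\infty},\mathcal{L})=0$ for $j\ge 1$: after passing to a finite étale cover trivialising $\mathcal{L}$, this reduces to the (almost) vanishing of higher cohomology of $\widehat{\mathcal{O}}_X$ on affinoid perfectoids, which is a direct consequence of Faltings's almost purity theorem in the form of \cite[Lem.~4.10, Lem.~5.5]{Sch-Pi}. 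Thus the spectral sequence degenerates and yields the desired quasi-isomorphism $\rR\Gamma(X_{\proet},\mathcal{L})\simeq \rR\Gamma(\Gamma(L/K),\mathcal{L}(X_{\widehat L,\infty}))$.

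The only genuinely delicate point is ensuring that a continuous semilinear $\Gamma(L/K)$-representation $M$ really does descend to a locally finite free object of $\mathrm{Vect}(X_{\proet},\widehat{\mathcal{O}}_X)$; the finite projective hypothesis on $M$ is crucial here, as it allows us to invoke \cite[Thm.~3.5.8]{KL16} on the full-faithfulness of the ``generic fibre'' functor from finite projective $\widehat R_{\widehat L,\infty}$-modules (with descent data) to vector bundles on the pro-étale site. Once this descent step is granted, preservation of tensor products and duals is automatic because evaluation at a single object commutes with both operations, and the inverse construction is defined by the same formula at the level of $\widehat R_{\widehat L,\infty}$-modules.
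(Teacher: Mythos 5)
Your proposal is correct and follows essentially the same route as the paper: descent along the pro-\'etale Galois cover $X_{\widehat L,\infty}\to X$ (the paper phrases this via the \v Cech nerve, the identification $\OX(X_{\widehat L,\infty}^{\bullet/X})\cong C(\Gamma(L/K)^{\bullet},\widehat R_{\widehat L,\infty})$ and \cite[Thm.~9.2.15]{KL15} rather than \cite[Thm.~3.5.8]{KL16}, but this is the same descent mechanism), and the cohomological comparison from the vanishing of higher cohomology on affinoid perfectoids (\cite[Prop.~2.3]{LZ}) together with the \v Cech-to-derived spectral sequence, of which your Cartan--Leray sequence is the Galois-cover incarnation.
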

 \begin{proof}
   Note that we may regard generalised representations as $\OX$-crystals on $X_{\proet,\rm{aff},\perf}$. Since $X_{\widehat L,\infty}$ is a cover of $X$ with Galois group $\Gamma(L/K)$, it is a cover of the final object of $\Sh(X_{\proet})$. We denote by $X_{\widehat L,\infty}^{\bullet/X}$ the corresponding \v Cech nerve. In particular, for any $n\geq 0$, the $X_{\widehat L,\infty}^{n/X}$ is the self-product of $n+1$ copies of $X_{\widehat L,\infty}$ over $X$. We claim that there exists an natural isomorphism 
   \[X_{\widehat L,\infty}^{n/X} = \Spa(C(\Gamma(L/K)^n,\widehat R_{\widehat L,\infty}),C(\Gamma(L/K)^n,\widehat R_{\widehat L,\infty}^+)),\]
   where $C(\Gamma(L/K)^n,\widehat R_{\widehat L,\infty}^+)$ denotes the continuous functions from $\Gamma(L/K)^n$ to $\widehat R_{\widehat L,\infty}^+$. 
   
   Indeed, the claim follows from the same argument in the proof \cite[Lem. 5.6]{Sch-Pi}: 
   As $\Gamma(L/K)^n$ is a profinite group, it can be viewed as an object $\underline{\Gamma(L/K)}^n$ in $\Spa(K,\calO_K)_{\text{prof\'et}}$ and hence an object in $\Spa(K,\calO_K)_{\proet}$ such that
   \[\begin{split}
       \underline{\Gamma(L/K)}^n = \Spa(C(\Gamma(L/K)^n,K),C(\Gamma(L/K)^n,\calO_K)).
   \end{split}\]
   Via the pull-back to $X_{\proet}$, it can be viewed as an object in $X_{\proet}$, and then we obtain an object 
   \[X_{\widehat L,\infty}\times \underline{\Gamma(L/K)}^n = \Spa(C(\Gamma(L/K)^n,\widehat R_{\widehat L,\infty}),C(\Gamma(L/K)^n,\widehat R_{\widehat L,\infty}^+))\in X_{\proet},\] which is affinoid perfectoid as $X_{\widehat L,\infty}$ is. On the other hand, as $X_{\widehat L,\infty}\to X$ is a Galois covering whose Galois group is exactly $\Gamma(L/K)$, we have an natural isomorphism 
   $X_{\widehat L,\infty}^{n/X} \cong X_{\widehat L,\infty}\times \underline{\Gamma(L/K)}^n$, which implies the claim as desired.
   
   Thanks to the claim, we have an isomorphism of cosimplicial rings
   \[\OX(X_{\widehat L,\infty}^{\bullet/X})\cong C(\Gamma(L/K)^{\bullet},\widehat R_{\widehat L,\infty}).\]
   By $v$-descent of vector bundles (cf. Example \ref{Exam-GRep} or \cite[Lem. 17.1.8]{SW}), we get an equivalence of categories
   \[\Vect(X_{\proet},\OX)\simeq{\rm Strat}(\OX(X_{\widehat L,\infty}^{\bullet/X}))\simeq {\rm Strat}(C(\Gamma(L/K)^{\bullet},\widehat R_{\widehat L,\infty})).\]

   Before we move on, let us recall that if $\Spa(S,S^+)$ is affinoid perfectoid, then any finite projective $S$-module $E$ is endowed with the canonical topology in the sense of \cite[Lem. 2.2.6(1)]{KL15} such that any morphism $E_1\to E_2$ of finite projective is always bounded by \cite[Lem. 2.2.6(2)]{KL15}. In particular, any automorphism of a finite projective $S$-module is alway a bounded (and hence an automorphism in the category of Banach $S$-modules) by \cite[Thm. 2.2.8]{KL15}. Note that $C(\Gamma(L/K)^{\bullet},\widehat R_{\widehat L,\infty})$ is perfectoid. The above argument implies that for any stratification $(M,\varepsilon_M)\in {\rm Strat}(C(\Gamma(L/K)^{\bullet},\widehat R_{\widehat L,\infty}))$, the $\varepsilon_M$ and its inverse $\varepsilon_M^{-1}$ are both bounded.

   Now, we are going to show that there is an equivalence of categories
   \[\Rep_{\Gamma(L/K)}(\widehat R_{\widehat L,\infty})\simeq {\rm Strat}(C(\Gamma(L/K)^{\bullet},\widehat R_{\widehat L,\infty})).\]
   
   Given a representaion $M\in \Rep_{\Gamma(L/K)}(\widehat R_{\widehat L,\infty})$, we can construct a stratification
   \[\varepsilon_M:M\otimes_{\widehat R_{\widehat L,\infty},p_0} C(\Gamma(L/K),\widehat R_{\widehat L,\infty})\xrightarrow{\cong} M\otimes_{\widehat R_{\widehat L,\infty},p_1} C(\Gamma(L/K),\widehat R_{\widehat L,\infty}) \]
   such that for any $m\in M$, we have $\varepsilon_M(m)={\rm ev}_m$ where ${\rm ev}_m$ is the evaluation map at $m$ via the isomorphism
   $M\otimes_{\widehat R_{\widehat L,\infty},p_1} C(\Gamma(L/K),\widehat R_{\widehat L,\infty})\cong C(\Gamma(L/K),M)$. One can check $\varepsilon_M$ satisfies the cocycle condition by noting that $M$ is a $\Gamma(L/K)$-representation.
   Conversely, given a stratification $(M,\varepsilon_M)\in {\rm Strat}(C(\Gamma(L/K)^{\bullet},\widehat R_{\widehat L,\infty}))$, as it satisfies the cocycle condition, we can get a $\Gamma(L/K)$-action on $M$ by letting $g(m) = \varepsilon_M(m)(g)$ for any $g\in \Gamma(L/K)$ and $m\in M$ after identifying 
   $M\otimes_{\widehat R_{\widehat L,\infty},p_1} C(\Gamma(L/K),\widehat R_{\widehat L,\infty})$ with $C(\Gamma(L/K),M)$. As $\varepsilon_M$ is an isomorphism in the category of Banach $C(\Gamma(L/K),\widehat R_{\widehat L,\infty})$-modules, we see that the above $\Gamma(L/K)$-action on $M$ is continuous. So we finally get an object in $\Rep_{\Gamma(L/K)}(\widehat R_{\widehat L,\infty})$.
   
   One can check from their constructions that the above two functors induce the desired equivalence of categories. It follows from the standard linear algebra that the equivalence above preserves tensor products and dualities.

   It remains to prove the ``moreover'' part. Using Example \ref{Exam-GRep} again, we get isomorphisms
   \[\calL(X_{\widehat L,\infty}^{\bullet/X})\cong\calL(X_{\widehat L,\infty})\otimes_{\widehat R_{\widehat L,\infty}}C(\Gamma(L/K)^{\bullet},\widehat R_{\widehat L,\infty}) \cong C(\Gamma(L/K)^{\bullet},\calL(X_{\widehat L,\infty})),\]
   where $\calL(X_{\widehat L,\infty})$ is endowed with the canonical topology.
   Since $\rR\Gamma(\Gamma(L/K),\calL(X_{\widehat L,\infty}))$ is computed by $C(\Gamma(L/K)^{\bullet},\calL(X_{\widehat L,\infty}))$, it suffices to show that $\rR\Gamma(X_{\proet},\calL)$ can be computed by the \v Cech-Alexander complex $\calL(X_{\widehat L,\infty}^{\bullet/X})$.
   To see this, as we have
   \[\rR\Gamma(X_{\proet},\calL) \simeq \rR\lim(\RGamma(X_{\proet}/X_{\widehat L,\infty}^{\bullet/X},\calL))\simeq \rR\lim(\calL(X_{\widehat L,\infty}^{\bullet/X}))\]
   where the last quasi-isomorphism follows from \cite[Prop. 2.3]{LZ} as $X_{\widehat L,\infty}^{\bullet/X}$ is affinoid perfectoid. So we can conclude as $\rR\lim(\calL(X_{\widehat L,\infty}^{\bullet/X}))$ is exactly the \v Cech-Alexander complex $\calL(X_{\widehat L,\infty}^{\bullet/X})$.
 \end{proof}
 
 According to Lemma \ref{Lem-EvaluateGRep}, we may work with $\widehat R_{\infty}$-representations of $\Gamma$ instead of generalised representations on $X_{\proet}$ in this subsection. We also need the following local description of $\OC$.
 \begin{lem}\label{Lem-OC}
   Let $L/K$ be a Galois extension in $\overline K$ containing $K_{\cyc}$. Then sending $Y_i$ to $t^{-1}\log(\frac{[T_i]^{\flat}}{T_i})$ for any $1\leq i\leq d$ induces an isomorphism of sheaves of $\OX$-algebras on $X_{\proet}/X_{\widehat L,\infty}$
   \[\iota: \OX[Y_1,\dots,Y_d]_{\mid X_{\widehat L,\infty}}\to \OC_{\mid X_{\widehat L,\infty}}.\]
   Moreover, the following assertions are true:
   \begin{itemize}
       \item[(1)] The pull-back of the Higgs field $\Theta$ (cf. Equation (\ref{Equ-Theta})) along $\iota$ on $\OX[Y_1,\dots,Y_d]$ is given by $\Theta=\sum_{i=1}^d-\frac{\partial}{\partial Y_i}\otimes\frac{\dlog T_i}{t}$.

       \item[(2)] The action of $\Gamma(L/K)\cong \Gamma_{\geo}\rtimes\Gal(L/K)$ on $Y_i$'s is determined such that for any $1\leq i,j\leq d$ and any $g\in \Gal(L/K)$, we have $\gamma_i(Y_j) = Y_j+\delta_{ij}$ and $g(Y_j) = \chi(g)^{-1}Y_j$.
   \end{itemize}
 \end{lem}
 \begin{proof}
   Let $X_i:= T_i-[T_i]^{\flat}$ as in \cite[Prop. 6.10]{Sch-Pi}. Then the desired isomorphism follows from \cite[Cor. 6.15]{Sch-Pi} by noting that $\frac{X_i}{\xi}$ and $Y_i$ differ from a unit in $\Gr^0\calO\bB_{\dR}$. It remains to prove the ``moreover part''. We now identify $\OC$ with $\OX[Y_1,\dots, Y_d]$ via $\iota$.

   \begin{itemize}
       \item[(1)] Recall that for any $n\geq 0$, the connection $\nabla$ on $\calO\bB_{\dR}^+/(\Ker(\theta)^n)$ is induced by the $\BBdRp/(\Ker(\theta)^n)$-linear extension of the canonical connection on $\calO_X$ (cf. \cite[the paragraph below Remark 6.9]{Sch-Pi}). See \cite[Def. 6.1(i) and Def. 6.2(ii)]{Sch-Pi} for the definition of $\theta$. In particular, modulo $\Ker(\theta)^n$ for any $n\geq 0$, we have 
       \[\begin{split}
           \nabla(Y_i)  &= \nabla(t^{-1}\log(\frac{[T_i]^{\flat}}{T_i}))\\
           &= t^{-1}\frac{\partial}{\partial T_i}(\sum_{m\geq 1}(-1)^{m-1}\frac{([T_i^{\flat}]/T_i-1)^{m}}{m})\otimes\rd T_i\\
           &= \frac{[T_i^{\flat}]}{T_i}\sum_{m\geq 1}(-1)^m(\frac{[T_i^{\flat}]}{T_i}-1)^{m-1}\otimes\frac{\dlog T_i}{t}\\
           &=-\frac{\dlog T_i}{t}
       \end{split}.\]
       By letting $n$ tend to $+\infty$, we see that $\nabla(Y_i) = -\frac{\dlog T_i}{t}$ in $\OBdRp$.
       So we can conclude as $\Theta$ is induced by $\nabla$ via the identification $\OC = \Gr^0\OBdR$ (cf. \cite[(2.2)]{LZ}).

       \item[(2)] The $\Gamma(L/K)$-action on $Y_i$'s can be deduced similarly as in the proof of \cite[Lem. 6.17]{Sch-Pi}.
   \end{itemize}
 \end{proof}
 
 \begin{notation}\label{Notation-LocalOC}
   Let $C_{L,\infty}: = \OC(X_{\widehat L,\infty})$ with induced Higgs field $\Theta$. Then Lemma \ref{Lem-OC} provides a $\Gamma(L/K)$-equivariant isomorphism of Higgs fields
   \[\iota: (\widehat R_{\widehat L,\infty}[Y_1,\dots,Y_d],\sum_{i=1}^d-\frac{\partial}{\partial Y_i}\otimes\frac{\dlog T_i}{t})\xrightarrow{\cong}(C_{L,\infty},\Theta),\]
   where the $\Gamma$-action on $\widehat R_{\widehat L,\infty}[Y_1,\dots,Y_d]$ is determined as in Lemma \ref{Lem-OC}. Then we obtain a $\Gamma(L/K)$-equivariant inclusion of Higgs fields 
   $(R_{\widehat L}[Y_1,\dots,Y_d],\sum_{i=1}^d-\frac{\partial}{\partial Y_i}\otimes\frac{\dlog T_i}{t})\subset (C_{L,\infty},\Theta)$
   via $\iota$.
 \end{notation}

 Then the main result in this subsection is 
 
 \begin{thm}\label{Thm-SimpsonLocal}
   Keep notations as above.
   
   \begin{enumerate}
       \item For any $M_{\infty}\in \Rep_{\Gamma(L/K)}(\widehat R_{\widehat L,\infty})$, let $H(M_{\infty}):= (M_{\infty}\otimes_{\widehat R_{\widehat L,\infty}}C_{L,\infty})^{\Gamma_{\geo}}$ and $\theta_{H(M_{\infty})}$ be the restriction of $\Theta_{M_{\infty}}:=\id_{M_{\infty}}\otimes\Theta$ (cf. (\ref{Equ-Theta_L})) to $H(M_{\infty})$. Then $(H(M_{\infty}),\theta_{H(M_{\infty})})$ defines a $\Gal(L/K)$-Higgs module over $R_{\widehat L}$. Moreover, for any $i\geq 1$, we have 
       \[\rH^i(\Gamma_{\geo},M_{\infty}\otimes_{\widehat R_{\widehat L,\infty}}C_{L,\infty}) = 0.\]
       
       \item For any $(H,\theta_H)\in\HIG_{\Gal(L/K)}(R_{\widehat L})$, let $\Theta_H:=\theta_H\otimes\id_{C_{L,\infty}}+\id_H\otimes\Theta$ (cf. (\ref{Equ-Theta_H})). Then $M_{\infty}(H,\theta_{H}):=(H\otimes_{R_{\widehat L}}C_{L,\infty})^{\Theta_H=0}$ is an $\widehat R_{\widehat L,\infty}$-representation of $\Gamma(L/K)$.
       
       \item The functor $M_{\infty}\mapsto (H(M_{\infty}),\theta_{H(M_{\infty})})$ induces an equivalence of categories
       \[\Rep_{\Gamma(L/K)}(\widehat R_{\widehat L,\infty})\to\HIG_{\Gal(L/K)}(R_{\widehat L}),\]
       whose quasi-inverse is given by the functor $(H,\theta_H)\mapsto M_{\infty}(H,\theta_H)$. The equivalence preserves ranks, tensor products and dualities.
       
       \item For any $M_{\infty}\in \Rep_{\Gamma(L/K)}(\widehat R_{\widehat L,\infty})$ with the associated $\Gal(L/K)$-Higgs module $(H,\theta_H)\in\HIG_{\Gal(L/K)}(R_{\widehat L})$. Then there exists a $\Gamma(L/K)$-equivariant isomorphism of Higgs fields
       \[(H\otimes_{R_{\widehat L}}C_{L,\infty},\Theta_H)\cong (M_{\infty}\otimes_{\widehat R_{\widehat L,\infty}}C_{L,\infty},\Theta_{M_{\infty}})\]
       which induces an $\Gal(L/K)$-equivariant quasi-isomorphism 
       \[\rR\Gamma(\Gamma_{\geo},M_{\infty}) \cong \HIG(H,\theta_H),\]
       where $\HIG(H,\theta_H)$ denotes the Higgs complex induced by $(H,\theta_H)$.
       As a consequence, we have a quasi-isomorphism 
       \[\rR\Gamma(\Gamma(L/K),M_{\infty}) \cong \rR\Gamma(\Gal(L/K),\HIG(H,\theta_H)).\]
       
       \item All results above are compatible with standard \'etale localisation of $X$.
   \end{enumerate}
 \end{thm}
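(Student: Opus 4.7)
The strategy I would follow is to combine the decompletion equivalence from Proposition \ref{Prop-UnipotentEquiv} (via Remark \ref{Rmk-UnipotentEquiv}) with the explicit polynomial-ring description of $C_{L,\infty}$ from Lemma \ref{Lem-OC}. Given $M_{\infty}\in \Rep_{\Gamma(L/K)}(\widehat R_{\widehat L,\infty})$, I first descend it to its unique unipotent model $M\in \Rep^{\rm uni}_{\Gamma(L/K)}(R_{\widehat L})$. Because $\Gamma_{\geo}$ acts locally unipotently on $M$, each $\theta_i:=-\log\gamma_i$ is a well-defined, commuting, locally nilpotent $R_{\widehat L}$-linear endomorphism of $M$; using $g\gamma_ig^{-1}=\gamma_i^{\chi(g)}$ one finds $g\theta_ig^{-1}=\chi(g)\theta_i$, and combined with $g(\dlog T_i/t)=\chi(g)^{-1}(\dlog T_i/t)$ this shows $\theta_M:=\sum_i\theta_i\otimes\frac{\dlog T_i}{t}$ is a $\Gal(L/K)$-equivariant nilpotent Higgs field. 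This gives a purely formal equivalence $\Rep^{\rm uni}_{\Gamma(L/K)}(R_{\widehat L})\simeq \HIG_{\Gal(L/K)}(R_{\widehat L})$ with inverse $\gamma_i=\exp(-\theta_i)$.

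The next step is to identify this ``take logarithm" equivalence with the functors $H(M_{\infty})$ and $M_{\infty}(H,\theta_H)$ stated in the theorem, via the Taylor-type morphism
\[
\sigma:M\longrightarrow M\otimes_{R_{\widehat L}}C_{L,\infty}=M_\infty[Y_1,\dots,Y_d],\quad m\mapsto \exp\Bigl(\sum_{i=1}^d\theta_iY_i\Bigr)(m),
\]
which is a \emph{polynomial} by local nilpotency. Restricting the $\Gamma_{\geo}$-action to the subring $M[Y_1,\dots,Y_d]$, where $\gamma_j$ acts as $\gamma_j^M\otimes(Y_i\mapsto Y_i+\delta_{ij})=\exp(\partial/\partial Y_j-\theta_j)$ (using that $\theta_j$ commutes with $\partial/\partial Y_j$), a direct computation gives $(\partial/\partial Y_j-\theta_j)\sigma(m)=0$, so $\sigma(m)\in H(M_{\infty})$. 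For surjectivity I would use the topological decomposition \eqref{Equ-TopoDecomposition-II}, $M_{\infty}=\widehat{\bigoplus}_{\underline\alpha}M\cdot\underline{T}^{\underline\alpha}$, to show first that $\Gamma_{\geo}$-invariance forces any invariant to lie in the $\underline\alpha=0$ piece $M[Y_1,\dots,Y_d]$ (the nonzero characters produce no invariants since $\zeta^{\alpha_j}\cdot\exp(\partial/\partial Y_j-\theta_j)-1$ is invertible on polynomials of bounded degree), and then apply a polynomial Poincar\'e lemma: solving $\partial f/\partial Y_j=\theta_j f$ recursively forces $f=\sigma(f|_{Y=0})$. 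Transporting $\Theta$ along $\sigma$, a one-line computation shows $\theta_{H(M_{\infty})}$ corresponds to $-\theta_M$ under $\sigma$, matching the formal equivalence of Step~1 (up to sign convention). Part (2) is handled dually via $\tau(h):=\exp(\sum_i\theta_i^HY_i)(h)$, which solves $\Theta_H\tau(h)=0$ by the same computation and exhausts $M_{\infty}(H,\theta_H)$ by the analogous Poincar\'e argument.

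For parts (1) and (4) I would compute $\rR\Gamma(\Gamma_{\geo},M_{\infty}\otimes C_{L,\infty})$ via the Koszul complex $\rK(\gamma_1-1,\dots,\gamma_d-1;M_\infty[Y_1,\dots,Y_d])$ (permissible by $\Gamma_{\geo}\cong\bigoplus_i\Zp\gamma_i$ and \cite[Lem. 7.3]{BMS18}). After reducing to the $\underline\alpha=0$ summand as above, the complex becomes $\rK(\gamma_1-1,\dots,\gamma_d-1;M[Y_1,\dots,Y_d])$, and since $\gamma_j-1=\exp(\partial/\partial Y_j-\theta_j)-1=(\partial/\partial Y_j-\theta_j)u_j$ with $u_j$ a locally unipotent unit, it is quasi-isomorphic to $\rK(\partial/\partial Y_1-\theta_1,\dots,\partial/\partial Y_d-\theta_d;M[Y_1,\dots,Y_d])$. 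Via the automorphism $\sigma$ (acting as a $\Gal(L/K)$-equivariant change of basis on $M[Y_1,\dots,Y_d]$ sending $\partial/\partial Y_j-\theta_j$ to $\partial/\partial Y_j$), this identifies with the Higgs Koszul complex $\HIG(M,\theta_M)$ concatenated with the polynomial de Rham resolution; the Poincar\'e lemma for polynomials then yields both the vanishing $\rH^i(\Gamma_{\geo},M_{\infty}\otimes C_{L,\infty})=0$ and the $\Gal(L/K)$-equivariant quasi-isomorphism $\rR\Gamma(\Gamma_{\geo},M_{\infty})\simeq \HIG(M,\theta_M)$. Hochschild--Serre applied to the split sequence $1\to\Gamma_{\geo}\to\Gamma(L/K)\to\Gal(L/K)\to 1$ then produces the final comparison $\rR\Gamma(\Gamma(L/K),M_{\infty})\simeq \rR\Gamma(\Gal(L/K),\HIG(M,\theta_M))$.

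The main obstacle I anticipate is the invariants computation in Step~2: controlling the interplay between the semilinear $\Gamma_{\geo}$-action on $M_{\infty}$ and the affine shift action on the $Y_i$'s requires the decomposition \eqref{Equ-TopoDecomposition-II} plus the change of basis $\sigma$ to linearise everything simultaneously; without this double reduction, the polynomial Poincar\'e lemma cannot be applied cleanly. Once Step~2 is in hand, the remaining assertions are formal: rank preservation, tensor products, and dualities in part~(3) follow directly from the explicit formula $\sigma(m)=\exp(\sum_i\theta_iY_i)(m)$ (since $\exp$ distributes over tensor and dualises to $-\theta^\vee$, and $\Gamma_{\geo}$-invariants of polynomial extensions commute with such operations); the $\Gamma(L/K)$-equivariant isomorphism $(H\otimes C_{L,\infty},\Theta_H)\cong (M_{\infty}\otimes C_{L,\infty},\Theta_{M_{\infty}})$ of part~(4) is produced by the same $\sigma$-based change of basis; and part~(5) follows since standard \'etale localization $X'\to X$ functorially base-changes the toric tower, preserving the $\Gamma(L/K)$-action and the period ring $C_{L,\infty}$, so the entire construction glues.
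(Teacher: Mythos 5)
Your proposal is correct and follows essentially the same route as the paper: descend to the unipotent model $M$ via Proposition \ref{Prop-UnipotentEquiv}, use the polynomial description of $C_{L,\infty}$ from Lemma \ref{Lem-OC}, and pass between unipotent $\Gamma_{\geo}$-actions and nilpotent Higgs fields through the explicit exponential $\prod_i\gamma_i^{-Y_i}=\exp(\sum_i\theta_iY_i)$, exactly as in \eqref{Equ-HiggsModule-I}--\eqref{Equ-GaloisRep-II} (your sign convention $\theta_i=-\log\gamma_i$ is the mirror of the paper's). The only presentational difference is that you compute $\rR\Gamma(\Gamma_{\geo},-)$ directly by conjugating the Koszul complex of the $\gamma_j-1$ into the polynomial de Rham/Higgs Koszul complex, whereas the paper outsources the vanishing to \cite[Lem.~2.10]{LZ} and deduces (4) from the resolution $\OX\to\HIG(\OC,\Theta)$ plus a spectral sequence; both are equivalent, though for the completed direct sum over $\underline\alpha\neq\underline 0$ you should note, as the paper does via \cite[Lem.~6.18]{Sch-Pi}, that the inverses of $\gamma_j-1$ are uniformly bounded.
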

 \begin{proof}
   (1) Let $M$ be the representation of $\Gamma(L/K)$ over $R_{\widehat L}$ corresponding to $M_{\infty}$ in the sense of Proposition \ref{Prop-UnipotentEquiv} (and Remark \ref{Rmk-UnipotentEquiv}). In other words, it satisfies the following conditions:
   
      (a) $M_{\infty} \cong M\otimes_{R_{\widehat L}}\widehat R_{\widehat L,\infty}$ as representations of $\Gamma(L/K)$ over $\widehat R_{\widehat L,\infty}$.
       
      (b) $\Gamma_{\geo}$ acts on $M$ unipotently and induces quasi-isomorphisms
      \[\rH^i(\Gamma_{\geo},M)\cong \rH^i(\Gamma_{\geo},M_{\infty}).\]
   
   In particular, we have 
   \[M\otimes_{R_{\widehat L}}C_{L,\infty}\cong M_{\infty}\otimes_{\widehat R_{\widehat L,\infty}}C_{L,\infty}.\]
   Using this to replace \cite[Prop. 2.8]{LZ}, we may conclude by the argument in the paragraph below \cite[Prop. 2.10]{LZ}. But for the further use, we provide more details here.
   
   Consider the $\Gamma(L/K)$-equivariant inclusion 
   \[R_{\widehat L}[Y_1,\dots,Y_d]\subset \widehat R_{\widehat L,\infty}[Y_1,\dots,Y_d] = C_{L,\infty}.\]
   Then by condition (b) and the argument in the paragraph after \cite[Lem. 6.17]{Sch-Pi}, for any $i\geq 0$, we get an isomorphism
   \[\rH^i(\Gamma_{\geo},M\otimes_{R_{\widehat L}}R_{\widehat L}[Y_1,\dots,Y_d])\cong \rH^i(\Gamma_{\geo},M_{\infty}\otimes_{\widehat R_{L,\infty}} C_{L,\infty})).\]
   As $\Gamma_{\geo}\cong \Zp\gamma_1\oplus\cdots\oplus\Zp\gamma_d$ acts on $M$ unipotently, we see that $(\gamma_d-1)$ is a nilpotent $R_{\widehat L}[Y_1,\dots,Y_{d-1}]$-linear morphism on $M\otimes_{R_{\widehat L}}R_{\widehat L}[Y_1,\dots,Y_{d-1}]$. As $\gamma_d(Y_d) = Y_d+1$, we deduce from \cite[Lem. 2.10]{LZ} that 
   \[\rH^0(\Zp\gamma_d,M\otimes_{R_{\widehat L}}R_{\widehat L}[Y_1,\dots,Y_d]) \cong M\otimes_{R_{\widehat L}}R_{\widehat L}[Y_1,\dots,Y_{d-1}]\]
   and that
   \[\rH^1(\Zp\gamma_d,M\otimes_{R_{\widehat L}}R_{\widehat L}[Y_1,\dots,Y_d])=0.\]
   In particular, by Hochschild--Serre spectral sequence, we know that for any $i\geq 0$, 
   \[\rH^i(\Gamma_{\geo},M\otimes_{R_{\widehat L}}R_{\widehat L}[Y_1,\dots,Y_d])\cong \rH^i(\Zp\gamma_1\oplus\cdots\oplus\Zp\gamma_{d-1},M\otimes_{R_{\widehat L}}R_{\widehat L}[Y_1,\dots,Y_{d-1}]).\]
   By iteration, we finally conclude that 
   \[\rH^i(\Gamma_{\geo},M\otimes_{R_{\widehat L}}R_{\widehat L}[Y_1,\dots,Y_d])= 0\]
   for any $i\geq 1$, which settles the ``moreover'' part of (1). 
   
   Since $\gamma_i^{-1}-1$ acts on $M$ nilpotently, for any $x\in M$, 
   \[\prod_{i=1}^d\gamma_i^{-Y_i}x: = \sum_{n_1,\dots,n_d\geq 0}(\prod_{i=1}^d(\gamma_i^{-1}-1)^{n_i}(x))\binom{Y_1}{n_1}\cdots\binom{Y_d}{n_d}\]
   is well-defined in $M\otimes_{R_{\widehat L}}R_{\widehat L}[Y_1,\dots,Y_d]$
   . By chasing the proof of \cite[Lem. 2.10]{LZ}, we see that
   \begin{equation}\label{Equ-HiggsModule-I}
       H(M_{\infty}) = \rH^0(\Gamma_{\geo},M\otimes_{R_{\widehat L}}R_{\widehat L}[Y_1,\dots,Y_d]) = \prod_{i=1}^d\gamma_i^{-Y_i}(M) := \{\prod_{i=0}^d\gamma_i^{-Y_i}x\mid x\in M\}
   \end{equation}
   is a finite projective $R_{\widehat L}$-module as desired. By Lemma \ref{Lem-OC}, for any $g\in \Gal(L/K)$ and any $1\leq i\leq d$, we have $g\gamma_ig^{-1} = \gamma_i^{\chi(g)}$ and $g(Y_i) = \chi(g)^{-1}Y_i$. In particular, for any $x\in M$, we have
   \begin{equation}\label{Equ-Product of gamma is Galois equivariant}
       \begin{split}
           g(\prod_{i=0}^d\gamma_i^{-Y_i}x) = & g(\sum_{n_1,\dots,n_d\geq 0}(\prod_{i=1}^d(\gamma_i^{-1}-1)^{n_i}(x))\binom{Y_1}{n_1}\cdots\binom{Y_d}{n_d})\\
           = & \sum_{n_1,\dots,n_d\geq 0}(\prod_{i=1}^d(g\gamma_i^{-1}g^{-1}-1)^{n_i}(g(x)))\binom{g(Y_1)}{n_1}\cdots\binom{g(Y_d)}{n_d}\\
           = & \prod_{i=0}^d(g\gamma_ig^{-1})^{-g(Y_i)}g(x)\\
           = & \prod_{i=0}^d(\gamma_i^{\chi(g)})^{-\chi(g)^{-1}Y_i}g(x)\\
           = & \prod_{i=0}^d\gamma_i^{-Y_i}g(x).
       \end{split}
   \end{equation}
   In other words, the $\prod_{i=1}^d\gamma_i^{-Y_i}$ is $\Gal(L/K)$-equivariant.
   So the $\Gal(L/K)$-action on $H(M_{\infty})$ is determined such that for any $\prod_{i=0}^d\gamma_i^{-Y_i}x\in H(M_{\infty})$,
   \begin{equation}\label{Equ-G-actian on H}
        g(\prod_{i=0}^d\gamma_i^{-Y_i}x) = \prod_{i=0}^d\gamma_i^{-Y_i}g(x)
   \end{equation}
   Using the expression of $\Theta$ in Lemma \ref{Lem-OC}, we deduce from (\ref{Equ-HiggsModule-I}) that 
   \begin{equation}\label{Equ-HiggsModule-II}
       \theta_{H(M_{\infty})} = \sum_{i=1}^{d}\log\gamma_i\otimes\frac{\dlog T_i}{t}.
   \end{equation}
   By $g\gamma_ig^{-1} = \gamma_i^{\chi(g)}$ for any $g\in \Gal(L/K)$ again, we have $g\log\gamma_ig^{-1} = \chi(g)\log\gamma_i$. Therefore, we obtain that $(H(M_{\infty}),\theta_{H(M_{\infty})})\in\HIG_{\Gal(L/K)}(R_{\widehat L})$.
   
   (2) We write $\theta_H = \sum_{i=1}^d\theta_i\otimes\frac{\dlog T_i}{t}$ with nilpotent $R_{\widehat L}$-linear endomorphisms $\theta_i$'s of $H$ which commute with each other such that for any $g\in \Gal(L/K)$, we have $g\theta_ig^{-1} = \chi(g)\theta_i$. Note that for any $x \in H\otimes_{R_{\widehat K_{\cyc}}}\widehat R_{\widehat K_{\cyc},\infty}[Y_1,\dots,Y_d]$, it is uniquely written as 
   \[x = \sum_{\underline n = (n_1,\dots,n_d)\in\bN^{d}}h_{\underline n}Y_1^{[n_1]}\cdots Y_d^{[n_d]}\]
   for finitely many $h_{\underline n}$'s in $H\otimes_{R_{\widehat K_{\cyc}}}\widehat R_{\widehat K_{\cyc},\infty}$, where $Y^{[n]}$ denotes the polynomial $\frac{Y^n}{n!}$ for all $n\geq 0$. Using the description of $\Theta$ in Lemma \ref{Lem-OC}, we see that 
   \begin{equation*}
       \begin{split}
           \Theta_H(x) & = \sum_{i=1}^d\sum_{\underline n = (n_1,\dots,n_d)\in\bN^{d}}(\theta_i(h_{\underline n})Y_1^{[n_1]}\cdots Y_d^{[n_d]}-h_{\underline n}Y_1^{[n_1]}\cdots Y_i^{[n_i-1]}\cdots Y_d^{[n_d]})\otimes\frac{\dlog T_i}{t}\\
           & = \sum_{i=1}^d\sum_{\underline n = (n_1,\dots,n_d)\in\bN^{d}}(\theta_i(h_{\underline n})-h_{\underline n+\underline 1_i})Y_1^{[n_1]}\cdots Y_d^{[n_d]}\otimes\frac{\dlog T_i}{t}.
       \end{split}
   \end{equation*}
   Therefore, we deduce that $\Theta_H(x) = 0$ if and only if for any $\underline n\in\bN^d$ and any $1\leq i\leq d$,
   \[h_{\underline n} = \theta_i(h_{\underline n-\underline 1_i}).\]
   By iteration, we obtain that for any $\underline n = (n_1,\dots,n_d) \in\bN^d$,
   \[h_{\underline n} = \theta_1^{n_1}\cdots\theta_d^{n_d}(h_{\underline 0}).\]
   Since $\theta_i$'s are nilpotent, we see that $h_{\underline n} = 0$ for $|\underline n|\gg 0$ and that 
   \[\prod_{i=1}^d\exp(\theta_iY_i)(h):=\sum_{\underline n = (n_1,\dots,n_d)\in\bN^d}\theta_1^{n_1}\cdots\theta_d^{n_d}(h)Y_1^{[n_1]}\cdots Y_d^{[n_d]}\]
   is well defined for any $h\in H\otimes_{R_{\widehat L}}\widehat R_{\widehat L,\infty}$. 
   So we obtain that 
   \begin{equation}\label{Equ-GaloisRep-I}
       M_{\infty}(H,\theta_H) = \prod_{i=1}^d\exp(\theta_iY_i)(H\otimes_{R_{\widehat L}}\widehat R_{\widehat L,\infty}) = \{\prod_{i=1}^d\exp(\theta_iY_i)(h)\mid h\in H\otimes_{R_{\widehat L}}\widehat R_{\widehat L,\infty}\},
   \end{equation}
   which is finite projective over $\widehat R_{\widehat L,\infty}$. By Lemma \ref{Lem-OC}, the $\Gamma_{\geo}$-action on $M$ is induced such that for any $1\leq i\leq d$, 
   \begin{equation}\label{Equ-GaloisRep-II}
       \gamma_i = \exp(\theta_i)
   \end{equation}
   Then for any $g\in \Gal(L/K)$, we have $g\gamma_ig^{-1} = \gamma_i^{\chi(g)}$ (as $g\theta_ig^{-1} = \chi(g)\theta_i$). In other words, $M_{\infty}(H,\theta_H)$ belongs to $\Rep_{\Gamma(L/K)}(\widehat R_{\widehat L,\infty})$.
   
   Similar to (\ref{Equ-Product of gamma is Galois equivariant}), we can check that $\prod_{i=1}^d\exp(\theta_iY_i)$ is $\Gal(L/K)$-equivariant; that is, for any $g\in \Gal(L/K)$ and any $h\in H\otimes_{R_{\widehat L}}\widehat R_{\widehat L,\infty}$, we have 
   \begin{equation}\label{Equ-Product of exponential is Galois equivariant}
       g(\prod_{i=1}^d\exp(\theta_iY_i)(h)) = \prod_{i=1}^d\exp(\theta_iY_i)(g(h)).
   \end{equation}
   
   (3) Fix an $M_{\infty}\in \Rep_{\Gamma(L/K)}(\widehat R_{\widehat L,\infty})$ and let $M$ be as in the proof of (1). Since $\Gamma_{\geo}$ acts on $M$ unipotently, we see that $\prod_{i=1}^d\gamma_i^{-Y_i}$ is a well-defined isomorphism of $M\otimes_{R_{\widehat L}}R_{\widehat L}[Y_1,\dots,Y_d]$. 
   Noting that $H(M_{\infty}) = \prod_{i=1}^d\gamma_i^{-Y_i}(M)$  cf. (\ref{Equ-HiggsModule-I}), we see that 
   \[M\otimes_{R_{\widehat L}}R_{\widehat L}[Y_1,\dots,Y_d] \cong H(M_{\infty})\otimes_{R_{\widehat L}}R_{\widehat L}[Y_1,\dots,Y_d]\]
   and hence that
   \[M_{\infty}\otimes_{\widehat R_{\widehat L,\infty}}\widehat R_{\widehat L,\infty}[Y_1,\dots,Y_d] \cong H(M_{\infty})\otimes_{R_{\widehat L}}\widehat R_{\widehat L,\infty}[Y_1,\dots,Y_d].\]
   
   We claim the two isomorphisms are compatible with both Higgs fields and $\Gamma(L/K)$-actions. We only prove the first one as the second is just the base-change of the first along $R_{\widehat L}\to\widehat R_{\widehat L,\infty}$. Granting this, we can obtain that $M_{\infty} = M_{\infty}(H(M_{\infty}),\theta_{H(M_{\infty})})$ as desired.

   For Higgs field: Recall the Higgs field on $M_{\infty}\otimes_{\widehat R_{\widehat L,\infty}}\widehat R_{\widehat L,\infty}[Y_1,\dots,Y_d]$ is given by 
   \[\Theta_M=\id_M\otimes(\sum_{i=1}^d-\frac{\partial}{\partial Y_i}\otimes\frac{\dlog T_i}{t})\] 
   while the Higgs field on $H(M_{\infty})\otimes_{R_{\widehat L}}\widehat R_{\widehat L,\infty}[Y_1,\dots,Y_d]$ is given by 
   \[\begin{split}
       \Theta_{H(M_{\infty})} = \theta_{H(M_{\infty})}\otimes\id+\id_{H(M_{\infty})}\otimes(\sum_{i=1}^d-\frac{\partial}{\partial Y_i}\otimes\frac{\dlog T_i}{t}).
   \end{split}\]
   So we have to show that for any $h\in H(M_{\infty})$ with $m\in M$, the $\Theta_M$ acts on $h$ via $\theta_{H(M_{\infty})}$. By (\ref{Equ-HiggsModule-I}), we can write $h = \prod_{i=1}^d\gamma_i^{-Y_i}(m)$ for some $m\in M$. So we have
   \[\begin{split}\Theta_M(h) = &\sum_{i=1}^d-\frac{\partial}{\partial Y_i}(\prod_{j=1}^d\gamma_j^{-Y_j})(m)\otimes\frac{\dlog T_i}{t}\\
   = & \sum_{i=1}^d\log\gamma_i((\prod_{j=1}^d\gamma_j^{-Y_j})(m))\otimes\frac{\dlog T_i}{t}=\sum_{i=1}^d\log\gamma_i(h)\otimes\frac{\dlog T_i}{t}.\end{split}\]
   The most right hand side is exactly $\theta_{H(M_{\infty})}(h)$ by (\ref{Equ-HiggsModule-II}).

   For $\Gamma(L/K)$-action: Recall the $\Gamma(L/K)$-action on $H(M_{\infty})\otimes_{R_{\widehat L}}R_{\widehat L}[Y_1,\dots,Y_d]$ is induced by that for any 
   \[h = \prod_{i=1}^d\gamma_i^{-Y_i}x\in H(M_{\infty})\text{ with }x\in M,\]
   the $\Gamma_{\geo}$ acts on $h$ trivially and that for any $g\in \Gal(L/K)$, by (\ref{Equ-G-actian on H}), $g(h) = \prod_{i=1}^d\gamma_i^{-Y_i}(g(x))$.
   So we have to show that if we regard $x\in M$ as an element in $H(M_{\infty})\otimes_{R_{\widehat L}}R_{\widehat L}[Y_1,\dots,Y_d]$, the restriction of the above $\Gamma(L/K)$-action on $x$ is compatible with the original one on $x$. 
   Recall that $\Gamma(L/K) = \Gamma_{\geo}\rtimes\Gal(L/K)$.
   As $\prod_{i=1}^d\gamma_i^{-Y_i}$ is $\Gal(L/K)$-equivariant (cf. Equation (\ref{Equ-Product of gamma is Galois equivariant})), the compatibility of $\Gal(L/K)$-actions on $x$ is clear. It remains to show the compatibility of $\Gamma_{\geo}$-actions on $x$. For any $1\leq i\leq d$, let $U_{i}\in \End_{R_{\widehat L}}(M)$ be the automorphism induced by the original $\gamma_{i}$-action on $M$. Then we have
   \[H(M_{\infty}) = \prod_{i=1}^dU_i^{-Y_i}M\]
   and in particular, we have $h = \prod_{i=1}^dU_i^{-Y_i} x$ and thus $x = \prod_{i=1}^dU_i^{Y_i} h$. Now, for any $1\leq i\leq d$, as an element in $H(M_{\infty})\otimes_{R_{\widehat L}}R_{\widehat L}[Y_1,\dots,Y_d]$, we have
   \[\gamma_i(x) = \gamma_i(\prod_{j=1}^dU_j^{Y_j} h) = \prod_{j=1}^dU_j^{\gamma_i(Y_j)} h = \prod_{j=1}^dU_j^{Y_j}(U_i h) = U_i\prod_{j=1}^dU_j^{Y_j} h = U_ix,\]
   which implies the compatibility of $\Gamma_{\geo}$-actions as desired.

   Similarly, fix an $(H,\theta_H)\in\HIG_{\Gal(L/K)}(R_{\widehat L})$ and write $\theta_H = \sum_{i=1}^d\theta_i\otimes\frac{\dlog T_i}{t}$ as in the proof of (2). Since $\theta_i$'s are nilpotent, we see that $\prod_{i=1}^d\exp(\theta_iY_i)$ is a well-defined isomorphism of $H\otimes_{R_{\widehat L}}\widehat R_{\widehat L,\infty}[Y_1,\dots,Y_d]$. By (\ref{Equ-GaloisRep-I}), we have
   \[M_{\infty}(H,\theta_H) = (\prod_{i=1}^d\exp(\theta_iY_i))(H\otimes_{R_{\widehat K_{\cyc}}}\widehat R_{\widehat K_{\cyc},\infty}),\]
   which gives rise to an isomorphism 
   \[H\otimes_{R_{\widehat L}}\widehat R_{\widehat L,\infty}[Y_1,\dots,Y_d] \cong M_{\infty}(H,\theta_H)\otimes_{\widehat R_{\widehat L,\infty}}\widehat R_{\widehat L,\infty}[Y_1,\dots,Y_d].\]
   Similarly, we can deduce from (\ref{Equ-GaloisRep-I}) and (\ref{Equ-GaloisRep-II}) that the above isomorphism is campatible with both Higgs fields and $\Gamma(L/K)$-actions. So we obtain that $(H(M_{\infty}(H,\theta_H)),\theta_{H(M_{\infty}(H,\theta_H))}) = (H,\theta_H)$.
   
   Therefore, we see functors defined in (1) and (2) induce a rank-preserving equivalence between $\Rep_{\Gamma(L/K)}(\widehat R_{\widehat L,\infty})$ and $\HIG_{\Gal(L/K)}(R_{\widehat L})$. By standard linear algebra, we see above constructions are compatible with tensor products and dualities, which completes the proof of (3).
   
   (4) The first part has been established in (3). To complete the proof, it suffices to show that 
   \[\rR\Gamma(\Gamma_{\geo},M_{\infty}) \simeq \HIG(H,\theta_H).\]
   Since Higgs complex $\HIG(C_{L,\infty},\Theta)$ induced by $(C_{L,\infty},\Theta)$ is a resolution of $\widehat R_{L,\infty}$, we get quasi-isomorphisms
   \[\rR\Gamma(\Gamma_{\geo},M_{\infty})\cong \rR\Gamma(\Gamma_{\geo},\HIG(M_{\infty}\otimes_{\widehat R_{\widehat L,\infty}}C_{L,\infty},\Theta_{M_{\infty}}))\cong \rR\Gamma(\Gamma_{\geo},\HIG(H\otimes_{R_{\widehat L}}C_{L,\infty},\Theta_{H})).\]
   By (1), for any $i\geq 1$ and any $j\geq 0$, we have 
   \[\rH^i(\Gamma_{\geo},H\otimes_{R_{\widehat L}}C_{L,\infty}\otimes_R\Omega_R^j(-j)) = 0.\]
   Using spectral sequence, we get a quasi-isomorphism
   \[\rR\Gamma(\Gamma_{\geo},\HIG(H\otimes_{R_{\widehat L}}C_{L,\infty},\Theta_{H})) \simeq \HIG(H,\theta_H),\]
   and hence that 
   \[\rR\Gamma(\Gamma_{\geo},M_{\infty})\simeq \HIG(H,\theta_H)\]
   as desired. This completes the proof of (4).
   
   (5) Since the equivalence in Proposition \ref{Prop-UnipotentEquiv} is compatible with standard \'etale localisation, this follows by noting all constructions above are compatible with standard \'etale localisation.
 \end{proof}

 \begin{rmk}\label{Rmk-SimpsonLocal}
    For any $M_{\infty}\in \Rep_{\Gamma(L/K)}(\widehat R_{\widehat L,\infty})$, let $M\in\Rep^{\rm uni}_{\Gamma(L/K)}(R_{\widehat L})$ be as in the proof of Theorem \ref{Thm-SimpsonLocal} (1). Then one can check that $(M,\theta_M:=\sum_{i=1}^d\log\gamma_i\otimes\frac{\dlog T_i}{t})$ with the restricted $\Gal(L/K)$-action gives rise to an object in $\HIG_{\Gal(L/K)}(R_{\widehat L})$. Let $(H(M_{\infty}),\theta_{H(M_{\infty})})$ be as in (\ref{Equ-HiggsModule-I}) and (\ref{Equ-HiggsModule-II}). It is easy to see that the map $\prod_{i=1}^d\gamma_i^{-Y_i}x\mapsto x$ gives rise to an isomorphism 
    \[(H(M_{\infty}),\theta_{H(M_{\infty})})\cong (M,\theta_M)\]
    of objects in $\HIG_{\Gal(L/K)}(R_{\widehat L})$. This phenomenon was discovered in \cite[Lem. 2.11]{LZ}.
 \end{rmk}
 
 \begin{cor}\label{Cor-SimpsonLocal}
   Keep notations as above.
   There exists an equivalence of categories 
   \[\Vect(X_{\proet},\OX)\simeq \HIG_{\Gal(L/K)}(X_{\widehat L})\]
   such that for any generalised representation $\calL$ on $X_{\proet}$ with associated $\Gal(L/K)$-Higgs bundle $(\calH,\theta_{\calH})$, there exists a quasi-isomorphism 
   \[\rR\Gamma(X_{\proet},\calL)\cong \rR\Gamma(\Gal(L/K),\rR\Gamma(X_{\widehat L,\et},\HIG(\calH,\theta_{\calH}))).\]
 \end{cor}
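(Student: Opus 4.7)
The plan is to derive this statement by composing Lemma \ref{Lem-EvaluateGRep} with Theorem \ref{Thm-SimpsonLocal}, after identifying finite projective Higgs modules over $R_{\widehat L}$ with Higgs vector bundles on the affinoid $X_{\widehat L}$.

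First, I would define the equivalence as the composition
\[
\Vect(X_{\proet},\OX) \xrightarrow{\simeq} \Rep_{\Gamma(L/K)}(\widehat R_{\widehat L,\infty}) \xrightarrow{\simeq} \HIG_{\Gal(L/K)}(R_{\widehat L}) \xrightarrow{\simeq} \HIG_{\Gal(L/K)}(X_{\widehat L}),
\]
where the first arrow is evaluation at $X_{\widehat L,\infty}$ (Lemma \ref{Lem-EvaluateGRep}), the second is the functor $M_\infty \mapsto (H(M_\infty),\theta_{H(M_\infty)})$ of Theorem \ref{Thm-SimpsonLocal}(3), and the third is the analytic--\'etale comparison (Kiehl's theorem), which upgrades a finite projective $R_{\widehat L}$-module endowed with a nilpotent Higgs field and a compatible $\Gal(L/K)$-action to a $\Gal(L/K)$-Higgs bundle on $X_{\widehat L,\et}$. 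All three functors preserve tensor products and dualities, so the composite does as well.

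For the cohomological statement, I would start from the quasi-isomorphism $\rR\Gamma(X_{\proet},\calL) \simeq \rR\Gamma(\Gamma(L/K), M_\infty)$ of Lemma \ref{Lem-EvaluateGRep} and apply the Hochschild--Serre spectral sequence associated with the semidirect product $\Gamma(L/K) \cong \Gamma_{\geo} \rtimes \Gal(L/K)$; this is available thanks to the splitting $\gamma \mapsto (\gamma, 1)$, which provides a continuous cross-section as required by the conventions in \S\ref{Intro-Notations}. The result is
\[
\rR\Gamma(X_{\proet},\calL) \simeq \rR\Gamma(\Gal(L/K), \rR\Gamma(\Gamma_{\geo}, M_\infty)).
\]
Then Theorem \ref{Thm-SimpsonLocal}(4) supplies a $\Gal(L/K)$-equivariant quasi-isomorphism $\rR\Gamma(\Gamma_{\geo}, M_\infty) \simeq \HIG(H,\theta_H)$. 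Finally, since $X_{\widehat L}$ is affinoid and each term of the Higgs complex $\HIG(\calH,\theta_{\calH})$ is a coherent $\calO_{X_{\widehat L}}$-module, Kiehl's theorem yields $\rR\Gamma(X_{\widehat L,\et}, \HIG(\calH,\theta_{\calH})) = \HIG(H,\theta_H)$, giving the asserted quasi-isomorphism.

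The real work is already contained in Theorem \ref{Thm-SimpsonLocal}, so the corollary is essentially a formal reassembly. The only point requiring minor vigilance is verifying that the $\Gal(L/K)$-equivariance appearing in Theorem \ref{Thm-SimpsonLocal}(4) matches the $\Gal(L/K)$-action on $(\calH,\theta_{\calH})$ transported across analytic--\'etale comparison; this is immediate from the explicit formulas (\ref{Equ-HiggsModule-I}) and (\ref{Equ-HiggsModule-II}) for $H(M_\infty)$ and $\theta_{H(M_\infty)}$, both of which are visibly compatible with the restriction of the $\Gamma(L/K)$-action to $\Gal(L/K)$.
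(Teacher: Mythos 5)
Your proposal is correct and follows essentially the same route as the paper: the equivalence is the composite of Lemma \ref{Lem-EvaluateGRep}, Theorem \ref{Thm-SimpsonLocal}(3), and the global-sections equivalence on the affinoid $X_{\widehat L}$, and the cohomological statement combines Lemma \ref{Lem-EvaluateGRep} with Theorem \ref{Thm-SimpsonLocal}(4) (whose ``as a consequence'' part is exactly the Hochschild--Serre step you spell out) together with the vanishing of higher coherent cohomology on the affinoid.
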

 \begin{proof}
   The desired equivalence is induced by the composition
   \[\Vect(X_{\proet},\OX)\to\Rep_{\Gamma(L/K)}(\widehat R_{\widehat L,\infty}) \to \HIG_{\Gal(L/K)}(R_{\widehat L})\xleftarrow{\Gamma(X_{\widehat L},-)}\HIG_{\Gal(L/K)}(X_{\widehat L,\et}),\]
   where the first two arrows are induced by Lemma \ref{Lem-EvaluateGRep} and Theorem \ref{Thm-SimpsonLocal} (3) while the last arrow is induced by taking global sections $\Gamma(X_{\widehat L},-)$, which is an equivalence as $X$ is affinoid. Finally, since $X$ is affinoid, we get 
   \[\rR\Gamma(X_{\widehat L,\et},\HIG(\calH,\theta_{\calH})) \simeq \HIG(H,\theta_H),\]
   where $(H,\theta_H)$ is the $\Gal(L/K)$-Higgs module induced by $(\calH,\theta_{\calH})$. So the desired quasi-isomorphism follows from Lemma \ref{Lem-EvaluateGRep} combined with Theorem \ref{Thm-SimpsonLocal} (4).
 \end{proof}
 
 \begin{rmk}
   We will see in the next subsection (cf. Corollary \ref{Cor-IndependentofChart-Simpson}) that the functor 
   \[\Vect(X_{\proet},\OX)\to \HIG_{\Gal(L/K)}(X_{\widehat L})\]
   is actually induced by $\calL\mapsto (\nu_*(\calL\otimes_{\OX}\OC),\nu_*(\Theta_{\calL}))$ as stated in Theorem \ref{Thm-Simpson}. So it does {\bf NOT} depend on the choice of toric chart on $X$. So one can glue the local constructions together to get a global equivalence.
 \end{rmk}

\subsection{Proof of Theorem \ref{Thm-Simpson}}
 In this subsection, we focus on the proof of Theorem \ref{Thm-Simpson}.
 
 We first show that $\rR^i\nu_*(\calL\otimes_{\OX}\OC)$ is a vector bundle on $X_{C,\et}$ for $i = 0$ and vanishes for $i\geq 1$. Note that it is the sheafification of the presheaf 
 \[(Y\in X_{C,\et})\mapsto \rH^i(X_{\proet}/Y,\calL\otimes_{\OX}\OC).\]
 By \cite[Cor. 2.6]{LZ}, it suffices to show that for any affinoid $Y = \Spa(R,R^+)\in X_{\et}$ admitting a toric chart, $\rH^0(X_{\proet}/Y_C,\calL\otimes_{\OX}\OC)$ is a finite projective $R_{\widehat L}$-module, and that for any standard \'etale localisation $Z = \Spa(S,S^+)\to Y$,
 \begin{equation}\label{Equ-CompatiblewithBaseChange-I}
     \rH^0(X_{\proet}/Z_C,\calL\otimes_{\OX}\OC) = \rH^0(X_{\proet}/Y_C,\calL\otimes_{\OX}\OC)\otimes_{R_C}S_C
 \end{equation}
 and for any $i\geq 1$,
 \begin{equation}\label{Equ-CompatiblewithBaseChange-II}
     \rH^i(X_{\proet}/Z_C,\calL\otimes_{\OX}\OC) = 0.
 \end{equation}
 We proceed as in \cite[\S 2.3]{LZ}.
 
 \begin{lem}\label{Lem-PerfectoidVanishing}
   For any affinoid perfectoid $U\in X_{\proet}$ and for any $i\geq 1$, we have
   \[\rH^i(X_{\proet}/U,\calL\otimes_{\OX}\OC) = 0.\]
 \end{lem}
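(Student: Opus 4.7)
The plan is to reduce to the local description of $\OC$ given by Lemma \ref{Lem-OC} and then invoke Scholze's vanishing of higher cohomology of $\widehat{\calO}_X$ on affinoid perfectoids. First I would find a pro-\'etale cover $V \to U$, with $V$ an affinoid perfectoid lying over some $X_{\widehat{L},\infty}$ (for instance, by choosing a toric chart on a suitable affinoid in $X$ so that $V = U \times_X X_{\widehat{L},\infty}$ makes sense, or more simply by passing to a refinement of the cover $\{X_{\widehat{L},\infty} \to X\}$ that also trivialises $\calL$). Since $\calL$ is a $\widehat{\calO}_X$-crystal in the sense of Example \ref{Exam-GRep}, it is locally finite free; we may therefore arrange that $\calL|_V \cong \widehat{\calO}_X^{\,r}|_V$ after possibly refining further (which preserves the affinoid perfectoid property).

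On such a $V$, Lemma \ref{Lem-OC} gives a canonical identification $\OC|_V \cong \widehat{\calO}_X|_V[Y_1,\ldots,Y_d]$, expressing $\OC|_V$ as a countable direct sum (and in particular a filtered colimit) of copies of $\widehat{\calO}_X|_V$. Consequently $(\calL \otimes_{\widehat{\calO}_X} \OC)|_V$ is a filtered colimit of finite free $\widehat{\calO}_X|_V$-modules of the form $\calL|_V \cdot \underline{Y}^{\,\underline{n}}$. Scholze's primitive comparison implies $\rH^i(V,\widehat{\calO}_X) = 0$ for $i \geq 1$, and since $V$ is affinoid perfectoid (hence qcqs), cohomology commutes with filtered colimits of abelian sheaves; therefore $\rH^i(V, \calL \otimes_{\widehat{\calO}_X} \OC) = 0$ for all $i \geq 1$.

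To descend this vanishing from $V$ to $U$, I would use the \v Cech-to-derived spectral sequence for the pro-\'etale cover $V \to U$. The fibre products $V^{n/U}$ remain affinoid perfectoid (self-products over $U$ of an affinoid perfectoid covering it are again affinoid perfectoid, cf.\ the argument in the proof of Lemma \ref{Lem-EvaluateGRep}), and each is of the same type as $V$, so the previous step applies to them as well. All higher terms on the $E_2$-page thus vanish, yielding $\rH^i(U, \calL \otimes_{\widehat{\calO}_X} \OC) = 0$ for $i \geq 1$.

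The main obstacle I anticipate is a bookkeeping one rather than a conceptual one: making sure that (a) the cover $V \to U$ can be chosen simultaneously to be affinoid perfectoid, to factor through $X_{\widehat{L},\infty}$ so that the polynomial description of $\OC$ applies, and to trivialise $\calL$; and (b) that the fibre products $V^{n/U}$ remain in this convenient class so that the \v Cech descent works. Both are standard once one uses that $X_{\widehat{L},\infty}$ may itself be chosen to lie in the pro-\'etale site of $X$ restricted to a small enough affinoid, and that affinoid perfectoid objects are stable under self-product over an affinoid perfectoid base.
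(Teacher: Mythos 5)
The decisive step of your argument --- descending the vanishing from the cover $V$ to $U$ via the \v Cech-to-derived spectral sequence --- does not work as stated. Knowing that $\rH^q(V^{n/U},\calF)=0$ for all $q\geq 1$ and all $n$ only kills the rows $q\geq 1$ of the $E_2$-page; the row $q=0$ is the \v Cech complex $\calF(V^{\bullet/U})$, and the spectral sequence then tells you that $\rH^i(U,\calF)\cong \check{\rH}^i(V^{\bullet/U},\calF)$, not that this group vanishes. This is exactly how the paper uses the same spectral sequence elsewhere (e.g.\ in the proof of Lemma \ref{Lem-EvaluateGRep} and Lemma \ref{Lem-PasstoGroupCoho}): vanishing on the nerve lets one \emph{compute} cohomology by the \v Cech complex, it does not make it vanish. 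A cautionary example with the identical setup: for a smooth non-perfectoid affinoid $X$ with toric chart, every term of the nerve of $X_{\infty}\to X$ is affinoid perfectoid with vanishing higher $\widehat\calO_X$-cohomology, yet $\rH^i(X,\widehat\calO_X)\cong \rH^i_{\cts}(\Gamma,\widehat R_\infty)\neq 0$. So to close your argument you would still have to prove exactness of the \v Cech complex of $\calL\otimes_{\OX}\OC$ in positive degrees for the cover $V\to U$, which you never address; doing so for the finite-rank pieces amounts to invoking the vanishing of higher cohomology of finite locally free $\OX$-modules on the affinoid perfectoid $U$ itself --- at which point the detour through $V$ is superfluous.

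The paper's proof avoids all of this: since $U$ is affinoid (hence $U_{\proet}$ is coherent and cohomology commutes with filtered colimits) and $\OC=\varinjlim_n\Sym^n_{\OX}\calE$ with $\calE$ the Faltings extension, the sheaf $\calL\otimes_{\OX}\OC$ is a filtered colimit of the finite locally free $\OX$-modules $\calL\otimes_{\OX}\Sym^n\calE$, and \cite[Prop. 2.3]{LZ} gives the vanishing for each of these directly on $U$ --- no toric chart, no trivialisation of $\calL$, and no descent. Your instinct to use the colimit presentation and coherence is the right one; the fix is to apply it to the \emph{globally defined} finite-rank filtration by symmetric powers of $\calE$ on $U$ itself, rather than to the polynomial description of $\OC$, which is only available after passing to a cover and thereby forces the problematic descent step.
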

 \begin{proof}
   Since $U$ is affinoid, the pro-\'etale site $U_{\proet}$ is coherent. In particular, taking cohomology commutes with taking direct limits by \cite[Expos\'e {\rm VI}, Theorem 5.1]{SGA4}. Since $\OC = \varinjlim_n\Sym^n_{\OX}\calE$, the result follows from \cite[Prop. 2.3]{LZ}.
 \end{proof}
 \begin{lem}\label{Lem-PasstoGroupCoho}
   For any $i\geq 0$, there exists a natural isomorphism 
   \[\rH^i(\Gamma_{\geo},(\calL\otimes_{\OX}\OC)(Y_{C,\infty}))\simeq \rH^i(X_{\proet}/Y_C,\calL\otimes_{\OX}\OC),\]
   where $Y_{C,\infty}$ is the analogue of $X_{C,\infty}$ for $Y$ replacing $X$ (cf. Notation \ref{Notation-LocalChart}).
 \end{lem}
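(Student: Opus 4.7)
The plan is to mimic the argument used in the proof of Lemma \ref{Lem-EvaluateGRep}, namely to apply the \v{C}ech-to-derived spectral sequence to the pro-\'etale Galois cover $Y_{C,\infty}\to Y_C$ with Galois group $\Gamma_{\geo}$, and then to identify the resulting \v{C}ech complex with the continuous group cochain complex of $\Gamma_{\geo}$ acting on $(\calL\otimes_{\OX}\OC)(Y_{C,\infty})$.

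First I would note that $Y_{C,\infty}\in X_{\proet}/Y_C$ is affinoid perfectoid, that it covers the final object of the localised topos $\Sh(X_{\proet}/Y_C)$, and that as in \cite[Lem. 5.6]{Sch-Pi} each term of its \v{C}ech nerve $Y_{C,\infty}^{\bullet/Y_C}$ is identified with $Y_{C,\infty}\times\underline{\Gamma_{\geo}}^{\bullet}$, which is again affinoid perfectoid. Then I would invoke Lemma \ref{Lem-PerfectoidVanishing} to conclude that for every $n\ge 0$ and every $i\ge 1$,
\[
\rH^i(X_{\proet}/Y_{C,\infty}^{n/Y_C},\calL\otimes_{\OX}\OC)=0.
\]

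With this vanishing in hand, the \v{C}ech-to-derived spectral sequence for the cover $Y_{C,\infty}\to Y_C$ degenerates, yielding a natural isomorphism between $\rH^i(X_{\proet}/Y_C,\calL\otimes_{\OX}\OC)$ and the \v{C}ech cohomology of the cosimplicial module $(\calL\otimes_{\OX}\OC)(Y_{C,\infty}^{\bullet/Y_C})$. Using the identification of $Y_{C,\infty}^{n/Y_C}$ with $Y_{C,\infty}\times\underline{\Gamma_{\geo}}^n$, one has
\[
(\calL\otimes_{\OX}\OC)(Y_{C,\infty}^{n/Y_C})\cong C(\Gamma_{\geo}^n,(\calL\otimes_{\OX}\OC)(Y_{C,\infty})),
\]
exactly as in the proof of Lemma \ref{Lem-EvaluateGRep}, and the \v{C}ech differentials match the standard continuous cochain differentials for $\Gamma_{\geo}$. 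This will give the desired isomorphism with $\rH^i(\Gamma_{\geo},(\calL\otimes_{\OX}\OC)(Y_{C,\infty}))$.

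The only technical point worth emphasising is the affinoid perfectoid nature of the terms $Y_{C,\infty}^{n/Y_C}$ together with the identification with $Y_{C,\infty}\times\underline{\Gamma_{\geo}}^n$; once this is established, everything else is a formal consequence of Lemma \ref{Lem-PerfectoidVanishing} and the standard comparison between \v{C}ech cohomology of a Galois cover and continuous group cohomology. Naturality in $\calL$ and in $Y$ is automatic from the functoriality of all constructions involved.
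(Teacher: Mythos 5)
Your argument is correct and coincides with the paper's own proof: both compute $\rR\Gamma(X_{\proet}/Y_C,\calL\otimes_{\OX}\OC)$ via the \v{C}ech--Alexander complex of the cover $Y_{C,\infty}\to Y_C$ using Lemma \ref{Lem-PerfectoidVanishing} and the \v{C}ech-to-derived spectral sequence, and then identify the terms with continuous cochains $C(\Gamma_{\geo}^{\bullet},(\calL\otimes_{\OX}\OC)(Y_{C,\infty}))$ exactly as in the proof of Lemma \ref{Lem-EvaluateGRep}. No further changes are needed.
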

 \begin{proof}
   Lemma \ref{Lem-PerfectoidVanishing} together with \v Cech-to-derived spectral sequence implies that the pro-\'etale cohomology $\rR\Gamma(X_{\proet}/Y_C,\calL\otimes_{\OX}\OC)$ can be computed by \v Cech--Alexander complex $(\calL\otimes_{\OX}\OC)(Y_{C,\infty}^{\bullet/Y_C})$, where $Y_{C,\infty}^{n/Y_C}$ denotes the self product of $n+1$ copies of $Y_{C,\infty}$ over $Y_C$. By a similar argument in the proof of Lemma \ref{Lem-EvaluateGRep}, we see that 
   \[(\calL\otimes_{\OX}\OC)(Y_{C,\infty}^{\bullet/Y_C})\cong C(\Gamma_{\geo}^{\bullet},(\calL\otimes_{\OX}\OC)(Y_{C,\infty})).\]
   Then the result follows.
 \end{proof}
 Now, by Theorem \ref{Thm-SimpsonLocal} (1) and (5), we conclude that $\rH^0(X_{\proet}/Y_C,\calL\otimes_{\OX}\OC)$ is a finite projective $R_C$-module such that (\ref{Equ-CompatiblewithBaseChange-I}) and (\ref{Equ-CompatiblewithBaseChange-II}) hold true. This shows that the functor
 \[\Vect(X,\OX)\to\HIG_{G_K}(X_C)\]
 sending each generalised representation $\calL$ to \[(\calH(\calL),\theta_{\calH(\calL)}):=(\nu_*(\calL\otimes_{\OX}\OC),\nu_*(\Theta_{\calL}))\]
 is a well-defined rank-preserving functor such that Theorem \ref{Thm-Simpson} (1) is true. Moreover, we also deduce that
 \begin{cor}\label{Cor-IndependentofChart-Simpson}
   Assume $X$ admits a toric chart. Then the functor $\Vect(X_{\proet},\OX)\to\HIG_{G_K}(X_C)$ introduced in Corollary \ref{Cor-SimpsonLocal} is independent of the choice of toric chart.
 \end{cor}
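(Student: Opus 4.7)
The plan is to show that the local functor built in Corollary \ref{Cor-SimpsonLocal} coincides with the globally defined rule
\[\calL\mapsto (\nu_*(\calL\otimes_{\OX}\OC),\nu_*(\Theta_{\calL})),\]
which patently involves no choice of toric chart (here, as allowed by Remark \ref{Rmk-Simpson}, we view the global target as $\HIG_{\Gal(L/K)}(X_{\widehat L})$ via the base-change equivalence with $\HIG_{G_K}(X_C)$). Since $X_{\widehat L}$ is affinoid, two $\Gal(L/K)$-Higgs bundles on $X_{\widehat L,\et}$ are isomorphic iff their global sections are, so it suffices to compare the two functors after evaluating on $X_{\widehat L}$.

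Fix $\calL\in\Vect(X_{\proet},\OX)$ and write $M_\infty=\calL(X_{\widehat L,\infty})\in\Rep_{\Gamma(L/K)}(\widehat R_{\widehat L,\infty})$ for the representation attached to $\calL$ by Lemma \ref{Lem-EvaluateGRep}. By construction the local functor sends $\calL$ to the bundle on $X_{\widehat L,\et}$ attached to the $\Gal(L/K)$-Higgs module $(H(M_\infty),\theta_{H(M_\infty)})$ from Theorem \ref{Thm-SimpsonLocal}(1). On the global side, Lemma \ref{Lem-PasstoGroupCoho} (applied with $Y=X$ and with $L$ in place of $C$, which is harmless in view of Remark \ref{Rmk-Simpson}) gives
\[(\nu_*(\calL\otimes_{\OX}\OC))(X_{\widehat L}) \;\cong\; \rH^0\!\bigl(\Gamma_{\geo},\, (\calL\otimes_{\OX}\OC)(X_{\widehat L,\infty})\bigr).\]
By Notation \ref{Notation-LocalOC} the argument of the right-hand side is $M_\infty\otimes_{\widehat R_{\widehat L,\infty}} C_{L,\infty}$, so this group coincides with $H(M_\infty)$. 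Likewise $\nu_*(\Theta_{\calL})$ evaluates to the restriction of $\id_{M_\infty}\otimes \Theta$ to the $\Gamma_{\geo}$-invariants, which is precisely $\theta_{H(M_\infty)}$; and the residual $\Gal(L/K)\cong\Gamma(L/K)/\Gamma_{\geo}$-action on the left matches the one inherited on $H(M_\infty)$ because the identification of Notation \ref{Notation-LocalOC} is $\Gamma(L/K)$-equivariant.

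Hence the two functors agree on global sections over $X_{\widehat L}$, and therefore agree as functors $\Vect(X_{\proet},\OX)\to\HIG_{\Gal(L/K)}(X_{\widehat L})$. Since the global functor $\calL\mapsto (\nu_*(\calL\otimes\OC),\nu_*(\Theta_{\calL}))$ uses only the pro-\'etale projection $\nu$ and the intrinsic Higgs field on $\OC$, it is independent of the toric chart, and the corollary follows. There is no real obstacle here: the argument is a direct unwinding of Theorem \ref{Thm-SimpsonLocal} and Lemma \ref{Lem-PasstoGroupCoho}, whose only slightly delicate point is keeping track that the Higgs field and the $\Gal(L/K)$-action inherited from $\calL\otimes\OC$ really do match those constructed by hand from $M_\infty$ in the local picture.
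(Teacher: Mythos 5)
Your proof is correct and follows essentially the same route as the paper: both identify the global sections of $\nu_*(\calL\otimes_{\OX}\OC)$ with $\rH^0(\Gamma_{\geo},(\calL\otimes_{\OX}\OC)(X_{\widehat L,\infty}))=H(M_\infty)$ via Lemma \ref{Lem-PasstoGroupCoho} and then invoke the construction in Theorem \ref{Thm-SimpsonLocal}/Corollary \ref{Cor-SimpsonLocal}. You simply spell out the compatibility of the Higgs fields and $\Gal(L/K)$-actions that the paper leaves implicit.
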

 \begin{proof}
   By Lemma \ref{Lem-PasstoGroupCoho}, we see that $\calH(\calL)(X_{C,\infty}) = \rH^0(\Gamma_{\geo},(\calL\otimes_{\OX}\OC)(X_{C,\infty}))$. Then the result follows from the construction of the functor in Corollary \ref{Cor-SimpsonLocal}.
 \end{proof}
 
 Thanks to above corollary, we can show that the functor $\calL\mapsto(\calH(\calL),\theta_{\calH(\calL)})$ is indeed an equivalence. For this purpose, we construct its quasi-inverse as follows:
 
 Let $\{X_i\to X\}_{i\in I}$ be an \'etale covering such that each $X_i\to X$ is standard \'etale and each $X_i$ admits a toric chart.
 For any $(\calH,\theta_{\calH})\in\HIG_{G_K}(X_{C,\et})$, let $(\calH_i,\theta_{\calH_i})$ be its restriction to $X_i$. Then we get canonical comparison isomorphisms 
 \[\iota_{ij}:(\calH_i,\theta_{\calH_i})_{\mid X_i\times_XX_j}\xrightarrow{\cong} (\calH_j,\theta_{\calH_j})_{\mid X_j\times_XX_j}.\]
 Let $\calL_i\in\Vect(X_{i,\proet},\widehat\calO_{X_i})$ be the generalised representation on $X_{i,\proet}$ corresponding to $(\calH_i,\theta_{\calH_i})$ in the sense of Corollary \ref{Cor-SimpsonLocal}. By Corollary \ref{Cor-IndependentofChart-Simpson}, $\iota_{ij}$'s induce canonical isomorphisms of 
 \[\calL_{i\mid_{X_i\times_XX_j}} \xrightarrow{\cong}\calL_{j\mid_{X_i\times_XX_j}}.\]
 So $\calL_i$'s glue and achieve an $\calL(\calH,\theta_{\calH})\in \Vect(X_{\proet},\OX)$. It is easy to check that the functor $(\calH,\theta_{\calH})\mapsto\calL(\calH,\theta_{\calH})$ is the desired quasi-inverse. By Theorem \ref{Thm-SimpsonLocal} (3), the above constructions preserve tensor products and dualities.
 
 To complete the proof of Theorem \ref{Thm-Simpson}, it remains to verify items (2), (3) and (4). 
 
 For (2), let $\calL\in\Vect(X_{\proet},\OX)$ with associated $(\calH,\theta_{\calH})\in\HIG_{G_K}(X_C)$. By adjunction formula, we get a morphism
 \[(\calH\otimes_{\calO_{X_C}}\OC_{\mid X_C},\Theta_{\calH})\to (\calL\otimes_{\OX}\OC)_{\mid X_C},\Theta_{\calL})\]
 and have to check this is an isomorphism, which reduces to Theorem \ref{Thm-SimpsonLocal} (3) as the problem is local.
 
 For (3), let $\calL\in\Vect(X_{\proet},\OX)$ and $(\calH,\theta_{\calH})\in\HIG_{G_K}(X_C)$ be as above. By noting that the universal Higgs field $(\OC,\Theta)$ induces a quasi-isomorphism $\OX\xrightarrow{\simeq} \HIG(\OC,\Theta)$, we get a quasi-isomorphism
 \[\calL\simeq \HIG(\calL\otimes_{\OX}\OC,\Theta_{\calL})\]
 and hence quasi-isomorphisms
 \begin{equation*}
     \begin{split}
         \rR\Gamma(X_{C,\proet},\calL)&\simeq \rR\Gamma(X_{C,\proet},\HIG(\calL\otimes_{\OX}\OC,\Theta_{\calL}))\\
         &\simeq\rR\Gamma(X_{C,\et},\rR\nu_*(\HIG(\calL\otimes_{\OX}\OC,\Theta_{\calL})))\\
         &\simeq\rR\Gamma(X_{C,\et},\nu_*(\HIG(\calL\otimes_{\OX}\OC,\Theta_{\calL})))\quad \text{by (1)}\\
         & \simeq\rR\Gamma(X_{C,\et},\nu_*(\HIG(\calH\otimes_{\calO_X}\OC,\Theta_{\calH})))\quad \text{by (2)}\\
         & \simeq \rR\Gamma(X_{C,\et},\HIG(\calH,\theta_{\calH})),
     \end{split}
 \end{equation*}
 where the last quasi-isomorphism is deduced by noting that $(\nu_*(\OC),\nu_*(\Theta)) \cong (\calO_X,0)$.
 
 For (4), let $\calL\in\Vect(X_{\proet},\OX)$ and $(\calH,\theta_{\calH})\in\HIG_{G_K}(X_C)$ be as above. 
 Since there exists a natural morphism $(f^*\OC_X,f^*\Theta_X)\to (\OC_{X'},\Theta_{X'})$ of Higgs fields, we get isomorphisms
 \begin{equation*}
     \begin{split}
         & (f^*\calH\otimes_{\calO_{X'_C}}\OC_{\mid X'_C},\Theta_{f^*\calH})\\
         \cong & (f^*\calH\otimes_{\calO_{X'_C}}\OC_{\mid X'_C},f^*\Theta_{\calH})\quad (\text{as~}f^*\Theta_{\calH} = \Theta_{f^*\calH}) \\
         \to & (f^*\calL\otimes_{\widehat \calO_{X'}}\OC_{\mid X'_C},f^*\Theta_{\calL})\quad (\text{by (2) via base-change along}~f^*\OC_X\to\OC_{X'})\\
         \cong & (f^*\calL\otimes_{\widehat \calO_{X'}}\OC_{\mid X'_C},\Theta_{f^*\calL})\quad (\text{as}~ f^*\Theta_{\calL} = \Theta_{f^*\calL}).
     \end{split}
 \end{equation*}
 By taking kernels of Higgs fields, we see that 
 \[f^*\calL \cong \calL(f^*\calH,f^*\theta_{\calH}).\]
 By applying $\nu_{X',*}: X'_{\proet}\to X'_{C,\et}$ to $(f^*\calH\otimes_{\calO_{X_C'}}\OC_{\mid X_C'},\Theta_{f^*\calH}) \cong (f^*\calL\otimes_{\widehat \calO_{X'}}\OC_{\mid X_C'},\Theta_{f^*\calL})$, we get 
 \[(\calH(f^*\calL),\theta_{\calH(f^*\calL)})\cong (f^*\calH,f^*\theta_{\calH}).\]
 Then Item (4) follows.
 
  Therefore, we complete the proof of Theorem \ref{Thm-Simpson}.

\section{Local description of Hodge--Tate crystals}\label{Sec-LocalConstruction}
  
  In this section, we study the category of (rational) Hodge--Tate crystals on $(\frakX)_{\Prism}$ for $\frakX = \Spf(R^+)$ small affine with a fixed chart 
  \[\Box: =\calO_K\za T_1^{\pm 1},\dots,T_d^{\pm 1}\ya \to R^+\]
  as defined in \S \ref{Intro-Notations}. Let $X = \Spa(R,R^+)$ be the rigid generic fibre of $\frakX$. For simplicity, we denote the prismatic site $(\frakX)_{\Prism}$ by $(R^+)_{\Prism}$, denote by $(R^+)_{\Prism}^{\perf}$ the sub-site of all perfect prisms, and denote by $(R^+/(\frakS,(E)))_{\Prism}$ the relative prismatic site with the base prism $(\frakS,(E))$. Let $R^+_C$ be the base-change of $R^+$ along $\calO_K\to\calO_C$.
  
  To state our main result, we make the following definition:
  \begin{dfn}\label{Dfn-EnhancedHiggsMod-Local}
  By an \emph{enhanced Higgs module} of rank $l$ over $R^+$, we mean a triple $(H,\theta_H,\phi_H)$ such that
  \begin{enumerate}
      \item $H$ is a finite projective module over $R^+$ of rank $l$ and $\theta_H:H\to H\otimes_{R^+}\widehat \Omega^1_{R^+/\calO_K}\{-1\}$ defines a nilpotent Higgs field on $H$. We denote by $\HIG(H,\theta_H)$ the induced Higgs complex.
      
      \item $\phi_H\in\End_{R^+}(H)$ such that 
      \begin{enumerate}
          \item $\lim_{n\to+\infty}\prod_{i=0}^{n-1}(\phi_H+iE'(\pi)) = 0$ with respect to the $p$-adic topology on $H$, and that
          
          \item $[\phi_H,\theta_H] = -E'(\pi)\theta_H$; that is, $\phi_H$ induces an endomorphism of $\HIG(H,\theta_H)$ as follows:
          \begin{equation}\label{Diag-EnhancedHiggsComplex-Local}
              \xymatrix@C=0.45cm{
                H\ar[r]^{\theta_H\qquad\quad}\ar[d]_{\phi_H}&H\otimes_{R^+}\widehat \Omega^1_{R^+/\calO_K}\{-1\}\ar[d]_{\phi_H+E'(\pi)\id_H} \ar[r]&\cdots\ar[r]&H\otimes_{R^+}\widehat \Omega^d_{R^+/\calO_K}\{-d\}\ar[d]_{\phi_H+dE'(\pi)\id_H}\\
                H\ar[r]^{\theta_H\qquad\quad}&H\otimes_{R^+}\widehat \Omega^1_{R^+/\calO_K}\{-1\} \ar[r]&\cdots\ar[r]&H\otimes_{R^+}\widehat \Omega^d_{R^+/\calO_K}\{-d\}.
              }
          \end{equation}
      \end{enumerate}
      We denote by $\HIG(H,\theta_H,\phi_H)$ the total complex of (\ref{Diag-EnhancedHiggsComplex-Local}). In other words, $\HIG(H,\theta_H,\phi_H)$ is the fibre of $\HIG(H,\theta_H)$:
      \[\HIG(H,\theta_H,\phi_H) = \fib(\HIG(H,\theta_H)\xrightarrow{\phi_H}\HIG(H,\theta_H).\]
  \end{enumerate}
   We denote by $\HIG^{\nil}_*(R^+)$ the category of enhanced Higgs modules over $R^+$. One can similarly define the category $\HIG^{\nil}_*(R)$ of enhanced Higgs modules over $R$ by replacing $R^+$ and $\widehat \Omega^i_{R^+/\calO_K}$ by $R$ and $\Omega^i_{R/K}$ respectively in the above sentences.
 \end{dfn}
 \begin{rmk}\label{Rmk-EnhancedHiggsMod-Local}
   In Definition \ref{Dfn-EnhancedHiggsMod-Local}, the nilpotency condition on $\theta_H$ can be deduced from the other assumptions. In fact, if we write $\theta_H = \sum_{i=1}^d\theta_i\otimes\frac{\dlog T_i}{E(u)}$ with $\theta_i$'s belonging to $\End_{R^+}(H)$, then the condition $[\phi_H,\theta_H] = -E'(\pi)\theta_H$ amounts to that for any $1\leq i\leq d$, we have $[\phi_H,\theta_i] = -E'(\pi)\theta_i$. So the Lie sub-algebra $\frakg$ of $\End_{R^+}(H)$ generated by $\phi_H$ and $\theta_i$'s is solvable with $[\frakg,\frakg]$ generated by $\theta_i$'s. Then the desired nilpotency follows from standard Lie theory (cf. \cite[\S 3.3 Cor. A]{Hum}) via embedding $R^+$ into $\Frac(R^+)$, the fractional field of $R^+$.
 \end{rmk}
 In this subsection, we want to prove the following result.
 \begin{thm}\label{Thm-HTasHiggs-Local}
   The evaluation at the Breuil--Kisin prism $(\frakS(R^+),(E))$ attached to the fixed framing $\square$ induces an equivalence of categories 
   \[\rho_{\square}:\Vect((R^+)_{\Prism},\overline \calO_{\Prism})\to\HIG_*^{\nil}(R^+)~(\text{resp.}~\rho_{\square}:\Vect((R^+)_{\Prism},\overline \calO_{\Prism}[\frac{1}{p}])\to\HIG_*^{\nil}(R)),\]
   which preserves ranks, tensor products and dualities. Moreover for any Hodge--Tate crystal (resp. rational Hodge--Tate crystal) $\bM$ with associated enhanced Higgs module over $R^+$ (resp. $R$), there exists a quasi-isomorphism
   \[\rR\Gamma((R^+)_{\Prism},\bM)\cong \HIG(H,\theta_H,\phi_H),\]
   which is functorial in $\bM$.
 \end{thm}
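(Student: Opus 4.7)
The plan is to prove the theorem via faithfully flat descent. By (the $p$-completely flat version of) \cite[Prop.~2.7]{BS21}, the Breuil--Kisin prism $(\frakS(R^+),(E))$ attached to $\square$ covers the final object of $\Sh((R^+)_\Prism)$.  Writing $\frakS(R^+)^{\bullet}$ for its Čech nerve in $(R^+)_\Prism$ and $A^\bullet := \overline\calO_\Prism(\frakS(R^+)^\bullet)$ for the induced cosimplicial ring (with $A^0 = \frakS(R^+)/E = R^+$), I would obtain equivalences
\[
\Vect((R^+)_\Prism,\overline\calO_\Prism)\simeq \Strat(A^\bullet),\qquad \Vect((R^+)_\Prism,\overline\calO_\Prism[\tfrac1p])\simeq \Strat(A^\bullet[\tfrac1p]),
\]
and the Čech--Alexander complex $\bM(A^\bullet)$ would compute $\rR\Gamma((R^+)_\Prism,\bM)$.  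The whole theorem then reduces to an explicit analysis of $A^\bullet$ and of stratifications on it.

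My second step is to compute $A^1$ explicitly.  In the \emph{absolute} site the self-product of $\frakS(R^+)$ receives contributions in two independent directions: a \emph{geometric} one, coming from the two embeddings of each $[T_i^\flat]$, and an \emph{arithmetic} one, coming from the two embeddings of the uniformizer variable $u$ of $\frakS$.  Forming the prismatic ($\delta$-)envelope of the diagonal along $E$ and then reducing modulo $E$, these contributions produce $d$ divided-power variables $X_1,\dots,X_d$ (with $X_i$ coming from $[T_i^\flat]\otimes 1 - 1\otimes[T_i^\flat]$) and one further divided-power variable $Y$ coming from $u\otimes 1-1\otimes u$, so that
\[
A^1 \;\cong\; R^+\langle X_1,\dots,X_d, Y\rangle^{\wedge}_{\mathrm{pd}}.
\]
The two face maps $p_0,p_1\colon R^+\to A^1$ differ in the $T_i$-direction by a term proportional to $X_i$ and in the uniformizer direction by a term proportional to $Y$ with a unit factor involving $E'(\pi)$, reflecting $E(u)\equiv E'(\pi)(u-\pi)\bmod E(u)^2$.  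The higher pieces $A^n$ are obtained analogously on $n(d+1)$ variables.

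Given $A^\bullet$, I would unpack a stratification $\varepsilon\colon H\otimes_{p_0}A^1\xrightarrow{\cong} H\otimes_{p_1}A^1$ on a finite projective $R^+$-module $H$ as a formal expansion of $\varepsilon(h\otimes 1)$ in divided powers of the $X_i$ and $Y$.  This expansion encodes $R^+$-linear endomorphisms $\theta_1,\dots,\theta_d$ and $\phi$ of $H$.  The cocycle condition, applied separately in each direction, forces the $\theta_i$'s to commute pairwise, so that $\theta_H := \sum_i \theta_i \otimes \frac{\dlog T_i}{E'(\pi)}$ (suitably reinterpreted via the twist $\{-1\}$) defines a Higgs field, nilpotent by Remark~\ref{Rmk-EnhancedHiggsMod-Local}; applied in the mixed direction it produces the key relation $[\phi,\theta_i] = -E'(\pi)\theta_i$, the constant $E'(\pi)$ arising exactly from the unit factor above.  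Convergence of the $p$-adic sum defining $\varepsilon$ on $A^1$ is equivalent to $\prod_{i=0}^{n-1}(\phi+iE'(\pi))\to 0$, giving the topological nilpotency of $\phi_H := \phi$.  Reversing this construction produces a quasi-inverse, and preservation of rank, tensor products and duals is automatic on the stratification side.  The rational case follows by inverting $p$ throughout and noting that finite projective $R$-modules descend through the equivalence without requiring an integral structure.

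Finally, for the cohomology comparison, under this unpacking the Čech--Alexander complex $\bM(A^\bullet)$ becomes the $p$-completed Koszul-type complex for the operators $\theta_1,\dots,\theta_d$ and $\phi$ on $H$; the $\theta$-directions produce the horizontal rows of \eqref{Diag-EnhancedHiggsComplex-Local} with values in $\widehat\Omega^i_{R^+/\calO_K}\{-i\}$, while the $\phi$-direction produces the vertical maps $\phi_H+iE'(\pi)\id_H$ on the $i$-th row, the shift $iE'(\pi)$ being forced by the commutation relation applied on each tensor factor $\widehat\Omega^i$.  Thus $\bM(A^\bullet)\simeq \HIG(H,\theta_H,\phi_H)$ as desired.  \textbf{The main obstacle} I expect is the explicit computation of $A^1$ together with the careful bookkeeping of normalization constants (the Tate twist $\{-1\}$, the factor $E'(\pi)$, and the interaction of $\delta(u)$ with $E(u)$ inside the $\delta$-envelope of the diagonal), since these interactions are precisely what distinguish the absolute theory from Tian's relative one and are what produce the extra arithmetic operator $\phi_H$.
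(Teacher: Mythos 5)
Your proposal follows essentially the same route as the paper: descent to stratifications over the cosimplicial ring $A^{\bullet}\cong R^+\{X,Y_1,\dots,Y_d\}^{\wedge}_{\mathrm{pd}}$ of the Breuil--Kisin \v Cech nerve, extraction of the operators $\theta_i$ and $\phi_H$ with the relation $[\phi_H,\theta_i]=-E'(\pi)\theta_i$ from the cocycle condition (this is the paper's Lemma \ref{Lem-Technique}, which is the main computation you have deferred), and identification of the \v Cech--Alexander complex with $\HIG(H,\theta_H,\phi_H)$. The one place your sketch undersells the work is the cohomology comparison: the reduction of $\bM(\frakS(R^+)^{\bullet},(E))$ to the finite complex is not a direct Koszul identification but proceeds via an Eilenberg--Zilber decomposition of a bicosimplicial object, a divided-power Poincar\'e lemma contracting the geometric variables onto the Higgs complex, and the one-variable arithmetic computation of \cite{MW21b}.
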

 \begin{proof}
   This follows from Proposition \ref{Prop-HTvsHiggs-Local} and Proposition \ref{Prop-HTvsHiggsCoho-Local} proved below.
 \end{proof}
 \begin{rmk}
   Since $(\frakS(R^+),(E))$ depends on the framing on $R^+$, the equivalence
   \[\Vect((R^+)_{\Prism},\overline \calO_{\Prism})\to\HIG_*^{\nil}(R^+)\]
   also depends on the given framing. However, we will see in Corollary \ref{Cor-IndenpendenceChart} that after inverting $p$, the equivalence $\Vect((R^+)_{\Prism},\overline \calO_{\Prism}[\frac{1}{p}])\to\HIG_*^{\nil}(R)$ is indeed independent of the choice of framings by using the $p$-adic Simpson correspondence studied in the previous section. So there is a global theory on the rational level (cf. Theorem \ref{Thm-HTasHiggs-Global}). When $\calO_K = \rW(\kappa)$ is unramified, Theorem \ref{Thm-HTasHiggs-Local} was also obtained by Bhatt--Lurie in \cite{BL22b} in a stacky way by regarding the Hodge--Tate substack of the prismatization of $\frakX$ as the classifying space of some group scheme. However, up to now, it is still a problem to achieve a global theory as their method needs a global ``Frobenius endomorphism'' of $\frakX$. See \cite[\S 9]{BL22b} for details. 
 \end{rmk}
 \begin{rmk}
     Note that there exists an obvious functor 
     \[\Vect((R^+)_{\Prism},\overline \calO_{\Prism})\to \Vect((R^+/(\frakS,(E)))_{\Prism},\overline \calO_{\Prism})\]
     induced by the forgetful functor $(R^+/(\frakS,(E)))_{\Prism}\to (R^+)_{\Prism}$. By abuse of notations, for each $\bM\in \Vect((R^+)_{\Prism},\overline \calO_{\Prism})$ (with associated enhanced Higgs module $(H,\theta_H,\phi_H)$), we also denote by $\bM$ its image under the above functor. Using Proposition \ref{Prop-Structure} below, it is easy to see that the induced topologically nilpotent Higgs module associated to $\bM\in \Vect((R^+/(\frakS,(E)))_{\Prism},\overline \calO_{\Prism})$ via the equivalences in \cite[Thm. 5.12 and Thm. 5.14]{Tia23} is exactly $(H,\theta_H)$ and there exists a quasi-isomoprhism
   \[\rR\Gamma((R^+/(\frakS,(E)))_{\Prism},\bM) = \HIG(H,\theta_H).\]
 \end{rmk}
 \begin{rmk}
      The equivalence $\Vect((R^+)_{\Prism},\overline \calO_{\Prism})\to\HIG_*^{\nil}(R^+)$ in Theorem \ref{Thm-HTasHiggs-Local} still holds true in a more general setting: Indeed, one can define a Hodge--Tate crystal of quasi-coherent modules as a sheaf $\bM$ of $\overline \calO_{\Prism}$-modules on $(R^+)_{\Prism}$ with $p$-complete evaluations satisfying Convention \ref{Convention-Crystal} (2) for $p$-complete tensor products, and define quasi-coherent enhanced Higgs modules as $p$-complete $R$-modules $H$ with Higgs fields $\theta_H$ and $R^+$-linear endomorphism $\phi_H$ satisfying Definition \ref{Dfn-EnhancedHiggsMod-Local} (2). Then the same proof of Theorem \ref{Thm-HTasHiggs-Local} also yields an equivalence between the category of Hodge--Tate crystals of quasi-coherent modules and the category of quasi-coherent enhanced Higgs modules by using ($p$-completely) faithfully flat descent for general ($p$-complete) modules (rather than finite projective ones). See \cite[Section 3]{GMW-HT} for related discussions.
 \end{rmk}
 We will prove Theorem \ref{Thm-HTasHiggs-Local} by establishing the desired equivalence and then comparing cohomologies separately in sequels.
\subsection{Preliminaries}
 \begin{construction}\label{Construction-CoverPrism}
   Let $\frakS\za\underline T^{\pm 1}\ya$ be the $(p,E)$-adic completion of $\frakS[\underline T^{\pm 1}]$ which is endowed with a $\delta$-structure over $\frakS$ such that $\delta(T_i) = 0$ for all $i$. Then $(\frakS\za\underline T^{\pm 1}\ya,(E))$ is a prism. By $p$-complete \'etaleness of $\Box$, it induces a smooth lifting $\frakS(R^+)$ of $R^+$ over $\frakS$ and a $(p,E)$-adically \'etale morphism $\frakS\za\underline T^{\pm 1}\ya \to \frakS(R^+)$. By \cite[Lem. 2.18]{BS22}, there exists a unique $\delta$-structure on $\frakS(R^+)$ which is compatible with the one on $\frakS\za\underline T^{\pm 1}\ya$ and makes $(\frakS(R^+),(E))$ a prism in $(R^+)_{\Prism}$. One can similarly define $(\Ainf\za\underline T^{\pm 1}\ya,(\xi))$ and $(\Ainf(R^+),(\xi))$ with $\delta$-structures compatible with those on $(\frakS\za \underline T^{\pm 1}\ya,(E))$ and $(\frakS(R^+),(E))$ via the morphism $\frakS\to\Ainf$ such that $\Ainf(R^+)$ is a lifting of $R^+_C$ over $\Ainf$. Let
   \[\widehat R_{C,\infty}^+: = R^+_C\widehat \otimes_{\calO_C\za T_1^{\pm 1},\dots,T_d^{\pm 1}\ya}\calO_C\za T_1^{\pm \frac{1}{p^{\infty}}},\dots,T_d^{\pm \frac{1}{p^{\infty}}}\ya.\]
   It is a perfectoid ring over $R$ with the induced perfect prism $(\Ainf(\widehat R_{C,\infty}^+),(\xi))$ in the sense of \cite[Thm. 3.10]{BS22}, which is the perfection of $(\Ainf(R^+),(\xi))$. We also adapt notations in \ref{Notation-LocalChart} and Notation \ref{Notation-LocalChart-II}.
 \end{construction}
 \begin{rmk}\label{Rmk-EtaleCompatibility}
   Note that for any \'etale localisation $\iota: R^+\to S^+$, there exists a unique morphism of prisms $(\frakS(R^+),(E))\to (\frakS(S^+),(E))$ lifting $\iota$, following from Lemma \cite[Lem. 2.18]{BS22}. Using this, we deduce from the constructions in this section that the equivalence and quasi-isomorphism in Theorem \ref{Thm-HTasHiggs-Local} are compatible with \'etale localisations. More precisely, let $\bM\in\Vect((R^+)_{\Prism},\overline \calO_{\Prism})$ be a Hodge--Tate crystal with corresponding enhanced Higgs module $(H,\theta_H,\phi_H)$. Then the enhanced Higgs module associated to the restriction of $\bM$ to $(S^+)_{\Prism}$ is exactly $(H\otimes_{R^+}S^+,\theta_H\otimes\id_{S^+},\phi_H\otimes\id_{S^+})$ and there exists a quasi-isomorphism 
   \[\rR\Gamma((S^+)_{\Prism},\bM)\cong \HIG(H,\theta_H,\phi_H)\otimes_{R^+}S^+.\]
 \end{rmk}
 \begin{lem}\label{Lem-CoverPrism}
   \begin{enumerate}
       \item Both $(\frakS(R^+),(E))$, $(\Ainf(R^+),(\xi))$ and $(\Ainf(\widehat R_{C,\infty}^+),(\xi))$ are covers of the final object of the topos $\Sh((R^+)_{\Prism})$. As a consequence, these prisms are also covers of the final object of $\Sh((R^+/(\frakS,(E)))_{\Prism})$.
       
       \item The prism $(\Ainf(\widehat R_{C,\infty}^+),(\xi))$ is a cover of the final object of the topos $\Sh((R^+)^{\perf}_{\Prism})$.
   \end{enumerate}
 \end{lem}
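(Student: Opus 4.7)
My plan is to verify each covering statement directly: for every $(A, I) \in (R^+)_{\Prism}$ (respectively in the perfect sub-site, for (2)) I will construct a $(p, I)$-completely faithfully flat cover $(A, I) \to (B, IB)$ in the prismatic site together with a morphism of prisms from each claimed cover to $(B, IB)$, compatibly with the structure map $R^+ \to B/IB$. By Yoneda, this exhibits the claimed prisms as covers of the final object.

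The construction proceeds in three stages. First, I invoke the standard fact that $(\frakS, (E))$ covers the final object of $(\calO_K)_{\Prism}$: given $(A, I) \in (\calO_K)_{\Prism}$, pick any set-theoretic lift $\tilde\pi_0 \in A$ of the image of $\pi$ in $A/I$, and replace $A$ by the $(p, I)$-completed free $\delta$-extension $A\{Y\}^{\wedge}/(Y - \tilde\pi_0)^{\wedge}$, which is $(p, I)$-completely faithfully flat over $A$ and admits a $\delta$-morphism $\frakS \to A\{Y\}^{\wedge}/(Y - \tilde\pi_0)^{\wedge}$ via $u \mapsto Y$, with $E(Y)$ generating the extended invertible ideal. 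Second, using the framing $\Box$, lift the images of $T_1, \ldots, T_d$ in $B/IB$ to units $\tilde t_i \in B^{\times}$ via Hensel's lemma (valid since $B$ is $(p, I)$-complete) and repeat the free $\delta$-extension trick simultaneously along $Y_i - \tilde t_i$, producing a further faithfully flat cover $B \to B'$ that carries a $\delta$-morphism of prisms $\frakS\za\underline T^{\pm 1}\ya \to B'$ sending $T_i \mapsto Y_i$. Third, the map $\frakS\za\underline T^{\pm 1}\ya \to \frakS(R^+)$ is $(p, E)$-completely étale with reduction modulo $E$ equal to $\calO_K\za\underline T^{\pm 1}\ya \to R^+$, so the rigidity of étale lifts along $\delta$-thickenings (\cite[Lem.~2.18]{BS19}) yields a unique compatible $\delta$-morphism $\frakS(R^+) \to B'$. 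This settles the covering property for $(\frakS(R^+), (E))$.

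For $(\Ainf(R^+), (\xi))$ the argument is identical, once we recall that $(\Ainf, (\xi))$ also covers the final object of $(\calO_K)_{\Prism}$ (alternatively, post-compose with the map $\frakS \to \Ainf$ sending $u \mapsto [\pi^{\flat}]$). For $(\Ainf(\widehat R_{C,\infty}^+), (\xi))$ I further adjoin compatible $p$-power roots of each $\tilde t_i$ through a perfectoid cover; combined with perfecting at the $\Ainf$-level, this produces a perfect cover of $(A, I)$ receiving the required map. Part (2) then follows by observing that when $(A, I)$ is itself perfect, the whole procedure can be executed inside the perfect sub-site (replace each free $\delta$-extension by its perfection, which remains $(p, I)$-completely faithfully flat). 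Finally, the ``as a consequence'' assertion in (1) is automatic: each constructed cover receives a map from $(\frakS, (E))$ and hence belongs to the relative site $(R^+/(\frakS, (E)))_{\Prism}$.

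The delicate point is the second stage: the definition of $\frakS\za\underline T^{\pm 1}\ya$ demands $\delta(T_i) = 0$, which a generic lift $\tilde t_i$ of $T_i$ will not satisfy. The free $\delta$-extension imposing $Y_i = \tilde t_i$ with $\delta(Y_i) = 0$ is precisely what repairs this, at the cost of a $(p, I)$-completely faithfully flat enlargement. Once this is carried out, the étale-rigidity of \cite{BS19} automatically produces the lift to $\frakS(R^+)$, and all subsequent variants (perfect, $\Ainf$, perfectoid) are obtained by compatible modifications of the same construction.
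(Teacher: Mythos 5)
Your strategy is genuinely different from the paper's. You try to manufacture, for each $(A,I)$, a flat cover receiving a map from $\frakS(R^+)$ by directly adjoining rank-one (i.e.\ $\delta=0$) lifts of $\pi$ and of the $T_i$'s and then invoking étale rigidity. The paper instead first observes that the chain of morphisms $(\frakS(R^+),(E))\to(\Ainf(R^+),(\xi))\to(\Ainf(\widehat R_{C,\infty}^+),(\xi))$ reduces everything to the single prism $(\Ainf(\widehat R_{C,\infty}^+),(\xi))$, then refines the quasi-syntomic cover $A/I\to A/I\widehat\otimes_{R^+}\widehat R_{C,\infty}^+$ by a prismatic (resp.\ perfect prismatic) cover using \cite[Prop. 7.11]{BS19}, and finally uses deformation theory / the universal property of $\Ainf(-)$ of a perfectoid ring. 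Your route could in principle be completed, but as written it has two genuine gaps.

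First, the object $A\{Y\}^{\wedge}/(Y-\tilde\pi_0)^{\wedge}$ is not what you need. If the quotient is taken by the $\delta$-ideal generated by $Y-\tilde\pi_0$, the result is just $A$ with $Y=\tilde\pi_0$, and then $u\mapsto Y$ is a $\delta$-morphism only when $\delta(\tilde\pi_0)=0$, which fails for a generic lift; if the quotient is by the ordinary ideal, the result is not a $\delta$-ring; and in neither case is a quotient faithfully flat. The same objection applies verbatim to your second stage: one cannot ``impose $Y_i=\tilde t_i$ with $\delta(Y_i)=0$'' by a flat quotient, since that would force $\delta(\tilde t_i)=0$ in $A$ itself. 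The correct repair is to adjoin a genuinely free rank-one variable (so $A[Y]^{\wedge}$ with $\delta(Y)=0$) and then impose only the congruence $Y\equiv\tilde\pi_0 \bmod I$ via the prismatic envelope $A[Y]^{\wedge}\{\frac{Y-\tilde\pi_0}{d}\}^{\wedge}$; the $(p,I)$-complete faithful flatness of that envelope is precisely \cite[Prop. 3.13]{BS19} and is the nontrivial input your argument silently uses but never invokes.

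Second, for part (2) you assert that replacing each extension by its perfection ``remains $(p,I)$-completely faithfully flat.'' This is not automatic and is in fact the hard point: producing a cover of a given perfect prism by a perfect prism whose reduction receives $\widehat R_{C,\infty}^+$ is exactly \cite[Prop. 7.11 (2)]{BS19}, which rests on Andr\'e's flatness lemma and cannot be obtained by naively perfecting a flat $\delta$-extension. A similar unjustified step occurs in your treatment of $(\Ainf(\widehat R_{C,\infty}^+),(\xi))$ in part (1), where ``adjoining compatible $p$-power roots through a perfectoid cover'' is left without any flatness justification. Note finally that the paper's reduction step renders your first two stages unnecessary: once the largest prism $(\Ainf(\widehat R_{C,\infty}^+),(\xi))$ is known to cover the final object, the smaller prisms do as well simply by composing with the morphisms between them.
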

 \begin{proof}
   Since there exist morphisms of prisms 
   \[(\frakS(R^+),(E))\to(\Ainf(R^+),(\xi))\to(\Ainf(\widehat R_{C,\infty}^+),(\xi)),\] 
   it is enough to show $(\Ainf(\widehat R_{C,\infty}^+),(\xi))$ cover the final objects of both $\Sh((R^+)_{\Prism})$ and $\Sh((R^+)_{\Prism}^{\perf})$.
   
   For the first topos, let $(A,I)$ be any bounded prism in $(R^+)_{\Prism}$. Then $A/I\widehat \otimes_{R^+}\widehat R_{C,\infty}^+$ is a quasi-syntomic cover of $A/I$. By \cite[Prop. 7.11 (1)]{BS22}, there exists a cover $(B,IB)$ of $(A,I)$ such that $A/I\to B/IB$ factors through $A/I\to A/I\widehat \otimes_{R^+}\widehat R_{C,\infty}^+$. In particular, $B/I$ is an $\widehat R_{C,\infty}^+$-algebra. Since $\widehat R_{C,\infty}^+$ is perfectoid, by deformation theory, we get a morphism $(\Ainf(\widehat R_{C,\infty}^+),(\xi))\to(B,IB)$ as desired. 
   
   For the second topos, we conclude from a similar argument as above by using \cite[Prop. 7.11 (2)]{BS22} instead of \cite[Prop. 7.11 (1)]{BS22}.
 \end{proof}
 
 \begin{notation}\label{Notation-CechNerve}
  Let $(\frakS(R^+)^{\bullet},(E))$ (resp. $(\frakS(R^+)^{\bullet}_{\geo},(E))$, resp. $(\Ainf(\widehat R_{C,\infty}^+)^{\bullet},(E))$) be the cosimplicial prisms induced by the self-products of $(\frakS(R^+),(E))$ (resp. $\frakS(R^+)$, resp. $\Ainf(\widehat R_{C,\infty}^+)$) in $(R^+)_{\Prism}$ (resp. $(R^+/(\frakS,(E)))_{\Prism}$, resp. $(R^+)_{\Prism}^{\perf}$). Here, $E$ denotes $E(u_0)$; that is, the corresponding $E(u)$ of the first component in each degree of the cosimplicial prisms.
 \end{notation}
 \begin{lem}\label{Lem-Structure}
   We have isomorphisms of cosimplicial rings (with obvious degeneracy morphisms $p_i$ and face morphisms $\sigma_i$):
   \begin{enumerate}
       \item $\frakS(R^+)^{\bullet} = \frakS(R^+)^{\widehat \otimes(\bullet+1)}\{\frac{u_0-u_i}{E(u_0)},\frac{T_{1,0}-T_{1,i}}{E(u_0)T_{1,0}},\dots,\frac{T_{d,0}-T_{d,i}}{E(u_0)T_{d,0}}\mid 1\leq i\leq \bullet\}^{\wedge}_{\delta}$, where for any $n\geq 0$, the $\frakS(R^+)^{\widehat \otimes(n+1)}$ denotes the $(p,E)$-complete tensor product of $n+1$ copies of $\frakS(R^+)$ over $\rW(\kappa)$ and for any $0\leq i\leq n$, $u_i, T_{1,i},\dots,T_{d,i}$ denote the corresponding $u,T_1,\dots,T_d$ of the $(i+1)$-factor.
       
       \item $\frakS(R^+)_{\geo}^{\bullet} = \frakS(R^+)^{\widehat \otimes_{\frakS}(\bullet+1)}\{\frac{T_{1,0}-T_{1,i}}{E(u_0)T_{1,0}},\dots,\frac{T_{d,0}-T_{d,i}}{E(u_0)T_{d,0}}\mid 1\leq i\leq \bullet\}^{\wedge}_{\delta}$, where for any $n\geq 0$, the $\frakS(R^+)^{\widehat \otimes_{\frakS}(n+1)}$ denotes the $(p,E)$-complete tensor product of $n+1$ copies of $\frakS(R^+)$ over $\frakS$ with $T_{1,i}$'s as above.
       
       \item $\overline \calO_{\Prism}[\frac{1}{p}](\Ainf(\widehat R_{C,\infty}^+)^{\bullet}) = \rC(\Gamma(\overline K/K)^{\bullet},\widehat R_{C,\infty})$, where for any $n\geq 0$, the $\rC(\Gamma(\overline K/K)^{\bullet},\widehat R_{C,\infty})$ denotes the ring of continuous functions from $\Gamma(\overline K/K)^n$ to $\widehat R_{C,\infty}^+$ (cf. \S \ref{Intro-Notations}).
   \end{enumerate}
 \end{lem}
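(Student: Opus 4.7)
The plan is to prove each isomorphism by computing the relevant coproduct explicitly. For (1) and (2), the $n$-th term of the \v Cech nerve is the $(n+1)$-fold coproduct of $(\frakS(R^+),(E))$ in the category of bounded prisms in $(R^+)_{\Prism}$ (respectively $(R^+/(\frakS,(E)))_{\Prism}$), and such coproducts are computed as prismatic envelopes in the sense of \cite[\S 3]{BS19}. For (1), I would first form the $(p,E)$-complete tensor product $\frakS(R^+)^{\widehat\otimes(n+1)}$ in $(p,E)$-complete $\delta$-rings, using the tensor product of $\delta$-structures (which is well-defined by the formal smoothness of $\frakS(R^+)$ over $\rW(\kappa)$ coming from the \'etaleness of $\Box$). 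Requiring the reduction modulo $(E(u_0))$ of the induced map to $R^+$ on each of the $n+1$ factors to coincide forces $u_0-u_i$ and $T_{j,0}-T_{j,i}$ to lie in $(E(u_0))$; one then adjoins the quotients $\frac{u_0-u_i}{E(u_0)}$ and $\frac{T_{j,0}-T_{j,i}}{E(u_0)T_{j,0}}$ as $\delta$-elements and takes the $(p,E)$-completed $\delta$-envelope, with the factor $T_{j,0}$ in the denominator serving purely as a unit normalization. The universal property of the prismatic envelope then identifies the result with the desired coproduct. Part (2) is strictly analogous: working relative to $(\frakS,(E))$, the $u_i$'s are already identified across factors, so only the $T$-variables require adjoining difference quotients.

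For (3), I would exploit the tilting equivalence. The perfect prism $(\Ainf(\widehat R_{C,\infty}^+),(\xi))$ corresponds to the perfectoid pair $(\widehat R_{C,\infty},\widehat R_{C,\infty}^+)$, whose associated affinoid perfectoid realizes a Galois pro-\'etale cover of $X$ with Galois group $\Gamma(\overline K/K)\cong \Gamma_{\geo}\rtimes \Gal(\overline K/K)$. Coproducts of perfect prisms in $(R^+)_{\Prism}^{\perf}$ correspond under tilting to coproducts of the associated perfectoid rings, and for a Galois pro-\'etale cover the latter are computed by the standard formula already used in the proof of Lemma~\ref{Lem-EvaluateGRep}. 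This gives
\[
\overline\calO_{\Prism}(\Ainf(\widehat R_{C,\infty}^+)^{n}) \cong \rC(\Gamma(\overline K/K)^n,\widehat R_{C,\infty}^+),
\]
and inverting $p$ yields (3).

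The main obstacle will be the prismatic envelope computations in (1) and (2): one must verify that the envelope is a genuine bounded prism and that $E(u_0)$ remains a nonzerodivisor there, so that the output truly lies in $(R^+)_{\Prism}$ rather than being merely a derived prism. This relies on the flatness and regularity behaviour of Breuil--Kisin style prismatic envelopes developed in \cite[\S 3]{BS19} and is the only non-formal ingredient; once it is established, the face and degeneracy maps of the cosimplicial structure are determined by functoriality of the $\delta$-envelope construction, and the explicit formulas in the statement follow.
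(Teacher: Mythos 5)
Your proposal follows essentially the same route as the paper: for (1) and (2) the paper likewise identifies the $n$-th term of the \v Cech nerve as the $(n+1)$-fold coproduct, maps in from $\frakS(R^+)^{\widehat\otimes(n+1)}$ using the universal property, and reduces everything to the well-definedness (flatness/boundedness) of the prismatic envelope via \cite[Prop.\ 3.13]{BS19}; for (3) it likewise invokes \cite[Thm.\ 3.10]{BS19} to pass to perfectoid rings and identifies the rationalized coproduct with the ring of functions on the $(n+1)$-fold self-product of $X_{C,\infty}$ over $X$, computed as in Lemma \ref{Lem-EvaluateGRep}. One caveat: the integral isomorphism $\overline\calO_{\Prism}(\Ainf(\widehat R_{C,\infty}^+)^{n}) \cong \rC(\Gamma(\overline K/K)^n,\widehat R_{C,\infty}^+)$ you write before inverting $p$ is only an \emph{almost} isomorphism (the paper records exactly this in the remark following the lemma), so you should pass to the generic fibre of the adic spaces first and only then apply the Galois-cover formula; this does not affect the rational statement actually being proved.
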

 \begin{proof}
   (1) For any $n\geq 0$, the $(\frakS(R^+)^n,(E))$ is the initial object of the category of prisms in $(R)_{\Prism}$ which are the targets of $n+1$ arrows from $(\frakS(R^+),(E))$. Note that for any prism $(A,I)$ in this category, there exists a unique morphism $\frakS(R^+)^{\widehat \otimes(n+1)} \to A$. Note that the reductions of the images of $u_i$'s and $T_{s,i}$'s modulo $I$ are $\pi$ and $T_s$ in $A/I$ respectively for all $0\leq i\leq n$ and $1\leq s\leq d$. In particular, $u_0-u_i,T_{1,0}-T_{1,i},\cdots,T_{d,0}-T_{d,i}$ for $1\leq i\leq n$ are all in $E(u_0)A=I$. So the morphism $\frakS(R^+)^{\widehat \otimes(n+1)}\to A$ factors uniquely through a morphism 
   \[\frakS(R^+)^{\widehat \otimes(n+1)}\{\frac{u_0-u_i}{E(u_0)},\frac{T_{1,0}-T_{1,i}}{E(u_0)T_{1,0}},\cdots,\frac{T_{d,0}-T_{d,i}}{E(u_0)T_{d,0}}\mid 1\leq i\leq n\}^{\wedge}_{\delta}\to A.\]
   Here $\frakS(R^+)^{\widehat \otimes(n+1)}\{\frac{u_0-u_i}{E(u_0)},\frac{T_{1,0}-T_{1,i}}{E(u_0)T_{1,0}},\cdots,\frac{T_{d,0}-T_{d,i}}{E(u_0)T_{d,0}}\mid 1\leq i\leq n\}^{\wedge}_{\delta}$ is the $(p,E)$-complete $\delta$-$\frakS(R)$-algebra obtained by freely adjoining $\frac{u_0-u_i}{E(u_0)},\frac{T_{1,0}-T_{1,i}}{E(u_0)T_{1,0}},\cdots,\frac{T_{d,0}-T_{d,i}}{E(u_0)T_{d,0}}$ for all $1\leq i\leq n$ to $\frakS(R^+)^{\widehat \otimes(n+1)}$. It is exactly the prismatic envelope of the $\delta$-pair $(\frakS(R^+)^{\widehat \otimes(n+1)},J)$ over $(\frakS(R^+),(E))$ as discussed in \cite[Prop. 3.13]{BS22}, where $J=(E,u_0-u_i,T_{1,0}-T_{1,i},\cdots,T_{d,0}-T_{d,i})_{1\leq i\leq n}$. Hence we have 
   \[\frakS(R^+)^{\bullet} = \frakS(R^+)^{\widehat \otimes(\bullet+1)}\{\frac{u_0-u_i}{E(u_0)},\frac{T_{1,0}-T_{1,i}}{E(u_0)T_{1,0}},\dots,\frac{T_{d,0}-T_{d,i}}{E(u_0)T_{d,0}}\mid 1\leq i\leq \bullet\}^{\wedge}_{\delta}.\]
   
   (2) This follows from a similar argument used in (1). Also see \cite[Lem. 5.2]{Tia23}.
   
   (3) By virtue of \cite[Thm. 3.10]{BS22}, for any $n\geq 0$, the $\Ainf(\widehat R_{C,\infty}^+)^n/(E)$ is the initial object of the category of perfectoid algebras over $R^+$ which are the targets of $n+1$ arrows from $\widehat R_{C,\infty}^+$. So the $(n+1)$-folds self-product of $X_{C,\infty}$ over $X$, which is an affinoid perfectoid space, is just $\Spa((\Ainf(\widehat R_{C,\infty}^+)^n/(E))[\frac{1}{p}],(\Ainf(\widehat R_{C,\infty}^+)^n/(E))[\frac{1}{p}]^+)$ where $(\Ainf(\widehat R_{C,\infty}^+)^n/(E))[\frac{1}{p}]^+$ is the $p$-completion of the integral closure of the image of $\Ainf(\widehat R_{C,\infty}^+)^n/(E)$ in $(\Ainf(\widehat R_{C,\infty}^+)^n/(E))[\frac{1}{p}]$. As shown in the proof of Lemma \ref{Lem-EvaluateGRep}, we know that $(\Ainf(\widehat R_{C,\infty}^+)^n/(E))[\frac{1}{p}]$ is just $\rC(\Gamma(\overline K/K)^n,\widehat R_{C,\infty})$.
 \end{proof}
 \begin{rmk}
   Keep notations as in the proof of \ref{Lem-Structure} (3). Then one can show that $\Ainf(\widehat R_{C,\infty}^+)^n/(E)$ is almost isomorphic to $\rC(\Gamma(\overline K/K)^n,\widehat R_{C,\infty}^+)$ with respect to $\frakm_C$ in the sense of \cite[\S 4]{Sch-IHES}. More precisely, we can show that the kernel and cokernel of the canonical morphism 
   \[\iota_n:
   \Ainf(\widehat R_{C,\infty}^+)^n/(E)\to \rC(\Gamma(\overline K/K)^n,\widehat R_{C,\infty}^+)\]
   are both killed by $\frakm_{C}$. Indeed, since $\iota_n$ becomes isomorphic after inverting $p$, we can conclude by using the structure theorem of perfectoid rings (cf. \cite[Prop. 3.2]{Bha-LectNote}).
 \end{rmk}
 \begin{prop}\label{Prop-Structure}
   Let $X_i:=\frac{u_0-u_i}{E(u_0)}$ and $Y_{s,i}=\frac{T_{s,0}-T_{s,i}}{E(u_0)T_{s,0}}$ for any $1\leq s\leq d$ and any $i\geq 1$. We identify $u_0$ and $T_{s,0}$'s in the first factor of  $\frakS(R^+)^{\bullet}$ with $u$ and $T_s$'s in $\frakS(R^+)$ respectively. Similarly, we identify $T_{s,0}$'s in the first factor of $\frakS(R)^{\bullet}_{\geo}$ with $T_s$'s in $\frakS(R^+)$.
   \begin{enumerate}
       \item We have isomorphisms of cosimplicial rings:
       \[\overline \calO_{\Prism}(\frakS(R^+)^{\bullet}_{\geo}) \cong R^+\{ Y_{1,i},\dots,Y_{d,i}\mid 1\leq i\leq \bullet\}^{\wedge}_{\pd}\]
       and
       \[\overline \calO_{\Prism}[\frac{1}{p}](\frakS(R^+)^{\bullet}_{\geo}) \cong R\{ Y_{1,i},\dots,Y_{d,i}\mid 1\leq i\leq \bullet\}^{\wedge}_{\pd},\]
       where $ R^+\{ Y_{1,i},\dots,Y_{d,i}\mid 1\leq i\leq \bullet\}^{\wedge}_{\pd}$ is the $p$-adic completion of the free divided power polynomial ring over $R^+$ generated by the reductions of $Y_{s,i}$'s modulo $E$ with $1\leq i\leq \bullet$ and 
       \[R\{ Y_{1,i},\dots,Y_{d,i}\mid 1\leq i\leq \bullet\}^{\wedge}_{\pd} =  R^+\{ Y_{1,i},\dots,Y_{d,i}\mid 1\leq i\leq \bullet\}^{\wedge}_{\pd}[\frac{1}{p}]\]
       such that the face maps are given by
       \begin{equation}\label{Equ-Face-Geo}
           p_i(Y_{s,j}) = \left\{
           \begin{array}{rcl}
                Y_{s,j+1}-Y_{s,j}, & i=0  \\ 
                Y_{s,j+1}, & i\leq j\\
                Y_{s,j}, & i > j;
           \end{array}
           \right.
       \end{equation}
       while the degeneracy maps are given by
       \begin{equation}\label{Equ-Degeneracy-Geo}
           \sigma_i(Y_{s,j}) = \left\{
            \begin{array}{rcl}
                0, & i=0,j=1 \\
                Y_{s,j-1}, & i<j, (i,j)\neq (0,1) \\
                Y_{s,j}, & i\geq j.
            \end{array}
           \right.
       \end{equation}
       
       \item We have isomorphisms of cosimplicial rings:
       \[\overline \calO_{\Prism}(\frakS(R^+)^{\bullet}) \cong R^+\{X_i, Y_{1,i},\dots,Y_{d,i}\mid 1\leq i\leq \bullet\}^{\wedge}_{\pd}\]
       and
       \[\overline \calO_{\Prism}[\frac{1}{p}](\frakS(R^+)^{\bullet}) \cong R\{X_i, Y_{1,i},\dots,Y_{d,i}\mid 1\leq i\leq \bullet\}^{\wedge}_{\pd},\]
       where $ R^+\{X_i, Y_{1,i},\dots,Y_{d,i}\mid 1\leq i\leq \bullet\}^{\wedge}_{\pd}$ is the $p$-adic completion of the free divided power polynomial ring over $R^+$ generated by the reductions of $X_i$'s and $Y_{s,i}$'s modulo $E$ with $1\leq i\leq \bullet$ and 
       \[R\{X_i, Y_{1,i},\dots,Y_{d,i}\mid 1\leq i\leq \bullet\}^{\wedge}_{\pd} =  R^+\{X_i, Y_{1,i},\dots,Y_{d,i}\mid 1\leq i\leq \bullet\}^{\wedge}_{\pd}[\frac{1}{p}]\]
       such that the face maps are given by
       \begin{equation}\label{Equ-Face-Arith}
           \begin{split}
               &p_i(X_j) = \left\{
                 \begin{array}{rcl}
                    (X_{j+1}-X_1)(1-E'(\pi)X_1)^{-1}, & i = 0 \\
                     X_{j+1}, & i\leq j\\
                     X_j, & i>j;
                 \end{array}
               \right.\\
               &p_i(Y_{s,j}) = \left\{
               \begin{array}{rcl}
                   (Y_{s,j+1}-Y_{s,1})(1-E'(\pi)X_1)^{-1}, & i = 0 \\
                   Y_{s,j+1}, & i\leq j\\
                   Y_{s,j}, & i>j;
               \end{array}
               \right.
           \end{split}
       \end{equation}
       while the degeneracy maps are given by
       \begin{equation}\label{Equ-Degeneracy-Arith}
           \begin{split}
               &\sigma_i(X_j) = \left\{
                 \begin{array}{rcl}
                    0, & i=0,j=1\\
                    X_{j-1}, & i< j,(i,j)\neq (0,1)\\
                    X_j, & i\geq j;
                 \end{array}
               \right.\\
               &\sigma_i(Y_{s,j}) = \left\{
               \begin{array}{rcl}
                   0, & i = 0,j = 1 \\
                   Y_{s,j-1}, & i< j,(i,j)\neq (0,1)\\
                   Y_{s,j}, & i\geq j.
               \end{array}
               \right.
           \end{split}
       \end{equation}
   \end{enumerate}
 \end{prop}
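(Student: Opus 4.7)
The plan is to reduce Lemma \ref{Lem-Structure} modulo $E(u_0)$ and then invert $p$ for the rational statements.

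First I would identify $\overline\calO_{\Prism}(\frakS(R^+)^n_\geo)$ and $\overline\calO_{\Prism}(\frakS(R^+)^n)$ as $p$-complete PD-polynomial rings. The input is a regularity/$\delta$-envelope computation: for each $n\geq 0$, the elements $T_{s,0}-T_{s,i}$ (in the geometric case), together with $u_0-u_i$ (in the absolute case), viewed modulo $E(u_0)$ inside the smooth algebras $\frakS(R^+)^{\widehat\otimes_\frakS(n+1)}/E(u_0)$ and $\frakS(R^+)^{\widehat\otimes(n+1)}/E(u_0)$, form part of a regular system of parameters relative to the first tensor factor. Given this regularity, the general principle that the $I$-reduction of a prismatic $\delta$-envelope along an $(A/I)$-regular sequence computes the $p$-completed classical divided-power envelope (see \cite{BS19}) gives the asserted identifications, and inverting $p$ yields the rational versions.

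Next, the face and degeneracy formulas follow by direct substitution. For $p_i$ with $i\geq 1$, the identity is immediate because $p_i$ fixes $u_0$ and $T_{s,0}$. The only subtle case is $p_0$, which shifts every factor index: starting from
\[
p_0(X_j)=\frac{u_1-u_{j+1}}{E(u_1)},\qquad p_0(Y_{s,j})=\frac{T_{s,1}-T_{s,j+1}}{E(u_1)T_{s,1}},
\]
and using $u_i=u_0-E(u_0)X_i$, $T_{s,i}=T_{s,0}(1-E(u_0)Y_{s,i})$, together with the Taylor expansion $E(u_1)\equiv E(u_0)(1-E'(u_0)X_1) \pmod{E(u_0)^2}$ and $E'(u_0)\equiv E'(\pi)\pmod{E(u_0)}$, one cancels the $E(u_0)$-factors between numerator and denominator and reduces modulo $E(u_0)$ to recover the stated formulas. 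The codegeneracies $\sigma_i$ come from dualising the diagonal inclusions of the \v Cech nerve, which duplicate the $i$-th factor; substituting into the defining expressions for $X_j$ and $Y_{s,j}$ gives the listed cases (the special value $0$ at $(i,j)=(0,1)$ reflecting the collapse $T_{s,0}=T_{s,1}$, and likewise $u_0=u_1$, after duplication).

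The geometric Part (1) is the specialisation of the absolute Part (2) obtained by tensoring over $\frakS$ rather than $W(\kappa)$: this identifies all $u_k$ with the single element $u$, kills the $X_i$ variables, and since $E(u_1)=E(u_0)$, the correction factor $(1-E'(\pi)X_1)^{-1}$ becomes trivial. The main obstacle is the PD-envelope identification in the first step, where one must carefully justify that the $\delta$-envelope collapses onto the classical PD envelope after reducing modulo $E(u_0)$; once this is in place, the remaining cosimplicial computations are routine bookkeeping.
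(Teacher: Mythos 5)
Your proposal is correct and follows essentially the same route as the paper: the ring identification comes from reducing the prismatic $\delta$-envelope modulo $E(u_0)$ to the $p$-completed PD envelope using the regularity input of \cite{BS19} (the paper delegates this to the computations of \cite[Prop.~4.3, Cor.~4.5]{Tian}, which rest on the same principle), and the face/degeneracy formulas are obtained by direct substitution, with the only nontrivial point being $p_0$, which you handle by the identical Taylor expansion $E(u_0)/E(u_1)\equiv(1-E'(\pi)X_1)^{-1}\bmod E$. The only cosmetic difference is that you deduce Part (1) as the specialisation of Part (2) over $\frakS$, whereas the paper cites \cite[Cor.~4.5]{Tian} for it directly.
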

 \begin{proof}
   We only need to show the $\overline \calO_{\Prism}$ case while the $\overline \calO_{\Prism}[\frac{1}{p}]$ case follows from inverting $p$.
   
   (1) This is \cite[Prop. 5.7 and the paragraph below its proof]{Tia23}.
   
   (2) For any $n\geq 0$, since $\delta(u_i) = 0$ for any $0\leq i\leq n$, the desired isomorphism 
   \[\overline \calO_{\Prism}(\frakS(R^+)^{\bullet}) \cong R^+\{X_i, Y_{1,i},\dots,Y_{d,i}\mid 1\leq i\leq \bullet\}^{\wedge}_{\pd}\]
   follows from the same calculations in the proof of \cite[Lem. 5.3 and Prop. 5.7]{Tia23} together with \cite[Prop. 2.12]{GMW-HT}. It remains to check that (\ref{Equ-Face-Arith}) and (\ref{Equ-Degeneracy-Arith}) hold true. Almost all formulae follow from the definitions of $p_i$'s and $\sigma_i$'s directly (cf. Convention \ref{Convention-Strat}) while the only difficulty appears in checking (\ref{Equ-Face-Arith}) for $p_0$. We show the details as follows:
   
   Since $p_0(u_i) = u_{i+1}$ and $p_0(T_{s,i}) = T_{s,i+1}$, we see that 
   \[p_0(X_j) = \frac{u_1-u_{j+1}}{E(u_1)} = (X_{j+1}-X_1)\frac{E(u_0)}{E(u_1)}\]
   and that
   \[p_0(Y_{s,j}) = \frac{T_{s,1}-T_{s,j+1}}{E(u_1)T_{s,1}} = (Y_{s,j+1}-Y_{s,j})\frac{E(u_0)T_{s,0}}{E(u_1)T_{s,1}}.\]
   Then the desired formulae can be obtained by noting that
   \[\frac{T_{s,0}}{T_{s,1}} = (1-E(u_0)Y_{s,1}) \equiv 1\mod E\]
   and that 
   \begin{equation}\label{Equ-FractionE}
       \begin{split}
           \frac{E(u_0)}{E(u_1)} & = (1-\frac{E(u_0)-E(u_1)}{E(u_0)})^{-1}\\
           & = (1-\sum_{i\geq 1}^{\deg E}\frac{E^{(i)}(u_1)(u_0-u_1)^i}{i!E(u_0)})^{-1}\quad (\text{by~Taylar's~expansion})\\
           & \equiv (1-E'(\pi)X_1)^{-1} \mod E\quad (\text{as}~u_0-u_1 = E(u_0)X_1).
       \end{split}
   \end{equation}
 \end{proof}
 \begin{exam}\label{Exam-Structure}
   Let $R^+ = \calO_K$ and hence $d = 0$ in Proposiition \ref{Prop-Structure}. Let $\frakS^{\bullet}$ be the underlying cosimplicial ring of \v Cech nerve of $(\frakS,(E))\in (\calO_K)_{\Prism}$. Then we have 
   \[\overline \calO_{\Prism}(\frakS^{\bullet},(E)) = \calO_K\{X_1,\dots,X_{\bullet}\}^{\wedge}\]
   with face and degeneracy maps given by (\ref{Equ-Face-Arith}) and (\ref{Equ-Degeneracy-Arith}) (for $X_i$'s), respectively.
 \end{exam}
 \begin{cor}\label{Cor-Structure}
   Let $\calK$ (resp. $\calK_g$, resp. $\calK_a$) be the kernel of 
   \[\begin{split}
   &\sigma_0:\overline \calO_{\Prism}(\frakS(R^+)^1,(E))\to \overline \calO_{\Prism}(\frakS(R^+),(E))\\ 
   &(\text{ resp. }\sigma_0:\overline \calO_{\Prism}(\frakS(R^+)^1_{\geo}),(E))\to\overline \calO_{\Prism}(\frakS(R^+),(E)), \text{ resp. }\sigma_0:\overline \calO_{\Prism}(\frakS^1,(E))\to\overline \calO_{\Prism}(\frakS,(E))).\end{split}\] 
   Then the following results hold true:
   \begin{enumerate}
       \item $\calK$ is the closed pd-ideal generated by $\{X_1^{[n_0]}Y_{1,1}^{[n_1]}\cdots Y_{d,1}^{[n_d]}\mid n_0+\cdots+n_d\geq 1\}$. In particular, by letting $d = 0$, we see that $\calK_a$ is the closed pd-ideal generated by $\{X_1^{[n]}\mid n\geq 1\}$.
       
       \item $\calK_g$ is the closed pd-ideal generated by $\{Y_{1,1}^{[n_1]}\cdots Y_{d,1}^{[n_d]}\mid n_1+\cdots+n_d\geq 1\}$.
       
       \item For any $r\geq 1$, let $\calK^{[r]}$ (resp. $\calK_g^{[r]}$, resp. $\calK_a^{[r]}$) be the $r$-th closed pd-power of $\calK$ (resp. $\calK_g$, resp. $\calK_a$). Then the obvious morphisms $\frakS^{\bullet}\to\frakS(R^+)^{\bullet}\to\frakS(R^+)^{\bullet}_{\geo}$ (induce a morphism ideals $\calK_a\to\calK\to\calK_g$ and then) induce a short exact sequence $R^+$-modules:
       \begin{equation}\label{Diag-pdIdeal}
           \xymatrix@C=0.45cm{
             0\ar[r]& \calK_a/\calK_a^{[2]}\otimes_{\calO_K}R^+\ar[r]& \calK/\calK^{[2]}\ar[r]&\calK_g/\calK_g^{2}\ar[r]&0
           }
       \end{equation}
   \end{enumerate}
   Similar results hold true for replacing $\overline \calO_{\Prism}$ by $\overline \calO_{\Prism}[\frac{1}{p}]$ after inverting $p$.
 \end{cor}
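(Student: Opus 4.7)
The plan is to read everything off directly from the explicit cosimplicial presentations supplied by Proposition~\ref{Prop-Structure} and Example~\ref{Exam-Structure}. Writing $A = R^+$ (resp.\ $A = \calO_K$), each of the three rings $\overline{\calO}_{\Prism}(\frakS(R^+)^1)$, $\overline{\calO}_{\Prism}(\frakS(R^+)^1_{\geo})$, $\overline{\calO}_{\Prism}(\frakS^1)$ is a $p$-adically completed divided-power polynomial $A$-algebra in the variables $\{X_1,Y_{1,1},\dots,Y_{d,1}\}$, $\{Y_{1,1},\dots,Y_{d,1}\}$, and $\{X_1\}$ respectively. On inspection of the degeneracy formulae (\ref{Equ-Degeneracy-Geo}) and (\ref{Equ-Degeneracy-Arith}), the map $\sigma_0$ is in each case the $A$-algebra augmentation that kills all the pd-generators.

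First I would prove (1) and (2) simultaneously: for a $p$-completed pd-polynomial algebra $A\{Z_1,\dots,Z_r\}^{\wedge}_{\pd}$, the monomials $Z_1^{[n_1]}\cdots Z_r^{[n_r]}$ with $|\underline{n}|\ge 0$ form a topological $A$-basis, so the kernel of the augmentation is the closed $A$-submodule spanned by those monomials with $|\underline{n}|\ge 1$; this is automatically a closed pd-ideal, and it is generated (as a closed pd-ideal) by $Z_1,\dots,Z_r$ because $Z_i^{[n]}=\gamma_n(Z_i)$ lies in the pd-ideal generated by $Z_i$. Applied to the three situations above, this yields the description of $\calK$, $\calK_g$, $\calK_a$ verbatim.

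Next I would identify the quotients $\calK^{[2]}$, $\calK_g^{[2]}$, $\calK_a^{[2]}$: by the same monomial basis, the $r$-th closed pd-power is spanned topologically by those $Z_1^{[n_1]}\cdots Z_r^{[n_r]}$ with $|\underline{n}|\ge r$. Hence
\[
\calK/\calK^{[2]} \;\cong\; R^+\cdot X_1 \oplus \bigoplus_{s=1}^{d} R^+\cdot Y_{s,1},\quad
\calK_g/\calK_g^{[2]} \;\cong\; \bigoplus_{s=1}^{d} R^+\cdot Y_{s,1},\quad
\calK_a/\calK_a^{[2]} \;\cong\; \calO_K\cdot X_1,
\]
each free of the indicated rank. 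The two prism maps $(\frakS,(E))\to(\frakS(R^+),(E))\to(\frakS(R^+)_{\geo},(E))$ induce maps on \v{C}ech nerves that, read through Proposition~\ref{Prop-Structure} and the formula $X_1 = (u_0-u_1)/E(u_0)$, $Y_{s,1} = (T_{s,0}-T_{s,1})/(E(u_0)T_{s,0})$, send $X_1\mapsto X_1$ (resp.\ kill $X_1$) and are compatible on the $Y_{s,1}$'s; this compatibility shows (\ref{Diag-pdIdeal}) is literally the split short exact sequence of free $R^+$-modules $0\to R^+\cdot X_1\to R^+\cdot X_1\oplus\bigoplus_s R^+\cdot Y_{s,1}\to \bigoplus_s R^+\cdot Y_{s,1}\to 0$, and in particular is exact. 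The rational version follows by inverting $p$, using that all modules in sight are $p$-completely flat.

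The only subtle point—and hence the main place that will require genuine care—is the interaction between the $p$-adic completion and the pd-structure: I must check that the closed pd-powers behave as claimed on the completed pd-polynomial algebra, i.e.\ that the obvious $A$-linear decomposition by pd-weight survives $p$-completion so that $\calK^{[r]}$ really is the closure of the span of monomials of pd-weight $\ge r$. This follows from the fact that each pd-weight piece is itself $p$-adically separated and complete (being a free $A$-module on the finite set of monomials of fixed pd-weight, or, for weight $\ge 1$, a product of such), so the completed pd-polynomial algebra is the topological direct sum of its pd-weight strata; once that is in place, the remainder of the argument is bookkeeping.
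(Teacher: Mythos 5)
Your proposal is correct and follows essentially the same route as the paper: both read the kernels off the explicit pd-polynomial presentations of Proposition \ref{Prop-Structure}, identify $\calK$, $\calK_g$, $\calK_a$ (and their closed pd-powers) with the closed spans of pd-monomials of the appropriate weight, and deduce (3) from the resulting freeness of the quotients. The only cosmetic difference is how the completion issue is handled — the paper completes the short exact sequence $0\to\calJ_0\to R^+[X_1,Y_{1,1},\dots,Y_{d,1}]_{\pd}\to R^+\to 0$ using torsion-freeness, while you argue via the topological basis of pd-monomials — and both are valid.
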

 \begin{proof}
   It suffices to consider the $\overline \calO_{\Prism}$ case as the $\overline \calO_{\Prism}[\frac{1}{p}]$ case follows from the flat base-change ``$-\otimes_{\Zp}\Qp$''. 
   
   (1) By Proposition \ref{Prop-Structure} (2), especially (\ref{Equ-Degeneracy-Arith}), we see that the closed ideal $\calJ$ generated by \[\{X_1^{[n_0]}Y_{1,1}^{[n_1]}\cdots Y_{d,1}^{[n_d]}\mid n_0+\cdots+n_1\geq 1\}\]
   is contained in $\calK$. So $\sigma_0:\overline \calO_{\Prism}(\frakS(R^+)^1)\to \overline \calO_{\Prism}(\frakS(R^+)) = R^+$ factors through the quotient $\frakS(R^+)^1/\calJ$. It is enough to show that $\frakS(R^+)^1/\calJ\cong R^+$, which can be easily deduced as follows:
   
   Let $\calJ_0$ be the ideal of the pd-polynomial ring $R^+[X_1,Y_{1,1},\dots,Y_{d,1}]_{\pd}$ over $R^+$ generated by $X_1, Y_{1,1},\dots,Y_{d,1}$. By taking $p$-adic completion along the exact sequence
   \[0\to\calJ_0\to R^+[X_1,Y_{1,1},\dots,Y_{d,1}]_{\pd}\to R^+\to 0 ,\]
   we get the desired isomorphism $\overline \calO_{\Prism}(\frakS(R^+)^1)/\calJ \cong R^+$ by noting that all rings involved are $(p,E)$-torsion free and $R^+$ is itself $(p,E)$-complete.
   
   (2) This follows from the same argument used in the proof of (1).
   
   (3) By (1) and (2), we see that for any $r\geq 1$, $\calK^{[r]}$ (resp. $\calK_g^{[r]}$, resp. $\calK_a^{[r]}$) is the closed ideal generated by 
   \begin{equation*}
       \begin{split}
           &\{X_1^{[n_0]}Y_{1,1}^{[n_1]}\cdots Y_{d,1}^{[n_d]}\mid n_0+\cdots+n_1\geq r\}\\
           &(\text{resp.}~ \{Y_{1,1}^{[n_1]}\cdots Y_{d,1}^{[n_d]}\mid n_1+\cdots+n_1\geq r\},~\text{resp.}~\{X_1^{[n]}\mid n\geq r\}).
       \end{split}
   \end{equation*}
   So, we deduce that 
   \[\calK/\calK^{[2]}\cong R^+\cdot X_1\oplus\oplus_{i=1}^dR^+\cdot Y_i~(\text{resp.}~\calK_g/\calK_g^{[2]}\cong \oplus_{i=1}^dR^+\cdot Y_i,~\text{resp.}~\calK_a/\calK_a^{[2]}\cong \calO_K\cdot X_1),\]
   which implies the desired exact sequence immediately.
 \end{proof}
 \begin{rmk}\label{Rmk-Structure}
   Consider $\overline \calO_{\Prism}$ in Corollary \ref{Cor-Structure}.
   By identifying $X_1$ with $\frac{\rd u}{E(u)}$, we see that 
   \[\calK_a/\calK_a^{[2]}\cong \widehat \Omega^1_{\frakS/\rW(\kappa)}\{-1\}\otimes_{\frakS}\calO_K \cong \calO_K\cdot \frac{\rd u}{E(u)}.\]
   By the definition of $Y_{s,i}$'s, we see that 
   \[\calK_g/\calK_g^{[2]}\cong \widehat \Omega^1_{R^+/\calO_K}\{-1\}\]
   by identidying $Y_{s,i}$ with $\frac{\dlog T_i}{E(u)}$,
   where $\widehat \Omega^1_{R^+/\calO_K}$ denotes the module of continuous differentials and $\{-1\}$ denotes the Breuil--Kisin twist. In particular, the exact sequence splits (non-canonically) and induces an isomorphism 
   \begin{equation}\label{Equ-Splitting}
       \calK/\calK^{[2]}\cong\calK_a/\calK_a^{[2]}\otimes_{\calO_K}R^+\oplus\calK_g/\calK_g^{[2]}\cong R^+\cdot \frac{\rd u}{E(u)}\oplus\widehat \Omega^1_{R^+/\calO_K}\{-1\}.
   \end{equation}
   A similar remark applies to $\overline \calO_{\Prism}[\frac{1}{p}]$.
 \end{rmk}

\subsection{Hodge--Tate crystals as enhanced Higgs modules}
   Now, we establish the desired equivalence in Theorem \ref{Thm-HTasHiggs-Local}.
   \begin{convention}\label{Convention-E'PI}
     For simplicity, we put $a = E'(\pi)$.
   \end{convention}
   \begin{convention}\label{Convention-CosimplicialRingA}
     Let $A^{?,\bullet}$ be the cosimplicial ring $R^?\{X_i,Y_{1,i},\dots,Y_{d,i}\mid 1\leq i\leq \bullet\}$ for $?\in\{\emptyset,+\}$ with face and degeneracy morphisms given by (\ref{Equ-Face-Arith}) and (\ref{Equ-Degeneracy-Arith}), respectively. By Proposition \ref{Prop-Structure}, we have 
     \[A^{\bullet}\cong \overline \calO_{\Prism}[\frac{1}{p}](\frakS(R^+)^{\bullet},(E))\]
     and 
     \[A^{+,\bullet}\cong \overline \calO_{\Prism}(\frakS(R^+)^{\bullet},(E)).\]
     Let $\Strat(A^{?,\bullet})$ denote the category of stratifications with respect to $A^{?,\bullet}$ satisfying the cocycle condition (cf. Convention \ref{Convention-Strat}). Note that $A^{?,0} = R^?$.
   \end{convention}
   We start with the following lemma:
   \begin{lem}\label{Lem-HTasStrat}
     There exists a canonical equivalence
     \[\begin{split}\Vect((R^+)_{\Prism},\overline \calO_{\Prism})\xrightarrow{\simeq}\Strat(A^{?,\bullet})~(\text{resp. }\Vect((R^+)_{\Prism},\overline \calO_{\Prism}[\frac{1}{p}]))\xrightarrow{\simeq} \Strat(A^{?,\bullet}))\end{split}\]
      for $? = +$ (resp. $? = \emptyset$) by evaluating (rational) Hodge--Tate crystals along the cosimplicial prism $(\frakS(R^+)^{\bullet},(E))$.
   \end{lem}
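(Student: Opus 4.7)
The plan is to apply descent along the cover $(\frakS(R^+),(E))$ of the final object of $\Sh((R^+)_{\Prism})$ established in Lemma \ref{Lem-CoverPrism}, combined with $(p,E)$-completely faithfully flat descent for finite projective modules in the prismatic topos (as in \cite[Prop. 2.7]{BS21}). I will spell out the integral case $?=+$; the rational case $?=\emptyset$ is obtained by inverting $p$ throughout, since $A^{\bullet}=A^{+,\bullet}[\tfrac{1}{p}]$ and the descent result is compatible with this localisation.

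First I build the functor $\bM\mapsto(M,\varepsilon)$. Given $\bM\in\Vect((R^+)_{\Prism},\overline\calO_{\Prism})$, set $M:=\bM(\frakS(R^+),(E))$; this is finite projective over $\overline\calO_{\Prism}(\frakS(R^+),(E))=R^+=A^{+,0}$ by Convention \ref{Convention-Crystal}(1). Evaluating $\bM$ on the two face maps $p_0,p_1:(\frakS(R^+),(E))\to(\frakS(R^+)^1,(E))$ and using the crystal axiom gives canonical isomorphisms $M\otimes_{R^+,p_i}A^{+,1}\xrightarrow{\cong}\bM(\frakS(R^+)^1,(E))$ for $i=0,1$, whose composite is the stratification $\varepsilon$. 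Functoriality of the crystal data applied to the three face maps into $(\frakS(R^+)^2,(E))$ and the degeneracy back to $(\frakS(R^+),(E))$ verifies both the cocycle condition and the normalisation in Convention \ref{Convention-Strat}.

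For the quasi-inverse, I apply descent. Given $(M,\varepsilon)\in\Strat(A^{+,\bullet})$ and any $(A,I)\in(R^+)_{\Prism}$, Lemma \ref{Lem-CoverPrism} produces a cover $(B,IB)\to(A,I)$ admitting a map from $(\frakS(R^+),(E))$; in particular the coproduct $(\frakS(R^+),(E))\otimes_{(R^+)_{\Prism}}(A,I)$ exists and its structure map to $(A,I)$ is $(p,I)$-completely faithfully flat, a consequence of the prismatic envelope construction \cite[Prop. 3.13]{BS19}. Pulling $(M,\varepsilon)$ back along this cover and descending via \cite[Prop. 2.7]{BS21} produces a finite projective $\overline\calO_{\Prism}(A,I)=A/I$-module $\bM(A,I)$ whose formation is functorial in $(A,I)$; the cocycle condition on $\varepsilon$ is precisely what makes the descent datum effective.

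The two constructions are mutually quasi-inverse by construction, and preservation of rank, tensor products, and dualities is automatic since all these operations commute with the base changes $-\otimes_{R^+,p_i}A^{+,1}$. The main obstacle I anticipate is verifying the $(p,I)$-complete faithful flatness of the base change of the cover $(\frakS(R^+),(E))$ to an arbitrary prism $(A,I)\in(R^+)_{\Prism}$, which underpins the applicability of prismatic descent; once this is granted, the rest of the argument is a formal exercise in descent theory, and is parallel to the relative statement already used in \cite[Thm. 4.10]{Tian}.
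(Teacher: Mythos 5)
Your proposal is correct and follows the same route as the paper, whose entire proof is the observation that $(\frakS(R^+),(E))$ covers the final object of $\Sh((R^+)_{\Prism})$ (Lemma \ref{Lem-CoverPrism}) together with a citation of \cite[Prop. 2.7]{BS21}; you have simply unpacked what that proposition does. One small imprecision: the descent criterion does not require the coproduct of $(\frakS(R^+),(E))$ with an arbitrary $(A,I)$ to exist and be flat over $(A,I)$ — it only needs local surjectivity onto the final object (exactly the content of Lemma \ref{Lem-CoverPrism}) plus the existence of the self-products $(\frakS(R^+)^{\bullet},(E))$ from \cite[Prop. 3.13]{BS19}, so the "main obstacle" you flag is already disposed of.
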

   \begin{proof}
     By Lemma \ref{Lem-CoverPrism}, $(\frakS(R^+),(E))$ is a cover of final object of $\Sh((R^+)_{\Prism})$. So the result follows from \cite[Prop. 2.7]{BS23}.
   \end{proof}
   Therefore, in order to establish the desired equivalence in Theorem \ref{Thm-HTasHiggs-Local}, it suffices to construct an equivalence between the categories $\frakS(A^{?,\bullet})$ and $\HIG_*^{\nil}(R^?)$.
   
   Let $(H,\varepsilon)\in \Strat(A^{?,\bullet})$ be any fixed stratification satisfying the cocycle condition. By Proposition \ref{Prop-Structure}, via the embedding $H\xrightarrow{x\mapsto x\otimes 1} H\otimes_{R^?,p_0}A^{?,1}$, there exists a collection $\{\phi_{i,\underline n}\}_{i\geq 0,\underline n\in\bN^d}$ of $R^?$-linear endomorphisms of $H$ such that for any $x\in H$,
   \[
   \varepsilon(x) = \sum_{i\geq 0,\underline n\in\bN^d}\phi_{i,\underline n}(x)X_1^{[i]}\underline Y_1^{[\underline n]}.
   \]
   Note that in order to make $\varepsilon$ well-defined, we require 
   \[
   \lim_{i+|\underline n|\to+\infty}\phi_{i,\underline n} = 0
   \]
   with respect to the $p$-adic topology on $H$.
   By (\ref{Equ-Face-Arith}) and (\ref{Equ-Degeneracy-Arith}), we see that
   \begin{equation}\label{Equ-P2P0}
       \begin{split}
           & p_2^*(\varepsilon)\circ p_0^*(\varepsilon)(x)\\
           =& p_2^*(\varepsilon)(\sum_{j\geq 0,\underline n\in\bN^d}\phi_{j,\underline n}(x)(1-aX_1)^{-j-|\underline n|}(X_2-X_1)^{[j]}(\underline Y_2-\underline Y_1)^{[\underline n]})\\
           =& \sum_{i,j\geq 0,\underline l,\underline n\in\bN^d}\phi_{i,\underline l}(\phi_{j,\underline n}(x))(1-aX_1)^{-j-|\underline n|}X_1^{[i]}(X_2-X_1)^{[j]}\underline Y_1^{[\underline l]}(\underline Y_2-\underline Y_1)^{[\underline n]}\\
           =& \sum_{i,j,k\geq 0,\underline l,\underline m, \underline n\in\bN^d}\phi_{i,\underline l}(\phi_{j+k,\underline m+\underline n}(x))(1-aX_1)^{-j-k-|\underline m|-|\underline n|}(-1)^{j+|\underline m|}X_1^{[i]}X_1^{[j]}X_2^{[k]}\underline Y_1^{[\underline l]}\underline Y_1^{[\underline m]}\underline Y_2^{[\underline n]}\\
           =& \sum_{i,j,k\geq 0,\underline l,\underline m, \underline n\in\bN^d}\phi_{i,\underline l}(\phi_{j+k,\underline m+\underline n}(x))(1-aX_1)^{-j-k-|\underline m|-|\underline n|}(-1)^{j+|\underline m|}\binom{i+j}{i}\binom{\underline l+\underline m}{\underline l}X_1^{[i+j]}\underline Y_1^{[\underline l+\underline m]}X_2^{[k]}\underline Y_2^{[\underline n]},
       \end{split}
   \end{equation}
   that 
   \begin{equation}\label{Equ-P1}
       p_1^*(\varepsilon)(x) = \sum_{k\geq 0,\underline n\in\bN^d}\phi_{k,\underline n}(x)X_2^{[k]}\underline Y_2^{[\underline n]},
   \end{equation}
   and that 
   \begin{equation}
       \sigma_0^*(\varepsilon)(x) = \phi_{0,\underline 0}(x).
   \end{equation}
   Therefore, we have
   \begin{lem}\label{Lem-Stratification}
      For any stratification $(H,\varepsilon)$ with respect to $A^{?,\bullet}$, if we write 
      \[\varepsilon(x) = \sum_{i\geq 0,\underline n\in\bN^d}\phi_{i,\underline n}(x)X_1^{[i]}\underline Y_1^{[\underline n]}\]
      for any $x\in H$ with $\phi_{i,\underline n}\to 0$ as $i+|\underline n|\to+\infty$, then $(H,\varepsilon)$ satisfies the cocycle condition if and only if the following conditions are satisfied:
      \begin{enumerate}
          \item $\phi_{0,\underline 0} = \id_H$, and
          
          \item For any $k\geq 0,\underline n\in\bN^d$ and for any $x\in H$, we have 
          \[\phi_{k,\underline n}(x) = \sum_{i,j\geq 0,\underline l,\underline m, \in\bN^d}\phi_{i,\underline l}(\phi_{j+k,\underline m+\underline n}(x))(1-aX)^{-j-k-|\underline m|-|\underline n|}(-1)^{j+|\underline m|}\binom{i+j}{i}\binom{\underline l+\underline m}{\underline l}X^{[i+j]}\underline Y^{[\underline l+\underline m]}.\]
      \end{enumerate}
   \end{lem}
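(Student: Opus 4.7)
The plan is to prove Lemma \ref{Lem-Stratification} as a direct coefficient comparison: most of the algebraic work has already been carried out in (\ref{Equ-P2P0}), (\ref{Equ-P1}), and the calculation $\sigma_0^*(\varepsilon)(x) = \phi_{0,\underline 0}(x)$ above, so what is left is to translate the cocycle condition $p_2^*(\varepsilon)\circ p_0^*(\varepsilon) = p_1^*(\varepsilon)$ and the normalization $\sigma_0^*(\varepsilon) = \id_H$ into identities on the coefficients $\phi_{i,\underline n}$.

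The normalization is immediate: $\sigma_0^*(\varepsilon)(x) = \phi_{0,\underline 0}(x)$ equals $x$ for all $x\in H$ if and only if $\phi_{0,\underline 0} = \id_H$, giving condition (1). For the cocycle identity, I will view both sides as elements of
\[
H\otimes_{R^?} R^?\{X_1,X_2,\underline Y_1,\underline Y_2\}^{\wedge}_{\pd},
\]
expand them in the topological basis of divided-power monomials $X_1^{[a]}X_2^{[k]}\underline Y_1^{[\underline p]}\underline Y_2^{[\underline n]}$, and compare. By (\ref{Equ-P1}) the right hand side $p_1^*(\varepsilon)(x) = \sum_{k,\underline n}\phi_{k,\underline n}(x)X_2^{[k]}\underline Y_2^{[\underline n]}$ is visibly constant in $X_1$ and $\underline Y_1$, so its coefficient of $X_2^{[k]}\underline Y_2^{[\underline n]}$ is exactly $\phi_{k,\underline n}(x)$. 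Extracting the same coefficient on the left hand side using the expansion (\ref{Equ-P2P0}) yields precisely the series in $X_1,\underline Y_1$ on the right of condition (2); setting these two equal (for all $k$ and $\underline n$) is thus equivalent to the cocycle identity, which completes the proof.

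The main obstacle is purely bookkeeping with divided powers. Concretely, the passage from (\ref{Equ-P2P0})'s third line to its fourth relies on the identities $(X_2-X_1)^{[j]} = \sum_{a+b=j}(-1)^{b}X_1^{[b]}X_2^{[a]}$ and analogously for $(\underline Y_2-\underline Y_1)^{[\underline n]}$, coupled with the multiplication rule $X_1^{[i]}X_1^{[j]} = \binom{i+j}{i}X_1^{[i+j]}$ and its multivariable analogue $\underline Y_1^{[\underline l]}\underline Y_1^{[\underline m]} = \binom{\underline l+\underline m}{\underline l}\underline Y_1^{[\underline l+\underline m]}$; these produce the binomial factors appearing in condition (2). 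One also needs the convergence $\phi_{i,\underline n}\to 0$ (as $i+|\underline n|\to\infty$) to justify rearranging the resulting double sums in the $p$-adically complete pd-polynomial ring. None of these steps is conceptually difficult, so the verification amounts to organizing the indices carefully and reading off the coefficients.
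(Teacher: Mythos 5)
Your proposal is correct and follows essentially the same route as the paper: the paper derives the lemma directly from the expansions (\ref{Equ-P2P0}) and (\ref{Equ-P1}) by comparing coefficients of $X_2^{[k]}\underline Y_2^{[\underline n]}$, with condition (1) coming from $\sigma_0^*(\varepsilon)=\id_H$, and the divided-power identities you cite are exactly the ones used implicitly in passing between the lines of (\ref{Equ-P2P0}).
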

   \begin{notation}\label{Notation-Stratification}
     For any $(H,\varepsilon)\in\Strat(A^{?,\bullet})$ with $\phi_{i,\underline n}$'s as above. Define $\phi_H = \phi_{1,\underline 0}$. For any $1\leq i\leq d$, put $\theta_i = \phi_{0,\underline 1_i}$ and for $? = +$ (resp. $\emptyset$), define
     \[\theta_H:H\to H\otimes_{R^+}\widehat \Omega^1_{R^+/\calO_K}\{-1\} ~(\text{resp.}~\theta_H:H\to H\otimes_{R} \Omega^1_{R/K}\{-1\})\]
     by $\theta_H=\sum_{i=1}^{d}\theta_i\otimes\frac{\dlog T_i}{E(u)}$.
   \end{notation}
   The following lemma is the key ingredient to establish the desired equivalence.
   \begin{lem}\label{Lem-Technique}
     Let $\phi_{i,\underline n}$'s be endomorphisms of $H$ with $\phi_{0,\underline 0} = \id_H$. Let $\phi_H$, $\theta_H$ and $\theta_i$'s be as in Notation \ref{Notation-Stratification}. Then the following assertions are equivalent:
     \begin{enumerate}
         \item For any $k\geq 0,\underline n\in\bN^d$ and for any $x\in H$, we have 
          
          \begin{equation}\label{Equ-Technique}
               \phi_{k,\underline n}(x) = \sum_{i,j\geq 0,\underline l,\underline m, \in\bN^d}\phi_{i,\underline l}(\phi_{j+k,\underline m+\underline n}(x))(1-aX)^{-j-k-|\underline m|-|\underline n|}(-1)^{j+|\underline m|}\binom{i+j}{i}\binom{\underline l+\underline m}{\underline l}X^{[i+j]}\underline Y^{[\underline l+\underline m]}.
          \end{equation}
          
          \item For any $1\leq i,j\leq d$, we have $[\phi_H,\theta_i]=-a\theta_i$ and $[\theta_i,\theta_j] = 0$ such that for any $k\geq 0$ and $\underline n\in\bN^d$, 
          \begin{equation}\label{Equ-GeneralFormulae}
              \phi_{k,\underline n}(x) = \prod_{s=1}^d\theta_i^{n_i}\cdot\prod_{i=0}^{k-1}(\phi_H+ia)(x) = \prod_{i=0}^{k-1}(\phi_H+(i+|\underline n|)a)\cdot\prod_{i=1}^{d}\theta_i^{n_i}(x).
          \end{equation}
     \end{enumerate}
     Moreover, if one of the above conditions is satisfied, then $\theta_i$'s are nilpotent and 
     \[\lim_{k+|\underline n|\to+\infty}\phi_{k,\underline n} = 0\Leftrightarrow \lim_{k\to+\infty}\prod_{i=0}^{k-1}(\phi_H+ia) = 0.\]
   \end{lem}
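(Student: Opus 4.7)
The plan is to prove the equivalence by comparing coefficients of monomials $X_1^{[a]} \underline{Y}_1^{[\underline{b}]}$ on both sides of the cocycle identity (\ref{Equ-Technique}). The implication $(1) \Rightarrow (2)$ is obtained by extracting the commutation relations from low-order coefficients and inductively propagating the closed-form expression (\ref{Equ-GeneralFormulae}); the converse $(2) \Rightarrow (1)$ follows by substituting (\ref{Equ-GeneralFormulae}) into (\ref{Equ-Technique}) and verifying a combinatorial identity.

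For $(1) \Rightarrow (2)$, I would first specialise the cocycle identity to $(k, \underline{n}) = (1, \underline{0})$ and $(0, \underline{1}_s)$. Comparing the coefficients of $X_1$ in both specialisations, and using that $(1 - aX_1)^{-N}$ contributes a known $X_1$-linear term proportional to $N a$, allows one to solve for $\phi_{1, \underline{1}_s}$ in two different ways: on one hand $\phi_{1,\underline{1}_s} = \phi_H \theta_s + a\theta_s$, on the other $\phi_{1,\underline{1}_s} = \theta_s \phi_H$, whose comparison yields $[\phi_H, \theta_s] = -a\theta_s$. A similar analysis of the $Y_{s,1} Y_{t,1}$ coefficient in the case $(k, \underline{n}) = (0, \underline{0})$ gives $[\theta_s, \theta_t] = 0$. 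With the commutation relations in hand, I then induct on $k + |\underline{n}|$: the unique appearance of $\phi_{k, \underline{n}}$ on the right-hand side of (\ref{Equ-Technique}) at $(i, j, \underline{l}, \underline{m}) = (0, 0, \underline{0}, \underline{0})$ can be isolated, and the remaining lower-order terms, evaluated using the inductive hypothesis, rearrange via the commutation relations into the desired expression (\ref{Equ-GeneralFormulae}). Both orderings in (\ref{Equ-GeneralFormulae}) emerge naturally because $(\phi_H + ia)\,\theta_s = \theta_s\,(\phi_H + (i-1)a)$ permits one to commute any polynomial in $\phi_H$ past a product of $\theta_s$'s at the cost of a uniform shift by $a|\underline n|$.

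For $(2) \Rightarrow (1)$, I would substitute (\ref{Equ-GeneralFormulae}) into the right-hand side of (\ref{Equ-Technique}). Using the commutation relations to collect all $\theta_s$'s on one side, the double sum over $(i, j, \underline{l}, \underline{m})$ factors into an abelian Koszul-type sum in the $\underline{Y}_1$-variables, collapsed by $\sum_{\underline{l} + \underline{m} = \underline{b}} (-1)^{|\underline{m}|} \binom{\underline{l} + \underline{m}}{\underline{l}} = \delta_{\underline{b}, \underline{0}}$, together with a $\phi_H$-polynomial sum in the $X_1$-variable, collapsed by expanding $(1 - aX_1)^{-N}$ as a divided-power series and applying a Vandermonde/Chu-type identity relating $\binom{N + r - 1}{r}$ with $\binom{i + j}{i}$. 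The resulting simplification matches $\phi_{k, \underline{n}}(x)$ on the left, verifying the cocycle identity. This combinatorial verification is the main obstacle; although it is an elementary identity over $\mathbb{Q}$ (and hence transparent after inverting $p$, or with appropriate divided-power bookkeeping integrally), it requires careful reorganisation and is a close variant of the calculation performed in the proof of \cite[Prop.~4.3]{Tian} for the purely geometric subsystem.

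For the ``moreover'' statement, granting either condition, Remark \ref{Rmk-EnhancedHiggsMod-Local} yields nilpotency of the commuting $\theta_s$'s via the standard solvable Lie algebra argument applied after embedding into a fraction field. Once the $\theta_s$ are nilpotent, formula (\ref{Equ-GeneralFormulae}) shows $\phi_{k, \underline{n}} = 0$ as soon as some $n_s$ exceeds the nilpotency index of $\theta_s$, so the $p$-adic convergence $\phi_{k, \underline{n}} \to 0$ as $k + |\underline{n}| \to \infty$ reduces to the single-variable convergence $\prod_{i=0}^{k-1} (\phi_H + ia) \to 0$ as $k \to \infty$, the finitely many shifts by $|\underline n| a$ arising in the second expression of (\ref{Equ-GeneralFormulae}) being irrelevant as $\underline n$ ranges over a bounded set.
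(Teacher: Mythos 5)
Your direction $(1)\Rightarrow(2)$ is essentially the paper's argument: one specialises $\underline Y=\underline 0$ (resp.\ $X=0$) in (\ref{Equ-Technique}) for \emph{general} $(k,\underline n)$ and reads off the coefficients of $X$ and of $Y_s$ to obtain the recursions $\phi_{k+1,\underline n}=(\phi_H+(k+|\underline n|)a)\phi_{k,\underline n}$ and $\phi_{k,\underline n+\underline 1_s}=\theta_s\phi_{k,\underline n}$, from which (\ref{Equ-GeneralFormulae}) and the commutation relations follow by iteration. Your description of the induction step is off, however: the term at $(i,j,\underline l,\underline m)=(0,0,\underline 0,\underline 0)$ is the constant term of the right-hand side and equals $\phi_{k,\underline n}(x)$ tautologically, so isolating it gives no information, and the remaining terms involve $\phi_{j+k,\underline m+\underline n}$ with \emph{larger} indices, so they cannot be ``evaluated using the inductive hypothesis'' in an induction on $k+|\underline n|$. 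The information must come from the degree-one coefficients, as in your computation of $\phi_{1,\underline 1_s}$ (where, note, the relation $\phi_{1,\underline 1_s}=\theta_s\phi_H$ comes from the $Y_s$-coefficient, not the $X$-coefficient); run that comparison for arbitrary $(k,\underline n)$ and you recover the correct proof.

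The genuine gap is in $(2)\Rightarrow(1)$. The double sum does \emph{not} factor into an abelian Koszul-type sum in the $\underline Y$-variables times a $\phi_H$-polynomial sum in $X$: the exponent $-j-k-|\underline m|-|\underline n|$ of $(1-aX)$ depends on $|\underline m|$, and $\phi_H$ does not commute with the $\theta_s$, so the two sums are coupled. Concretely, for $d=1$, $k=0$, $\underline n=\underline 0$, the coefficient of $X^{[1]}Y^{[1]}$ on the right-hand side receives $\theta\phi_H-\phi_H\theta-\theta\phi_H+\theta\phi_H=-[\phi_H,\theta]=a\theta$ from the four terms with $i+j=|\underline l|+|\underline m|=1$, and this is cancelled only by the cross-term $-a\theta$ coming from the $X$-expansion of $(1-aX)^{-|\underline m|}$ in the term $(i,j,l,m)=(0,0,0,1)$. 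A decoupled argument of the form ``$\sum_{\underline l+\underline m=\underline b}(-1)^{|\underline m|}\binom{\underline l+\underline m}{\underline l}=\delta_{\underline b,\underline 0}$ times a Vandermonde identity'' never sees this cancellation and would in effect require $[\phi_H,\theta_s]=0$; this coupling is precisely what is absent from the purely geometric computation in \cite{Tian} that you cite as a template. The paper's proof circumvents it by first showing that $\partial_X$ of the right-hand side vanishes --- a telescoping computation that invokes $(\phi_H+ia)\theta_s=\theta_s(\phi_H+(i-1)a)$ exactly at the cross-terms --- and only then, at $X=0$, running the decoupled exponential identity $\exp(\sum_i\theta_iY_i)\exp(-\sum_i\theta_iY_i)\exp(\sum_i\theta_iZ_i)=\exp(\sum_i\theta_iZ_i)$ in the $\underline Y$-variables. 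Your ``moreover'' paragraph is correct and matches the paper.
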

   \begin{proof}
     First, we assume (1) holds true for any $k\geq 0$ and any $\underline n\in\bN^d$ and then verify (2) as follows:
     
     By letting $\underline Y = \underline 0$ in both sides of (\ref{Equ-Technique}), we get
     \[\phi_{k,\underline n}(x) = \sum_{i,j\geq 0}\phi_{i,\underline 0}(\phi_{j+k,\underline n}(x))(1-aX)^{-j-k-|\underline n|}(-1)^{j}\binom{i+j}{i}X^{[i+j]}.\]
     Compare the coefficients of $X$ in both sides of the above formula and then we get
     \[(k+|\underline n|)a\phi_{k,\underline n}(x)-\phi_{1+k,\underline n}(x)+\phi_{1,\underline 0}(\phi_{k,\underline n}(x)) = 0,\]
     which implies by iteration that for any $k\geq 0$ and $\underline n\in\bN^d$,
     \begin{equation}\label{Equ-Iteration-I}
         \phi_{k,\underline n}(x) = (\phi_{1,\underline 0}+(k-1+|\underline n|)a)\phi_{k-1,\underline n}(x) = \prod_{i=0}^{k-1}(\phi_{H}+(i+|\underline n|)a)\phi_{0,\underline n}(x).
     \end{equation}
     By letting $X = 0$ in both sides of (\ref{Equ-Technique}), we get
     \[\phi_{k,\underline n}(x) = \sum_{\underline l,\underline m, \in\bN^d}\phi_{0,\underline l}(\phi_{k,\underline m+\underline n}(x))(-1)^{|\underline m|}\binom{\underline l+\underline m}{\underline l}\underline Y^{[\underline l+\underline m]}.\]
     Compare the coefficients of $Y_i$ in both sides of the above formula and then we get
     \[\phi_{0,\underline 1_i}(\phi_{k,\underline n}(x)) - \phi_{k,\underline 1_i+\underline n}(x) = 0.\]
     In particular, by letting $k = 0$ and $\underline n = \underline 1_j$ in the above formula, we see that
     \[\theta_i\theta_j = \phi_{0,\underline 1_i+\underline 1_j} = \theta_j\theta_i.\]
     In other words, $\theta_i$'s commute with each other.
     Also, by iteration (for all $1\leq i\leq d$) that for any $k\geq 0$ and $\underline n\in\bN^d$,
     \begin{equation}\label{Equ-Iteration-II}
         \phi_{k,\underline n}(x) = \phi_{0,\underline 1_i}(\phi_{k,\underline n-\underline 1_i}(x)) = \prod_{i=1}^{d}\theta_i^{n_i}(\phi_{k,\underline 0}(x)).
     \end{equation}
     Now, (\ref{Equ-GeneralFormulae}) follows from (\ref{Equ-Iteration-I}) and (\ref{Equ-Iteration-II}). In particular, by letting $k = 1$ and $\underline n = \underline 1_i$, we see that $[\phi_H,\theta_i] = -a\theta_i$ as desired.
     
     Now assume (2) is true. By noting that the constant term of the right hand side of (\ref{Equ-Technique}) is exactly $\phi_{k,\underline n}(x)$, it suffices to show that the right hand side of (\ref{Equ-Technique}) is independent of $X$ and $\underline Y$.
   
   First, we show that the right hand side of (\ref{Equ-Technique}) is independent of $X$. In other words, if we put 
   \begin{equation*}
   \begin{split}
        (\dagger)
       =  & \partial_X(\sum_{i,j\geq 0,\underline l,\underline m, \in\bN^d}\phi_{i,\underline l}(\phi_{j+k,\underline m+\underline n}(x))(1-aX)^{-j-k-|\underline m|-|\underline n|}(-1)^{j+|\underline m|}\binom{i+j}{i}\binom{\underline l+\underline m}{\underline l}X^{[i+j]}\underline Y^{[\underline l+\underline m]})\\
       = & \sum_{i,j\geq 0,\underline l,\underline m, \in\bN^d}(j+k+|\underline m|+|\underline n|)a\phi_{i,\underline l}(\phi_{j+k,\underline m+\underline n}(x))\\
       & \cdot (1-aX)^{-j-k-1-|\underline m|-|\underline n|}(-1)^{j+|\underline m|}\binom{i+j}{i}\binom{\underline l+\underline m}{\underline l}X^{[i+j]}\underline Y^{[\underline l+\underline m]}\\
       & +\sum_{i,j\geq 0,i+j\geq 1,\underline l,\underline m, \in\bN^d}\phi_{i,\underline l}(\phi_{j+k,\underline m+\underline n}(x))(1-aX)^{-j-k-|\underline m|-|\underline n|}(-1)^{j+|\underline m|}\binom{i+j}{i}\binom{\underline l+\underline m}{\underline l}X^{[i+j-1]}\underline Y^{[\underline l+\underline m]},
       \end{split}
   \end{equation*}
   then we have to show that $(\dagger) = 0$. Note that 
   \[\{(i,j)\mid i,j\geq 0,i+j\geq 1\} = \{(i,j)\mid i,j-1\geq 0\}\cup\{(i,0)\mid i\geq 1\}.\]
   We have
   \begin{equation*}
   \begin{split}
       (\dagger) = & \sum_{i,j\geq 0,\underline l,\underline m, \in\bN^d}(j+k+|\underline m|+|\underline n|)a\phi_{i,\underline l}(\phi_{j+k,\underline m+\underline n}(x))\\
       & \cdot (1-aX)^{-j-1-k-|\underline m|-|\underline n|}(-1)^{j+|\underline m|}\binom{i+j}{i}\binom{\underline l+\underline m}{\underline l}X^{[i+j]}\underline Y^{[\underline l+\underline m]}\\
       & -\sum_{i,j\geq 0,\underline l,\underline m, \in\bN^d}\phi_{i,\underline l}(\phi_{j+1+k,\underline m+\underline n}(x))(1-aX)^{-j-1-k-|\underline m|-|\underline n|}(-1)^{j+|\underline m|}\binom{i+j+1}{i}\binom{\underline l+\underline m}{\underline l}X^{[i+j]}\underline Y^{[\underline l+\underline m]}\\
       & +\sum_{i\geq 0,\underline l,\underline m, \in\bN^d}\phi_{i+1,\underline l}(\phi_{k,\underline m+\underline n}(x))(1-aX)^{-k-|\underline m|-|\underline n|}(-1)^{|\underline m|}\binom{\underline l+\underline m}{\underline l}X^{[i]}\underline Y^{[\underline l+\underline m]}.
   \end{split}
   \end{equation*}
    By (\ref{Equ-GeneralFormulae}), we have $\phi_{j+1+k,\underline m+\underline n}(x)=(\phi_H+(j+k+|\underline m|+|\underline n|)a)\phi_{j+k,\underline m+\underline n}(x)$. Hence we can use $\phi_{j+1+k,\underline m+\underline n}(x)-\phi_H(\phi_{j+k,\underline m+\underline n}(x))$ to replace $(j+k+|\underline m|+|\underline n|)a(\phi_{j+k,\underline m+\underline n}(x))$ in above formula and conclude that
   \begin{equation*}
       \begin{split}
           (\dagger) = & \sum_{i,j\geq 0,\underline l,\underline m, \in\bN^d}-\phi_{i,\underline l}(\phi_H(\phi_{j+k,\underline m+\underline n}(x)))(1-aX)^{-j-1-k-|\underline m|-|\underline n|}(-1)^{j+|\underline m|}\binom{i+j}{i}\binom{\underline l+\underline m}{\underline l}X^{[i+j]}\underline Y^{[\underline l+\underline m]}\\
           & -\sum_{i,j\geq 0,\underline l,\underline m, \in\bN^d}\phi_{i,\underline l}(\phi_{j+1+k,\underline m+\underline n}(x))(1-aX)^{-j-1-k-|\underline m|-|\underline n|}(-1)^{j+|\underline m|}\binom{i+j}{i-1}\binom{\underline l+\underline m}{\underline l}X^{[i+j]}\underline Y^{[\underline l+\underline m]}\\
           & +\sum_{i\geq 0,\underline l,\underline m, \in\bN^d}\phi_{i+1,\underline l}(\phi_{k,\underline m+\underline n}(x))(1-aX)^{-k-|\underline m|-|\underline n|}(-1)^{|\underline m|}\binom{\underline l+\underline m}{\underline l}X^{[i]}\underline Y^{[\underline l+\underline m]}.
       \end{split}
   \end{equation*}
  By (\ref{Equ-GeneralFormulae}) again, we have $\phi_{i+1,\underline l}(\phi_{j+k,\underline m+\underline n}(x))=(\phi_H+(i+|\underline j|)a)\phi_{i,\underline l}(\phi_{j+k,\underline m+\underline n}(x))$. Then by using $[\phi_H,\theta_i]=-a\theta_i$, $[\theta_i,\theta_j] = 0$ and (\ref{Equ-GeneralFormulae}), we have $\phi_{i+1,\underline l}(\phi_{j+k,\underline m+\underline n}(x))=ia\phi_{i,\underline l}(\phi_{j+k,\underline m+\underline n}(x))+\phi_{i,\underline l}(\phi_H(\phi_{j+k,\underline m+\underline n}(x)))$. Now the above formula becomes
   \begin{equation*}
       \begin{split}
           (\dagger) = & \sum_{i,j\geq 0,\underline l,\underline m, \in\bN^d}ia\phi_{i,\underline l}(\phi_{j+k,\underline m+\underline n}(x))(1-aX)^{-j-1-k-|\underline m|-|\underline n|}(-1)^{j+|\underline m|}\binom{i+j}{i}\binom{\underline l+\underline m}{\underline l}X^{[i+j]}\underline Y^{[\underline l+\underline m]}\\
           & -\sum_{i,j\geq 0,\underline l,\underline m, \in\bN^d}\phi_{i+1,\underline l}(\phi_{j+k,\underline m+\underline n}(x))(1-aX)^{-j-1-k-|\underline m|-|\underline n|}(-1)^{j+|\underline m|}\binom{i+j}{i}\binom{\underline l+\underline m}{\underline l}X^{[i+j]}\underline Y^{[\underline l+\underline m]}\\
           & -\sum_{i,j\geq 0,\underline l,\underline m, \in\bN^d}\phi_{i,\underline l}(\phi_{j+1+k,\underline m+\underline n}(x))(1-aX)^{-j-1-k-|\underline m|-|\underline n|}(-1)^{j+|\underline m|}\binom{i+j}{i-1}\binom{\underline l+\underline m}{\underline l}X^{[i+j]}\underline Y^{[\underline l+\underline m]}\\
           & +\sum_{i\geq 0,\underline l,\underline m, \in\bN^d}\phi_{i+1,\underline l}(\phi_{k,\underline m+\underline n}(x))(1-aX)^{-k-|\underline m|-|\underline n|}(-1)^{|\underline m|}\binom{\underline l+\underline m}{\underline l}X^{[i]}\underline Y^{[\underline l+\underline m]}\\
           = & \sum_{i,j\geq 0,\underline l,\underline m, \in\bN^d}aX\phi_{i,\underline l}(\phi_{j+k,\underline m+\underline n}(x))(1-aX)^{-j-1-k-|\underline m|-|\underline n|}(-1)^{j+|\underline m|}\binom{i-1+j}{i-1}\binom{\underline l+\underline m}{\underline l}X^{[i-1+j]}\underline Y^{[\underline l+\underline m]}\\
           & -\sum_{i,j\geq 0,\underline l,\underline m, \in\bN^d}\phi_{i+1,\underline l}(\phi_{j+k,\underline m+\underline n}(x))(1-aX)^{-j-1-k-|\underline m|-|\underline n|}(-1)^{j+|\underline m|}\binom{i+j}{i}\binom{\underline l+\underline m}{\underline l}X^{[i+j]}\underline Y^{[\underline l+\underline m]}\\
           & -\sum_{i,j\geq 0,\underline l,\underline m, \in\bN^d}\phi_{i,\underline l}(\phi_{j+1+k,\underline m+\underline n}(x))(1-aX)^{-j-1-k-|\underline m|-|\underline n|}(-1)^{j+|\underline m|}\binom{i+j}{i-1}\binom{\underline l+\underline m}{\underline l}X^{[i+j]}\underline Y^{[\underline l+\underline m]}\\
           & +\sum_{i\geq 0,\underline l,\underline m, \in\bN^d}\phi_{i+1,\underline l}(\phi_{k,\underline m+\underline n}(x))(1-aX)^{-k-|\underline m|-|\underline n|}(-1)^{|\underline m|}\binom{\underline l+\underline m}{\underline l}X^{[i]}\underline Y^{[\underline l+\underline m]},
       \end{split}
   \end{equation*}
   where we use $i\binom{i+j}{i}X^{[i+j]} = X\binom{i-1+j}{i-1}X^{[i-1+j]}$ to get the second equality. In the most right hand side of the above formula, replacing $i-1$ by $i$ in the first and third summands and using $\binom{j-1}{-1} = \binom{j}{-1} = 0$, we see that
   \begin{equation*}
       \begin{split}
           (\dagger) = & \sum_{i,j\geq 0,\underline l,\underline m, \in\bN^d}aX\phi_{i+1,\underline l}(\phi_{j+k,\underline m+\underline n}(x))(1-aX)^{-j-1-k-|\underline m|-|\underline n|}(-1)^{j+|\underline m|}\binom{i+j}{i}\binom{\underline l+\underline m}{\underline l}X^{[i+j]}\underline Y^{[\underline l+\underline m]}\\
           & -\sum_{i,j\geq 0,\underline l,\underline m, \in\bN^d}\phi_{i+1,\underline l}(\phi_{j+k,\underline m+\underline n}(x))(1-aX)^{-j-1-k-|\underline m|-|\underline n|}(-1)^{j+|\underline m|}\binom{i+j}{i}\binom{\underline l+\underline m}{\underline l}X^{[i+j]}\underline Y^{[\underline l+\underline m]}\\
           & -\sum_{i,j\geq 0,\underline l,\underline m, \in\bN^d}\phi_{i+1,\underline l}(\phi_{j+1+k,\underline m+\underline n}(x))(1-aX)^{-j-1-k-|\underline m|-|\underline n|}(-1)^{j+|\underline m|}\binom{i+1+j}{i}\binom{\underline l+\underline m}{\underline l}X^{[i+1+j]}\underline Y^{[\underline l+\underline m]}\\
           & +\sum_{i\geq 0,\underline l,\underline m, \in\bN^d}\phi_{i+1,\underline l}(\phi_{k,\underline m+\underline n}(x))(1-aX)^{-k-|\underline m|-|\underline n|}(-1)^{|\underline m|}\binom{\underline l+\underline m}{\underline l}X^{[i]}\underline Y^{[\underline l+\underline m]}\\
           = & -\sum_{i,j\geq 0,\underline l,\underline m, \in\bN^d}\phi_{i+1,\underline l}(\phi_{j+k,\underline m+\underline n}(x))(1-aX)^{-j-k-|\underline m|-|\underline n|}(-1)^{j+|\underline m|}\binom{i+j}{i}\binom{\underline l+\underline m}{\underline l}X^{[i+j]}\underline Y^{[\underline l+\underline m]}\\
           & -\sum_{i,j\geq 0,\underline l,\underline m, \in\bN^d}\phi_{i+1,\underline l}(\phi_{j+1+k,\underline m+\underline n}(x))(1-aX)^{-j-1-k-|\underline m|-|\underline n|}(-1)^{j+|\underline m|}\binom{i+1+j}{i}\binom{\underline l+\underline m}{\underline l}X^{[i+1+j]}\underline Y^{[\underline l+\underline m]}\\
           & +\sum_{i\geq 0,\underline l,\underline m, \in\bN^d}\phi_{i+1,\underline l}(\phi_{k,\underline m+\underline n}(x))(1-aX)^{-k-|\underline m|-|\underline n|}(-1)^{|\underline m|}\binom{\underline l+\underline m}{\underline l}X^{[i]}\underline Y^{[\underline l+\underline m]}.
       \end{split}
   \end{equation*}
   Finally, replacing $j+1$ by $j$ in the second term of the most right hand side of above formula, we see that
   \begin{equation*}
       \begin{split}
           (\dagger) = & -\sum_{i,j\geq 0,\underline l,\underline m, \in\bN^d}\phi_{i+1,\underline l}(\phi_{j+k,\underline m+\underline n}(x))(1-aX)^{-j-k-|\underline m|-|\underline n|}(-1)^{j+|\underline m|}\binom{i+j}{i}\binom{\underline l+\underline m}{\underline l}X^{[i+j]}\underline Y^{[\underline l+\underline m]}\\
           &+ \sum_{i\geq 0,j\geq 1,\underline l,\underline m, \in\bN^d}\phi_{i+1,\underline l}(\phi_{j+k,\underline m+\underline n}(x))(1-aX)^{-j-k-|\underline m|-|\underline n|}(-1)^{j+|\underline m|}\binom{i+j}{i}\binom{\underline l+\underline m}{\underline l}X^{[i+j]}\underline Y^{[\underline l+\underline m]}\\
           & +\sum_{i\geq 0,\underline l,\underline m, \in\bN^d}\phi_{i+1,\underline l}(\phi_{k,\underline m+\underline n}(x))(1-aX)^{-k-|\underline m|-|\underline n|}(-1)^{|\underline m|}\binom{\underline l+\underline m}{\underline l}X^{[i]}\underline Y^{[\underline l+\underline m]}\\
           = & 0
       \end{split}
   \end{equation*}
   as desired, which shows that the right hand side of (\ref{Equ-Technique}) is independent of $X$. In particular by plugging $X=0$, to deduce (\ref{Equ-Technique}), it is enough to show that for any $k\geq 0$ and any $\underline n\in \bN^d$, we have
   \begin{equation}\label{Equ-Technique-I}
       \phi_{k,\underline n}(x) = \sum_{\underline l,\underline m, \in\bN^d}\phi_{0,\underline l}(\phi_{k,\underline m+\underline n}(x))(-1)^{|\underline m|}\binom{\underline l+\underline m}{\underline l}\underline Y^{[\underline l+\underline m]}.
   \end{equation}
   
     We consider a special case for $k = 0$ in (\ref{Equ-Technique-I}). In other words, we want to show that for any $\underline n\in\bN^d$,
     \begin{equation}\label{Equ-Technique-II}
         \phi_{0,\underline n}(x) = \sum_{\underline l,\underline m, \in\bN^d}\phi_{0,\underline l}(\phi_{0,\underline m+\underline n}(x))(-1)^{|\underline m|}\binom{\underline l+\underline m}{\underline l}\underline Y^{[\underline l+\underline m]}.
     \end{equation}
     To do so, it suffices to show that for free variables $Z_1,\dots,Z_d$,
     \[\sum_{\underline n\in\bN^d}\phi_{0,\underline n}(x)\underline Z^{[\underline n]} = \sum_{\underline l,\underline m, \underline n\in\bN^d}\phi_{0,\underline l}(\phi_{0,\underline m+\underline n}(x))(-1)^{|\underline m|}\binom{\underline l+\underline m}{\underline l}\underline Y^{[\underline l+\underline m]}\underline Z^{[\underline n]}.\]
     Since $\theta_i$'s commute with each other and $\phi_{0,\underline n} = \prod_{i=1}^d\theta_i^{n_i}$ by (\ref{Equ-GeneralFormulae}), we see that
     \[\sum_{\underline n\in\bN^d}\phi_{0,\underline n}(x)\underline Z^{[\underline n]} = \exp(\sum_{i=1}^d\theta_iZ_i)(x)\]
     and that 
     \begin{equation*}
         \begin{split}
             &\sum_{\underline l,\underline m, \underline n\in\bN^d}\phi_{0,\underline l}(\phi_{0,\underline m+\underline n}(x))(-1)^{|\underline m|}\binom{\underline l+\underline m}{\underline l}\underline Y^{[\underline l+\underline m]}\underline Z^{[\underline n]}\\
             =&\sum_{\underline l,\underline m, \underline n\in\bN^d}\prod_{i=1}^d\theta_i^{l_i+m_i+n_i}(x)(-1)^{|\underline m|}\underline Y^{[\underline l]}\underline Y^{[\underline m]}\underline Z^{[\underline n]}\\
             =&\sum_{\underline l\in\bN^d}\prod_{i=1}^d\theta_i^{l_i}\underline Y^{[\underline l]}\sum_{\underline m\in\bN^d}\prod_{i=1}^d\theta_i^{m_i}(-\underline Y)^{[\underline m]}\sum_{\underline n\in\bN^d}\prod_{i=1}^d\theta_i^{n_i}(x)\underline Z^{[\underline n]}\\
             =&\exp(\sum_{i=1}^d\theta_iY_i)(x)\exp(-\sum_{i=1}^d\theta_iY_i)\exp(\sum_{i=1}^d\theta_iZ_i)(x)\\
             =&\exp(\sum_{i=1}^d\theta_iZ_i)(x).
         \end{split}
     \end{equation*}
     Therefore, the special case (\ref{Equ-Technique-II}) holds true. In particular, replacing $x$ by $\phi_{k,\underline 0}(x)$ in (\ref{Equ-Technique-II}), we get
     \begin{equation}\label{Equ-Technique-III}
         \phi_{0,\underline n}(\phi_{k,\underline 0}(x)) = \sum_{\underline l,\underline m, \in\bN^d}\phi_{0,\underline l}(\phi_{0,\underline m+\underline n}(\phi_{k,\underline 0}(x)))(-1)^{|\underline m|}\binom{\underline l+\underline m}{\underline l}\underline Y^{[\underline l+\underline m]}.
     \end{equation}
     By using (\ref{Equ-GeneralFormulae}), especially $\phi_{0,\underline n}(\phi_{k,\underline 0}(x)) = \phi_{k,\underline n}(x)$, we see that (\ref{Equ-Technique-III}) is exactly (\ref{Equ-Technique-I}) as desired. So we conclude that (2) implies (1).
     
     To complete the proof, it remains to show the ``moreover'' part. The nilpotency of $\theta_i$'s follows from the same argument use in Remark \ref{Rmk-EnhancedHiggsMod-Local}. Since $\phi_{k,\underline n} = \phi_{0,\underline n}\circ\phi_{k,0}$ for any $(k,\underline n)\in\bN\times\bN^d$, we conclude from the nilpotency of $\theta_i$'s that 
     \[\lim_{k+|\underline n|\to+\infty}\phi_{k,\underline n} = 0\Leftrightarrow \lim_{k\to+\infty}\prod_{i=0}^{k-1}(\phi_H+ia) = 0.\]
 \end{proof}
 \begin{prop}\label{Prop-HTvsHiggs-Local}
   The evaluation at $(\frakS(R^+),(E))$ induces equivalences 
   \[\begin{split}
       &\Vect((R^+)_{\Prism},\overline \calO_{\Prism})\to \Strat(A^{+,\bullet})\to\HIG^{\nil}_*(R^+)\\(\text{resp.}&\Vect((R^+)_{\Prism},\overline \calO_{\Prism}[\frac{1}{p}])\to\Strat(A^{\bullet})\to\HIG^{\nil}_*(R))
   \end{split}\]
   of categories, which preserves ranks, tensor products and dualities. Moreover, for any $\bM\in \Vect((R^+)_{\Prism},\overline \calO_{\Prism})$ (resp. $\Vect((R^+)_{\Prism},\overline \calO_{\Prism}[\frac{1}{p}])$) with the corresponding stratification $(H,\varepsilon)\in \Strat(A^{?,\bullet})$ for $? = +$ (resp. $\emptyset$), let $\phi_H,\theta_H$ and $\theta_i$'s be as in Notation \ref{Notation-Stratification} and then we have
   \begin{enumerate}
      \item the enhanced Higgs module induced by $\bM$ is $(H,\theta_H,\phi_H)$, and 
      
      \item the stratification $\varepsilon$ on $H$ is determined such that for any $x\in H$,
      \begin{equation}\label{Equ-Stratification}
          \begin{split}
              \varepsilon(x)=& \exp(\sum_{i=1}^d\theta_iY_1)(1-aX_1)^{-\frac{\phi_H}{a}}(x)\\
               &: = \sum_{k\geq 0,\underline n\in\bN^d}\prod_{i=1}^d\theta_i^{n_i}( \prod_{j=0}^{k-1}(\phi_H+jE'(\pi))(x))X_1^{[k]}\underline Y_1^{[\underline n]}\\
              =& (1-aX_1)^{-\frac{\phi_H}{a}}\prod_{i=1}^d\exp((1-aX_1)^{-1}\theta_iY_1)(x)\\
               &: = \sum_{k\geq 0,\underline n\in\bN^d}\prod_{j=0}^{k-1}(\phi_H+(j+|\underline n|)E'(\pi))(\prod_{i=1}^d\theta_i^{n_i}(x))X_1^{[k]}\underline Y_1^{[\underline n]}
          \end{split}
      \end{equation}
   \end{enumerate}
 \end{prop}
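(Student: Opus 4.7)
The first equivalence $\Vect((R^+)_{\Prism},\overline \calO_{\Prism}) \simeq \Strat(A^{+,\bullet})$ (and its rational analogue) is precisely Lemma \ref{Lem-HTasStrat}, obtained by evaluation along the cover $(\frakS(R^+),(E))$ together with faithfully flat descent as in \cite[Prop.~2.7]{BS21}. So the main task is to construct a natural equivalence $\Strat(A^{?,\bullet}) \simeq \HIG^{\nil}_*(R^?)$ for $? \in \{+, \emptyset\}$.

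The plan is to unwind stratifications via the explicit description of $A^{?,\bullet}$ from Proposition \ref{Prop-Structure}. Given $(H,\varepsilon) \in \Strat(A^{?,\bullet})$, expanding $\varepsilon(x) = \sum_{i,\underline n}\phi_{i,\underline n}(x) X_1^{[i]}\underline Y_1^{[\underline n]}$ with $\phi_{i,\underline n}\to 0$ encodes $\varepsilon$ as a family of $R^?$-linear endomorphisms of $H$. The cocycle identity $p_2^*\varepsilon \circ p_0^*\varepsilon = p_1^*\varepsilon$ and normalisation $\sigma_0^*\varepsilon = \id_H$, rewritten using the face/degeneracy formulae (\ref{Equ-Face-Arith})--(\ref{Equ-Degeneracy-Arith}) as in (\ref{Equ-P2P0})--(\ref{Equ-P1}), are exactly the conditions of Lemma \ref{Lem-Stratification}. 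Now define $\phi_H := \phi_{1,\underline 0}$ and $\theta_i := \phi_{0,\underline 1_i}$, setting $\theta_H := \sum_i \theta_i \otimes \frac{\dlog T_i}{E(u)}$. Lemma \ref{Lem-Technique} is designed precisely to translate the cocycle condition into: $[\theta_i,\theta_j]=0$, $[\phi_H,\theta_i] = -a\theta_i$, the $\theta_i$'s are nilpotent, $\lim_{k\to\infty}\prod_{j=0}^{k-1}(\phi_H+ja)=0$, and the generation formula (\ref{Equ-GeneralFormulae}). These are exactly the axioms of $\HIG^{\nil}_*(R^?)$ as in Definition \ref{Dfn-EnhancedHiggsMod-Local} (with nilpotency of $\theta_H$ automatic by Remark \ref{Rmk-EnhancedHiggsMod-Local}). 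Conversely, given $(H,\theta_H,\phi_H)\in\HIG^{\nil}_*(R^?)$, defining $\phi_{k,\underline n}$ by (\ref{Equ-GeneralFormulae}) and packaging them into $\varepsilon$ via (\ref{Equ-Stratification}) produces a stratification satisfying the cocycle condition by the other direction of Lemma \ref{Lem-Technique}. The two constructions are mutually inverse by inspection.

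For the explicit formula (\ref{Equ-Stratification}): both expressions are obtained by plugging (\ref{Equ-GeneralFormulae}) into $\varepsilon(x) = \sum_{k,\underline n}\phi_{k,\underline n}(x)X_1^{[k]}\underline Y_1^{[\underline n]}$, using respectively the two equivalent forms $\phi_{k,\underline n} = \prod_i\theta_i^{n_i} \cdot \prod_{j=0}^{k-1}(\phi_H+ja)$ and $\phi_{k,\underline n} = \prod_{j=0}^{k-1}(\phi_H+(j+|\underline n|)a)\cdot\prod_i\theta_i^{n_i}$, together with the identities $\sum_k \prod_{j=0}^{k-1}(\phi_H+ja) X_1^{[k]} = (1-aX_1)^{-\phi_H/a}$ and $\sum_{\underline n}\prod_i\theta_i^{n_i}\underline Y_1^{[\underline n]} = \exp(\sum_i\theta_i Y_{1,i})$ (formally interpreted, which is legitimate since $\theta_i$'s are nilpotent and the $\phi_H$-series converges $p$-adically by hypothesis).

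Preservation of ranks is built into the equivalence since $H$ is the underlying module in both categories. Tensor products and dualities are defined termwise on stratifications via base change from $A^{?,0} = R^?$ to $A^{?,1}$; on the Higgs side, the induced operators are $\theta_{H_1 \otimes H_2} = \theta_{H_1}\otimes 1 + 1\otimes \theta_{H_2}$, $\phi_{H_1\otimes H_2} = \phi_{H_1}\otimes 1 + 1\otimes \phi_{H_2}$, with analogous formulae for duals, and the compatibility is a direct calculation using the multiplicative form of $\varepsilon$ given by (\ref{Equ-Stratification}). The only real obstacle in the argument is the combinatorial identity of Lemma \ref{Lem-Technique}, which is already established; everything else amounts to matching formal expressions and checking convergence, so the proof is essentially a clean bookkeeping of the constructions above.
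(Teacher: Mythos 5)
Your proposal is correct and follows essentially the same route as the paper: the proposition is deduced from Lemma \ref{Lem-HTasStrat} (descent to stratifications), Lemma \ref{Lem-Stratification} (unwinding the cocycle condition via Proposition \ref{Prop-Structure}), and the ``moreover'' part of Lemma \ref{Lem-Technique}, exactly as you describe. The paper's own proof is just a two-line citation of these lemmas, so your more detailed bookkeeping of the mutually inverse constructions and of formula (\ref{Equ-Stratification}) is a faithful expansion of the intended argument.
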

 \begin{proof}
   The ``moreover'' part of Lemma \ref{Lem-Technique} shows that $(H,\theta_H,\phi_H)$ is an enhanced Higgs module over $R^?$. Then
   the proposition follows from Lemma \ref{Lem-HTasStrat} and Lemma \ref{Lem-Technique} immediately as all constructions involved preserve ranks, tensor products and dualities.
 \end{proof}
 \begin{rmk}\label{Rmk-Intrinsic}
   We give an intrinsic construction of $(H,\theta_H,\phi_H)$ from a stratification $(H,\varepsilon)\in\Strat(A^{?,\bullet})$ for $? = +$ (resp. $\emptyset$) as follows:
   
   Let $\calK,\calK_g$ and $\calK_a$ be as in Corollary \ref{Cor-Structure} (and Remark \ref{Rmk-Structure}).
   Noticing that $\sigma_0^*(\varepsilon) = \id_H$, we deduce that for any $x\in H$,
   \[\varepsilon(x) - \id_H(x) \in H\otimes_{R^?,p_1}A^1\cdot\calK.\]
   Modulo $\calK^{[2]}$, we get an $R^?$-linear morphism 
   \[\overline{(\varepsilon-\id_H)}:H\to H\otimes_{R^?}\calK/\calK^{[2]}.\]
   Let $\theta:H\to H\otimes_{R^+}\widehat \Omega^1_{R^+/\calO_K}\{-1\}$ (resp, $\theta:H\to H\otimes_{R}\Omega^1_{R/K}\{-1\}$) be the $R^?$-linear morphism induced by the projection $\calK/\calK^{[2]}\to \calK_g/\calK_g^{[2]}$ in Corollary \ref{Cor-Structure} (3). Using the (non-canonical) decomposition (\ref{Equ-Splitting}),
   we see that $\overline{(\varepsilon-\id_H)}$ is of the form 
   \[(\phi,\theta):H\to H\oplus H\otimes_{R^+}\widehat \Omega^1_{R^+/\calO_K}\{-1\}~(\text{resp.}~(\phi,\theta):H\to H\oplus H\otimes_{R} \Omega^1_{R/K}\{-1\}).\]
   By (\ref{Equ-Stratification}), we see that $\phi = \phi_H$ and $\theta = \theta_H$.
 \end{rmk}
 \begin{exam}[Breuil--Kisin Twist]\label{Exam-BK twist}
   Let $\overline \calO_{\Prism}\{n\}:=\calI_{\Prism}^n/\calI_{\Prism}^{n+1}$ for any $n\in\bZ$, which is known as the $n$-th \emph{Breuil--Kisin twist} of $\calO_{\Prism}$. Then we see that 
   \[\overline \calO_{\Prism}\{n\}(\frakS(R^+),(E)) = R^+\cdot E(u)^n.\]

  Note that by Taylor’s expansion, we have 
  \[\begin{split}
      E(u_0-E(u_0)X_1)&=E(u_0)+E^{(1)}(u_0)(-E(u_0)X_1)+E^{(2)}(u_0)\frac{(-E(u_0)X_1)^2}{2!}+\cdots\\
      &\equiv E(u_0)(1-E'(u_0)X_1)\mod E(u_0)^2.
  \end{split}\]
  Write $E(u_0-E(u_0)X_1) = E(u_0)\big((1-E'(u_0)X_1)+E(u_0)v\big)$ for some $v\in \frakS(R)^1$, and then we can get the following congruence
   \[\begin{split}
       p_0(E(u_0)^n) &= E(u_1)^n\\
       &= E(u_0-E(u_0)X_1)^n \\
       &= E(u_0)^n\big((1-E'(u_0)X_1)+E(u_0)v\big)^n\\
       &\equiv (1-E'(u_0)X_1)^nE(u_0)^n \mod E(u_0)^{n+1}.
   \end{split}\]
   By chasing constructions above, we see that the enhanced Higgs bundle associated to $\overline \calO_{\Prism}\{n\}$ is $(R^+,0,-nE'(\pi)\id_H)$. 
   In general, for any (rational) Hodge--Tate crystal $\bM$ on $(R^+)_{\Prism}$ with the associated enhanced Higgs module $(H,\theta_H,\phi_H)$, the enhanced Higgs module induced by $\bM\{n\}: = \bM\otimes_{\overline \calO_{\Prism}}\overline \calO_{\Prism}\{n\}$ is $(H,\theta_H,\phi_H-nE'(\pi)\id_H)$. 
 \end{exam}

 \subsection{Prismatic cohomology vs Higgs cohomology: Local case}
   Now, we want to show the following result:
   \begin{prop}\label{Prop-HTvsHiggsCoho-Local}
      For any (rational) Hodge--Tate crystal $\bM$ on $(R^+)_{\Prism}$ with associated enhanced Higgs module $(H,\theta_H,\phi_H)$, there exists a quasi-isomorphism 
      \[\rR\Gamma((R^+)_{\Prism},\bM)\simeq \HIG(H,\theta_H,\phi_H).\]
   \end{prop}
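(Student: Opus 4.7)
The plan is to compute $\rR\Gamma((R^+)_{\Prism}, \bM)$ via the \v Cech--Alexander complex associated to the cover $(\frakS(R^+), (E))$, and then to exhibit an explicit quasi-isomorphism to $\HIG(H, \theta_H, \phi_H)$. By Lemma \ref{Lem-CoverPrism} together with standard \v Cech--Alexander descent (as in \cite[Prop. 2.7]{BS21}), $\rR\Gamma((R^+)_{\Prism}, \bM)$ is computed by the cosimplicial $R^?$-module $\bM(\frakS(R^+)^\bullet, (E)) = H \otimes_{R^?, p_0} A^{?, \bullet}$, with face maps twisted by the stratification $\varepsilon$ in Proposition \ref{Prop-HTvsHiggs-Local}. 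By Proposition \ref{Prop-Structure}, each $A^{?, n}$ is a $p$-adically completed PD polynomial ring on the variables $X_1, \ldots, X_n, Y_{1,1}, \ldots, Y_{d,n}$ over $R^?$, with face and degeneracy maps given by (\ref{Equ-Face-Arith}) and (\ref{Equ-Degeneracy-Arith}).

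First, I would handle the ``geometric direction'' corresponding to the sub-cosimplicial ring $A^{?,\bullet}_{\geo}$ generated by only the $Y$ variables (i.e., the \v Cech nerve of $\frakS(R^+)$ in the relative site $(R^+/(\frakS,(E)))_{\Prism}$). Following the strategy of \cite[Thm. 4.10]{Tian}, the PD Poincar\'e lemma for the polynomial ring in the $Y_{s,i}$'s identifies the normalized complex of $H \otimes_{R^?} A^{?,\bullet}_{\geo}$ with the Higgs complex $\HIG(H, \theta_H)$: concretely, one contracts the higher PD variables and sends $Y_{s,1}$ to $\tfrac{\dlog T_s}{E(u)}$ via the identification of Remark \ref{Rmk-Structure}, and then the ``$\exp(\sum \theta_i Y)$'' factor in $\varepsilon$ produces the Higgs differential $\theta_H$.

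Next, I would combine this with the $X$-direction. Since the stratification factors as $\varepsilon = \exp(\sum \theta_i Y_1) \circ (1 - aX_1)^{-\phi_H / a}$ by (\ref{Equ-Stratification}), the \v Cech--Alexander complex organizes into (the total complex of) a bicosimplicial object whose ``$Y$-rows'' produce $\HIG(H, \theta_H)$ by the previous paragraph, and whose ``$X$-columns'' produce a complex computing the action of $\phi_H$. For the $X$-direction, the analogue of the PD Poincar\'e lemma identifies the normalized $X$-complex with a two-term complex $H \to H$, and the observation that the coefficient of $X_1^{[k]}\underline Y_1^{[\underline n]}$ in the stratification equals $\prod_{i=0}^{k-1}(\phi_H + (i+|\underline n|)E'(\pi)) \circ \prod_s \theta_s^{n_s}$ precisely matches the shifted differentials $\phi_H + iE'(\pi)$ appearing in diagram (\ref{Diag-EnhancedHiggsComplex-Local}). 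Taking the total complex of the resulting bicomplex gives exactly the Higgs-type complex $\HIG(H, \theta_H, \phi_H) = \fib(\HIG(H, \theta_H) \xrightarrow{\phi_H} \HIG(H, \theta_H))$ from Definition \ref{Dfn-EnhancedHiggsMod-Local}.

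The main obstacle is to make this bicomplex reduction precise, because the face maps (\ref{Equ-Face-Arith}) involve the factor $(1 - aX_1)^{-1}$ that couples the $X$ and $Y$ directions, so the decomposition is not a naive tensor product. A workable approach is to introduce a decreasing filtration on the \v Cech--Alexander complex by $X$-degree, show via explicit contracting homotopies on the associated graded (where the coupling degenerates) that the $k$-th graded piece computes $\HIG(H, \theta_H)$ acted on by $\phi_H + kE'(\pi)$, and then reassemble. A secondary technical point is convergence: because of the condition $\lim_{n \to \infty} \prod_{i=0}^{n-1}(\phi_H + iE'(\pi)) = 0$ in Definition \ref{Dfn-EnhancedHiggsMod-Local} together with nilpotency of the $\theta_i$'s (Remark \ref{Rmk-EnhancedHiggsMod-Local}), the infinite sums appearing in $\varepsilon$ converge $p$-adically, so the explicit quasi-isomorphism is well-defined and functorial in $\bM$.
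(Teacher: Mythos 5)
Your overall strategy coincides with the paper's: compute $\rR\Gamma((R^+)_{\Prism},\bM)$ by the \v Cech--Alexander complex of $(\frakS(R^+),(E))$, separate the geometric ($Y$) direction from the arithmetic ($X$) direction, resolve the former by the PD/Higgs Poincar\'e lemma as in Tian, resolve the latter by the two-term complex $[H\xrightarrow{\phi_H+qE'(\pi)}H]$ as in the $d=0$ case of \cite[Thm. 3.20]{MW21b}, and assemble a total complex. The difference lies in how the coupling coming from $(1-aX_1)^{-1}$ is handled. The paper does not filter: it builds an explicit bicosimplicial object $C^{\bullet,\bullet}\{-\bullet\}$ (Construction \ref{Construction-Bicomsimplicial}) in which the twists by $(1-aX_1)^{-1}$ on the $Y$'s and on $E(u)$, and by $(1-aX_1)^{-\frac{\phi_H}{a}}$ on $H$, are built into the face maps of the arithmetic direction; it checks that the diagonal of this bicosimplicial object is the \v Cech--Alexander complex (Lemma \ref{Lem-Diagonal}) and invokes the cosimplicial Eilenberg--Zilber theorem (Corollary \ref{Cor-Eilenberg–Zilber}). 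After that the rows and columns genuinely decouple, and the two Poincar\'e-lemma inputs are applied one direction at a time (Lemma \ref{Lem-ArithGeo-to-ArithGeoHiggs}, Lemma \ref{Lem-Rho}).

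Your proposed filtration by $X$-degree has a problem as stated. Both $(1-aX_1)^{-1}$ and $(1-aX_1)^{-\frac{\phi_H}{a}}$ are congruent to the identity modulo positive $X$-degree, so on the associated graded the arithmetic twist disappears entirely: the graded pieces see only the geometric stratification and not $\phi_H$, so the $k$-th graded piece is not ``$\HIG(H,\theta_H)$ acted on by $\phi_H+kE'(\pi)$''. The operator $\phi_H$ would only resurface in the differentials of the spectral sequence of the filtration, and identifying those with the map $\phi_H$ in $\fib(\HIG(H,\theta_H)\to\HIG(H,\theta_H))$ amounts to redoing the computation of \cite[Thm. 3.20]{MW21b} inside a spectral sequence, which is harder, not easier, than the Eilenberg--Zilber route. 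Note also that the graded piece of fixed $X$-degree $k\geq 1$ in cosimplicial degree $n$ is spanned by all monomials $X_1^{[i_1]}\cdots X_n^{[i_n]}$ with $i_1+\cdots+i_n=k$, so it is not a single copy of the $Y$-complex; collapsing it already requires the contracting homotopy of the $X$-direction Poincar\'e lemma. The remaining ingredients of your outline (the geometric Poincar\'e lemma following Tian, and convergence from $\lim_{n}\prod_{i=0}^{n-1}(\phi_H+iE'(\pi))=0$ together with nilpotency of the $\theta_i$) are correct and match the paper.
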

  \begin{rmk}
  Keep notations as in Proposition \ref{Prop-HTvsHiggsCoho-Local}.
Write $Rf_*\bM$ for the derived direct image of $\bM$ along $f: \Spf(R^+)\to \Spf(\calO_K)$. On the one hand, for any Hodge--Tate crystal $\bL$ over $(\calO_K)_{\Prism}$ with the induced ``Sen module'' $(L,\phi_L)$ where $L$ is $\bL(\frakS,(E))$, the cohomology $\rR\Gamma((\calO_K)_{\Prism},\bL)$ can be computed by the complex $[L\xrightarrow{\phi_L}L]$ (cf. \cite[Thm. 4.5]{GMW-HT}). On the other hand, we see that $(\rR f_*\bM)(\frakS,(E))$ is just $\rR\Gamma((R^+/(\frakS,(E)))_{\Prism},\bM)$, which can be computed via $\HIG(H,\theta_H)$ (cf. \cite[Thm. 5.12]{Tia23}). So it is reasonable to believe there exists a quasi-isomorphism between $\rR\Gamma((R^+)_{\Prism},\bM)$ and $\HIG(H,\theta_H,\phi_H)$.
 Indeed, we will show Proposition \ref{Prop-HTvsHiggsCoho-Local} by combining calculations in \cite{Tia23} with \cite{GMW-HT} together.
  \end{rmk}
  
  For the sake of simplicity, we still assume $a = E'(\pi)$ as in Convention \ref{Convention-E'PI}. We first show that one can compute $\rR\Gamma((R^+)_{\Prism},\bM)$ via \v Cech--Alexander complex.
  \begin{lem}\label{Lem-Vanishing}
     Let $\frakX$ be any bounded $p$-adic formal scheme and let $\calC$ be either $(\frakX)_{\Prism}$ or $(\frakX)_{\Prism}^{\perf}$.
     For any (rational) Hodge--Tate crystal $\bM$ on $\calC$ and for any $\calU = (A,I)\in \calC$, we have $\rH^i(\calC/\calU,\bM) = 0$ for any $i\geq 1$.
  \end{lem}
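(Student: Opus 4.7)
The plan is to use a Čech--Alexander computation combined with faithfully flat descent for crystals. Since $\calU = (A,I)$ is the final object of $\calC/\calU$, we already have $\rH^0(\calC/\calU,\bM) = \bM(A,I)$; the point is the vanishing in positive degrees. It suffices to treat the integral case $\bM \in \Vect(\calC, \overline\calO_{\Prism})$, as the rational case follows by inverting $p$.

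First, I would construct a cover $(A,I) \to (A_\infty, IA_\infty)$ in $\calC/\calU$ such that $(A_\infty, IA_\infty)$ is a perfect prism and $A/I \to A_\infty/IA_\infty$ is $p$-completely faithfully flat. This is done in the spirit of Lemma \ref{Lem-CoverPrism}: take a $p$-completely faithfully flat perfectoid cover of $A/I$, and lift it to a perfect prism mapping to $(A,I)$ using \cite[Thm.~3.10]{BS19} together with \cite[Prop.~7.11]{BS19}. This works in both $(\frakX)_{\Prism}$ and its perfect sub-site.

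Next, form the Čech nerve $(A_\infty^{\bullet/A}, IA_\infty^{\bullet/A})$ of this cover in $\calC/\calU$. By the crystal property of $\bM$, evaluation of $\bM$ on this nerve is naturally identified with the Amitsur complex of the faithfully flat ring map $A/I \to A_\infty/IA_\infty$ tensored (over $A/I$) with the finite projective $A/I$-module $\bM(A,I)$. Faithfully flat descent then shows that this complex is a resolution of $\bM(A,I)$ concentrated in cohomological degree $0$, so the Čech cohomology of $\bM$ along this cover vanishes in positive degrees.

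Finally, to upgrade Čech vanishing to the derived vanishing $\rH^i(\calC/\calU,\bM) = 0$ for $i \geq 1$, I would run the Čech-to-derived spectral sequence: the key observation is that each term $(A_\infty^{n/A}, IA_\infty^{n/A})$ of the nerve is again a perfect prism admitting a cover of the same shape, so the previous paragraph applies uniformly along the nerve and makes the spectral sequence collapse to Čech cohomology. The main obstacle I anticipate is the careful bookkeeping for the full site $(\frakX)_{\Prism}$, where the fiber products $(A_\infty^{n/A}, IA_\infty^{n/A})$ need to be genuine prisms covering the corresponding slice; this is precisely what the construction in Lemma \ref{Lem-CoverPrism} guarantees, so no new input beyond \cite{BS19} should be required.
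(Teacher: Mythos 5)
Your core mechanism is the right one --- the crystal property turns the \v{C}ech complex of a cover into an Amitsur complex, and faithfully flat descent kills its positive-degree cohomology --- and this is also the engine of the paper's proof. But your final step has a gap. Having verified \v{C}ech vanishing only for your one chosen perfectoid cover $(A,I)\to(A_\infty,IA_\infty)$, you cannot yet conclude that the \v{C}ech-to-derived spectral sequence ``collapses'': its $E_2^{0,q}$-terms involve $\rH^q(\calC/(A_\infty^{n/A},IA_\infty^{n/A}),\bM)$ for $q\geq 1$, which is exactly the kind of group you are trying to prove is zero, so the argument as written is circular. The remark that each term of the nerve ``admits a cover of the same shape'' does not by itself break the circle; what is needed is the Cartan-type criterion behind the \v{C}ech-to-derived comparison, and that criterion requires positive-degree \v{C}ech vanishing for a \emph{cofinal} system of covers of every object of the slice site. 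A single distinguished cover per object is not cofinal among all $(p,I)$-completely faithfully flat covers.

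The fix is immediate and is precisely the paper's route: your descent computation never uses that the cover is perfectoid. For an \emph{arbitrary} cover $(A,I)\to(B,IB)$ in $\calC$, the crystal property gives $\bM(B^{\bullet},IB^{\bullet})\cong\bM(A,I)\otimes_{A/I}B^{\bullet}/IB^{\bullet}$, and $(p,I)$-completely faithfully flat descent shows this complex is acyclic in positive degrees. Since this holds for every cover of every object, the standard criterion applies and yields $\rH^i(\calC/\calU,\bM)=0$ for $i\geq 1$; the construction of the perfectoid cover (your appeal to Lemma \ref{Lem-CoverPrism} and \cite[Thm.\ 3.10, Prop.\ 7.11]{BS19}) becomes unnecessary. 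One further small point: ``the rational case follows by inverting $p$'' is not a valid reduction, since a rational Hodge--Tate crystal need not be the rationalization of an integral one; rather, the same Amitsur-complex argument applies verbatim to $\overline\calO_{\Prism}[\frac{1}{p}]$-crystals.
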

  \begin{proof}
    For any $\calV = (B,IB)\in (R)_{\Prism}$ which is a cover of $\calU$ in $\calC$, we denote by $(B^{\bullet},IB^{\bullet})$ the \v Cech nerve induced by $\calV$. Since $\bM$ is a (rational) Hodge--Tate crystal on $\calC$, we have a canonical isomorphism of comsimplicial $B^{\bullet}$-modules
    \[\bM(B^{\bullet},IB^{\bullet})\cong \bM(A,I)\otimes_{A/I}B^{\bullet}/IB^{\bullet}.\]
    Since $\calV$ is a cover of $\calU$, by $p$-adically faithfully flat descent, we see that $\rH^i(\overline\calO_{\Prism}(B^{\bullet},IB^{\bullet})) = 0$ and hence $\rH^i(\bM(B^{\bullet},IB^{\bullet})) = 0$ for any $i\geq 1$. So the lemma follows from \cite[\href{https://stacks.math.columbia.edu/tag/03F9}{Lemma 03F9}]{Sta}.
  \end{proof}
  Now, we still keep the assumption on the smallness of $R^+$.
  \begin{lem}\label{Lem-CechAlexanderMethod}
    For any (rational) Hodge--Tate crystal $\bM$ on $(R^+)_{\Prism}$, there exists a quasi-isomorphism 
    \[\rR\Gamma((R^+)_{\Prism},\bM)\cong \bM(\frakS(R^+)^{\bullet},(E)).\]
  \end{lem}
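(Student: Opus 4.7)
The plan is to identify the complex $\bM(\frakS(R^+)^{\bullet},(E))$ as the \v{C}ech--Alexander complex associated to the cover $(\frakS(R^+),(E))$ of the final object of $\Sh((R^+)_{\Prism})$ (whose existence is guaranteed by Lemma \ref{Lem-CoverPrism}), and then invoke the \v{C}ech-to-derived functor spectral sequence to compare it with $\rR\Gamma((R^+)_{\Prism},\bM)$.

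First, I would write down the \v{C}ech-to-derived spectral sequence
\[
E_2^{i,j} = \check H^i\bigl(\{(\frakS(R^+),(E))\to \ast\},\, \underline{H}^j(\bM)\bigr)\Longrightarrow H^{i+j}((R^+)_{\Prism},\bM),
\]
where $\underline{H}^j(\bM)$ denotes the presheaf $\calU \mapsto H^j((R^+)_{\Prism}/\calU,\bM)$ on $(R^+)_{\Prism}$. The \v{C}ech nerve of this cover, as a cosimplicial object in $(R^+)_{\Prism}$, is precisely $(\frakS(R^+)^{\bullet},(E))$ by construction (Notation \ref{Notation-CechNerve}).

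Next, I would apply Lemma \ref{Lem-Vanishing} to all terms of this \v{C}ech nerve. Since each $(\frakS(R^+)^{n},(E))$ is itself an object of $(R^+)_{\Prism}$, Lemma \ref{Lem-Vanishing} yields $\underline{H}^j(\bM)(\frakS(R^+)^n,(E))=0$ for every $j\geq 1$ and every $n\geq 0$. Hence $E_2^{i,j}=0$ for $j\geq 1$, the spectral sequence degenerates on the $j=0$ row, and we are left with
\[
\rR\Gamma((R^+)_{\Prism},\bM) \simeq \check C^{\bullet}\bigl((\frakS(R^+)^{\bullet},(E)),\bM\bigr) = \bM(\frakS(R^+)^{\bullet},(E)),
\]
which is exactly the claimed quasi-isomorphism.

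There is no real obstacle: the only nontrivial inputs are (a) that $(\frakS(R^+),(E))$ covers the final object, which is Lemma \ref{Lem-CoverPrism}, and (b) the vanishing of higher cohomology on each prism, which is Lemma \ref{Lem-Vanishing}. The mildly delicate point is simply checking that the \v{C}ech-to-derived spectral sequence is available in the prismatic topos (which it is, as this is a general fact for any site with a cover of the final object). Everything else is formal.
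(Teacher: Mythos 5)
Your proof is correct and follows exactly the same route as the paper, which cites Lemma \ref{Lem-CoverPrism} and Lemma \ref{Lem-Vanishing} together with the \v{C}ech-to-derived spectral sequence in a single line; you have merely spelled out the details.
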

  \begin{proof}
    This follows from Lemma \ref{Lem-CoverPrism} together with Lemma \ref{Lem-Vanishing} by using \v Cech-to-derived spectral sequence.
  \end{proof}
  \begin{construction}\label{Construction-Bicomsimplicial}
    Let $\bM$ be a (rational) Hodge--Tate crystal on $(R^+)_{\Prism}$ with induced enhanced Higgs complex $(H,\theta_H,\phi_H)$. Then there exists a canonical isomorphism 
    \[H\otimes_{R^?,q_0}A^{?,\bullet}\cong \bM(\frakS(R^+)^{\bullet},(E)).\]
    By virtues of Proposition \ref{Prop-Structure} (2), we may write
    \[H\otimes_{R^?,q_0}A^{?,\bullet} = H\{X_i,Y_{1,i},\dots,Y_{d,i}\mid 1\leq i\leq \bullet\}^{\wedge}_{\pd}\]
    and
    \[B^{\bullet} = R^+\{Y_{1,i},\dots,Y_{d,i}\mid 1\leq i\leq \bullet\}^{\wedge}_{\pd}.\]
    We regard $A^{?,m}$ as a $B^m$-algebra via identifying $Y_{s,i}$'s\footnote{We warn that the induced morphism $B^{\bullet}\to A^{?,\bullet}$ does not preserve the cosimplicial structures.}. 
    Let $\widehat \Omega^q_{B^m}$ be the module of continuous $q$-forms of $B^{m}$ over $\calO_K$. By Proposition \ref{Prop-Structure}, for any $n\geq 0$, we have
    \[R^+\{X_i,Y_{s,j}\mid 1\leq i\leq n,1\leq j\leq m\}^{\wedge}_{\pd}\widehat \otimes_{B^m}\widehat \Omega^q_{B^m}\cong \widehat \Omega^q_{R^+\{X_i,Y_{s,j}\mid 1\leq i\leq n,1\leq j\leq m\}^{\wedge}_{\pd}/R^+\{X_i\mid 1\leq i\leq n\}^{\wedge}_{\pd}}.\]
    
    For any $m,n,q\geq 0$, define 
    \[C^{n,m}\{-q\}:=H\{X_i,Y_{1,j},\dots,Y_{d,j}\mid 1\leq i\leq n,1\leq j\leq m\}^{\wedge}_{\pd}\widehat \otimes_{B^{m}}\widehat \Omega^q_{B^{m}}\{-q\}.\]
    We then define a structure of ``Higgs complex of bicosimplicial objects'' on $C^{\bullet,\bullet}\{-\bullet\}$ as follows:
    
    \begin{enumerate}
        \item For any $0\leq i\leq n+1$, define $p_i^1:C^{n,\bullet}\{-\bullet\}\to C^{n+1,\bullet}\{-\bullet\}$ such that it acts via $(1-aX_1)^{-1}$ on $E(u)$ (where we view $E(u)$ as a basis of the Breuil--Kisin twist $R^?\{1\}$ mentioned in Example \ref{Exam-BK twist}) for $i=0$ and the identity for $i\geq 1$, acts via (\ref{Equ-Face-Arith}) on $X_k$'s, acts on $Y_{s,j}$'s via the scalar $(1-aX_1)^{-1}$ for $i=0$ and as the identity for $i\geq 1$, and acts on $x\in H$ via $(1-aX_1)^{-\frac{\phi_H}{a}}$ for $i=0$ and via $\id_H$ for $i\geq 1$. 
        
        \item For any $0\leq i\leq n$, define $\sigma_i^1: C^{n+1,\bullet}\{-\bullet\}\to C^{n,\bullet}\{-\bullet\}$ such that it acts trivially on $E(u)$, acts via (\ref{Equ-Degeneracy-Arith}) on $X_i$'s and acts on $Y_{s,j}$'s and $x\in H$ as the identity.
        
        \item For any $0\leq j\leq m+1$, define $p_j^2:C^{\bullet,m}\{-\bullet\}\to C^{\bullet,m+1}\{-\bullet\}$ such that it acts trivially on $E(u)$, acts via the identity on $X_i$'s, via (\ref{Equ-Face-Geo}) on $Y_{s,k}$'s, and acts on $x\in H$ via $\exp(\sum_{i=1}^d\theta_iY_{i,1})$ for $i = 0$ and via the identity for $i\geq 1$.
        
        \item For any $0\leq j\leq m$, define $\sigma_j^2:C^{\bullet,m+1}\{-\bullet\}\to C^{\bullet,m}\{-\bullet\}$ such that it acts trivially on $E(u)$, $X_i$'s and $x\in H$, and acts via (\ref{Equ-Degeneracy-Geo}) on $Y_{s,k}$'s.
        
        \item For any $q\geq 0$, define $d_q^{\bullet,\bullet}:C^{\bullet,\bullet}\{-q\}\to C^{\bullet,\bullet}\{-q-1\}$ such that for any $x\in H\{X_i,Y_{s,j}\mid 1\leq i\leq \bullet,1\leq j\leq \bullet\}^{\wedge}_{\pd}$ and $\omega\in \widehat \Omega^q_{B^{\bullet}}\{-q\}$, it carries $x\otimes\omega$ to $\theta_H(x)\wedge\omega+x\otimes \rd\omega$, where $\theta_H$ extends to $H\{X_i,Y_{s,j}\mid 1\leq i\leq \bullet,1\leq j\leq \bullet\}^{\wedge}_{\pd}$ by linear extension and $\rd:\widehat \Omega^q_{B^{\bullet}}\{-q\}\to \widehat \Omega^{q+1}_{B^{\bullet}}\{-q-1\}$ is defined in the paragraph after \cite[Lem. 5.15]{Tia23} (which is denoted by $\rd_R$ in loc.cit.).
    \end{enumerate}
  \end{construction}
  Then we have the following lemma:
  \begin{lem}\label{Lem-Bicosimplicial}
     The $(C^{\bullet,\bullet}\{-\bullet\},p_i^1,\sigma_i^1,p_i^2,\sigma_i^2, d_q^{\bullet,\bullet})$ defines a complex of bicosimplicial $R^?$-modules. 
     In other words, for any $q\geq 0$, $(C^{\bullet,\bullet}\{-q\},p_i^1,\sigma_i^1,p_i^2,\sigma_i^2,)$ is a bicomsimplicial $R^?$-module and $d_q^{\bullet,\bullet}:C^{\bullet,\bullet}\{-q\}\to C^{\bullet,\bullet}\{-q-1\}$ preserves bicomsimplicial structures such that $d_{q+1}^{\bullet,\bullet}\circ d_q^{\bullet,\bullet} = 0$.
  \end{lem}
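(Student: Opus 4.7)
The plan is to verify three independent assertions in sequence: (1) for each fixed $q \geq 0$, the quintuple $(C^{\bullet,\bullet}\{-q\}, p_i^1, \sigma_i^1, p_j^2, \sigma_j^2)$ forms a bicosimplicial $R^?$-module; (2) the maps $d_q^{\bullet,\bullet}$ commute with all the cosimplicial structure maps in both directions; and (3) $d_{q+1}^{\bullet,\bullet} \circ d_q^{\bullet,\bullet} = 0$. Throughout we would abbreviate $a = E'(\pi)$ and systematically use the two key commutation relations from Definition~\ref{Dfn-EnhancedHiggsMod-Local}: $[\phi_H, \theta_i] = -a\theta_i$ and $[\theta_i, \theta_j] = 0$.

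For step~(1), I would split into three substeps. First, I would verify that $(C^{\bullet,\bullet}\{-q\}, p_i^1, \sigma_i^1)$ satisfies the cosimplicial identities in the first direction. On the generators $X_i$ these identities follow from the cosimplicial structure of $\overline{\calO}_{\Prism}(\frakS(R^+)^{\bullet},(E))$ recorded in Example~\ref{Exam-Structure} and Proposition~\ref{Prop-Structure}(2). On the twisted pieces (the generator $E(u)$ of the Breuil--Kisin twist, the $Y_{s,j}$'s and elements $x \in H$), the only non-trivial modification occurs for $i=0$, where $p_0^1$ introduces a factor $(1-aX_1)^{-1}$ on $E(u)$ and $Y_{s,j}$, and $(1-aX_1)^{-\phi_H/a}$ on $H$. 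The required cosimplicial identity $p_j^1 p_i^1 = p_i^1 p_{j-1}^1$ ($i < j$) and its degeneracy analogues then reduce, in the first non-trivial case $p_1^1 p_0^1 = p_0^1 p_0^1$, to the cocycle condition for the stratification $\varepsilon$ at $\underline{Y}=0$ established in Proposition~\ref{Prop-HTvsHiggs-Local}. Second, I would verify the analogous identities for $(p_j^2, \sigma_j^2)$; these reduce on $Y_{s,k}$ to the cosimplicial structure of Proposition~\ref{Prop-Structure}(1), and on $x \in H$ to the cocycle condition for $\exp(\sum_i \theta_i Y_{i,1})$, which holds because the $\theta_i$'s commute. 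Third, the two cosimplicial structures commute essentially formally, since $p_i^1$ and $p_j^2$ act on disjoint sets of generators apart from the scalar factor $(1-aX_1)^{-1}$ (only for $p_0^1$) multiplying $Y_{s,j}$'s uniformly.

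For step~(2), the crucial computation is commutation of $d_q$ with $p_0^1$ on elements of $H$. Writing $d_q(x) = \theta_H(x) = \sum_i \theta_i(x) \otimes \frac{\dlog T_i}{E(u)}$, the shift identity $[\phi_H, \theta_i] = -a\theta_i$ upgrades to $\theta_i (\phi_H + ja) = (\phi_H + (j+1)a)\theta_i$ for every $j$, and hence to
\[
\theta_i \,(1-aX_1)^{-\phi_H/a} \;=\; (1-aX_1)^{-1} (1-aX_1)^{-\phi_H/a}\, \theta_i.
\]
The extra factor $(1-aX_1)^{-1}$ produced on the left is precisely absorbed by the action of $p_0^1$ on the Breuil--Kisin generator $E(u)^{-1}$ sitting inside the twist $\{-1\}$, which is multiplication by $(1-aX_1)$. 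Commutation with $p_0^2$ follows from $[\theta_i, \theta_j] = 0$, which implies $\theta_H$ commutes with $\exp(\sum_i \theta_i Y_{i,1})$. The remaining face and degeneracy maps act trivially on $H$, on $R^+$ and on $\dlog T_i$, so commutation is immediate there; for compatibility with the de Rham differential $\rd$ acting on $\widehat{\Omega}^\bullet_{B^\bullet}$, I would invoke the construction of $\rd$ from \cite{Tian} (the paragraph after Lem.~4.13 loc.~cit.).

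For step~(3), decompose $d_q = \theta_H \wedge (\cdot) + \rd$. Then $d_{q+1} \circ d_q$ expands into four terms: $(\theta_H \wedge)^2$, which vanishes because $[\theta_i, \theta_j] = 0$; $\rd \circ \rd = 0$; and two cross terms $\theta_H \wedge \rd$ and $\rd \circ (\theta_H \wedge)$, which cancel up to sign because $\theta_H = \sum_i \theta_i \otimes \frac{\dlog T_i}{E(u)}$ has coefficients constant in the $Y_{s,j}$-direction, so $\rd$ passes through $\theta_H \wedge$ with a Koszul sign.

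The main obstacle will be step~(2), specifically the bookkeeping of the twists $(1-aX_1)^{-1}$ on $E(u)$ versus $Y_{s,j}$ and their interaction with the de Rham differential $\rd$ applied to divided-power monomials $\underline{Y}^{[\underline{n}]}$: one must check that after the shift $\phi_H \rightsquigarrow \phi_H + a$ produced by moving $\theta_i$ past $(1-aX_1)^{-\phi_H/a}$, all residual scalar factors involving $X_1$ recombine exactly as prescribed by the action of $p_0^1$ on the full tensor product $H\{X_i, Y_{s,j}\}^{\wedge}_{\pd} \widehat{\otimes}_{A^{+,m}} \widehat{\Omega}^q_{A^{+,m}}\{-q\}$. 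Everything else in the lemma is a bicosimplicial/Koszul formality once this balancing is established.
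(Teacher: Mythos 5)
Your proposal follows essentially the same route as the paper's proof. The second cosimplicial direction together with $d_{q+1}^{\bullet,\bullet}\circ d_q^{\bullet,\bullet}=0$ is the relative (geometric) statement, which the paper simply imports from \cite[Lem. 4.14]{Tian} and you verify directly; the first direction is governed by the arithmetic part of the stratification (the paper realizes $C^{\bullet,m}\{0\}$ as the evaluation of $\bM$ on the cosimplicial prism $(\frakS(R^+)^{m}_{\geo},(E))\times_{(\frakS,(E))}(\frakS^{\bullet},(E))$, which is the structural version of your reduction to the cocycle condition at $\underline Y=0$); and the compatibility of $d_q$ with $p_0^1$ is exactly the shift identity $\theta_i\,(1-aX_1)^{-\frac{\phi_H}{a}}=(1-aX_1)^{-\frac{\phi_H}{a}-1}\theta_i$ absorbed by the Breuil--Kisin twist of $E(u)^{-1}$, which is precisely the computation the paper carries out.

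The one place where you understate the work is the claim that the two cosimplicial structures commute ``essentially formally'' because $p_i^1$ and $p_j^2$ act on disjoint generators. They do not: $p_0^1$ acts on $x\in H$ by $(1-aX_1)^{-\frac{\phi_H}{a}}$ while $p_0^2$ acts on the same $x$ by $\exp(\sum_{i=1}^d\theta_iY_{i,1})$, so the identity $p_0^1p_0^2=p_0^2p_0^1$ on $H$ is a genuine computation. One gets
\[
p_0^1(p_0^2(x)) = (1-aX_1)^{-\frac{\phi_H}{a}}\exp\Bigl(\sum_{i=1}^d(1-aX_1)^{-1}\theta_iY_{i,1}\Bigr)(x),
\]
the inner factor $(1-aX_1)^{-1}$ coming from the action of $p_0^1$ on the $Y_{i,1}$'s, and this equals $\exp(\sum_{i=1}^d\theta_iY_{i,1})\bigl((1-aX_1)^{-\frac{\phi_H}{a}}(x)\bigr)=p_0^2(p_0^1(x))$ only by the same shift identity $[\phi_H,\theta_i]=-a\theta_i$ that you invoke in your step (2) (equivalently, by formula (\ref{Equ-Stratification})). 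Since that identity is already in your hands, this is not a fatal gap, but it is a non-formal check that the paper performs explicitly and your argument must include.
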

  \begin{proof}
    By \cite[Lem. 5.16]{Tia23}, we see that for any $n,q\geq 0$, $(C^{n,\bullet}\{-q\},p_i^2,\sigma_i^2)$ defines a cosimplicial $R^?$-module such that $\{d_q^{n,\bullet}:C^{n,\bullet}\{-q\}\to C^{n,\bullet}\{-q-1\}\}_{q\geq 0}$ is a complex of cosimplicial $R^?$-modules.
    
    Next, we show that for any $m,q\geq 0$, $(C^{\bullet,m}\{-q\},p_i^1,\sigma_i^1)$ also defines a cosimplicial $R^?$-module such that $\{d_q^{\bullet,m}:C^{\bullet,m}\{-q\}\to C^{\bullet,m}\{-q-1\}\}_{q\geq 0}$ is a complex of cosimplicial $R^?$-modules. It is easy to check the first criterion. Using Example \ref{Exam-BK twist}, we only need to check that $(C^{\bullet,m}\{-q\},p_i^1,\sigma_i^1)$ defines a cosimplicial $R^?$-module for $q = 0$. But in this case, $(C^{\bullet,m}\{-0\},p_i^1,\sigma_i^1)$ is the comsimplicial $R^?$-module induced by evaluating $\bM$ at the fibre product $(\frakS(R^+)^{m}_{\geo},(E))\times_{(\frakS,(E))}(\frakS^n,(E))$ of $(\frakS(R^+)^m_{\geo},(E))$ and $(\frakS^n,(E))$ over the first factor of the latter, where $(\frakS^n,(E))$ is defined in Example \ref{Exam-Structure}. This fiber product exists as its underlying $\delta$-ring is $\frakS(R^+)^m_{\geo}\widehat \otimes_{\frakS}\frakS^n$. To show the second criterion, we have to check that $d^{\bullet,m}_q$ preserves cosimplicial structures. We only show that it commutes with $p_0^1$ as the rest is much easier and can be deduced in a similar way. Now, for any $x\in H$ and $\omega\in \widehat \Omega^q_{R^+\{X_i,Y_{s,j}\mid 1\leq i\leq n,1\leq j\leq m\}^{\wedge}_{\pd}/R^+\{X_i\mid 1\leq i\leq n\}^{\wedge}_{\pd}}\{-q\}$, we see that 
    \begin{equation*}
    \begin{split}
        p_0^1(d_{q}^{n,m}(x\otimes\omega)) &= p_0^1(\sum_{i=1}^d\theta_i(x)\otimes\frac{\dlog T_i}{E(u)}\wedge\omega+x\otimes\rd \omega)\\
        & = \sum_{i=1}^d(1-aX_1)^{-\frac{\phi_H}{a}-1}\theta_i(x)\otimes\frac{\dlog T_i}{E(u)}\wedge p_0^1(\omega)+(1-aX_1)^{-\frac{\phi_H}{a}}(x)\otimes p_0^1(\rd \omega),
    \end{split}
    \end{equation*}
    where the second equality follows as $p_0^1(E(u)) = (1-aX_1)E(u)$ (cf. Example \ref{Exam-BK twist}).
    On the other hand, we have
    \begin{equation*}
    \begin{split}
        d_{q}^{n,m}(p_0^1(x\otimes\omega)) &= d_{q}^{n,m}((1-aX_1)^{-\frac{\phi_H}{a}}(x)\otimes p_0^1(\omega))\\
        & = \sum_{i=1}^d\theta_i((1-aX_1)^{-\frac{\phi_H}{a}}(x))\otimes\frac{\dlog T_i}{E(u)}\wedge p_0^1(\omega)+(1-aX_1)^{-\frac{\phi_H}{a}}(x)\otimes \rd p_0^1( \omega).
    \end{split}
    \end{equation*}
    By noting that $\rd$ commutes with $p_0^1$, we reduce the case to showing that for any $1\leq i\leq d$, 
    \[(1-aX_1)^{-\frac{\phi_H}{a}-1}\theta_i(x) = \theta_i((1-aX_1)^{-\frac{\phi_H}{a}}(x)).\]
    But this follows from (\ref{Equ-GeneralFormulae}) directly.
    
    It remains to show for each $q\geq 0$, the $(C^{\bullet,\bullet}\{-q\},p_i^1,\sigma_i^1,p_i^2,\sigma_i^2)$ is a bicosimplicial $R^?$-module. In other words, we need to show $p_i^1$'s and $\sigma_i^1$'s commute with $p_i^2$'s and $\sigma_i^2$'s. We only show $p_0^1$ commutes with $p_0^2$'s as the rest can be checked similarly. For any $x\in H$, we have 
    \begin{equation*}
        \begin{split}
            p_0^1(p_0^2(x)) & = p_0^1(\exp(\sum_{i=1}^d\theta_iY_{i,1})x)\\
             & = (1-aX_1)^{-\frac{\phi_H}{a}}(\exp(\sum_{i=1}^d(1-aX_1)^{-1}\theta_iY_{i,1})x)\\
             & = \exp(\sum_{i=1}^d\theta_iY_{i,1})((1-aX_1)^{-\frac{\phi_H}{a}}(x)) \quad (\text{by}~(\ref{Equ-Stratification}))\\
             & = p_0^2(p_0^1(x)).
        \end{split}
    \end{equation*}
    For any $k\geq 1$, we have that
    \begin{equation*}
        \begin{split}
            p_0^1(p_0^2(X_k)) = p_0^1(X_k) = (X_{k+1}-X_1)(1-aX_1)^{-1} = p_0^2(p_0^1(X_k)),
        \end{split}
    \end{equation*}
    and that for any $1\leq s\leq d$,
    \begin{equation*}
        \begin{split}
            p_0^1(p_0^2(Y_{s,k})) = p_0^1(Y_{s,k+1}-Y_{s,1}) = (Y_{s,k+1}-Y_{s,1})(1-aX_1)^{-1} = p_0^2(Y_{s,k}(1-aX_1)^{-1}) = p_0^2(p_0^1(Y_{s,k})),
        \end{split}
    \end{equation*}
    as desired. The proof is complete.
  \end{proof}
  The $(C^{\bullet,\bullet}\{-\bullet\},p_i^1,\sigma_i^1,p_i^2,\sigma_i^2)$ is related with $\bM(\frakS(R)^{\bullet},(E))$ in the following sense:
  \begin{lem}\label{Lem-Diagonal}
    Let $(C^{\bullet,\bullet},p_i^1,\sigma_i^1,p_i^2,\sigma_i^2)$ denote the bi-cosimplicial $R^?$-module of the restriction of the complex $(C^{\bullet,\bullet}\{-\bullet\},p_i^1,\sigma_i^1,p_i^2,\sigma_i^2, d_q^{\bullet,\bullet})$ at $q = 0$. Let $(D^{\bullet},p_i^D,\sigma_i^D)$ be the diagonal of this bi-cosimplicial object (cf. \cite[\S 8.5]{Wei}). Then we have an isomorphism of cosimplicial $R^?$-modules $D^{\bullet} = \bM(\frakS(R^+)^{\bullet},(E))$.
  \end{lem}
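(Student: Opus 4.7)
The plan is to first identify $D^n = C^{n,n}$ with $\bM(\frakS(R^+)^n, (E)) = H \otimes_{R^?, q_0} A^{?,n}$ as $R^?$-modules via the obvious matching of generators $X_i$ and $Y_{s,i}$ for $1 \le i \le n$, $1 \le s \le d$; this is a direct consequence of Proposition~\ref{Prop-Structure}(2). Under this identification, what remains is to check that the diagonal face and degeneracy maps $p_i^D = p_i^1 \circ p_i^2$ and $\sigma_i^D = \sigma_i^1 \circ \sigma_i^2$ agree with the face and degeneracy maps of $\bM(\frakS(R^+)^\bullet, (E))$ coming from the stratification $\varepsilon$, namely the formulas (\ref{Equ-Face-Arith}) and (\ref{Equ-Degeneracy-Arith}) on $X_k$ and $Y_{s,k}$, and (\ref{Equ-Stratification}) on $H$.

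For $i \ge 1$ the verification is essentially mechanical: by Construction~\ref{Construction-Bicomsimplicial}, $p_i^1$ is the identity on $H$ and on each $Y_{s,j}$ (affecting only $X_k$ via (\ref{Equ-Face-Arith})), while $p_i^2$ is the identity on $H$ and on each $X_k$ (affecting only $Y_{s,k}$ via (\ref{Equ-Face-Geo})); the composition therefore reproduces (\ref{Equ-Face-Arith}) in the range $i \ge 1$. An identical case analysis handles the degeneracies $\sigma_i^D$ for $i \ge 0$ against (\ref{Equ-Degeneracy-Arith}).

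The substantive case is $i = 0$. On $X_k$ only $p_0^1$ acts nontrivially, giving $(X_{k+1} - X_1)(1 - aX_1)^{-1}$, matching the first line of (\ref{Equ-Face-Arith}). On $Y_{s,k}$, we have $p_0^2(Y_{s,k}) = Y_{s,k+1} - Y_{s,1}$ followed by $p_0^1$ scaling by $(1 - aX_1)^{-1}$, matching the second line of (\ref{Equ-Face-Arith}). The main computation is the action on $x \in H$: by the definitions in Construction~\ref{Construction-Bicomsimplicial},
\[
p_0^D(x) \;=\; p_0^1\!\left(\sum_{\underline n \in \bN^d} \prod_{s=1}^d \theta_s^{n_s}(x)\, \underline Y_1^{[\underline n]}\right) \;=\; \sum_{\underline n} (1-aX_1)^{-\phi_H/a}\!\left(\prod_{s=1}^d \theta_s^{n_s}(x)\right) (1-aX_1)^{-|\underline n|} \underline Y_1^{[\underline n]}.
\]
Since $a \in \calO_K$ is a scalar and $\phi_H$ commutes with itself, expanding both sides as pd-power series in $X_1$ and matching coefficients of $X_1^{[k]}$ gives the formal identity $(1-aX_1)^{-\phi_H/a}(1-aX_1)^{-|\underline n|} = (1-aX_1)^{-(\phi_H + |\underline n|a)/a}$. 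Combined with the expansion
\[
(1-aX_1)^{-(\phi_H + |\underline n|a)/a} \;=\; \sum_{k \ge 0} \prod_{j=0}^{k-1}\!\bigl(\phi_H + (j + |\underline n|)a\bigr)\, X_1^{[k]},
\]
this yields
\[
p_0^D(x) \;=\; \sum_{k \ge 0,\, \underline n \in \bN^d} \prod_{j=0}^{k-1}\!\bigl(\phi_H + (j+|\underline n|)a\bigr)\!\left(\prod_{s=1}^d \theta_s^{n_s}(x)\right) X_1^{[k]} \underline Y_1^{[\underline n]} \;=\; \varepsilon(x) \;=\; p_0^*(\varepsilon)(x)
\]
by (\ref{Equ-Stratification}), completing the identification.

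The main obstacle is purely bookkeeping: one must carefully track which generators are touched by $p_i^j$ and $\sigma_i^j$ for $j = 1, 2$, and verify the binomial-type power series identity for $(1-aX_1)^{-s}$. No deeper ingredient is required beyond the commutativity of $\phi_H$ with scalars and the key formula (\ref{Equ-Stratification}) for the stratification associated to an enhanced Higgs module.
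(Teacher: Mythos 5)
Your proposal is correct and follows essentially the same route as the paper's proof: identify $D^n=C^{n,n}$ with $H\{X_i,Y_{s,i}\}^{\wedge}_{\pd}$ via Proposition \ref{Prop-Structure}, dispose of the $i\geq 1$ faces and all degeneracies mechanically, and reduce the $i=0$ face to the identity $(1-aX_1)^{-\phi_H/a}(1-aX_1)^{-|\underline n|}=(1-aX_1)^{-(\phi_H+|\underline n|a)/a}$ matching the second form of (\ref{Equ-Stratification}). The only cosmetic difference is that you verify the maps on the generators $X_k$, $Y_{s,k}$ and on $x\in H$, whereas the paper writes out the check on a general element $f(X_i,Y_{1,i},\dots,Y_{d,i})x$; these are equivalent since all maps involved are (semi)linear over the cosimplicial ring maps.
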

  \begin{proof}
    We identify $\bM(\frakS(R^+),(E))$ with $H\{X_i,Y_{1,i},\dots,Y_{d,i}\mid 1\leq i\leq \bullet\}_{\pd}^{\wedge}$ as in Construction \ref{Construction-Bicomsimplicial} and denote the induced face and degeneracy morphisms by $p_i^H$'s and $\sigma_i^H$'s. Then by definition of $C^{\bullet,\bullet}$, we see that for any $n\geq 0$, 
    \[D^n = C^{n,n} = H\{X_i,Y_{1,i},\dots,Y_{d,i}\mid 1\leq i\leq n\}_{\pd}^{\wedge}.\]
    For any $f(X_i,Y_{1,i},\dots,Y_{d,i}\mid 1\leq i\leq n)x\in D^n$, we have that
    \begin{equation*}
        \begin{split}
           & p_0^D(f(X_i,Y_{1,i},\dots,Y_{d,i}\mid 1\leq i\leq n)x)\\
           =& p_0^1(p_0^2(f(X_i,Y_{1,i},\dots,Y_{d,i}\mid 1\leq i\leq n)x))\\
           =&p_0^1(f(X_i,Y_{1,i+1}-Y_{1,1},\dots,Y_{d,i+1}-Y_{d,1}\mid 1\leq i\leq n)\exp(\sum_{k=1}^d\theta_kY_k)x)\\
           =& f((X_{i+1}-X_1)(1-aX_1)^{-1},(Y_{1,i+1}-Y_{1,1})(1-aX_1)^{-1},\dots,(Y_{d,i+1}-Y_{d,1})(1-aX_1)^{-1}\mid 1\leq i\leq n)\\
           & \cdot (1-aX_1)^{-\frac{\phi_H}{a}}\exp(\sum_{k=1}^d(1-aX_1)^{-1}\theta_kY_k)x\\
           = & p_0^H(f(X_i,Y_{1,i},\dots,Y_{d,i}\mid 1\leq i\leq n)x)\quad(\text{by}~(\ref{Equ-Stratification},\ref{Equ-Face-Arith}))
        \end{split}
    \end{equation*}
    and that for any $1\leq k\leq n+1$,
    \begin{equation*}
        \begin{split}
            & p_k^D(f(X_i,Y_{1,i},\dots,Y_{d,i}\mid 1\leq i\leq n)x)\\
            =& p_k^1(p_k^2(f(X_i,Y_{1,i},\dots,Y_{d,i}\mid 1\leq i\leq n)x))\\
            =& p_k^1(f(X_i,Y_{1,j},\dots,Y_{d,j}\mid 1\leq i\leq n, 1\leq j\leq n+1,j\neq k)\exp(\sum_{l=1}^d\theta_lY_{l,1})x)\\
            =& f(X_i,Y_{1,i},\dots,Y_{d,i}\mid 1\leq i\leq n+1,i\neq k)\exp(\sum_{l=1}^d\theta_lY_{l,1})x\\
            =&p_k^H(f(X_i,Y_{1,i},\dots,Y_{d,i}\mid 1\leq i\leq n)x).
        \end{split}
    \end{equation*}
    Similarly, for any $f(X_i,Y_{1,i},\dots,Y_{d,i}\mid 1\leq i\leq n+1)x\in D^n$, we have that
    \begin{equation*}
        \begin{split}
            &\sigma_0^D(f(X_i,Y_{1,i},\dots,Y_{d,i}\mid 1\leq i\leq n+1)x) \\
            =& \sigma_0^1(\sigma_0^2(f(X_i,Y_{1,i},\dots,Y_{d,i}\mid 1\leq i\leq n+1)x))\\
            =&\sigma_0^1(f(X_i,0,Y_{1,i},\dots,Y_{d,i}\mid 1\leq i\leq n)x)\\
            =&f(0,X_i,0,Y_{1,i},\dots,Y_{d,i}\mid 1\leq i\leq n)x\\
            =&\sigma_0^H(f(X_i,Y_{1,i},\dots,Y_{d,i}\mid 1\leq i\leq n+1)x),
        \end{split}
    \end{equation*}
    and that for any $1\leq k\leq n$, after letting $X_{i}'$ (resp. $Y_{s,i}'$) be $X_{i}$ (resp. $Y_{s,i}$) for $1\leq i\leq k$ and be $X_{i-1}$ (resp. $Y_{s,i-1}$) for $k+1\leq i\leq n+1$, 
    \begin{equation*}
        \begin{split}
            &\sigma_k^D(f(X_i,Y_{1,i},\dots,Y_{d,i}\mid 1\leq i\leq n+1)x)\\
            =&\sigma_k^1(\sigma_k^2(f(X_i,Y_{1,i},\dots,Y_{d,i}\mid 1\leq i\leq n+1)x))\\
            =&  \sigma_0^1(f(X_i,Y'_{1,i},\dots,Y'_{d,i}\mid 1\leq i\leq n+1)x)\\
            =& f(X_i',Y_{1,i}',\dots,Y_{d,i}'\mid 1\leq i\leq n+1)x\\
            =& \sigma_k^H(f(X_i,Y_{1,i},\dots,Y_{d,i}\mid 1\leq i\leq n+1)x).
        \end{split}
    \end{equation*}
    These imply the desired isomorphism of cosimplicial $R^?$-modules and hence complete the proof.
  \end{proof}
  \begin{cor}\label{Cor-Eilenberg–Zilber}
    Let $\Tot(C^{\bullet})$ denote be the total complex induced by the bi-cosimplicial $R^?$-module $C^{\bullet,\bullet}$. Then we have a quasi-isomorphism 
    \[\bM(\frakS(R^+)^{\bullet},(E))\simeq \Tot(C^{\bullet,\bullet}).\]
  \end{cor}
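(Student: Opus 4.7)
The plan is to deduce the corollary from the Eilenberg--Zilber theorem applied to the bicosimplicial $R^?$-module $(C^{\bullet,\bullet}, p_i^1, \sigma_i^1, p_i^2, \sigma_i^2)$ constructed in the preceding lemmas. Recall that, for any bicosimplicial abelian group $X^{\bullet,\bullet}$, the Alexander--Whitney map and the shuffle map give a natural quasi-isomorphism between the total complex $\Tot(X^{\bullet,\bullet})$ (associated to the double cochain complex obtained from the two cosimplicial structures via alternating-sign face-map differentials) and the cochain complex associated to the diagonal cosimplicial object $\Delta^*X^{\bullet}$, where $\Delta^*X^n = X^{n,n}$ with face and degeneracy operators $p_i^D = p_i^1 \circ p_i^2$ and $\sigma_i^D = \sigma_i^1 \circ \sigma_i^2$. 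This is the standard cosimplicial incarnation of the Eilenberg--Zilber theorem, the dual of the classical statement for simplicial abelian groups (see, e.g., \cite[\S 8.5]{Wei}).

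Concretely, I would first note that Lemma~\ref{Lem-Bicosimplicial} guarantees that $(C^{\bullet,\bullet}, p_i^1, \sigma_i^1, p_i^2, \sigma_i^2)$ is a genuine bicosimplicial $R^?$-module (at the level $q=0$, which is what is relevant for $\Tot(C^{\bullet,\bullet})$ as stated). Applying the Eilenberg--Zilber theorem to this bicosimplicial object then yields a functorial quasi-isomorphism
\[
\Delta^* C^{\bullet,\bullet} \xrightarrow{\simeq} \Tot(C^{\bullet,\bullet})
\]
of cochain complexes of $R^?$-modules. Next, Lemma~\ref{Lem-Diagonal} identifies the diagonal cosimplicial $R^?$-module $\Delta^*C^{\bullet,\bullet}$ with $\bM(\frakS(R^+)^\bullet,(E))$, including matching face and degeneracy maps. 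Composing these two identifications produces the desired quasi-isomorphism
\[
\bM(\frakS(R^+)^\bullet,(E)) \simeq \Tot(C^{\bullet,\bullet}).
\]

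There is essentially no hard step remaining: the identification of the diagonal was the technical content, carried out in Lemma~\ref{Lem-Diagonal}, and Eilenberg--Zilber is a formal (and classical) statement. The only thing one should be slightly careful about is that our objects carry a natural filtration by powers of the pd-ideal and are $p$-adically completed; however, since Eilenberg--Zilber provides a functorial chain homotopy equivalence that is defined term-by-term via Alexander--Whitney and shuffle operators, it is compatible with completions (the maps on each cosimplicial degree preserve the pd-filtration), and passing to the completed total complex presents no issue. Accordingly, I expect the entire argument to be a two-step citation: invoke Eilenberg--Zilber for bicosimplicial modules, then substitute the diagonal identification of Lemma~\ref{Lem-Diagonal}.
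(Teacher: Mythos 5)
Your proposal matches the paper's proof: the paper likewise invokes the cosimplicial Eilenberg--Zilber theorem (citing the shuffle coproduct formula) together with the identification of the diagonal cosimplicial object established in Lemma~\ref{Lem-Diagonal}. Your extra remark on compatibility with the $p$-adic/pd-completions is a sensible point of care but does not change the argument.
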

  \begin{proof}
     This is well-known as the cosimplicial Eilenberg–Zilber theorem and the shuffle coproduct formula (cf. \cite[Appendix]{GM}).
  \end{proof}
  \begin{lem}\label{Lem-ArithGeo-to-ArithGeoHiggs}
    Let $\Tot(C^{\bullet,\bullet}\{-\bullet\})$ (resp. $\Tot(C^{\bullet,0}\{-\bullet\})$) be the total complex of \[(C^{\bullet,\bullet}\{-\bullet\},p_i^1,\sigma_i^1,p_i^2,\sigma_i^2, d_q^{\bullet,\bullet})~(\text{resp.}~(C^{\bullet,0}\{-\bullet\},p_i^1,\sigma_i^1,d_q^{\bullet,0})).\] 
    Then the natural projections $C^{\bullet,\bullet}\leftarrow C^{\bullet,\bullet}\{-\bullet\} \to C^{\bullet,0}\{-\bullet\}$ induce quasi-isomorphisms
    \[\Tot(C^{\bullet,\bullet})\simeq \Tot( C^{\bullet,\bullet}\{-\bullet\} )\simeq \Tot(C^{\bullet,0}\{-\bullet\}).\]
  \end{lem}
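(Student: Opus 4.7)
The lemma asserts two quasi-isomorphisms relating the total complexes of three slices of the triple-indexed object $C^{\bullet,\bullet}\{-\bullet\}$ (two cosimplicial indices $n,m$, plus the Higgs-complex index $q$). My overall strategy is, for each fixed arithmetic index $n$, to view $C^{n,\bullet}\{-\bullet\}$ as a bicomplex in the geometric cosimplicial index $m$ and the Higgs index $q$, and then collapse it in two different ways. The geometric index $m$ (encoding pd-polynomials in the $Y_{s,j}$'s) and the Higgs index $q$ (encoding de Rham forms in the $Y_{s,j}$'s and the $T_i$'s) are in a precise sense dual to each other, so one can collapse either while keeping the other.

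For $\Tot(C^{\bullet,\bullet}\{-\bullet\})\simeq \Tot(C^{\bullet,0}\{-\bullet\})$ (collapsing $m$ to $m=0$), I will filter $C^{n,m}\{-q\}$ by the number of $dT$-forms appearing in the $q$-factor, so that on the associated graded only the de Rham differential in the $Y_{s,j}$-direction survives. The pd-Poincar\'e lemma (acyclicity of $\widehat\Omega^{\bullet}_{B^m/R^+}$ in positive degrees) then gives acyclicity in positive $dY$-degree for each $(n,m)$; after summing cosimplicially in $m$, the resulting spectral sequence collapses onto the slice $m=0$, i.e.\ $C^{n,0}\{-\bullet\}$ with its $(\theta_H + d_T)$-differential. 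For $\Tot(C^{\bullet,\bullet})\simeq \Tot(C^{\bullet,\bullet}\{-\bullet\})$ (collapsing $q$ to $q=0$), the argument is dual and uses that the Moore/normalised complex of the cosimplicial pd-polynomial ring $B^{\bullet}$ furnishes, in the geometric direction, a resolution compatible with the de Rham complex in the $Y_{s,j}$'s. Concretely, I will build an explicit contracting homotopy in the $q$-direction out of the shift/augmentation structure of the $m$-cosimplicial pd-polynomial ring, together with a correction factor of $\exp(\sum_{i=1}^d\theta_iY_{i,1})$ that matches the twist in $p_0^2$; convergence is ensured by nilpotency of the $\theta_i$'s (cf.\ Remark \ref{Rmk-EnhancedHiggsMod-Local}).

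The main obstacle will be rigorously constructing and verifying the contracting homotopy for the first quasi-isomorphism: the $\exp$-twist in $p_0^2$ and the Higgs twist $\theta_H$ appearing in $d_q$ must be carefully balanced against the pd-structure on the $Y_{s,j}$'s, and all simplicial face and degeneracy identities must be checked by hand. Once both quasi-isomorphisms are established, the chain $\Tot(C^{\bullet,\bullet})\simeq \Tot(C^{\bullet,\bullet}\{-\bullet\})\simeq \Tot(C^{\bullet,0}\{-\bullet\})$ gives the lemma and feeds directly into the proof of Proposition \ref{Prop-HTvsHiggsCoho-Local}.
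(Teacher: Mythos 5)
Your proposal is correct and follows essentially the same route as the paper: for each fixed arithmetic index $n$ you collapse the $(m,q)$-double complex in both directions, using the pd-Poincar\'e lemma in the $Y$-variables to reach the $m=0$ slice and the contractibility of the twisted cosimplicial pd-ring (via the extra degeneracy, corrected by $\exp(\sum_i\theta_iY_{i,1})$) to reach the $q=0$ slice, then assemble over $n$. The paper simply outsources these two collapses to \cite[Lem. 4.13]{Tian} and \cite[Lem. 2.17]{BdJ} and reduces to $n=0$ by flat base change along $\calO_K\to\frakS^n/(E)$; the only correction to your write-up is that the contracting homotopy killing the $q\geq 1$ part must live in the cosimplicial $m$-direction (an acyclicity at fixed $m$ would fail, since the de Rham complex of a pd-polynomial ring resolves only the constants), not in the $q$-direction as you phrase it.
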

  \begin{proof}
    Let $\frakS^{\bullet}$ be as in Example \ref{Exam-Structure}. Note that \cite[Lem. 5.15]{Tia23} gives rise to a quasi-isomorphism \[C^{0,0}\{-\bullet\}\simeq \Tot(C^{0,\bullet}\{-\bullet\}).\]
    By Construction \ref{Construction-Bicomsimplicial}, for any $n\geq 0$, we get an isomorphism of complexes of simplicial $R^?$-morphisms
    \[C^{0,\bullet}\{-\bullet\}\widehat \otimes_{\calO_K}\frakS^n/(E)\cong C^{n,\bullet}\{-\bullet\}.\]
    Define $T^{n,\bullet}:=\Tot(C^{n,\bullet}\{-\bullet\})$ for all $n$. Then using faithful flatness of $\calO_K\to\frakS^n/(E)$, we get quasi-isomorphisms
    \[C^{n,0}\{-\bullet\}\simeq \Tot(C^{n,\bullet}\{-\bullet\}) = T^{n,\bullet} \]
    for all $n$. By Lemma \ref{Lem-Bicosimplicial}, we see that $\{C^{n,0}\{-\bullet\}\simeq T^{n,\bullet}\}_{n\geq 0}$ is compatible with $p_i^1$'s and $\sigma_i^1$'s. So by using the spectral sequence associated to double complexes, we deduce that 
    \[\Tot(C^{\bullet,0}\{-\bullet\})\simeq \Tot(T^{\bullet,\bullet}) = \Tot(C^{\bullet,\bullet}\{-\bullet\})\]
    as desired (alternatively, we have a weak equivalence  (i.e. degree-wise weak equivalence) of cosimplicial cochain complexes, and by taking the homotopy limits, i.e. the totalization, we get the first quasi-isomorphism).

    On the other hand, by \cite[Lem. 2.17]{BdJ}, for any $q\geq 1$, $C^{0,\bullet}\{-q\}$ is homotopic to zero and hence so is $C^{n,\bullet}\{-q\}$ for each $n\geq 0$. So we get a quasi-isomorphism
    \[C^{n,\bullet}\simeq T^{n,\bullet}\]
    for each $n$, which is compatible with $p_i^1$'s and $\sigma_i^1$'s, again following from Lemma \ref{Lem-Bicosimplicial}. Therefore, we obtain a quasi-isomorphism 
    \[\Tot(C^{\bullet,\bullet})\simeq \Tot(T^{\bullet,\bullet}) = \Tot(C^{\bullet,\bullet}\{-\bullet\})\]
    by using the spectral sequence associated to double complexes. The proof is complete.
  \end{proof}
  \begin{construction}\label{Construction-Rho}
    Let $F(X,Y):= \frac{(1-aX)^{-\frac{Y}{a}}-1}{Y} = X+\sum_{n\geq 1}\prod_{i=1}^n(Y+ia)X^{[n+1]}$. Then for any $q\geq 0$, as $\lim_{n\to+\infty}\prod_{i=0}^{n-1}(\phi_H+ia) = 0$, we see that $F(X,\phi_H+qa)$ is well-defined and induces a morphism
    \[F(X,\phi_H+qa): H\otimes_{R^+}\widehat \Omega^q_{R^+/\calO_K}\{-q\}\to H\{X_1\}\otimes_{R^+}\widehat \Omega^q_{R^+/\calO_K}\{-q\} = C^{1,0}\{-q\} .\]
    We claim it makes the following diagram
    \begin{equation*}
        \xymatrix@C=0.45cm{
          H\otimes_{R^+}\widehat \Omega^q_{R^+/\calO_K}\{-q\}\ar@{=}[d]\ar[rr]^{\phi_H+qa}&&H\otimes_{R^+}\widehat \Omega^q_{R^+/\calO_K}\{-q\}\ar[d]^{F(X_1,\phi_H+qa)}\\
          C^{0,0}\{-q\}\ar[rr]^{p_0^1-p_1^1}&&C^{1,0}\{-q\}
        }
    \end{equation*}
    commute. Indeed, by Construction \ref{Construction-Bicomsimplicial}(1), we see that on $C^{0,0}\{-q\}$, the face map $p_0^1$ acts via $(1-aX_1)^{-\frac{\phi_H}{a}-q} = (1-aX_1)^{-\frac{\phi_H+qa}{a}}$ while $p_1^1$ acts via the identity map. So we see that
    \[p_0^1-p_1^1 = (1-aX_1)^{-\frac{\phi_H+qa}{a}} - 1 = F(X_1,\phi_H+qa)\circ(\phi_H+qa)\]
    as desired, where the second equality follows from the definition of $F(X,Y)$. Using the same argument in the second part of the proof of \cite[Proposition 3.9(1)]{GMW-HT}, we get a morphism of complexes
    \[\rho_q:[H\otimes_{R^+}\widehat \Omega^1_{R^+/\calO_K}\{-q\}\to H\otimes_{R^+}\widehat \Omega^1_{R^+/\calO_K}\{-q\}]\to C^{\bullet,0}\{-q\}.\]
    Since $[\phi_H,\theta_H] = -a\theta_H$, we see that $\rho_{\bullet}$ induces a morphism of complexes
    \[\rho: \HIG(H,\theta_H,\phi_H)\to \Tot(C^{\bullet,0}\{-\bullet\}).\]
  \end{construction}
  \begin{lem}\label{Lem-Rho}
    The morphism $\rho$ in Construction \ref{Construction-Rho} is a quasi-isomorphism.
  \end{lem}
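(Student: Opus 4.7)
The key observation is that for each fixed $q\ge 0$, the column $C^{\bullet,0}\{-q\}$ only involves the ``arithmetic'' variables $X_i$, and can be identified with the \v{C}ech--Alexander complex of a suitable Hodge--Tate crystal on the \emph{absolute} prismatic site $(\calO_K)_\Prism$, whose cohomology we already understand in the $d=0$ situation. The proof then reduces to assembling these column-wise identifications into a map of double complexes and applying a spectral sequence argument.

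First I would show that for each $q\ge 0$ the map $\rho_q$ is a quasi-isomorphism. Unwinding Construction \ref{Construction-Bicomsimplicial}, the cosimplicial $R^+$-module $C^{\bullet,0}\{-q\}$ is, as an $\calO_K$-cosimplicial module, the base change of $\overline\calO_\Prism(\frakS^\bullet,(E)) = \calO_K\{X_1,\dots,X_\bullet\}^{\wedge}_{\pd}$ of Example \ref{Exam-Structure} by $M_q := H\otimes_{R^+}\widehat\Omega^q_{R^+/\calO_K}\{-q\}$; moreover, using Example \ref{Exam-BK twist} together with the defining formulas for $p_i^1$ and $\sigma_i^1$, the face map $p_0^1$ acts on $M_q$ via multiplication by $(1-aX_1)^{-\phi_{M_q}/a}$ with $\phi_{M_q}:=\phi_H+qa\id$. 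Thus $C^{\bullet,0}\{-q\}$ is precisely the evaluation at $(\frakS^\bullet,(E))$ of the Hodge--Tate crystal on $(\calO_K)_\Prism$ whose associated ``$d=0$ enhanced Higgs module'' is $(M_q,\phi_{M_q})$. The case $d=0$ of Theorem \ref{Thm-HTasHiggs-Local} (i.e.\ \cite[Thm.~3.20]{MW21b}) provides a natural quasi-isomorphism $[M_q\xrightarrow{\phi_{M_q}}M_q]\simeq \bM_q(\frakS^\bullet,(E))$, and inspecting its construction shows that the map in degree $1$ is multiplication by $F(X_1,\phi_{M_q})$, so it coincides with $\rho_q$.

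Next, I would verify that the family $\{\rho_q\}_{q\ge 0}$ assembles into a morphism of double complexes, i.e.\ that $\rho$ is compatible with the vertical Higgs differential $d_q^{\bullet,0}$. The only nontrivial compatibility is the commutativity of the square
\[
\xymatrix@C=0.6cm{
H\otimes\widehat\Omega^q\{-q\}\ar[r]^{\theta_H}\ar[d]_{F(X_1,\phi_H+qa)} & H\otimes\widehat\Omega^{q+1}\{-q-1\}\ar[d]^{F(X_1,\phi_H+(q+1)a)} \\
C^{1,0}\{-q\}\ar[r]^{\theta_H} & C^{1,0}\{-q-1\},
}
\]
which follows from the identity $(\phi_H+(q+1)a)\circ\theta_H=\theta_H\circ(\phi_H+qa)$ (itself a consequence of $[\phi_H,\theta_H]=-a\theta_H$) by expanding $F$ as a power series in $X_1$ and applying the relation term-by-term.

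Finally, to conclude that $\rho$ itself is a quasi-isomorphism, I would compare the two spectral sequences associated to the column filtrations of $\HIG(H,\theta_H,\phi_H)$ and of $\Tot(C^{\bullet,0}\{-\bullet\})$: by the previous two steps, $\rho$ induces an isomorphism on $E_1$-pages column-by-column, and since $H$ is finite projective over $R^+$ with $\widehat\Omega^\bullet_{R^+/\calO_K}$ bounded, both spectral sequences are bounded and converge, yielding the desired isomorphism on abutments. The main obstacle is the bookkeeping required to match the explicit face-map formulas $(\ref{Equ-Face-Arith})$ in the $Y$-free subcomplex $C^{\bullet,0}\{-q\}$ with those of the absolute \v{C}ech--Alexander complex for $\bM_q$ over $(\calO_K)_\Prism$, and in particular tracking the interaction of the BK-twist factor $(1-aX_1)^q$ on $E(u)^{-q}$ with the $(1-aX_1)^{-\phi_H/a}$ action on $H$; once this identification is nailed down, the rest is a standard spectral sequence argument.
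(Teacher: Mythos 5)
Your proposal is correct and follows essentially the same route as the paper: reduce via the column filtration/spectral sequence of the bicomplex to showing each $\rho_q$ is a quasi-isomorphism, and deduce that from \cite[Thm.\ 3.20]{MW21b} applied with $R^?$ in place of $\calO_K$ (your identification of $p_0^1$ acting on $H\otimes\widehat\Omega^q\{-q\}$ via $(1-aX_1)^{-(\phi_H+qa)/a}$ is the correct bookkeeping of the Breuil--Kisin twist). The only difference is that you re-verify $\rho$ is a chain map, which the paper already builds into Construction \ref{Construction-Rho} using $[\phi_H,\theta_H]=-a\theta_H$.
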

  \begin{proof}
      Using the spectral sequence associated to double complexes, it suffices to show that each $\rho_q$ is a quasi-isomorphism. But this follows from \cite[Proposition 3.9]{GMW-HT} (simply by replacing ``$\calO_K$-linear'' there by ``$R^?$-linear'').
  \end{proof}
   Note that the geometric part does not intervene in Lemma \ref{Lem-Rho}. Now, we are prepared to prove Proposition \ref{Prop-HTvsHiggsCoho-Local}:
  \begin{proof}[Proof of Proposition \ref{Prop-HTvsHiggsCoho-Local}:]
    This follows from quasi-isomorphisms
    \begin{equation*}
        \begin{split}
            \rR\Gamma((R^+)_{\Prism},\bM)& \simeq \bM(\frakS(R^+)^{\bullet},(E))\quad (\text{by~Lemma~\ref{Lem-CechAlexanderMethod}})\\
            & \simeq \Tot(C^{\bullet,\bullet})\quad (\text{by~Corollary~\ref{Cor-Eilenberg–Zilber}})\\
            & \simeq \Tot(C^{\bullet,0}\{-\bullet\})\quad (\text{by~Lemma~\ref{Lem-ArithGeo-to-ArithGeoHiggs}})\\
            & \simeq \HIG(H,\theta_H,\phi_H)\quad (\text{by~Lemma \ref{Lem-Rho}}).
        \end{split}
    \end{equation*}
    Then we can conclude by noting that all constructions above are functorial in $\bM$.
  \end{proof}

\section{Hodge--Tate crystals as generalised representations}\label{Sec-EtaleComparison}
 In this section, we work with smooth $p$-adic formal schemes over $\calO_K$ with rigid analytic generic fiber $X$ to prove the following result.
 \begin{thm}\label{Thm-EtaleRealisation}
   There exists a natural equivalence of categories 
   \[\rL:\Vect((\frakX)_{\Prism}^{\perf},\overline \calO_{\Prism}[\frac{1}{p}])\to \Vect(X_{\proet},\OX),\]
   which preserves ranks, tensor products and dualities, such that for any rational Hodge--Tate crystal $\bM$ on $(\frakX)_{\Prism}^{\perf}$, we have a quasi-isomorphism
   \[\rR\Gamma((\frakX)_{\Prism}^{\perf},\bM)\cong \rR\Gamma(X_{\proet},\rL(\bM)),\]
   which is functorial in $\bM$.
 \end{thm}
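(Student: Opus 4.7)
The plan is to exploit the common basis of ``affinoid perfectoid'' objects on both sides: by \cite[Thm. 3.10]{BS19} the functor $(A,I)\mapsto A/I$ gives an equivalence between perfect prisms and integral perfectoid rings, and by \cite{Sch-IHES, KL15} affinoid perfectoids form a basis of $X_{\proet}$ over which every generalised representation is determined. Concretely, to a perfect prism $(A,I)\in(\frakX)_{\Prism}^{\perf}$, whose structure includes a map $\Spf(A/I)\to\frakX$, I associate the affinoid perfectoid $U_{(A,I)}=\Spa(S,S^+)$ with $S=(A/I)[\tfrac1p]$ and $S^+$ the integral closure of $A/I$ in $S$, equipped with its natural map to $X$. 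Conversely, to an affinoid perfectoid $\Spa(S,S^+)\to X$ I attach the perfect prism $(\Ainf(S^+),(\xi))\in(\frakX)_{\Prism}^{\perf}$, and the two assignments are mutually quasi-inverse up to the harmless replacement $A/I\rightsquigarrow S^+$ (the issue which forces us to invert $p$).

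First I would upgrade this object-level bijection to a morphism of sites between $X_{\proet, \mathrm{affperfd}}$ and $(\frakX)_{\Prism}^{\perf}$: covers on the prismatic side (faithfully flat maps of perfect prisms mod $I$) correspond via tilting to $v$-covers of the associated affinoid perfectoids, which in turn are refined by pro-\'etale covers after passing to the affinoid perfectoid basis. Under this correspondence, the structure sheaves agree rationally: $\overline\calO_{\Prism}[\tfrac1p](A,I)=(A/I)[\tfrac1p]=\widehat\calO_X(U_{(A,I)})$. I would then define $\rL(\bM)$ to be the sheaf on $X_{\proet}$ associated to the presheaf $U_{(A,I)}\mapsto \bM(A,I)$ on this basis. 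Since $\bM$ is a crystal, its evaluations form a compatible system of finite projective modules over $\widehat\calO_X$; by \cite[Thm. 3.5.8]{KL16} (i.e.\ Example \ref{Exam-GRep}), this sheaf is automatically a generalised representation. Tensor products, dualities, and ranks are preserved because all constructions are pointwise on perfect prisms. A quasi-inverse is produced symmetrically: given $\calL\in\Vect(X_{\proet},\OX)$, set $(A,I)\mapsto \calL(U_{(A,I)})$; the crystal property reduces to the crystal property of $\calL$ on the affinoid perfectoid basis of $X_{\proet}$.

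For the cohomological comparison, both sides vanish in higher degrees on the chosen basis: on the prismatic side this is Lemma \ref{Lem-Vanishing}, and on the pro-\'etale side this is \cite[Prop. 2.3]{LZ} (as invoked also in Lemma \ref{Lem-PerfectoidVanishing}). Therefore on a quasi-compact open admitting a perfect prismatic cover $(A,I)$, both $\rR\Gamma((\frakX)_{\Prism}^{\perf}\!/(A,I),\bM)$ and $\rR\Gamma(X_{\proet}/U_{(A,I)},\rL(\bM))$ are computed by the \v Cech--Alexander complex of the associated cosimplicial perfect prism (resp.\ its tilt); these cosimplicial objects coincide under the tilting correspondence, and by Step 1 the values of $\bM$ and $\rL(\bM)$ agree on each level. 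Globalizing via a Zariski or \'etale cover of $\frakX$ by small affines and using \v Cech-to-derived spectral sequences then yields the desired quasi-isomorphism, functorially in $\bM$.

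The main obstacle I foresee is making the site-level comparison in Step 1 fully rigorous, in particular handling the passage between $A/I$ and $S^+$. Integrally these differ by an almost zero module in the sense of \cite{Sch-IHES}, so the equivalence between $\overline\calO_{\Prism}$-crystals and $\widehat\calO_X^+$-crystals can only hold up to almost isomorphism, which is precisely why inverting $p$ is essential: on $\overline\calO_{\Prism}[\tfrac1p]$ and $\widehat\calO_X$ the identification is strict. A secondary point is to ensure that the local-to-global step works without a framing assumption on $\frakX$, which is achieved by checking that the assignment $(A,I)\mapsto U_{(A,I)}$ is functorial under \'etale localization on $\frakX$, so that gluing along Zariski or \'etale covers of $\frakX$ is automatic.
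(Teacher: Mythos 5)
Your proposal is correct and follows essentially the same route as the paper: the same object-level correspondence $U=\Spa(S,S^+)\mapsto(\Ainf(S^+),\ker\theta)$ and $(A,I)\mapsto\Spa((A/I)[\frac{1}{p}],(A/I)[\frac{1}{p}]^+)$ defining $\rL$ and its quasi-inverse directly on crystals (with the almost-isomorphism between $A/I$ and $S^+$ disappearing after inverting $p$, exactly as you note), the same vanishing inputs (Lemma \ref{Lem-Vanishing} and \cite[Prop. 2.3]{LZ}), and the same \v Cech--Alexander comparison over an \'etale cover by small affines. The only ingredient the paper makes explicit that you leave implicit is the existence of the relevant (self-)fibre products in $(\frakX)_{\Prism}^{\perf}$ and their identification with fibre products of the corresponding affinoid perfectoids over $X$ (Lemma \ref{Lem-Fibreproduct}, via \cite[Cor. 7.3]{BS19} for quasi-regular semiperfectoid rings), which is what makes the two \v Cech nerves literally coincide.
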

 \begin{rmk}\label{Rmk-EtaleRealisation}
     By \cite[Thm 3.5.8 and Cor. 3.5.9]{KL16}, one may replace $X_{\proet}$ by $X_v$ in Theorem \ref{Thm-EtaleRealisation}. 
 \end{rmk}
 
   Now, we focus on the proof of Theorem \ref{Thm-EtaleRealisation}. We first construct the desired equivalence. Let $X_{v,\aff,\perf}$ be the site of affinoid perfectoid spaces over $X$ with the $v$-topology. As in Example \ref{Exam-GRep}, we see from \cite[Thm. 3.5.8]{KL16} that there is an equivalence of categories
   \[\Vect(X_{\proet},\OX)\simeq \Vect(X_{v}^{\aff,\perf},\calO_X),\]
   where $X_v^{\aff,\perf}$ denotes the sub-site of $X_v$ consisting of affinoid perfectoids.
   We then construct the desired functor $\rL$ as follows.
   \begin{construction}\label{Construction-EtaleRealisation-I}
      For any $ U = \Spa(S,S^+)\in X_{v,\aff,\perf}$, we can assign it to a unique perfect prism $\calA_U:=(\Ainf(S^+),\Ker(\theta:\Ainf(S^+)\to S^+)) \in (\frakX)_{\Prism}^{\perf}$ by using \cite[Thm. 3.10]{BS22}. Via this functor, we have 
      \[\OX(U) = S = \overline \calO_{\Prism}[\frac{1}{p}](\calA_U),\]
      and then get a natural functor 
      \[\rL: \Vect((\frakX)_{\Prism}^{\perf},\overline \calO_{\Prism}[\frac{1}{p}]) \to \Vect(X_{v,\aff,\perf},\calO_X)\]
      such that for any $\bM\in\Vect((\frakX)_{\Prism}^{\perf},\overline \calO_{\Prism}[\frac{1}{p}])$ and for any affinoid perfectoid space $U\in X_{\proet}$, we have 
      \[\rL(\bM)(U) = \bM(\calA_U).\]
      Note that by \cite[Thm 3.5.8]{KL16}, the presheaf $\rL(\bM)$ is indeed a sheaf.
   \end{construction}
   
 Now let us fix an \'etale covering $\{\frakX_i = \Spf(R_i^+)\to \frakX\}_{i\in I}$ of small affine $\frakX_i$'s. Let $\calA_i:= (\Ainf(\widehat R_{i,C,\infty}^+),(\xi))$ be as in Construction \ref{Construction-CoverPrism} with $X_{i,C,\infty} = \Spa(\widehat R_{i,C,\infty},\widehat R_{i,C,\infty}^+)$. Then $\{X_{i,C,\infty}\to X\}_{i\in I}$ is a pro-\'etale covering of $X$. We claim that $\{\calA_i\}_{i\in I}$ forms a cover of the final object of $\Sh((\frakX)_{\Prism}^{\perf})$.
   
   Indeed, for any $(A,I)\in (\frakX)_{\Prism}^{\perf}$, using the separatedness of $\frakX$, we see that $\Spf(A/I)\times_{\frakX}\frakX_i$ is affine and thus denote the corresponding ring of regular functions by $\overline A_i$. Then $\Spf(\overline A_i)\to \Spf(A/I)$ is an ($p$-complete) \'etale morphism. In particular, by Elkik's algebraization theorem and \cite[Corollay 2.1.6]{CS}, $\overline A_i$ is a perfectoid ring. By deformation theory, $\overline A_i$ admits a unique lifting $A_i$ over $A$. Then \cite[Lem. 2.18]{BS22} shows that there exists a unique $\delta$-structure on $A_i$ which is compatible with the one on $A$ such that $(A_i,IA_i)$ is a perfect prism in $(\frakX_i)_{\Prism}^{\perf}$. As there exist finitely many $i$'s such that $\{(A,I)\to (A_i,IA_i)\}$ forms a cover of $(A,I)$, the claim now follows from Lemma \ref{Lem-CoverPrism} (2).

     To see $\rL$ is an equivalence, we have to construct the quasi-inverse $\rL^{-1}$ of $\rL$. 
   \begin{construction}\label{Construction-EtaleRealisation-II}
      For any $ \calA = (A,I)\in (\frakX)_{\Prism}^{\perf}$ such that $I\neq (p)$ (in fact, it suffices to consider the perfect prisms appearing in the \v Cech nerve corresponding to the cover above), we can assign it to an affinoid perfectoid space $U_{\calA}$ over $X$ by setting  \[U_{\calA}:=\Spa((A/I)[\frac{1}{p}],(A/I)[\frac{1}{p}]^{+})\]
      by using \cite[Thm. 3.10]{BS22} and \cite[Lem. 3.21]{BMS18}, where $(A/I)[\frac{1}{p}]^{+}$ denotes the $p$-adic completion of the integral closure of $A/I$ in $(A/I)[\frac{1}{p}]$. Via this functor, we have 
      \[\calO_X(U_{\calA}) = (A/I)[\frac{1}{p}] = \overline \calO_{\Prism}[\frac{1}{p}](\calA).\]

      So we can get a natural functor 
      \[\rL^{-1}: \Vect(X_{v,\aff,\perf},\calO_X) \to \Vect((\frakX)_{\Prism}^{\perf},\overline \calO_{\Prism}[\frac{1}{p}]).\]
      More precisely, given $\calL\in\Vect(X_{v,\aff,\perf},\calO_X)$, for any perfect prism $\calA=(A,I\neq (p))\in (\frakX)_{\Prism}^{\perf}$, we define 
      \[\rL^{-1}(\calL)(\calA) = \calL(U_{\calA}),\]
      while for any perfect crystalline prism $\calA = (A,(p))\in (\frakX)_{\Prism}^{\perf}$, we define 
      \[\rL^{-1}(\calL)(\calA) = 0.\]
   \end{construction}
   It is easy to see that $\rL^{-1}$ is the quasi-inverse of $\rL$ as desired.
 
   \begin{rmk}
     A similar construction of $\rL$ also appeared in the work of Morrow--Tsuji \cite{MT}. In loc.cit., they worked with smooth formal schemes $\frakX$ over $\calO_C$ and established an equivalence between the category of prismatic crystals and the category of relative Breuil--Kisin--Fargues modules on $X_{\proet}$.
   \end{rmk}
   
   In order to complete the proof of Theorem \ref{Thm-EtaleRealisation}, it remains to show that
   \[\rR\Gamma((\frakX)_{\Prism}^{\perf},\bM)\simeq \rR\Gamma(X_{\proet},\rL(\bM)).\]

   Note that $\calA_i= (\Ainf(\widehat R_{i,C,\infty}^+),(\xi))$ corresponds to $X_{i,C,\infty}$ via the functors in Construction \ref{Construction-EtaleRealisation-I} and Construction \ref{Construction-EtaleRealisation-II}. 
   \begin{lem}\label{Lem-Fibreproduct}
      For any finite subset $J = \{j_1,\dots,j_r\}\subset I$, the fibre product $\calA_J = \calA_{j_1}\times\cdots\times\calA_{j_r}$ exists in $(\frakX)_{\Prism}^{\perf}$ such that 
      \[X_{j_1,C,\infty}\times_X\cdots\times_XX_{j_r,C,\infty} = U_{\calA_J}.\]
   \end{lem}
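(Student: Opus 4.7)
The plan is to construct $\calA_J$ directly on the perfectoid side using the Bhatt--Scholze equivalence \cite[Thm. 3.10]{BS19} and then verify the universal property. The key point is that the category of perfect prisms in $(\frakX)_{\Prism}^{\perf}$ is anti-equivalent to the category of perfectoid $\calO_{\frakX}$-algebras (i.e.\ affine perfectoid formal schemes equipped with a morphism to $\frakX$), under which fibre products of prisms correspond to $p$-completed tensor products of perfectoid rings.

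First, I would construct the affinoid perfectoid space $U_J := X_{j_1,C,\infty}\times_X\cdots\times_X X_{j_r,C,\infty}$. Since $\frakX$ is separated, each fibre product $\frakX_{j_{k_1}}\times_{\frakX}\cdots\times_{\frakX}\frakX_{j_{k_s}}$ is an affine formal scheme, so the corresponding fibre product on the generic fibre is an affinoid rigid space over $K$. The map $X_{j_k,C,\infty} \to X_{j_k}$ factors over the \'etale covering, and the fibre product of these affinoid perfectoid spaces over the affinoid rigid space $X$ exists as an affinoid perfectoid space (say $U_J = \Spa(S_J,S_J^+)$) by the standard stability of perfectoid rings under $p$-completed tensor products. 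Explicitly, $S_J^+$ is the $p$-adic completion of $\widehat R_{j_1,C,\infty}^+ \otimes_{R^+}\cdots\otimes_{R^+}\widehat R_{j_r,C,\infty}^+$, taken over the appropriate affine open of $\frakX$ covering all the $\frakX_{j_k}$'s.

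Next, I would define $\calA_J := (\Ainf(S_J^+),(\xi))$, which is a perfect prism by \cite[Thm. 3.10]{BS19}, and it comes equipped with a natural map $\Spf(S_J^+)\to\frakX$ factoring through each $\frakX_{j_k}$. To see that $\calA_J$ represents the fibre product $\calA_{j_1}\times\cdots\times\calA_{j_r}$ in $(\frakX)_{\Prism}^{\perf}$, I would use the anti-equivalence above: for any perfect prism $(B,J)\in(\frakX)_{\Prism}^{\perf}$, giving compatible maps $\calA_{j_k}\to (B,J)$ in $(\frakX)_{\Prism}^{\perf}$ is equivalent to giving compatible maps $\widehat R_{j_k,C,\infty}^+\to B/J$ of perfectoid rings over $\calO_{\frakX}$, which by the universal property of the $p$-completed tensor product is the same as giving a single map $S_J^+\to B/J$ over $\calO_{\frakX}$, and this uniquely lifts to a map $\Ainf(S_J^+)\to B$ carrying $\xi$ to a generator of $J$.

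Finally, the identification $U_{\calA_J}=U_J$ is immediate from Construction \ref{Construction-EtaleRealisation-II} applied to $\calA_J=(\Ainf(S_J^+),(\xi))$, which produces $\Spa(S_J^+[1/p], (S_J^+[1/p])^+) = \Spa(S_J,S_J^+) = U_J$. The main (mild) obstacle is keeping the bookkeeping straight when $\frakX$ is not affine; this is handled by the separatedness hypothesis on $\frakX$, which guarantees that intersections of the affine \'etale charts are themselves affine, so all the completed tensor products can be formed over an honest $p$-complete ring.
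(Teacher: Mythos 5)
Your overall strategy (construct the candidate on the perfectoid side and verify the universal property via \cite[Thm.~3.10]{BS19}) is the same as the paper's, but there is a genuine gap at the key step: you assert that $S_J^+$ is the $p$-adic completion of $\widehat R_{j_1,C,\infty}^+\otimes_{R_J^+}\cdots\otimes_{R_J^+}\widehat R_{j_r,C,\infty}^+$ and that this ring is perfectoid ``by the standard stability of perfectoid rings under $p$-completed tensor products.'' That stability holds for completed tensor products over a \emph{perfectoid} base, but here the base $R_J^+$ is not perfectoid, and the naive completed tensor product is in general only quasi-regular semiperfectoid, not perfectoid. Already in the simplest example $\calO_C\za T^{\pm 1/p^\infty}\ya\widehat\otimes_{\calO_C\za T^{\pm1}\ya}\calO_C\za T^{\pm 1/p^\infty}\ya \cong \calO_C\za T^{\pm1/p^\infty},u^{\pm1/p^\infty}\ya/(u-1)$ one is quotienting a perfectoid ring by an element $u-1$ admitting no compatible $p$-power roots; this is the standard example of a qrsp ring whose $\theta$-kernel is not principal (it is of $q$-de Rham type), so it is not perfectoid and $\Ainf(-)$ of it is not a prism in the required sense. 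Relatedly, the correct ring of functions on $X_{j_1,C,\infty}\times_X\cdots\times_X X_{j_r,C,\infty}$ is only \emph{almost} isomorphic to the naive completed tensor product (compare the remark after Lemma \ref{Lem-Structure}, where $\Ainf(\widehat R_{C,\infty}^+)^n/(E)$ is almost, but not actually, $\rC(\Gamma^n,\widehat R_{C,\infty}^+)$), so your explicit identification of $S_J^+$ cannot be taken on the nose.

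The fix, which is exactly what the paper does, is to observe that the completed tensor product $S^+$ is quasi-regular semiperfectoid and then invoke \cite[Cor.~7.3]{BS19} to produce the \emph{initial perfectoid ring} $S_J^+$ over $S^+$ (its perfectoidization); one then defines $\calA_J=(\Ainf(S_J^+),(\xi))$ and runs your universal-property argument with maps out of $S^+$ into perfectoid targets factoring uniquely through $S_J^+$. With that correction the rest of your argument, including the identification $U_{\calA_J}=U_J$ as the initial perfectoid space over $X$ mapping to all the $X_{j_k,C,\infty}$, goes through. A minor additional point: the tensor product must be formed over $R_J^+$ with $\Spf(R_J^+)=\frakX_{j_1}\times_{\frakX}\cdots\times_{\frakX}\frakX_{j_r}$ (affine by separatedness of $\frakX$), not over a ring of some ambient affine open of $\frakX$, which need not exist.
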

   \begin{proof}
      Note that $\calA_J$ should be the initial object of the category of prisms $\calB = (B,J)\in (\frakX)_{\Prism}^{\perf}$ together with morphisms $\calA_j\to\calB$ for all $j\in J$. For any such a prism $\calB$, the structure morphism $\Spf(B/J)\to\frakX$ then factors through $\frakX_J \to \frakX$, where $\frakX_J = \Spf(R_J^+) = \frakX_{j_1}\times_{\frakX}\cdots\times_{\frakX}\frakX_{j_r}$ (as $\frakX$ is separated). Put $\widehat R_{j,C,\infty,J}^+:=\widehat R_{j,C,\infty}^+\widehat \otimes_{R_j^+}R_J^+$. Then it is perfectoid (which plays the role of ``$\widehat R_{C,\infty}^+$'' for $R_J^+$ instead of $R_j^+$ with respect to the induced framing from $R_i^+$) and hence induces a perfect prism $\calA_{j,J}:=(A_{j,J},IA_{j,J})\in (R_J^+)_{\Prism}^{\perf}$ by \cite[Thm. 3.10]{BS22}. By construction, the given morphism $\calA_i\to\calB$ has to factor over $\calA_{j}\to\calA_{j,J}$ for each $j\in J$. To conclude, it is enough to show the fibre product $\calA_{j_1,J}\times\cdots\times\calA_{j_r,J}$ exists in $(R_J^+)_{\Prism}^{\perf}$ as it is exactly $\calA_J$ as desired. Using \cite[Thm. 3.10]{BS22}, it is enough to show that the category of perfectoid rings over 
      \[S^+:=\widehat R_{j_1,C,\infty,J}^+\widehat \otimes_{R_J^+}\cdots\widehat \otimes_{R_J^+}\widehat R_{j_r,C,\infty,J}^+\]
      has an initial object $S_J^+$, which follows from \cite[Cor. 7.3]{BS22} as $S^+$ is quasi-regular semi-perfectoid. So we conclude the existence of $\calA_J$. Moreover, by constructions above, $U_{\calA_J} = \Spa(S_J[\frac{1}{p}],S_J[\frac{1}{p}]^+)$ is the initial object of the category of perfectoid spaces $Y$ over $X$ which admits a fixed morphism $Y\to X_{j,C,\infty}$ for all $j\in J$. This shows that $U_{\calA_J} = X_{j_1,C,\infty}\times_X\cdots\times_XX_{j_r,C,\infty}$ as desired.
   \end{proof}
    
  \begin{proof}[Proof of Theorem \ref{Thm-EtaleRealisation}]
  Now we have proved the first part of Theorem \ref{Thm-EtaleRealisation}. Thanks to Lemma \ref{Lem-Fibreproduct} above, for any $\bM\in \Vect((\frakX)_{\Prism}^{\perf},\overline \calO_{\Prism}[\frac{1}{p}])$, by Construction \ref{Construction-EtaleRealisation-I} and Construction \ref{Construction-EtaleRealisation-II}, we have the same \v Cech complex
    \[\check \rC^{\bullet}(\{\calA_i\}_{i\in I},\bM) = \check \rC^{\bullet}(\{X_{i,C,\infty}\}_{i\in I},\rL(\bM)).\]
    By Lemma \ref{Lem-Vanishing} and \cite[Prop. 2.3]{LZ}, using \v Cech-to-derived spectral sequence, we get a quasi-isomorphism 
    \[\rR\Gamma((\frakX)_{\Prism}^{\perf},\bM)\simeq \rR\Gamma(X_{\proet},\rL(\bM))\]
    as desired, which completes the proof of Theorem \ref{Thm-EtaleRealisation}.
    \end{proof}
   
   \begin{exam}\label{Exam-PerfHTasGeneRep}
       Assume $\frakX = \Spf(R^+)$ is small with respect to the framing $\Box$ (cf. \S \ref{Intro-Notations}). Then one can give a direct proof of Theorem \ref{Thm-EtaleRealisation} as follows: 
       
       Keep notations in Construction \ref{Construction-CoverPrism} and let $X_{C,\infty} = \Spa(\widehat R_{C,\infty},\widehat R_{C,\infty}^+)$. Then $X_{C,\infty}\in X_{\proet}$ is affinoid perfectoid and is a Galois cover of $X$ with Galois group $\Gamma(\overline K/K)$ (cf. Notation \ref{Notation-LocalChart-II}). Note that $\calA_{X_{C,\infty}} = (\Ainf(\widehat R_{C,\infty}^+),(\xi))$, which is a cover of the final object of $\Sh((\frakX)_{\Prism}^{\perf})$, following from Lemma \ref{Lem-CoverPrism} (2). Let $X_{C,\infty}^{\bullet+1/X}$ (resp. $\calA_{X_{C,\infty}}^{\bullet+1}$) be the induced \v Cech nerve of $X_{C,\infty}$ (resp. $\calA_{X_{C,\infty}}$). Then the proof of Lemma \ref{Lem-EvaluateGRep} combined with Lemma \ref{Lem-Structure} (3) shows that 
      \[\OX(X_{C,\infty}^{\bullet+1/X}) = \rC(\Gamma(\overline K/K)^{\bullet},\widehat R_{C,\infty}) = \overline \calO_{\Prism}(\calA_{X_{C,\infty}}^{\bullet+1}).\]
       Note that \cite[Prop. 2.7]{BS23} also applies to $(\frakX)_{\Prism}^{\perf}$. We obtain a natural equivalence
       \[\Vect((\frakX)_{\Prism}^{\perf},\overline \calO_{\Prism}[\frac{1}{p}]) \simeq \Strat(\rC(\Gamma(\overline K/K)^\bullet,\widehat R_{C,\infty}))\simeq \Rep_{\Gamma(\overline K/K)}(\widehat R_{C,\infty}).\]
       Then we show that $\rL$ is an equivalence by applying Lemma \ref{Lem-EvaluateGRep}. Clearly, the above construction preserves ranks, tensor products and dualities. 
       
       By Lemma \ref{Lem-Vanishing} and \v Cech-to-derived spectral sequence, for any rational Hodge--Tate crystal $\bM$ on $(\frakX)_{\Prism}^{\perf}$ with induced $\widehat R_{C,\infty}^+$-representation $M$ of $\Gamma(\overline K/K)$, we have quasi-isomorphism
       \[\iota_{\Box}: \rR\Gamma((\frakX)_{\Prism}^{\perf},\bM)\simeq \rR\Gamma(\Gamma(\overline K/K),M)\simeq \rR\Gamma(X_{\proet},\rL(\bM)),\]
       where the second quasi-isomorphism follows from Lemma \ref{Lem-EvaluateGRep} again.
       We complete the proof.
       
       At the end of this example, we show that the quasi-isomorphism $\iota_{\Box}$ is actually independent of the choice of the framing $\Box$ on $\frakX$. Indeed, let $\Box_1$ and $\Box_2$ be two framings on $\frakX$ with corresponding $X_{C,\infty}^{\Box_1}$ and $X_{C,\infty}^{\Box_2}$, respectively. Let $U_{i,j}$ be the fibre products of $(i+1)$ copies of $X_{C,\infty}^{\Box_1}$ and $(j+1)$ copies of $X_{C,\infty}^{\Box_2}$ over $X$. Then each $U_{i,j}$ is affinoid perfectoid over $X$ and then we get an isomorphism of bicomsimplicial $R$-modules
       \[\bM(\calA_{U_{\bullet,\bullet}})\cong \rL(\bM)(U_{\bullet,\bullet}).\]
       By noting that $U_{\bullet,0}$ (resp. $U_{0,\bullet}$) is the \v Cech nerve associated to $X_{C,\infty}^{\Box_1}$ (resp. $X_{C,\infty}^{\Box_2}$), we have a natural commutative diagram
       \begin{equation*}
          \xymatrix@C=0.45cm{ \bM(\calA^{\bullet+1/X}_{X_{C,\infty}^{\Box_1}})\ar[d] \ar[r]& \bM(\calA_{U_{\bullet,\bullet}})\ar[d]&\ar[l] \bM(\calA^{\bullet+1/X}_{X_{C,\infty}^{\Box_2}})\ar[d]\\
           \rL(\bM)(U_{\bullet,0})\ar[r]&\rL(\bM)(U_{\bullet,\bullet})&\ar[l]\rL(\bM)(U_{0,\bullet}).}
       \end{equation*}
       Thanks to \cite[Prop. 2.3]{LZ} and Lemma \ref{Lem-Vanishing}, all horizontal arrows induce quasi-isomorphisms of corresponding total complexes and hence so is the middle vertical arrow (as the left and the right arrows induce $\iota_{\Box_1}$ and $\iota_{\Box_2}$ on totalizations). Therefore, we get a natural comparison between $\iota_{\Box_1}$ and $\iota_{\Box_2}$.
   \end{exam}

\section{Inverse Simpson functor for enhanced Higgs bundles}\label{inverse Simpson}\label{Sec-Global}
 We still work with smooth $p$-adic formal schemes over $\calO_K$ in this section.
 \begin{notation}\label{Notation-lambda}
 Note that both $\xi$ and $E([\pi^{\flat}])$ are generators of the principal ideal $\Ker(\theta:\Ainf\to \calO_C)$. Let $\lambda$ be the image of $\frac{\xi}{E([\pi^{\flat}])}$ under the surjection $\theta:\Ainf\to\calO_C$. Clearly, $\lambda$ belongs to $\calO_{\widehat K_{\cyc,\infty}}$ and is indeed a unit. For any $g\in G_K$, we see that 
   \begin{equation}\label{Equ-Lambda}
       \begin{split}
           g(\lambda) = & \theta(g(\frac{[\epsilon]-1}{E([\pi^{\flat}])([\epsilon^{\frac{1}{p}}]-1)})) \\ 
           =& \theta(\frac{[\epsilon]^{\chi(g)}-1}{[\epsilon]-1}\frac{[\epsilon^{\frac{1}{p}}]-1}{[\epsilon^{\frac{1}{p}}]^{\chi(g)}-1}\frac{\xi}{E([\pi]^{\flat})}\frac{E([\pi]^{\flat})}{E([\epsilon]^{c(g)}[\pi]^{\flat})})\\
           =& \chi(g)\frac{\zeta_p-1}{\zeta_p^{\chi(g)}-1}\lambda\theta(\frac{E([\pi^{\flat}])}{E([\pi]^{\flat})+E'([\pi^{\flat}])[\pi^{\flat}]([\epsilon]^{c(g)}-1)})\\
            = & \chi(g)\frac{\zeta_p-1}{\zeta_p^{\chi(g)}-1}\lambda(1+\pi E'(\pi)\theta(\frac{[\epsilon]^{c(g)}-1}{[\epsilon]-1}\frac{[\epsilon]-1}{E([\pi^{\flat}])}))^{-1}\\
            = & \chi(g)\frac{\zeta_p-1}{\zeta_p^{\chi(g)}-1}\lambda(1+\pi E'(\pi)(\zeta_p-1)\lambda c(g)))^{-1},
       \end{split}
   \end{equation}
   where $c:G_K\to \Zp$ is defined by $g(\pi^{\flat}) = \epsilon^{c(g)}\pi^{\flat}$ as in \S \ref{Intro-Notations}. For the third equality above, we used Taylor's expansion \[E([\epsilon]^{c(g)}[\pi]^{\flat})=E([\pi]^{\flat})+E'([\pi^{\flat}])[\pi^{\flat}]([\epsilon]^{c(g)}-1)+E^{''}([\pi^{\flat}])[\pi^{\flat}]^2([\epsilon]^{c(g)}-1)^2/2!+\cdots\] and the facts that $([\epsilon]-1)|([\epsilon]^{c(g)}-1)$ and $\theta([\epsilon]-1)=0$.
 \end{notation}
 \begin{dfn}\label{Dfn-EnhancedHiggsBundle}
   By an \emph{enhanced Higgs bundle} of rank $l$ on $\frakX_{\et}$, we mean a triple $(\calH,\theta_{\calH},\phi_{\calH})$ consisting of 
   \begin{enumerate}
       \item a locally finite free $\calO_{\frakX}$-module $\calH$ of rank $l$ on $\frakX$ together with a nilpotent Higgs field $\theta_{\calH}$ on $\calH$; denote by $\HIG(\calH,\theta_{\calH})$ the induced Higgs complex of $(\calH,\theta_{\calH})$;
       
       \item an $\calO_{\frakX}[\frac{1}{p}]$-linear endomorphism $\phi_{\calH}$ of $\phi_{\calH}$ satisfying 
       \begin{enumerate}
           \item $\lim_{n\to+\infty}\prod_{i=0}^{n-1}(\phi_{\calH}+iE'(\pi)) = 0$ for the $p$-adic topology,
           
           \item $[\phi_{\calH},\theta_{\calH}] = -E'(\pi)\theta_{\calH}$; that is, $\phi_{\calH}$ makes the following diagram
           \begin{equation}\label{Diag-EnhancedHiggsComplex-Global}
              \xymatrix@C=0.45cm{
                \calH\ar[r]^{\theta_{\calH}\qquad\quad}\ar[d]_{\phi_{\calH}}&H\otimes_{\calO_{\frakX}}\widehat \Omega^1_{\frakX}\{-1\}\ar[d]_{\phi_{\calH}+E'(\pi)\id_{\calH}} \ar[r]&\cdots\ar[r]&\calH\otimes_{\calO_{\frakX}}\widehat \Omega^d_{\frakX}\{-d\}\ar[d]_{\phi_{\calH}+dE'(\pi)\id_{\calH}}\\
                \calH\ar[r]^{\theta_{\calH}\qquad\quad}&\calH\otimes_{\calO_{\frakX}}\widehat \Omega^1_{\calO_{\frakX}}\{-1\} \ar[r]&\cdots\ar[r]&\calH\otimes_{\calO_{\frakX}}\widehat \Omega^d_{\frakX}\{-d\}
              }
          \end{equation}
          commute. We denote by $\HIG(\calH,\theta_{\calH},\phi_{\calH})$ the total complex of the bicomplex (\ref{Diag-EnhancedHiggsComplex-Global}) and hence
          \[\HIG(\calH,\theta_{\calH},\phi_{\calH}) = \fib(\HIG(\calH,\theta_{\calH})\xrightarrow{\phi_{\calH}}\HIG(\calH,\theta_{\calH})).\]
       \end{enumerate}
   \end{enumerate}
   We denote by $\HIG^{\nil}_*(\frakX)$ the category of enhanced Higgs bundles on $\frakX_{\et}$. One can similarly define the category $\HIG^{\nil}_*(X)$ of enhanced Higgs bundles on $X_{\et}$.
 \end{dfn}
 \begin{rmk}\label{Rmk-EnhancedHiggsBundle}
  The nilpotency of $\theta_{\calH}$ in Definition \ref{Dfn-EnhancedHiggsBundle} can be deduced from the other assumptions (cf. Remark \ref{Rmk-EnhancedHiggsMod-Local}).
 \end{rmk}
 This section is devoted to proving the following theorem.
 \begin{thm}\label{Thm-HTasHiggs-Global}
   Let $\frakX$ be a smooth $p$-adic formal scheme over $\calO_K$ with rigid analytic generic fiber $X$.
   Then there exists an equivalence of categories
   \[\rho:\Vect((\frakX)_{\Prism},\overline \calO_{\Prism}[\frac{1}{p}])\simeq \HIG^{\nil}_*(X),\]
   which preserves ranks, tensor products and dualities. Moreover, this equivalence fits into the following commutative diagram of functors:
   \begin{equation}\label{Diag-CommutativeDiagram}
       \xymatrix@C=0.45cm{
         \Vect((\frakX)_{\Prism},\overline \calO_{\Prism}[\frac{1}{p}])\ar[rrrr]^{\rho}\ar[d]^{\Res}&&&&\HIG^{\nil}_*(X)\ar[d]^{\rF}\ar[lld]_{\rF_{S}}\\
         \Vect((\frakX)_{\Prism}^{\perf},\overline \calO_{\Prism}[\frac{1}{p}])\ar[rr]^{\simeq}
         &&\Vect(X_{\proet},\OX)\ar[rr]^{\simeq}&&\HIG_{G_K}(X_C),
       }
   \end{equation}
   where $\Res$ is induced by inclusion of sites $(\frakX)_{\Prism}^{\perf}\subset(\frakX)_{\Prism}$, $\rF$ will be defined in Construction \ref{Construction-F}. All arrows in Diagram \ref{Diag-CommutativeDiagram} are fully faithful functors while the bottows arrows are given in Theorem \ref{Thm-EtaleRealisation} and Theorem \ref{Thm-Simpson}. In particular, we obtain the fully faithful inverse Simpson functor 
   \[\rF_S:\HIG^{\nil}_*(X)\to \Vect(X_{\proet},\OX)\] 
   defined as the composite of the functor $\rF$ and the equivalence $\HIG_{G_K}(X_C)\xrightarrow{\simeq}\Vect(X_{\proet},\OX)$.
 \end{thm}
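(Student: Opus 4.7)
The plan is to globalize the local equivalence of Theorem \ref{Thm-HTasHiggs-Local} by exploiting the fully faithful functor $\rF$ to check independence of the framing. The argument proceeds in four stages.

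First, I would define the functor $\rF:\HIG^{\nil}_*(X)\to \HIG_{G_K}(X_C)$. In the affine small case $\frakX=\Spf(R^+)$ with generic fiber $\Spa(R,R^+)$, given an enhanced Higgs module $(H,\theta_H,\phi_H)$ I take its base change to $R_C$, keep the Higgs field $\theta_H\otimes \id$ (Tate-twisted), and let $G_K$ act through the cocycle
\[
g\mapsto \bigl(1+\pi E'(\pi)(\zeta_p-1)\lambda\,c(g)\bigr)^{-\phi_H/E'(\pi)},
\]
dictated by Theorem \ref{Lem-Restriction}. Well-definedness uses the convergence condition $\lim_n\prod_{i=0}^{n-1}(\phi_H+iE'(\pi))=0$ from Definition \ref{Dfn-EnhancedHiggsBundle}; cocycle behaviour follows from the formula (\ref{Equ-Lambda}) for $g(\lambda)$; and compatibility with $\theta_H$ reduces to the commutator relation $[\phi_H,\theta_H]=-E'(\pi)\theta_H$. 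This construction is evidently functorial and compatible with étale localisation, so it glues to a global functor $\rF$ on arbitrary smooth $\frakX$. The functor $\rF_S$ is then defined as the composite of $\rF$ with the quasi-inverse of the Simpson equivalence of Theorem \ref{Thm-Simpson}.

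Second, I would verify that Diagram (\ref{Diag-CommutativeDiagram}) commutes in the affine small case. The local equivalence $\rho=\rho_\square$ comes from Theorem \ref{Thm-HTasHiggs-Local}. The composition along the bottom row, applied to a Hodge--Tate crystal $\bM$, is by construction the generalised representation $\Res(\bM)$ viewed as a $\widehat R_{C,\infty}$-representation of $\Gamma(\overline K/K)$ (cf.\ Example \ref{Exam-PerfHTasGeneRep}), and the associated $G_K$-Higgs module is obtained by the $p$-adic Simpson correspondence of Theorem \ref{Thm-SimpsonLocal}. The explicit description in Theorem \ref{Lem-Restriction} shows that this is precisely the cocycle used to define $\rF$. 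Hence, after fixing any framing $\square$, one has an equality $\rF\circ \rho_\square=\rF_S\circ \Res$ on $\Vect((R^+)_{\Prism},\overline{\calO}_\Prism[\tfrac{1}{p}])$.

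Third, granting the full faithfulness of $\rF$ (which I postpone to \S\ref{Sect-Sen operator} and is the main obstacle), I would use it to deduce that $\rho_\square$ is in fact independent of the framing $\square$. Indeed, the functors $\Res$, $\rF_S$, and the two bottom equivalences are all manifestly globally defined without any auxiliary choice. The relation $\rF\circ \rho_\square=\rF_S\circ \Res$ then determines $\rho_\square$ uniquely (up to unique natural isomorphism) because $\rF$ is fully faithful. Once chart-independence is established in the small affine case, the equivalences $\rho_\square$ on any étale cover $\{\frakX_i\to \frakX\}$ of $\frakX$ by small affines carry canonical descent data on overlaps (the cocycle condition is automatic from uniqueness), and hence glue to a global equivalence $\rho:\Vect((\frakX)_\Prism,\overline{\calO}_\Prism[\tfrac{1}{p}])\xrightarrow{\simeq}\HIG^{\nil}_*(X)$ making the whole diagram commute.

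Finally, once the diagram commutes and $\rho$ is an equivalence, the preservation of ranks, tensor products, and dualities follows locally from Theorem \ref{Thm-HTasHiggs-Local}; and $\Res$ is fully faithful because it factors as $\rho$ followed by $\rF_S$, both fully faithful, giving the last claim about $\rF_S$. The hard part of the plan is therefore not the globalisation, which is essentially formal once $\rF$ is known to be fully faithful, but rather the full faithfulness of $\rF$ itself: this is not accessible by the prismatic methods of \S\ref{Sec-LocalConstruction} and requires comparing the Kummer-type Sen operator $-\phi_H/E'(\pi)$ coming from the prismatic data with the cyclotomic Sen operator of Shimizu--Petrov attached to the generalised representation. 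Following \cite{Gao}, I expect to carry out this comparison using the theory of locally analytic vectors of Berger--Colmez applied to the tower $K_{\mathrm{cyc},\infty}/K$, as formalised in Theorem \ref{Intro-Sen}, which will occupy \S\ref{Sect-Sen operator}.
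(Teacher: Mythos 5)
Your proposal is correct and follows essentially the same route as the paper: define $\rF$ by the explicit cocycle $(1+\pi E'(\pi)(\zeta_p-1)\lambda c(g))^{-\phi_H/E'(\pi)}$, check the local commutativity of the diagram via Theorem \ref{Lem-Restriction} and Theorem \ref{Thm-SimpsonLocal}, use the full faithfulness of $\rF$ (deferred to the Sen-theoretic comparison of the Kummer and cyclotomic operators via locally analytic vectors) to deduce that $\rho_\square$ is independent of the framing, and then glue over an \'etale cover by small affines. This matches the paper's Construction \ref{Construction-F}, Propositions \ref{Prop-F-FullyFaithful} and \ref{Prop-F-welldefined}, Corollary \ref{Cor-IndenpendenceChart}, and the gluing argument of \S\ref{Sec-Global}.
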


    The equivalence $\rho:\Vect((\frakX)_{\Prism},\overline \calO_{\Prism}[\frac{1}{p}])\simeq \HIG^{\nil}_*(X)$ in Theorem \ref{Thm-HTasHiggs-Global} has been established when $\frakX = \Spf(R^+)$ is small affine and upgrades to the integral level in this case (cf. Theorem \ref{Thm-HTasHiggs-Local}). However, the constructions there depend on the choice of framing on $R^+$ and look difficult to glue together. The problem can be solved by showing that both $\rF$ and $\Res$ are fully faithful making Diagram (\ref{Diag-CommutativeDiagram}) commute locally. Then by some standard argument, the local equivalence $\rho_{\square}:\Vect((R^+)_{\Prism},\overline \calO_{\Prism}[\frac{1}{p}])\simeq \HIG^{\nil}_*(R)$ in Theorem \ref{Thm-HTasHiggs-Local} is independent of the choice of framing and hence glue to a global equivalence. We will follow this strategy in sequels.
 
 \subsection{Local version of Theorem \ref{Thm-HTasHiggs-Global} and the construction of $\rF$}
  In this subsection, we assume $\frakX = \Spf(R^+)$ is small affine and keep notations in Notation \ref{Notation-LocalChart-II}. Then by Example \ref{Exam-PerfHTasGeneRep} and Lemma \ref{Lem-EvaluateGRep}, we have equivalences of categories
  \[\Vect((\frakX)_{\Prism}^{\perf},\overline \calO_{\Prism}[\frac{1}{p}])\simeq \Rep_{\Gamma(\overline K/K)}(\widehat R_{C,\infty})\simeq \Vect(X_{\proet},\OX).\]
  Recall we also have the equivalence 
  \[\rho_{\square}:\Vect((R^+)_{\Prism},\overline \calO_{\Prism}[\frac{1}{p}])\simeq \HIG_*^{\nil}(R).\]
  The purpose in this subsection is to give an explicit description of $\Res$ as a functor 
  \[\rV: \HIG_*^{\nil}(R)\to \Rep_{\Gamma(\overline K/K)}(\widehat R_{C,\infty}),\]
  and then describe $\rF$ by using the local Simpson correspondence i.e. Theorem \ref{Thm-SimpsonLocal}.
  \begin{lem}\label{Lem-Restriction}
    For any $(H,\theta_H,\phi_H)\in\HIG^{\nil}_*(R)$ with induced $\widehat R_{C,\infty}$-representation $V:=\rV(H,\theta_H,\phi_H)$ of $\Gamma(\overline K/K)$, we have $V = H\otimes_R\widehat R_{C,\infty}$ such that for any $1\leq i\leq d$, any $g\in G_K$ and any $x\in H$, 
    \[\gamma_i(x) = \exp(-(\zeta_p-1)\lambda\theta_i)(x)~\text{and}~g(x) = (1+\pi E'(\pi)(\zeta_p-1)\lambda c(g))^{-\frac{\phi_H}{E'(\pi)}}(x).\]
  \end{lem}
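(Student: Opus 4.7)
The plan is to compute the $\Gamma(\overline{K}/K)$-action on $V:=\Res(\bM)(\Ainf(\widehat R_{C,\infty}^+))$ by evaluating the universal stratification \eqref{Equ-Stratification} along the canonical morphism of prisms $\iota:(\frakS(R^+),(E))\to(\Ainf(\widehat R_{C,\infty}^+),(\xi))$ from Construction \ref{Construction-CoverPrism} sending $u\mapsto[\pi^{\flat}]$ and $T_s\mapsto[T_s^{\flat}]$. First, the crystal property of $\bM$ together with the reductions $\frakS(R^+)/(E)[\tfrac{1}{p}]=R$ and $\Ainf(\widehat R_{C,\infty}^+)/(\xi)[\tfrac{1}{p}]=\widehat R_{C,\infty}$ immediately yields
\[V=\bM(\Ainf(\widehat R_{C,\infty}^+))\cong H\otimes_R\widehat R_{C,\infty},\]
settling the underlying-module claim.

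For the Galois action, I pass to the \v Cech nerves: $\iota$ induces a cosimplicial morphism $\frakS(R^+)^{\bullet}\to\Ainf(\widehat R_{C,\infty}^+)^{\bullet}$, and by Lemma \ref{Lem-Structure} (3) we identify $\overline{\calO}_{\Prism}[\tfrac{1}{p}](\Ainf(\widehat R_{C,\infty}^+)^{\bullet})$ with $C(\Gamma(\overline{K}/K)^{\bullet},\widehat R_{C,\infty})$. Under this identification and the discussion in Example \ref{Exam-PerfHTasGeneRep}, the pulled-back stratification on $V$ becomes the function $\gamma\mapsto(\gamma\text{-action on }V)$. Thus computing $\gamma(x)$ for $x\in H\subset V$ reduces to substituting into the right-hand side of \eqref{Equ-Stratification} the images under $\iota^1$, reduced modulo $\xi_0$ and evaluated at $\gamma$, of $X_1=(u_0-u_1)/E(u_0)$ and $Y_{s,1}=(T_{s,0}-T_{s,1})/(E(u_0)T_{s,0})$.

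The key mod-$\xi$ input is that $\lambda=\theta(\xi/E([\pi^{\flat}]))$ together with $[\epsilon]-1=\xi([\epsilon]^{1/p}-1)$ give $\theta((1-[\epsilon])/E([\pi^{\flat}]))=-(\zeta_p-1)\lambda$. For $\gamma_i\in\Gamma_{\geo}$, since $\gamma_i$ fixes $[\pi^{\flat}]$ and sends $[T_s^{\flat}]$ to $[\epsilon]^{\delta_{is}}[T_s^{\flat}]$, one obtains $X_1\mapsto 0$ and $Y_{s,1}\mapsto -\delta_{is}(\zeta_p-1)\lambda$; plugging into \eqref{Equ-Stratification} gives $\gamma_i(x)=\exp(-(\zeta_p-1)\lambda\theta_i)(x)$. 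For $g\in G_K\subset\Gamma(\overline{K}/K)$, the splitting of Notation \ref{Notation-LocalChart-II} arranges that $g$ fixes the formal variables $T_s^{1/p^n}$ (hence $[T_s^{\flat}]$), while $g([\pi^{\flat}])=[\epsilon]^{c(g)}[\pi^{\flat}]$; a binomial expansion $[\epsilon]^{c(g)}-1=([\epsilon]-1)w$ with $\theta(w)=c(g)$ then gives $\theta(([\epsilon]^{c(g)}-1)/E([\pi^{\flat}]))=c(g)(\zeta_p-1)\lambda$, whence $X_1\mapsto-\pi c(g)(\zeta_p-1)\lambda$ and each $Y_{s,1}\mapsto 0$. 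Using $a=E'(\pi)$, the formula \eqref{Equ-Stratification} simplifies to the claimed $g(x)=(1+\pi E'(\pi)(\zeta_p-1)\lambda c(g))^{-\phi_H/E'(\pi)}(x)$.

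The main subtlety is pinning down the sign and convention in Lemma \ref{Lem-Structure} (3) so that the pulled-back stratification is identified with the $\Gamma(\overline{K}/K)$-action itself rather than its inverse, and then checking that the resulting semilinear cocycle relation for $g\mapsto(1+\pi E'(\pi)(\zeta_p-1)\lambda c(g))^{-\phi_H/E'(\pi)}$ is consistent with the transformation law \eqref{Equ-Lambda} of $\lambda$ under $G_K$; once this bookkeeping is settled, the rest of the argument is the mod-$\xi$ computation sketched above.
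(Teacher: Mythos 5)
Your proposal is correct and follows essentially the same route as the paper: the paper likewise reduces to comparing stratifications via the morphism of cosimplicial rings $\frakS(R^+)^{\bullet}\to\Ainf(\widehat R_{C,\infty}^+)^{\bullet}$, computes the images of $X_1$ and $Y_{s,1}$ as functions on $\Gamma(\overline K/K)$ (obtaining exactly your values $X_1\mapsto-\pi(\zeta_p-1)\lambda c(g)$ and $Y_{s,1}\mapsto-n_s(\zeta_p-1)\lambda$, stated there as Lemma \ref{Lem-Function}), and substitutes into (\ref{Equ-Stratification}). The only cosmetic difference is that the paper evaluates at a general element $\gamma_1^{n_1}\cdots\gamma_d^{n_d}g$ in one shot, while you treat $\gamma_i$ and $g$ separately; the cocycle-consistency check you flag is deferred in the paper to Proposition \ref{Prop-F-welldefined}.
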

  \begin{proof}
    Let $A^{\bullet}$ be the cosimplicial ring in Convention \ref{Convention-CosimplicialRingA}. By Lemma \ref{Lem-HTasStrat} and Example \ref{Exam-PerfHTasGeneRep}, it suffices to compare stratifications induced by $(H,\theta_H,\phi_H)$ and $V$ with respect to $A^{\bullet}$ and $\rC(\Gamma(\overline K/K)^{\bullet},\widehat R_{C,\infty})$, respectively. Note that the natural map
    \[\Strat(A^{\bullet})\to\Strat(\rC(\Gamma(\overline K/K)^{\bullet},\widehat R_{C,\infty}))\]
    is induced by the morphism of cosimplicial rings $\frakS(R)^{\bullet}\to \Ainf(\widehat R_{C,\infty}^+)^{\bullet}$. By Lemma \ref{Lem-Structure} (3) and Proposition \ref{Prop-Structure}, we have to determine the functions in $\rC(\Gamma(\overline K/K),\widehat R_{C,\infty})$ induced by $X_1,Y_{1,1},\dots,Y_{d,1}$.
    \begin{lem}\label{Lem-Function}
       For any $\underline n = (n_1,\dots,n_d)\in\bN^d$, any $1\leq s\leq d$, and any $g\in G_K$, as functions in $\rC(\Gamma(\overline K/K),\widehat R_{C,\infty})$, we have 
       \[X_1(\gamma_1^{n_1}\cdots\gamma_d^{n_d}g) = -\pi(\zeta_p-1)\lambda c(g)\]
       and  
       \[Y_{s,1}(\gamma_1^{n_1}\cdots\gamma_d^{n_d}g) = -n_s(\zeta_p-1)\lambda.\]
    \end{lem}
    \begin{proof}
      Recall $X_1 = \frac{u_0-u_1}{E(u_0)}$ and $Y_{s,1} = \frac{T_{s,1}-T_{s,0}}{E(u_0)T_{s,0}}$. Regard $(\rC(\Gamma(\overline K/K),\Ainf(\widehat R_{C,\infty}^+)),(\xi))$ as a perfect prism in $(R^+)_{\Prism}$ where $\xi$ is the constant function. Then we have two morphisms from $(\Ainf(\widehat R_{C,\infty}^+),(\xi))$ to $(\rC(\Gamma(\overline K/K),\Ainf(\widehat R_{C,\infty}^+)),(\xi))$: one sends  $x\in \Ainf(\widehat R_{C,\infty}^+)$ to the constant function corresponding to $x$, the other sends $x\in \Ainf(\widehat R_{C,\infty}^+)$ to the function sending $g\in \Gamma(\overline K/K)$ to $g(x)$. Then we get a morphism from $\Ainf(\widehat R_{C,\infty}^+)^1,(\xi))$ to $(\rC(\Gamma(\overline K/K),\Ainf(\widehat R_{C,\infty}^+)),(\xi))$, which induces the isomorphism $\Ainf(\widehat R_{C,\infty}^+)^1/\xi[1/p]\cong \rC(\Gamma(\overline K/K),\widehat R_{C,\infty})$ in Lemma \ref{Lem-Structure} (3).
      
      In particular, the image of  $u_0$ (resp. $T_{s,0}$) in $\rC(\Gamma(\overline K/K),\Ainf(\widehat R_{C,\infty}^+))$ is the constant function with the value $[{\pi^{\flat}}]$ (resp. $[T_s^{\flat}]$), where $T_s^{\flat} = (T_s,T_s^{\frac{1}{p}},\dots)\in \widehat R_{C,\infty}^{+\flat}$). And the image of
      $u_1$ (resp. $T_{s,1}$) in $\rC(\Gamma(\overline K/K),\Ainf(\widehat R_{C,\infty}^+))$ is the evaluation function on $\Gamma(\overline K/K)$ at $[\pi^{\flat}]$ (resp. $[T_s^{\flat}]$). Therefore, we conclude that
      \begin{equation*}
          \begin{split}
              X_1(\gamma_1^{n_1}\cdots\gamma_d^{n_d}g) = \theta(\frac{[\pi^{\flat}](1-[\epsilon]^{c(g)})}{E([\pi^{\flat}])}) = -\pi(\zeta_p-1)\lambda c(g)
          \end{split}
      \end{equation*}
      and that
      \begin{equation*}
          \begin{split}
              Y_{s,1}(\gamma_1^{n_1}\cdots\gamma_d^{n_d}g) = \theta(\frac{[T_s^{\flat}](1-[\epsilon]^{n_s})}{E([\pi^{\flat}])[T_s^{\flat}]}) = -n_s(\zeta_p-1)\lambda.
          \end{split}
      \end{equation*}
      as desired. 
    \end{proof}
    Now, we continue the proof of Lemma \ref{Lem-Restriction}. Let $\varepsilon_H: H\otimes_{p_0}A^1\cong H\otimes_{p_1}A^1$ be the stratification corresponding to $(H,\theta_H,\varphi_H)$. The base change $\varepsilon_H\otimes\rC(\Gamma(\overline K/K),\widehat R_{C,\infty})$ of $\varepsilon_H$ to $\Ainf(\widehat R_{C,\infty}^+)^1/\xi[1/p]\cong\rC(\Gamma(\overline K/K),\widehat R_{C,\infty})$ is the stratification corresponding to the representation $V$. For any $\gamma_1^{n_1}\cdots\gamma_d^{n_d}g\in \Gamma(\overline K/K)$, the base change of $\varepsilon_H\otimes\rC(\Gamma(\overline K/K),\widehat R_{C,\infty})$ along the evaluation map $\rC(\Gamma(\overline K/K),\widehat R_{C,\infty})\to \widehat R_{C,\infty}$ at $\gamma_1^{n_1}\cdots\gamma_d^{n_d}g$ is then the (linearization) of the $\gamma_1^{n_1}\cdots\gamma_d^{n_d}g$-action on $V$. Now by Proposition \ref{Prop-HTvsHiggs-Local}, especially (\ref{Equ-Stratification}), we deduce from Lemma \ref{Lem-Function} (i.e. evaluate $\varepsilon_H$ at $\gamma_1^{n_1}\cdots\gamma_d^{n_d}g$) that for any $x\in H$,
    \begin{equation}\label{Equ-GammaAction}
        \gamma_1^{n_1}\cdots\gamma_d^{n_d}g(x) = \exp(\sum_{i=1}^d-n_i(\zeta_p-1)\lambda\theta_i)(1+\pi E'(\pi)(\zeta_p-1)\lambda c(g))^{-\frac{\phi_H}{E'(\pi)}}(x).
    \end{equation}
    This completes the proof.
  \end{proof}
  \begin{rmk}\label{Rmk-Restriction}
    Since $\theta_H$ and $\phi_H$ are both defined over $R$, we see that $H_C:=H\otimes_RR_C$ is itself stable under the action of $\Gamma(\overline K/K)$. Noting that $\gamma_i$ acts on $H_C$ unipotently, we see that $H_C$ is the $R_C$-representation in $\Rep_{\Gamma(\overline K/K)}^{\rm uni}(R_C)$ associated to $\rV(H,\theta_H,\phi_H)$ under the equivalence in Proposition \ref{Prop-UnipotentEquiv} (and Remark \ref{Rmk-UnipotentEquiv}).
  \end{rmk}
  Now we can describe the functor $\rF$ in the case for $\frakX = \Spf(R^+)$.
  \begin{construction}\label{Construction-F-Local}
     Define $\rF: \HIG^{\nil}_*(R)\to \HIG_{G_K}(R_C)$ as the composition 
     \[\HIG^{\nil}_*(R)\xrightarrow{\rV} \Rep_{\Gamma(\overline K/K)}(\widehat R_{C,\infty})\xrightarrow{\text{Theorem \ref{Thm-SimpsonLocal}}}\HIG_{G_K}(R_C).\]
     In particular, it makes the Diagram (\ref{Diag-CommutativeDiagram}) commute.
  \end{construction}
  \begin{lem}\label{Lem-F-Local}
    For any $(H,\theta_H,\phi_H)\in \HIG^{\nil}_*(R)$, we have 
    \[\rF(H,\theta_H,\phi_H) = (H\otimes_RR_C,-(\zeta_p-1)\lambda\theta_H)\]
    with the $G_K$-action such that for any $x\in H$ and any $g\in G_K$,
    \[g(x) = (1+\pi E'(\pi)(\zeta_p-1)\lambda c(g))^{-\frac{\phi_H}{E'(\pi)}}(x).\]
  \end{lem}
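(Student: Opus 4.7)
The plan is to combine Lemma \ref{Lem-Restriction} (which describes the restriction functor $\rV$ explicitly) with Theorem \ref{Thm-SimpsonLocal} and Remark \ref{Rmk-SimpsonLocal} (which describe the local $p$-adic Simpson correspondence in terms of the unipotent model). Since $\rF$ is defined as the composition of $\rV$ and the Simpson correspondence, everything should reduce to a direct unwinding of definitions.

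First, by Lemma \ref{Lem-Restriction}, the image of $(H,\theta_H,\phi_H)$ under $\rV$ is $H\otimes_R\widehat R_{C,\infty}$, where for any $x\in H$, $g\in G_K$, and $1\leq i\leq d$, one has
\[
\gamma_i(x)=\exp(-(\zeta_p-1)\lambda\theta_i)(x),\qquad g(x)=(1+\pi E'(\pi)(\zeta_p-1)\lambda c(g))^{-\phi_H/E'(\pi)}(x).
\]
Next, by Remark \ref{Rmk-Restriction}, $H_C:=H\otimes_RR_C$ is stable under $\Gamma(\overline K/K)$ and is precisely the unipotent model of $\rV(H,\theta_H,\phi_H)$ in $\Rep^{\mathrm{uni}}_{\Gamma(\overline K/K)}(R_C)$. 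The $G_K$-action on $H_C$ transports verbatim to the $G_K$-action on the $\Gal(\overline K/K)$-Higgs bundle by the construction in Theorem \ref{Thm-SimpsonLocal}, yielding the desired formula for $g(x)$.

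For the Higgs field, I would invoke Remark \ref{Rmk-SimpsonLocal}: since $H_C$ is the unipotent model, the associated Higgs field on $H_C$ (viewed as an object of $\HIG_{G_K}(R_C)$) is given by $\theta_M=\sum_{i=1}^d \log\gamma_i\otimes\frac{\dlog T_i}{t}$. From the explicit formula for $\gamma_i$ above, $\log\gamma_i=-(\zeta_p-1)\lambda\theta_i$ on $H_C$, so this Higgs field equals $\sum_{i=1}^d -(\zeta_p-1)\lambda\theta_i\otimes\frac{\dlog T_i}{t}$.

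The only subtle point — and the main thing to articulate carefully — is the identification of the Breuil--Kisin twist $\{-1\}$, in which $\theta_H=\sum_i\theta_i\otimes\frac{\dlog T_i}{E(u)}$ is valued, with the Tate twist $(-1)$, in which the Simpson-side Higgs field is valued. This follows from the well-known relation $\theta(t/\xi)=\zeta_p-1$ together with the definition $\lambda=\theta(\xi/E([\pi^\flat]))$, which gives $\theta(t/E([\pi^\flat]))=(\zeta_p-1)\lambda$. Under the induced identification $\frac{1}{E(u)}\leftrightarrow (\zeta_p-1)\lambda\cdot\frac{1}{t}$ of the two twists on $R_C$, the element $-(\zeta_p-1)\lambda\theta_H$ corresponds exactly to $\sum_i -(\zeta_p-1)\lambda\theta_i\otimes\frac{\dlog T_i}{t}$, completing the identification of the Higgs field. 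No major obstacle is expected beyond being bookkeeping-careful with this twist comparison.
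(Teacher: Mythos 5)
Your proposal is correct and follows essentially the same route as the paper, whose proof of this lemma is simply the one-line observation that it follows from Remark \ref{Rmk-SimpsonLocal} and Remark \ref{Rmk-Restriction}. Your explicit verification that $\log\gamma_i=-(\zeta_p-1)\lambda\theta_i$ on the unipotent model $H_C$ and your bookkeeping of the Breuil--Kisin/Tate twist comparison via $\theta(t/E([\pi^\flat]))=(\zeta_p-1)\lambda$ are exactly the details the paper leaves implicit, and they are carried out correctly.
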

  \begin{proof}
    This follows from Remark \ref{Rmk-SimpsonLocal} and Remark \ref{Rmk-Restriction} immediately.
  \end{proof}
  \begin{rmk}\label{Rmk-F-Local}
    Note that $\lambda \in \calO_{\widehat K_{\cyc,\infty}}$. Thanks to Remark \ref{Rmk-UnipotentEquiv} and Remark \ref{Rmk-Simpson}, one can use $\widehat L$ and $\Gal(L/K)$ instead of $C$ and $G_K$ in Lemma \ref{Lem-Restriction}, Remark \ref{Rmk-Restriction} and Lemma \ref{Lem-F-Local} for any Galois extension $L/K$ in $\overline K$ containing $K_{\cyc,\infty}$.
  \end{rmk}
  
  \begin{prop}\label{Prop-F-FullyFaithful}
    The functors $\rF$, $\rV$ and $\Res$ are both fully faithful.
  \end{prop}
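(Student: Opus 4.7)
The plan is to exploit the commutative diagram~(\ref{Diag-CommutativeDiagram}) together with the equivalences already proved. Since $\rho_{\square}$ is an equivalence by Theorem~\ref{Thm-HTasHiggs-Local} and $\rF$ is the composition of $\rV$ with the local Simpson equivalence of Theorem~\ref{Thm-SimpsonLocal}, the full faithfulness of any one of $\rV$, $\rF$, $\Res$ implies that of the other two; I will focus on $\rV$. Faithfulness is immediate: by construction $\rV(H,\theta_H,\phi_H) = H \otimes_R \widehat R_{C,\infty}$ (with extra structure), and $R \hookrightarrow \widehat R_{C,\infty}$ is injective, so $f \otimes \id = 0$ forces $f = 0$ on the finite projective modules $H_i$.

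For fullness, since $\rV$ preserves tensor products and dualities (inherited from the corresponding properties of $\rho_{\square}$, $\Res$, and Theorem~\ref{Thm-SimpsonLocal}), internal $\Hom$ reduces the question to showing that for every $(H,\theta_H,\phi_H) \in \HIG^{\nil}_*(R)$ with $V = \rV(H,\theta_H,\phi_H)$, the natural map
$$
\{h \in H : \theta_H(h) = 0,\ \phi_H(h) = 0\} \;\longrightarrow\; V^{\Gamma(\overline K/K)}
$$
is an isomorphism. I would compute the right-hand side in two stages via the short exact sequence $1 \to \Gamma_{\geo} \to \Gamma(\overline K/K) \to G_K \to 1$. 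For the geometric stage, Lemma~\ref{Lem-Restriction} exhibits each $\gamma_i \in \Gamma_{\geo}$ acting on $H \subset V$ by the unipotent operator $\exp(-(\zeta_p-1)\lambda\, \theta_i)$. Combining Proposition~\ref{Prop-UnipotentEquiv} (applied via Remark~\ref{Rmk-UnipotentEquiv} to the extension $C/\widehat K_{\cyc}$) with the decomposition~(\ref{Equ-TopoDecomposition-II}), I would identify $V^{\Gamma_{\geo}}$ with $(H \otimes_R R_C)^{\Gamma_{\geo}}$. Since $-(\zeta_p-1)\lambda$ is a unit in $\calO_{\widehat K_{\cyc,\infty}}$ and each $\theta_i$ is nilpotent, an elementary argument in characteristic zero (applying the top nonzero power of $\theta_i$ to the identity $\exp(-(\zeta_p-1)\lambda\, \theta_i)(v)=v$) then yields $V^{\Gamma_{\geo}} = \Ker(\theta_H) \otimes_R R_C$.

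The arithmetic stage is the crux. Since $[\phi_H, \theta_H] = -E'(\pi)\theta_H$, the operator $\phi_H$ preserves $\Ker(\theta_H)$; by Lemma~\ref{Lem-Restriction}, the residual $G_K$-action on $\Ker(\theta_H) \otimes_R R_C$ is
$$
g(h) = (1+\pi E'(\pi)(\zeta_p-1)\lambda\, c(g))^{-\phi_H/E'(\pi)}(h).
$$
The remaining task is to prove
$$
(\Ker(\theta_H) \otimes_R R_C)^{G_K} \;=\; \Ker(\phi_H) \cap \Ker(\theta_H),
$$
which is precisely an instance of arithmetic Sen descent once one knows that the Sen operator of this semilinear $G_K$-representation equals $-\phi_H/E'(\pi)$. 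That Sen operator identification is the content of Theorem~\ref{Intro-Sen}, to be established in \S\ref{Sect-Sen operator}, and constitutes the principal obstacle of the whole argument; the plan there is to follow the strategy of \cite{Gao} and use the Berger--Colmez theory \cite{BC16} of locally analytic vectors to compare the Sen operator coming from the cyclotomic tower with the Kummer-type quantity $-\phi_H/E'(\pi)$ arising from the prism $(\frakS, (E))$. Granted this identification, the descent itself is standard---classical Sen theory when $d=0$, and the relative Sen theory of Shimizu--Petrov (cf.\ Proposition~\ref{Prop-Petrov}) when $d \geq 1$---which completes the proof.
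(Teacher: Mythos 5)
Your overall architecture coincides with the paper's: both arguments reduce everything to the full faithfulness of $\rV$, reduce that (via internal $\Hom$) to a computation of invariants, and both must forward-reference \S\ref{Sect-Sen operator} for the comparison between the Kummer-side operator $-\phi_H/E'(\pi)$ and the cyclotomic Sen operator. The organizational difference is that you filter by the normal subgroup $\Gamma_{\geo}$ and treat the geometric and arithmetic stages separately, whereas the paper passes to locally analytic vectors for the whole group $\Gamma(K_{\cyc,\infty}/K)$ at once (checking the Tate--Sen axioms and invoking \cite[Cor. 5.3]{Por22}) and then computes the $\rH^0$ of the resulting Lie algebra cohomology with the operators $\nabla_1,\dots,\nabla_d,\nabla_\tau,\nabla_\gamma$ simultaneously. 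Your geometric stage is sound and matches the paper's computation $\nabla_i=-\lambda(\zeta_p-1)\theta_i$ on $H$.

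The gap is concentrated in the sentence ``Granted this identification, the descent itself is standard.'' Two concrete problems. First, $\Ker(\theta_H)$ is in general only a finitely generated torsion-free $R$-module, not projective, so the Shimizu--Petrov theory (Proposition \ref{Prop-Petrov}), a statement about vector bundles, does not apply to $\Ker(\theta_H)\otimes_R R_C$ directly; you should instead descend all of $H\otimes_R R_C$ and afterwards intersect with $\Ker(\theta_H\otimes\id)$, using flatness of $R\to R_C$. Second, and more seriously, even granting Theorem \ref{Prismatic-Sen}, cyclotomic Sen descent only yields $(H\otimes_R R_C)^{G_K}=(D_{{\rm Sen},K_{\cyc}}^{\nabla_{\gamma}=0})^{\Gal(K_{\cyc}/K)}$ for the cyclotomic model $D_{{\rm Sen},K_{\cyc}}$, which is an $R_{K_{\cyc}}$-module that is \emph{not} $H\otimes_R R_{K_{\cyc}}$: the twisting cocycle involves $\lambda\in\calO_{\widehat K_{\cyc,\infty}}$, so elements of $H$ are not fixed by $\Gal(\overline K/K_{\cyc})$. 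Identifying that invariant space with the $R$-module $H^{\phi_H=0}$ is exactly the content of the second half of the paper's proof: it uses $R_{\widehat K_{\cyc,\infty}}^{\tau\text{-}{\rm la},\gamma=1}=R_{K_{\infty}}$ and $R_{\widehat K_{\cyc,\infty}}^{\hat G\text{-}{\rm la},\Gal(K_{\cyc,\infty}/K_{\infty})=1}=R_{K_{\infty}}$ (Proposition \ref{R-la}), the identification $D_{{\rm Sen},K_{\infty}}(W)=H\otimes_R R_{K_{\infty}}$ (Theorem \ref{Prismatic-Sen}(1)), and the final Galois-descent computation $R_{K(\pi_m)}\cap R_{K(\zeta_{p^m})}=R$ inside $R_{K(\pi_m,\zeta_{p^m})}$. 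None of this is an off-the-shelf citation, so this step needs to be written out rather than dismissed as standard.
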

  \begin{proof}
    We only need to show $\rV$ is fully faithful. Granting this, we get the full faithfulness of $\rF$ by Theorem \ref{Thm-SimpsonLocal} and then the full faithfulness of $\Res$ as Diagram (\ref{Diag-CommutativeDiagram}) commutes in this case. Moreover, by \'etale descent, we can further assume $H$ is finite free over $R$. Thanks to Remark \ref{Rmk-F-Local}, we may work with $\widehat K_{\cyc,\infty}$ and $\Gamma(K_{\cyc,\infty}/K)$ instead of $C$ and $\Gamma(\overline K/K)$ in the following arguments. For simplicity, we put $L=K_{\cyc,\infty}$.
    
    Let $H_{\widehat L}\in\Rep_{\Gamma(L/K)}(R_{\widehat L})$ be as in Remark \ref{Rmk-Restriction}. By Proposition \ref{Prop-UnipotentEquiv}, if we put $V = \rV(H,\theta_H,\phi_H)$, then we have a quasi-isomorphism
    \[\rR\Gamma(\Gamma(L/K),H_{\widehat L})\simeq \rR\Gamma(\Gamma(L/K),V).\]
    
    We claim that $\Gamma(L/K)\cong \Gamma_{\geo}\rtimes\Gal(L/K)$ satisfies Axiom (1)-(4) of Tate--Sen theory for $R_{\widehat L}$ formulated in \cite[\S 5.1]{Por22}. Indeed, since $\Gamma_{\geo}$ acts on $R_{\widehat L}$ trivially, we only need to check $\Gal(L/K)$ satisfies desired axioms. Note that $R^+$ is a topologically free $\calO_K$-module by lifting $\kappa$-basis of $R^+/\pi R^+$ over $\kappa$. By equipping $R_{\widehat L}$ with the supreme norm induced by the corresponding $\calO_K$-basis of $R^+$, it suffices to check that the desired axioms are satisfied for $\widehat L$, which reduces to the example below \cite[Cor. 5.3]{Por22}. In particular, by \cite[Cor. 5.3]{Por22}, we get quasi-isomorphisms
    \[\rR\Gamma(\Gamma(L/K),H_{\widehat L})\simeq \rR\Gamma(\Gamma(L/K),H_{\widehat L}^{\rm la})\simeq \rR\Gamma(\Lie(\Gamma(L/K)),H_{\widehat L}^{\rm la})^{\Gamma(L/K)}, \]
    where $H_{\widehat L}^{\rm la}$ denotes the locally analytic vectors (cf. Definition \ref{defLAV}) in $H_{\widehat L}$ with respect to the action of $\Gamma(L/K)$, which turns out to be $H\otimes_{R}R_{\widehat L}^{\rm la}$ as $\Gamma(L/K)$ acts on $H$ analytically (cf. (\ref{Equ-GammaAction})).
    
    For any $(H_1,\theta_{H_1},\varphi_{H_1}),(H_2,\theta_{H_2},\varphi_{H_2})\in \HIG^{\nil}_*(R)$, we want to show \[\Hom((H_1,\theta_{H_1},\varphi_{H_1}),(H_2,\theta_{H_2},\varphi_{H_2}))=\Hom(\rV(H_1,\theta_{H_1},\varphi_{H_1}),\rV(H_2,\theta_{H_2},\varphi_{H_2})).\] If we write $H=H_1^{\vee}\otimes H_2$, then it suffices to show that 
    \begin{equation}\label{Equ-F-Fullyfaithful}
        H^{\phi_H = 0,\theta_1=\dots=\theta_d = 0} = \rH^0(\Gamma(L/K),H_{\widehat L})=\rH^0(\Lie(\Gamma(L/K)),H\otimes_RR^{\rm la}_{\widehat L})^{\Gamma(L/K)}.
    \end{equation}
    Recall $\Gamma(L/K)\cong (\Zp\gamma_1\oplus\dots\oplus\Zp\gamma_d\oplus\Zp\tau)\rtimes\Gal(K_{\cyc}/K)$ and let $\gamma$ be a topological generator of $\Gal(K_{\cyc}/K)$. Let $\nabla_i=\log\gamma_i$, $\nabla_{\tau}=\log\tau$ and $\nabla_{\gamma}=\frac{\log\gamma}{\log\chi(\gamma)}$. Then we have
    \[\Lie(\Gamma(L/K)) \cong \oplus_{i=1}^d\Qp\nabla_i\oplus\Qp\nabla_{\tau}\oplus\Qp\nabla_{\gamma}\]
    such that $\nabla_i$'s and $\nabla_{\tau}$ commute with each other and $[\nabla_{\gamma},\nabla_{*}] = \nabla_*$ for $*\in\{1,\dots,d,\tau\}$. Note that $\lambda\in \widehat L^{\rm la}\subset R_{\widehat L}^{\rm la}$ by (\ref{Equ-Lambda}). Using (\ref{Equ-Lambda}) and the facts that $c(\tau)=1$, $c(\gamma)=0$ and $\chi(\tau)=1$, it is easy to see that 
    \[\nabla_{\tau}(\lambda) =\lim_{m\to+\infty}\frac{\tau^{p^m}-1}{p^m}(\lambda)= -\pi E'(\pi)(\zeta_p-1)\lambda^2~\text{and}~\nabla_{\gamma}(\lambda)=\frac{1}{\log\chi(\gamma)}\lim_{m\to+\infty}\frac{\gamma^{p^m}-1}{p^m}(\lambda) = \lambda.\]
    
    We claim that 
    \[H^{\phi_H = 0,\theta_1=\dots=\theta_d = 0} \subset \rH^0(\Lie(\Gamma(L/K)),H\otimes_RR^{\rm la}_{\widehat L})^{\Gamma(L/K)}.\]
    Indeed, by (\ref{Equ-GammaAction}), the claim is an immediate consequence of that for any $x\in H$,
     \begin{equation*}
       \begin{split}
           \lim_{n\to+\infty}\frac{\tau^{p^n}-1}{p^n}(x) &= \lim_{n\to+\infty}\frac{1}{p^n}((1+\pi E'(\pi)(\zeta_p-1)\lambda p^n)^{-\frac{\phi_H}{E'(\pi)}}-1)(x)\\
           &= \lim_{n\to+\infty}\sum_{k\geq 1}\prod_{i=0}^{k-1}(\phi_H+iE'(\pi))\frac{(-\pi(\zeta_p-1)\lambda)^kp^{(k-1)n}}{k!}(x)\\
           & = -\pi(\zeta_p-1)\lambda\phi_H(x),
       \end{split}
   \end{equation*}
   and
    \begin{equation*}
       \begin{split}
           \lim_{n\to+\infty}\frac{\gamma_i^{p^n}-1}{p^n} (x)&= \lim_{n\to+\infty}\frac{1}{p^n}(\exp(-p^n(\zeta_p-1)\lambda\theta_i)-1)(x)\\
           &= \lim_{n\to+\infty}\sum_{k\geq 1}\frac{(-(\zeta_p-1)\lambda \theta_i)^kp^{(k-1)n}}{k!}(x)\\
           & = -(\zeta_p-1)\lambda\theta_i(x).
       \end{split}
   \end{equation*}
   
    Moreover, the above argument shows that for $x\in H$, we have 
    \begin{equation*}
        \begin{split}
            &\nabla_i(x) = -\lambda(\zeta_p-1)\theta_i(x),\\
            &\nabla_{\tau}(x) = -\pi\lambda(\zeta_p-1)\phi_H(x);\\
            &\nabla_{\gamma}(x) = 0.
        \end{split}
    \end{equation*}
    
    It remains to prove that
    \[ ((H\otimes_RR^{\rm la}_{\widehat L})^{\nabla_1=\dots=\nabla_{d}=0,\nabla_{\tau}=\nabla_{\gamma}=0})^{\Gamma(L/K)}\subset H^{\phi_H = 0,\theta_1=\dots=\theta_d = 0} .\]
    For this, we have to apply some results in \S\ref{Sect-Sen operator} (whose proofs certainly do not rely on this proposition).
    
    Let $\nabla_{\rm Sen}$ be the operator in the proof of Theorem \ref{Kummer Sen}. Then we have 
    \[
    ((H\otimes_RR^{\rm la}_{\widehat L})^{\nabla_1=\dots=\nabla_{d}=0,\nabla_{\tau}=\nabla_{\gamma}=0})^{\Gamma(L/K)}\subset ((H\otimes_RR^{\rm la}_{\widehat L})^{\nabla_1=\dots=\nabla_{d}=0,\nabla_{\rm Sen}=0})^{\Gamma(L/K)}.
    \]
    The right hand side is equal to  \begin{equation*}
        \begin{split}
        ((H\otimes_RR^{\rm la}_{\widehat L})^{\theta_1=\dots=\theta_{d}=0,\phi_{H}=0})^{\Gal(L/K)}&=(((H\otimes_RR^{\rm la}_{\widehat L})^{\theta_1=\dots=\theta_{d}=0,\phi_{H}=0})^{\Gal(L/K_{\infty})})^{\Gal(L/K_{\cyc})}\\
        &=((H\otimes_RR_{K_{\infty}})^{\theta_1=\dots=\theta_d=0,\phi_H=0})^{\Gal(L/K_{\cyc})}\\
        &=(H^{\theta_1=\dots=\theta_d=0,\phi_H=0}\otimes_RR_{K_{\infty}})^{\Gal(L/K_{\cyc})}
        \end{split}
      \end{equation*}
    where the second equality is due to $R_{\widehat L}^{{\rm la},\Gal(L/K_{\infty})=1}=R_{K_{\infty}}$ by Proposition \ref{R-la} and the third equality is due to that $R\to R_{K_{\infty}}$ is faithfully flat.

    Write $(H^{\theta_1=\dots=\theta_d=0,\phi_H=0}\otimes_RR_{K_{\infty}})^{\Gal(L/K_{\cyc})}$ as 
    \[(H^{\theta_1=\dots=\theta_d=0,\phi_H=0}\otimes_RR_{K_{\infty}})^{\Gal(L/K_{\cyc})} = \cup_m (H^{\theta_1=\dots=\theta_d=0,\phi_H=0}\otimes_RR_{K(\pi_m)})^{\Gal(K(\pi_m,\zeta_{p^m})/K(\zeta_{p^m}))}.\]
    For each $m$, we see that \[(H^{\theta_1=\dots=\theta_d=0,\phi_H=0}\otimes_RR_{K(\pi_m,\zeta_{p^m})})^{\Gal(K(\pi_m,\zeta_{p^m})/K(\zeta_{p^m}))}=H^{\theta_1=\dots=\theta_d=0,\phi_H=0}\otimes_RR_{K(\zeta_{p^m})}\] by Galois descent. Then
    \[\begin{split}
    &(H^{\theta_1=\dots=\theta_d=0,\phi_H=0}\otimes_RR_{K(\pi_m)})^{\Gal(K(\pi_m,\zeta_{p^m})/K(\zeta_{p^m}))}\\
    =&H^{\theta_1=\dots=\theta_d=0,\phi_H=0}\otimes_RR_{K(\pi_m)}\cap H^{\theta_1=\dots=\theta_d=0,\phi_H=0}\otimes_RR_{K(\zeta_{p^m})}
    \end{split}\]
    which is exactly $H^{\theta_1=\dots=\theta_d=0,\phi_H=0}$. Hence we obtain that 
    \[((H\otimes_RR^{\rm la}_{\widehat L})^{\nabla_1=\dots=\nabla_{d}=0,\nabla_{\rm Sen}=0})^{\Gamma(L/K)}=H^{\theta_1=\dots=\theta_d=0,\phi_H=0}.\]
    So we are done.
  \end{proof}
  \begin{rmk}
    The idea of using   Lie algebra cohomology to prove the above proposition is due to Hui Gao.
\end{rmk}
 \begin{construction}\label{Construction-F} 
   Assume $\frakX$ is a smooth $p$-adic formal scheme over $\calO_K$ with generic fiber $X$.
   For any enhanced Higgs bundle $(\calH,\theta_{\calH},\phi_{\calH})\in\HIG^{\nil}_*(X)$, define $\rF(\calH,\theta_{\calH},\phi_{\calH}) \in \HIG_{G_K}(X_C)$ as follows: 
   \begin{enumerate}
       \item The underlying Higgs bundle on $X_{C,\et}$ is $(\calH\otimes_{\calO_{\frakX}}\calO_{X_C},-(\zeta_p-1)\lambda\theta_{\calH})$.
       
       \item For any local section $x$ of $\calH$ and for any $g\in G_K$, we have 
       \begin{equation}\label{Equ-GaloisAction-F}
           g(x) = (1+\pi E'(\pi)(\zeta_p-1)\lambda c(g))^{-\frac{\phi_H}{E'(\pi)}}(x). 
       \end{equation}
   \end{enumerate}
 \end{construction}
 \begin{prop}\label{Prop-F-welldefined}
   The functor $\rF: \HIG^{\nil}_*(X)\to \HIG_{G_K}(X_C)$ is a well-defined fully faithful functor, which preserves ranks, tensor products and dualities.
 \end{prop}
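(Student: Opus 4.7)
The plan is to reduce everything to the small affine local case already handled by Lemma \ref{Lem-F-Local} and Proposition \ref{Prop-F-FullyFaithful}, and then glue. First, to see that $\rF(\calH,\theta_{\calH},\phi_{\calH})$ is a well-defined object of $\HIG_{G_K}(X_C)$, observe that the formula (\ref{Equ-GaloisAction-F}) is local on $\frakX$ and expands as a $p$-adically convergent series
\[
(1+\pi E'(\pi)(\zeta_p-1)\lambda c(g))^{-\phi_{\calH}/E'(\pi)} = \sum_{n\geq 0}\binom{-\phi_{\calH}/E'(\pi)}{n}(\pi E'(\pi)(\zeta_p-1)\lambda c(g))^{n},
\]
convergent thanks to the assumption $\lim_{n\to\infty}\prod_{i=0}^{n-1}(\phi_{\calH}+iE'(\pi))=0$ in Definition \ref{Dfn-EnhancedHiggsBundle}. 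The underlying Higgs field $(\calH\otimes\calO_{X_C},-(\zeta_p-1)\lambda\theta_{\calH})$ is trivially well-defined.

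Now choose an \'etale cover of $\frakX$ by small affines $\frakU_i=\Spf(R_i^+)$, each admitting a framing $\square_i$. On each $\frakU_i$, Lemma \ref{Lem-F-Local} expresses $\rF$ as the composite
\[
\HIG^{\nil}_*(R_i)\xrightarrow{\rV}\Rep_{\Gamma(\overline K/K)}(\widehat R_{i,C,\infty})\xrightarrow{\text{Thm.\ \ref{Thm-SimpsonLocal}}}\HIG_{G_K}(R_{i,C}),
\]
in which the first arrow is genuinely a functor into $\Gamma(\overline{K}/K)$-representations by Lemma \ref{Lem-Restriction} and the second is the local Simpson equivalence. The cocycle identity for the $G_K$-action on $\rF(\calH,\theta_{\calH},\phi_{\calH})$ and its compatibility with $-(\zeta_p-1)\lambda\theta_{\calH}$ are therefore automatic locally; they use the transformation law (\ref{Equ-Lambda}) for $\lambda$, the cocycle identity $c(g_1g_2)=c(g_1)+\chi(g_1)c(g_2)$, and the bracket relation $[\phi_{\calH},\theta_{\calH}]=-E'(\pi)\theta_{\calH}$. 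Since the global formulas defining $\rF$ make no reference to a framing, the local data on the $\frakU_i$'s automatically agree on overlaps and glue to a global object of $\HIG_{G_K}(X_C)$.

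For full faithfulness, note that $\Hom$-groups in both $\HIG^{\nil}_*(X)$ and $\HIG_{G_K}(X_C)$ are computed as global sections of sheaves on $X_{\et}$, so it suffices to check that $\rF$ is fully faithful after restriction to each small affine $X_i=\Spa(R_i,R_i^+)$. On each $X_i$ this is exactly Proposition \ref{Prop-F-FullyFaithful}, which establishes the full faithfulness of $\rV$ (and hence of the composite defining $\rF$). Preservation of ranks is immediate since the underlying $\calO_{X_C}$-module is just the base change of $\calH$; tensor products and dualities are preserved because both the formula $-(\zeta_p-1)\lambda\theta_{\calH}$ for the Higgs field and the exponential formula (\ref{Equ-GaloisAction-F}) for the Galois action are compatible with the natural tensor/dual operations on $(\theta_{\calH},\phi_{\calH})$ in $\HIG^{\nil}_*(X)$, a routine check. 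The main obstacle in the proof is verifying intrinsically (without choosing a framing) that the formula (\ref{Equ-GaloisAction-F}) really defines a continuous $G_K$-action compatible with the Higgs field; this is handled by invoking the local computation in Lemma \ref{Lem-F-Local}, where everything was already verified by matching with the prismatic stratification via Lemma \ref{Lem-Function}.
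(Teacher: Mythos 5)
Your proof is correct, and for the full faithfulness part it follows essentially the same route as the paper: reduce to a cover by small affines and invoke Proposition \ref{Prop-F-FullyFaithful}. Where you genuinely diverge is on well-definedness. The paper verifies directly, by an explicit global computation on local sections, that (\ref{Equ-GaloisAction-F}) satisfies the cocycle identity $g_1(g_2(x))=(g_1g_2)(x)$ (using the transformation law (\ref{Equ-Lambda}) for $\lambda$ together with $c(g_1g_2)=c(g_1)+\chi(g_1)c(g_2)$) and that $-(\zeta_p-1)\lambda\theta_{\calH}$ is $G_K$-equivariant (using $[\phi_{\calH},\theta_i]=-E'(\pi)\theta_i$ via (\ref{Equ-GeneralFormulae})). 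You instead observe that on each small affine the global formula coincides, by Lemma \ref{Lem-F-Local}, with the output of the composite of $\rV$ with the local Simpson equivalence of Theorem \ref{Thm-SimpsonLocal}, which is already known to land in $\HIG_{G_K}$, and then glue using the framing-independence of the formula. Both arguments are valid; the paper's direct check has the advantage of being self-contained and never routing well-definedness through the framing-dependent local constructions, whereas your version trades the computation for a citation and makes the global well-definedness logically rest on Lemma \ref{Lem-F-Local}, whose own proof is where the matching of formulas is actually carried out. Two minor points you should make explicit: (i) in the full faithfulness reduction, identifying global Hom's with the equalizer requires the statement over the double overlaps $X_{ij}$ as well (harmless, since by separatedness the $\frakX_{ij}$ are again small affine, as the paper notes); (ii) your convergence remark for the binomial series is correct but should be normalized as in (\ref{Equ-Stratification}), i.e. $(1+w)^{-\phi_{\calH}/E'(\pi)}=\sum_{k\ge 0}\prod_{i=0}^{k-1}(\phi_{\calH}+iE'(\pi))\frac{(-w/E'(\pi))^k}{k!}$ with $w=\pi E'(\pi)(\zeta_p-1)\lambda c(g)$, which converges precisely by condition (a) of Definition \ref{Dfn-EnhancedHiggsBundle}.
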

 \begin{proof}
   We first show that $\rF$ is well-defined. More precisely, we have to check that (\ref{Equ-GaloisAction-F}) induces a $G_K$-action on $\calH\otimes_{\calO_{\frakX}}\calO_{X_C}$ and the Higgs field $-(\zeta_p-1)\lambda\theta_H$ is $G_K$-equivariant.
   
   For any $g_1,g_2\in G_K$ and any local section $x$, we have 
   \begin{equation*}
       \begin{split}
           g_1(g_2(x)) = & g_1((1+\pi E'(\pi)(\zeta_p-1)\lambda c(g_2))^{-\frac{\phi_{\calH}}{E'(\pi)}}(x))\\
           =& (1+\pi E'(\pi)(\zeta_p^{\chi(g_1)}-1)g_1(\lambda) c(g_2))^{-\frac{\phi_{\calH}}{E'(\pi)}}(g_1(x))\quad (\text{as~}\phi_{\calH}~\text{is~defined~over}~\calH)\\
           =&(1+\chi(g_1)c(g_2)\frac{\pi E'(\pi)(\zeta_p-1)\lambda}{1+\pi E'(\pi)(\zeta_p-1)\lambda c(g_1)} )^{-\frac{\phi_{\calH}}{E'(\pi)}}(g_1(x))\quad (\text{by~(\ref{Equ-Lambda})})\\
           = &((1+\chi(g_1)c(g_2)\frac{\pi E'(\pi)(\zeta_p-1)\lambda}{1+\pi E'(\pi)(\zeta_p-1)\lambda c(g_1)} )(1+\pi E'(\pi)(\zeta_p-1)\lambda c(g_1)))^{-\frac{\phi_{\calH}}{E'(\pi)}}(x)\\
           =& (1+\pi E'(\pi)(\zeta_p-1)(\lambda c(g_1)+ \lambda\chi(g_1)c(g_2)))^{-\frac{\phi_{\calH}}{E'(\pi)}}(x)\\
           =& (1+\pi E'(\pi)(\zeta_p-1)\lambda c(g_1g_2))^{-\frac{\phi_{\calH}}{E'(\pi)}}(x)\\
           =&(g_1g_2)(x).
       \end{split}
   \end{equation*}
   So (\ref{Equ-GaloisAction-F}) is a well-defined $G_K$-action on $\rF(\calH,\theta_{\calH},\phi_{\calH})$.
   
   It remains to check that $-(\zeta_p-1)\lambda\theta_{\calH}$ is $G_K$-equivariant. For any $g\in G_K$ and any local section $x$, if we write $\theta_{\calH}(x) = \sum_{i=1}^d\theta_i(x)\otimes\frac{\dlog T_i}{t}$, then we have
   \begin{equation*}
       \begin{split}
           &g(-(\zeta_p-1)\lambda\theta_{\calH}(x))\\
           =&-(\zeta_p^{\chi(g)}-1)g(\lambda)g(\sum_{i=1}^d\theta_i(x)\otimes\frac{\dlog T_i}{t})\\
           =& -\chi(g)^{-1}(\zeta_p^{\chi(g)}-1)g(\lambda)\sum_{i=1}^dg(\theta_i(x))\otimes\frac{\dlog T_i}{t}\\
           =& -\chi(g)^{-1}(\zeta_p^{\chi(g)}-1)g(\lambda)\sum_{i=1}^d(1+\pi E'(\pi)(\zeta_p-1)\lambda c(g))^{-\frac{\phi_{\calH}}{E'(\pi)}}(\theta_i(x))\otimes\frac{\dlog T_i}{t}\\
           =& -(\zeta_p-1)\lambda(1+\pi E'(\pi)(\zeta_p-1)\lambda c(g))^{-1}\sum_{i=1}^d(1+\pi E'(\pi)(\zeta_p-1)\lambda c(g))^{-\frac{\phi_{\calH}}{E'(\pi)}}(\theta_i(x))\otimes\frac{\dlog T_i}{t}\quad(\text{by~(\ref{Equ-Lambda})})\\
           =& -(\zeta_p-1)\lambda\sum_{i=1}^d(1+\pi E'(\pi)(\zeta_p-1)\lambda c(g))^{-\frac{\phi_{\calH}}{E'(\pi)}-1}(\theta_i(x))\otimes\frac{\dlog T_i}{t}
       \end{split}
   \end{equation*}
   and that
   \begin{equation*}
       \begin{split}
           & -(\zeta_p-1)\lambda\theta_{\calH}(g(x))\\
           =& -(\zeta_p-1)\lambda\theta_{\calH}((1+\pi E'(\pi)(\zeta_p-1)\lambda c(g))^{-\frac{\phi_{\calH}}{E'(\pi)}}(x))\\
           = &-(\zeta_p-1)\lambda\sum_{i=1}^d\theta_i((1+\pi E'(\pi)(\zeta_p-1)\lambda c(g))^{-\frac{\phi_{\calH}}{E'(\pi)}}(x))\otimes\frac{\dlog T_i}{t}.
       \end{split}
   \end{equation*}
   Since $[\phi_{\calH},\theta_{\calH}] = -E'(\pi)\theta_{\calH}$ (and hence $[\phi_{\calH},\theta_i] = -E'(\pi)\theta_i$ for all $i$), by (\ref{Equ-GeneralFormulae}), we see that 
   \[(1+\pi E'(\pi)(\zeta_p-1)\lambda c(g))^{-\frac{\phi_{\calH}}{E'(\pi)}-1}(\theta_i(x)) = \theta_i((1+\pi E'(\pi)(\zeta_p-1)\lambda c(g))^{-\frac{\phi_{\calH}}{E'(\pi)}}(x)),\]
   which shows that $-(\zeta_p-1)\lambda\theta_{\calH}$ is compatible with $G_K$-action. 
   So we conclude that $\rF$ is well-defined. By its construction, we see that $\rF$ preserves ranks, tensor products and dualities.

   It remains show that $\rF$ is fully faithful. It suffices to show that there is an isomorphism
   \[\rH^0(X, \HIG(\calH,\theta_{\calH},\phi_{\calH}))\cong \rH^0(X_C,\HIG(\rF(\calH,\theta_{\calH},\phi_{\calH})))^{G_K},\]
   where $\HIG(\rF(\calH,\theta_{\calH},\phi_{\calH})) = \HIG(\calH\otimes_{\calO_{\frakX}}\calO_{X_C},-(\zeta_p-1)\lambda\theta_H)$ denotes the Higgs complex induced by $\rF(\calH,\theta_{\calH},\phi_{\calH})$.
   
   Let $\{\frakX_i\to\frakX\}_{i\in I}$ be a covering of $\frakX$ by small affines $\frakX_i = \Spf(R_i^+)$ and for any $i,j\in I$, let $\frakX_{ij} = \frakX_i\times_{\frakX}\frakX_j = \Spf(R_{ij}^+)$. Let $X_i$ be the generic fibers of $\frakX_i$ and $X_{i,C}$ be its base-change to $C$. Similarly define $X_{ij}$ and $X_{ij,C}$. By noting that 
   \[\rH^0(X, \HIG(\calH,\theta_{\calH},\phi_{\calH})) = \Ker(\prod_{i\in I}\rH^0(X_i, \HIG(\calH,\theta_{\calH},\phi_{\calH})_{\mid  X_i})\to\prod_{i,j\in I}\rH^0(X_{ij}, \HIG(\calH,\theta_{\calH},\phi_{\calH}))_{\mid X_{ij}})\]
   and that 
   \[\begin{split}
       &\rH^0(X_C,\HIG(\rF(\calH,\theta_{\calH},\phi_{\calH})))^{G_K}\\
       =&\Ker(\prod_{i\in I}\rH^0(X_{i,C},\HIG(\rF(\calH,\theta_{\calH},\phi_{\calH}))_{\mid X_{i,C}})^{G_K})\to \prod_{i,j\in I}\rH^0(X_{ij,C},\HIG(\rF(\calH,\theta_{\calH},\phi_{\calH}))_{\mid X_{ij,C}})^{G_K},
   \end{split}\]
    it suffices to show that for $* = i$ or $ij$, 
   \[\rH^0(X_*, \HIG(\calH,\theta_{\calH},\phi_{\calH})_{\mid  X_*})\cong\rH^0(X_{*,C},\HIG(\rF(\calH,\theta_{\calH},\phi_{\calH}))_{\mid X_{*,C}})^{G_K}.\]
   Then Proposition \ref{Prop-F-FullyFaithful} applies (as each $\frakX_*$ is small affine). We are done.
 \end{proof}
 \begin{cor}\label{Cor-IndenpendenceChart}
   Assume $\frakX = \Spf(R^+)$ is small affine. Then the equivalence \[\Vect((R^+)_{\Prism},\overline \calO_{\Prism}[\frac{1}{p}])\simeq \HIG^{\nil}_*(R)\]
   constructed in Theorem \ref{Thm-HTasHiggs-Local} is independent of the framing on $\frakX$.
 \end{cor}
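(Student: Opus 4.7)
The plan is to exploit the fully faithfulness of $\rF$ established in Proposition \ref{Prop-F-welldefined} together with the fact that the bottom row of Diagram (\ref{Diag-CommutativeDiagram}) is manifestly framing-independent. Concretely, let $\square_1$ and $\square_2$ be two framings on $\frakX = \Spf(R^+)$, with associated Breuil--Kisin prisms $(\frakS(R^+)_{\square_1},(E))$ and $(\frakS(R^+)_{\square_2},(E))$, and let $\rho_{\square_1}$ and $\rho_{\square_2}$ be the corresponding equivalences provided by Theorem \ref{Thm-HTasHiggs-Local}. I want to show that $\rho_{\square_1} \simeq \rho_{\square_2}$ canonically.

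First, I would record that the functor $\Res$ from $\Vect((R^+)_{\Prism},\overline\calO_{\Prism}[\frac{1}{p}])$ to $\Vect((R^+)_{\Prism}^{\perf},\overline\calO_{\Prism}[\frac{1}{p}])$, being induced by the inclusion of sites $(R^+)_{\Prism}^{\perf} \subset (R^+)_{\Prism}$, is intrinsic and does not depend on any framing. Combining this with the globally defined equivalences of Theorem \ref{Thm-EtaleRealisation} and Theorem \ref{Thm-Simpson}, the entire composition
\[
\Vect((R^+)_{\Prism},\overline\calO_{\Prism}[\tfrac{1}{p}]) \xrightarrow{\Res} \Vect((R^+)_{\Prism}^{\perf},\overline\calO_{\Prism}[\tfrac{1}{p}]) \xrightarrow{\simeq} \Vect(X_{\proet},\widehat\calO_X) \xrightarrow{\simeq} \HIG_{G_K}(X_C)
\]
is independent of the framing. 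I emphasize that the independence of the middle arrow (the \'etale realisation) from the framing was already verified at the end of Example \ref{Exam-PerfHTasGeneRep}.

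Next, I would invoke the local commutativity of Diagram (\ref{Diag-CommutativeDiagram}) established via Lemma \ref{Lem-Restriction} and Construction \ref{Construction-F-Local}: for each framing $\square_k$ the composition $\rF \circ \rho_{\square_k}$ coincides with the framing-independent composition displayed above. Consequently
\[
\rF \circ \rho_{\square_1} \;\simeq\; \rF \circ \rho_{\square_2}
\]
as functors from $\Vect((R^+)_{\Prism},\overline\calO_{\Prism}[\frac{1}{p}])$ to $\HIG_{G_K}(X_C)$, via a canonical natural isomorphism. Since $\rF : \HIG^{\nil}_*(R) \to \HIG_{G_K}(X_C)$ is fully faithful by Proposition \ref{Prop-F-welldefined}, it reflects isomorphisms of functors on its essential image, so there is a unique natural isomorphism $\rho_{\square_1} \simeq \rho_{\square_2}$ lifting the one above. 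This gives the desired independence statement.

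The main obstacle, which is essentially already overcome in the preceding development, is the verification that $\rF$ is fully faithful; everything in this corollary is formal once that input is in hand, together with the local explicit description of $\Res$ given by Lemma \ref{Lem-Restriction}. No further computation should be required, and this argument is precisely what enables the local equivalences $\rho_{\square}$ to glue into the global equivalence $\rho$ that appears in Theorem \ref{Thm-HTasHiggs-Global}.
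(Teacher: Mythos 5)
Your proposal is correct and follows essentially the same route as the paper: both arguments identify $\rF\circ\rho_{\square_1}$ and $\rF\circ\rho_{\square_2}$ with the framing-independent composite through $\Res$ and the global equivalences, and then use the full faithfulness of $\rF$ to descend the resulting natural isomorphism to one between $\rho_{\square_1}$ and $\rho_{\square_2}$. The only cosmetic difference is that the paper phrases the last step via an explicit chain of Hom-set identifications while you phrase it as $\rF$ reflecting isomorphisms of functors; these are the same argument.
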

 \begin{proof}
   Let $\rho_1$ and $\rho_2$ be equivalences from $\Vect((R^+)_{\Prism},\overline \calO_{\Prism}[\frac{1}{p}])$ to $\HIG^{\nil}_*(R)$ corresponding to the framings $\Box_1$ and $\Box_2$ on $R^+$ in the sense of Theorem \ref{Thm-HTasHiggs-Local}. Note that in this case, Theorem \ref{Thm-HTasHiggs-Global} holds true and in particular, the diagram (\ref{Diag-CommutativeDiagram}) commutes. So we see that 
   \[\rF\circ\rho_1=\rF\circ\rho_2,\]
   which coincides with the composition
   \[\Vect((R^+)_{\Prism},\overline \calO_{\Prism}[\frac{1}{p}])\xrightarrow{\Res}\Vect((R^+)_{\Prism}^{\perf},\overline \calO_{\Prism}[\frac{1}{p}])\xrightarrow{\simeq}\Vect(X_{\proet},\OX)\xrightarrow{\simeq}\HIG_{G_K}(X_C),\]
   where $X = \Spa(R,R^+)$ denotes the generic fiber of $\frakX$.
   We denote this composition by $\rT$. It is clearly independent of the choice of framings on $R^+$. 
   
   By Proposition \ref{Prop-F-FullyFaithful}, both $\rF$ and $\rT$ are fully faithful.
   Therefore, for any $\bM\in \Vect((R^+)_{\Prism},\overline \calO_{\Prism}[\frac{1}{p}])$, we have
   \[\Hom(\bM,\bM) = \Hom(\rT(\bM),\rT(\bM)) = \Hom(\rF(\rho_1(\bM)),\rF(\rho_2(\bM))) = \Hom(\rho_1(\bM),\rho_2(\bM)).\]
   So the identity $\id_{\bM}$ provides a canonical isomorphism between $\rho_1(\bM)$ and $\rho_2(\bM)$. We win.
 \end{proof}

\subsection{Proof of Theorem \ref{Thm-HTasHiggs-Global}}
 
 Now, we are going to prove Theorem \ref{Thm-HTasHiggs-Global} by noting that both $\Vect((\frakX)_{\Prism},\overline \calO_{\Prism}[\frac{1}{p}])$ and $\HIG^{\nil}_*(X)$ are indeed \'etale stacks.
 
 We fix a covering $\{\frakX_i\to\frakX\}_{i\in I}$ of $\frakX$ by small affine $\frakX_i = \Spf(R_i^+)$ and let $\frakX_{ij} = \frakX_i\times_{\frakX}\frakX_j = \Spf(R_{ij}^+)$ for all $i,j\in I$. For any $\bM\in \Vect((\frakX)_{\Prism},\overline \calO_{\Prism}[\frac{1}{p}])$, denote by $\bM_i$ its restriction to $(\frakX_i)_{\Prism}$ for any $i\in I$. Then we get canonical isomorphisms 
 $\iota_{ij}: \bM_{i\mid_{(\frakX_{ij})_{\Prism}}}\xrightarrow{\cong}\bM_{j\mid_{(\frakX_{ij})_{\Prism}}}$
 satisfying the cocycle condition.
 
 Now applying Theorem \ref{Thm-HTasHiggs-Local}, each $\bM_i$ induces an enhanced Higgs bundle $(\calH_i,\theta_{\calH_i},\phi_{\calH_i})$ in $\HIG^{\nil}_*(X_i)$. By Corollary \ref{Cor-IndenpendenceChart}, the isomorphisms $\iota_{ij}$'s induce isomorphisms 
 \[\rho_{ij}: (\calH_i,\theta_{\calH_i},\phi_{\calH_i})_{\mid_{X_{ij}}}\xrightarrow{\cong}(\calH_j,\theta_{\calH_j},\phi_{\calH_j})_{\mid_{X_{ij}}}\]
 satisfying the cocycle condition. Therefore, we get an enhanced Higgs bundle $(\calH,\theta_{\calH},\phi_{\calH})\in \HIG^{\nil}_*(X)$ whose restriction to $X_i$ coincides with $(\calH_i,\theta_{\calH_i},\phi_{\calH_i})$. This induces a functor 
 \[\rho:\Vect((\frakX)_{\Prism},\overline \calO_{\Prism}[\frac{1}{p}])\to\HIG^{\nil}_*(X).\]

 As $\rho$ is now globally defined and is \'etale locally an equivalence by Theorem \ref{Thm-HTasHiggs-Local}, we see that $\rho$ is an equivalence. Now the functors in Diagram \ref{Diag-CommutativeDiagram} are all globally defined and it is commutative \'etale locally. So we must have Diagram \ref{Diag-CommutativeDiagram} commutes globally.
\begin{rmk}
    If we start with a smooth $p$-adic formal scheme $\frakX$ over $\calO_C$, under some deformation condition on $\frakX$, Theorem \ref{Thm-HTasHiggs-Global} also holds true for ``small Hodge--Tate crystals'' and ``small Higgs bundles'', by using the overconvergent period sheaf $\calO\bC^{\dagger}$ constructed in \cite{Wan23} to replace $\OC$. Here, ``small Hodge--Tate'' crystals should be understood as rational Hodge--Tate crystals admitting $\overline \calO_{\Prism}$-lattices which are ``close'' to $\overline \calO^l_{\Prism}$ for some $l\geq 0$. Similar remark applies to ``small Higgs bundles''. A similar but much more stronger result is also obtained by Tsuji and we thank him for telling us this. 
\end{rmk}

\newpage

\section{The Sen operator}\label{Sect-Sen operator}
Let $\frakX=\Spf(R^+)$ be a small smooth $p$-adic formal scheme over $\calO_K$. Given a rational Hodge--Tate crystal $\bM\in \Vect((R^+)_{\Prism},\overline\calO_{\Prism}[\frac{1}{p}])$, there is an associated enhanced Higgs bundle $(H,\theta_H,\phi_H)$ by Proposition \ref{Prop-HTvsHiggs-Local}. The goal of this section is to prove that the linear operator $\phi_H$ is essentially the classical Sen operator in the cyclotomic case (after extending scalar). This result has been used in Section \ref{inverse Simpson} to prove a global version of Proposition \ref{Prop-HTvsHiggs-Local} in the $p$-inverted case and construct a global inverse Simpson functor at the same time. 

When $R^+=\calO_K$, the coincidence of the operators is \cite[Theorem 8.2]{GMW-HT}. We will follow the strategy in loc.cit and use the theory of locally analytic vectors. For the basics of the theory of locally analytic vectors, we refer to \cite[Section 2]{BC16} and \cite[Section 6]{GMW-HT}.
 
 \subsection{Sen theory in the cyclotomic case}\label{SSec-cyclotomic Sen}
Recall that in \cite{Sen}, Sen established equivalences of the following categories
 \[\Rep_{\Gal(K_{\cyc}/K)}(K_{\cyc}) \to \Rep_{\Gal(K_{\cyc}/K)}(\widehat K_{\cyc})(\simeq \Rep_{G_K}(C))\]
 which preserves ranks, tensor products and dualities.
 Using this, he constructed a faithful functor
 from $\Rep_{G_K}(C)$ to $\Mod_*(C)$, the category of pairs $(V,\phi_V)$ consisting of a finite dimensional $C$-space $V$ and an endomorphism $\phi_V\in \End_{C}(V)$ such that for any $V\in \Rep_{G_K}(C)$ with the corresponding pair $(W,\phi_W)$, there exists a $G_K$-equivariant isomorphism 
 \[\rR\Gamma(G_K,V)\otimes_KC\simeq [W\xrightarrow{\phi_W}W].\]
 
 We briefly review the construction of Sen. For any $V\in \Rep_{G_K}(C)$, one can regard it as a representation in $\Rep_{\Gal(K_{\cyc}/K)}(\widehat K_{\cyc})$ and denote by $V_0$ the corresponding $K_{\cyc}$-representation of $\Gal(K_{\cyc}/K)$. Then $\phi_V$ is the unique endomorphism of $V_0$ such that for any $v\in V_0$, there exists an open subgroup $H_{v_0}$ of $\Gal(K_{\cyc}/K)$ such that for any $\gamma\in H_{v_0}$, 
 \begin{equation}\label{Equ-ActionSen}
     \gamma(v_0) = \exp(\phi_V\log\chi(\gamma))(v_0).
 \end{equation}
  
 Note that $\Gal(K_{\cyc}/K)$ is a $p$-adic Lie group. It is easy to see that $V_0 = V^{\rm la}$, the subspace of $V$ consisting of locally analytic vectors with respect to the action of $\Gal(K_{\cyc}/K)$. Then $\phi_V$ is the generator of the Lie algebra of $\Gal(K_{\cyc}/K)$. See \cite{BC16} for more discussions.
 
 The result of Sen can be generalised to the geometric case. Assume $X = \Spa(R,R^+)$ is smooth over $K$ and admits a toric chart. In \cite{Shi}, Shimizu showed that for any $V\in \Rep^{\rm free}_{\Gal(K_{\cyc}/K)}(R_{\widehat K_{\cyc}})$ with associated $V_0\in \varinjlim_n\Rep^{\rm free}_{\Gal(K_{\cyc}/K)}(R_{K(\zeta_{p^n})})$ (cf. Theorem \ref{Thm-ArithDecompletion} and Remark \ref{Rmk-Decompletion}), there exists a unique endomorphism $\phi_V$ of $V_0$ such that (\ref{Equ-ActionSen}) is still true for any $v_0\in V_0$ and some open subgroup $H_{v_0}\subset \Gal(K_{\cyc}/K)$.
 His result was generalised to any smooth quasi-compact rigid analytic spaces $X$ over $K$ by Petrov in \cite{Pet}. Let us fix some notations.
 
 \begin{notation}\label{Notation-RingedSpace}
For any smooth quasi-compact rigid analytic space $X$ over $K$, let $\calX$ be the ringed space $(X,\calO_X\otimes_KK_{\cyc})$. Note that there exists an obvious way to assign a vector bundle on $X_{\widehat K_{\cyc}}$ (with a continuous $\Gal(K_{\cyc}/K)$-action) to a vector bundle on $\calX$ (with a continuous $\Gal(K_{\cyc}/K)$-action.). Let $\Vect_{\Gal(K_{\cyc}/K)}(\calX)$ denote the category of vector bundles on $\calX$ with continuous $\Gal(K_{\cyc}/K)$-actions.
 \end{notation}
 \begin{prop}[\emph{\cite[Prop. 3.2]{Pet}}]\label{Prop-Petrov}
   There exists an equivalence of categories
   \[\Vect_{\Gal(K_{\cyc}/K)}(\calX)\simeq\Vect_{\Gal(K_{\cyc}/K)}(X_{\widehat K_{\cyc}})\]
   which preserves ranks, tensor products and dualities. Here $\Vect_{\Gal(K_{\cyc}/K)}(X_{\widehat K_{\cyc}})$ denotes the category of vector bundles on $X_{\widehat K_{\cyc}}$ with continuous $\Gal(K_{\cyc}/K)$-actions. Moreover, for any $\calE\in \Vect_{\Gal(K_{\cyc}/K)}(\calX)$, there exists a unique endomorphism $\phi_{\calE}$ of $\calE$ such that for any affinoid $U = \Spa(R,R^+)\subset X$ and for any section $x\in\calE(U)$, there exists an open subgroup $H_x\subset \Gal(K_{\cyc}/K)$ such that for any $\gamma\in H_x$, 
   \begin{equation}\label{Equ-Petrov}
       \gamma(x) = \exp(\phi_{\calE}\log\chi(\gamma))(x).
   \end{equation}
 \end{prop}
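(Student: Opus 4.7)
The plan is to prove the equivalence by base-change, verify it étale-locally via Shimizu-style decompletion, and then construct the Sen operator from the Lie algebra action of the cyclotomic Galois group.

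I would send $\calE \in \Vect_{\Gal(K_\cyc/K)}(\calX)$ to $\calE \otimes_{\calO_\calX} \widehat{\calO}_{X_{\widehat K_\cyc}}$, using the natural map $\calO_\calX = \calO_X \otimes_K K_\cyc \to \widehat{\calO}_{X_{\widehat K_\cyc}}$. To show this is an equivalence I would cover $X$ by quasi-compact opens $U = \Spa(R, R^+)$ admitting a toric chart. On such a $U$, a $\Gal(K_\cyc/K)$-equivariant vector bundle on $\calX|_U$ is the same as a finite projective $R \otimes_K K_\cyc$-module with continuous semilinear $\Gal(K_\cyc/K)$-action (which descends to some $R_n = R \otimes_K K(\zeta_{p^n})$), while a $\Gal$-equivariant vector bundle on $(X_{\widehat K_\cyc})|_{U_{\widehat K_\cyc}}$ is an object of $\Rep_{\Gal(K_\cyc/K)}(\widehat R_\infty)$. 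The local equivalence is then exactly Theorem \ref{Thm-ArithDecompletion} (with $\Gamma_\geo$ acting trivially, so $\Gamma$ acts through the quotient $\Gal(K_\cyc/K)$), and compatibility with ranks, tensor products and dualities is immediate from the explicit description of models.

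To produce the Sen operator $\phi_\calE$, I would work locally with a model $V_0$ defined over some $R_n$. For $n$ sufficiently large, $\Gal(K_\cyc/K(\zeta_{p^n}))$ is pro-cyclic with topological generator $\gamma_0$, and after further enlarging $n$ the endomorphism $\gamma_0^{p^m} - 1$ of the finite projective $R_n$-module $V_0$ becomes topologically nilpotent for $m \gg 0$. One then sets
\[
\phi_\calE \;=\; \frac{\log(\gamma_0^{p^m})}{p^m \log \chi(\gamma_0)} \;\in\; \End_{R_n}(V_0),
\]
where $\log(\gamma_0^{p^m}) = -\sum_{k\geq 1}(1-\gamma_0^{p^m})^k/k$ converges by topological nilpotency. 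This endomorphism is independent of both $m$ and the choice of $\gamma_0$ (comparing two choices amounts to a trivial computation with $p$-adic logarithms). On any sufficiently small open subgroup $H_x \subset \Gal(K_\cyc/K(\zeta_{p^n}))$, every element has the form $\gamma_0^{c}$ with $c \in p^m\Zp$, so \eqref{Equ-Petrov} holds by construction, and this property characterises $\phi_\calE$ uniquely.

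The main obstacle is gluing: one must check that both the local equivalence and the local operator are independent of the chosen toric chart and patch along standard étale localisations of $X$. Both statements reduce to the fact that the triple $(\{R_n\}, \widehat R_\infty, \Gal(K_\cyc/K))$ is stably decompleting (Lemma \ref{Lem-arithdecompletion}), so that restriction to rational subdomains and finite étale covers preserves the decompletion system. Combined with the essential uniqueness of good models in a decompletion system (Remark \ref{Rmk-Decompletion}) and the uniqueness characterisation \eqref{Equ-Petrov} of the Sen operator, this forces the local data to glue to the desired global vector bundles and endomorphisms with the stated functorial properties.
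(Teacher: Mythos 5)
The paper does not prove this proposition: it is quoted verbatim from Petrov (\cite[Prop.~3.2]{Pet}), with the surrounding text only recalling that it extends Sen's and Shimizu's constructions; so there is no in-paper argument to compare against. Your reconstruction is nonetheless sound and follows exactly the route the paper's own machinery suggests: base change defines the functor, Theorem \ref{Thm-ArithDecompletion} (the generalised arithmetic tower being a decompletion system) gives local essential surjectivity and full faithfulness, a model over some $R_n$ carries a linear $\Gal(K_{\cyc}/K(\zeta_{p^n}))$-action from which $\phi_{\calE}=\log(\gamma_0^{p^m})/(p^m\log\chi(\gamma_0))$ is extracted, and the characterisation \eqref{Equ-Petrov} forces uniqueness and hence gluing. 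Two small points. First, the toric chart is superfluous here: Lemma \ref{Lem-arithdecompletion} and Theorem \ref{Thm-ArithDecompletion} are stated for an arbitrary affinoid Tate algebra $(R,R^+)$ over $K$ (the geometric directions $\gamma_i$ act trivially), so you may take any affinoid open cover of $X$ and glue along honest opens rather than worrying about étale descent for charts. Second, your uniqueness argument should be made explicit: if $\phi$ and $\phi'$ both satisfy \eqref{Equ-Petrov} for a section $x$, then applying both identities to $\gamma=\gamma_0^{p^k}$ for $k\gg 0$ and passing to the limit $\lim_k(\gamma_0^{p^k}(x)-x)/(p^k\log\chi(\gamma_0))$ yields $\phi(x)=\phi'(x)$; this is what licenses both the independence of $(\gamma_0,m)$ and the patching of the locally defined operators into a global endomorphism of $\calE$.
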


 Now let's assume $X=\Spa(R,R^+)$ admits a toric chart over $\Spa(K,\calO_K)$. Following Sen's strategy, Shimizu uses finite vectors (i.e. vectors whose $\Gal(K_{\cyc}/K)$-orbit is a finite set) to construct the decompletion of representations in $\Rep_{\Gal(K_{\cyc}/K)}^{\rm free}(R_{\widehat K_{\cyc}})$. We are now going to show finite vectors coincide with locally analytic vectors in this case.

 Let's first briefly recall the definition of locally analytic vectors. We refer to \cite[\S 2.1]{BC16} for more details.

\begin{dfn}\label{defLAV}
\begin{enumerate}
\item 
Let $G$ be a $p$-adic Lie group, and let $(W, \|\cdot \|)$ be a $\Qp$-Banach representation of $G$.
Let $H$ be an open subgroup of $G$ such that there exist coordinates $c_1,\hdots,c_d : H \to \Zp$ giving rise to an analytic bijection $\cbf : H \to \Zp^d$.
 We say that an element $w \in W$ is an $H$-analytic vector if there exists a sequence $\{w_\kbf\}_{\kbf \in \mathbb{N}^d}$ with $w_\kbf \to 0$ in $W$, such that $$g(w) = \sum_{\kbf \in \mathbb{N}^d} \cbf(g)^\kbf w_\kbf, \quad \forall g \in H.$$
Let $W^{H\dan}$ denote the space of $H$-analytic vectors.

\item $W^{H\dan}$ injects into $\mathcal{C}^{\an}(H, W)$ (the space of analytic functions on $H$ valued in $W$), and we endow it with the induced norm, which we denote as $\|\cdot\|_H$. We have $\|w\|_H=\sup_{\kbf \in \mathbb{N}^d}\|w_{\kbf}\|$, and $W^{H\dan}$ is a Banach space.

\item 
We say that a vector $w \in W$ is \emph{locally analytic} if there exists an open subgroup $H$ as above such that $w \in W^{H\dan}$. Let $W^{G\dla}$ denote the space of such vectors. We have $W^{G\dla} = \cup_{H} W^{H\dan}$ where $H$ runs through  open subgroups of $G$. When $G$ is clear from the context, we write $W^{\la}$ for short.
\end{enumerate}
\end{dfn}
 
 \begin{thm}
   For any finite free $R_{\widehat K_{\cyc}}$-representation $W$ of $\Gal(K_{\cyc}/K)$, we have $W^{\rm fin}=W^{\rm la}$, where $W^{\rm fin}$ denotes the subset of $W$ consisting of elements whose $\Gal(K_{\cyc}/K)$-orbit is finite while $W^{\rm la}$ denotes the subset consisting of locally analytic vectors in the sense of \cite{BC16}.
 \end{thm}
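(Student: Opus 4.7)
The forward inclusion $W^{\rm fin}\subset W^{\rm la}$ is tautological: every vector with finite $\Gal(K_{\cyc}/K)$-orbit is stabilised by an open subgroup, so the orbit map is locally constant and in particular locally analytic in the sense of \cite{BC16}.

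For the reverse inclusion, the plan is to combine Shimizu's decompletion with an explicit computation of locally analytic vectors in the scalar ring. First, by Theorem~\ref{Thm-ArithDecompletion} (which for trivial geometric datum recovers Shimizu's result), there exist an integer $n_0\geq 0$ and a $\Gal(K_{\cyc}/K)$-stable, finite free $R_{K(\zeta_{p^{n_0}})}$-submodule $W_{n_0}\subset W$ whose scalar extension to $R_{\widehat K_{\cyc}}$ is $W$. Galois descent along the finite extensions $R_{K(\zeta_{p^m})}/R_{K(\zeta_{p^{n_0}})}$ then identifies $W_m:=W^{\Gal(K_{\cyc}/K(\zeta_{p^m}))}$ as a compatible tower of finite free models, whence $W^{\rm fin}=\bigcup_{m\geq n_0}W_m$.

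Given $v\in W^{\rm la}$, I will fix an $R_{K(\zeta_{p^{n_0}})}$-basis $e_1,\dots,e_r$ of $W_{n_0}$ and expand $v=\sum_i a_i e_i$ with $a_i\in R_{\widehat K_{\cyc}}$. After restricting to an open subgroup $H\subset\Gal(K_{\cyc}/K(\zeta_{p^{n_0}}))$ on which $\gamma\mapsto\gamma(v)$ is analytic and which fixes each $e_i$, semilinearity gives $\gamma(v)=\sum_i\gamma(a_i)e_i$ on $H$; comparing coordinates in the basis $\{e_i\}$ forces each $\gamma\mapsto\gamma(a_i)$ to be $H$-analytic, i.e.\ $a_i\in R_{\widehat K_{\cyc}}^{\rm la}$. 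The problem therefore reduces to the scalar identity $R_{\widehat K_{\cyc}}^{\rm la}=R_{K_{\cyc}}$.

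To establish this identity, I would verify the Tate--Sen axioms of \cite[\S 5.1]{Por22} for the tower $(R_{K(\zeta_{p^m})})_{m\geq 0}\hookrightarrow R_{\widehat K_{\cyc}}$ under $\Gal(K_{\cyc}/K)$, exactly in the manner used in the proof of Proposition~\ref{Prop-F-FullyFaithful} for the Kummer tower. The concrete input is a topological $\calO_K$-basis of $R^+$ obtained by lifting a $\kappa$-basis of $R^+/\pi$; equipping $R_{\widehat K_{\cyc}}$ with the associated supremum norm, the axioms propagate coordinate-wise from the classical case of $\widehat K_{\cyc}$ (the example below \cite[Cor.~5.3]{Por22}). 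Then \cite[Cor.~5.3]{Por22} yields $R_{\widehat K_{\cyc}}^{\rm la}=R_{K_{\cyc}}$, so each $a_i$ lies in $R_{K(\zeta_{p^m})}$ for a single $m\geq n_0$ large enough to absorb all $r$ coefficients. Thus $v\in W_m\subset W^{\rm fin}$.

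The main obstacle will be ensuring that the Tate--Sen axioms descend uniformly across the infinite $\calO_K$-basis of $R^+$, i.e.\ securing a uniform radius of analyticity for the coefficient expansions rather than the weaker pointwise statement. This uniformity is precisely what distinguishes $W^{\rm la}$ from the union of fixed loci, and it requires careful control over Tate's normalised trace maps applied level-by-level to the orthonormal expansion; verifying this carefully is the technical heart of the argument.
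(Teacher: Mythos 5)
Your overall architecture matches the paper's: reduce both inclusions to the scalar identity $R_{\widehat K_{\cyc}}^{\rm la}=R_{K_{\cyc}}$ by expanding a locally analytic vector in a basis of the decompleted model (the paper compresses your coordinate comparison into a citation of \cite[Prop.~2.3]{BC16}, applied to a basis of $W^{\rm fin}$ itself, which is a finite free $R_{K_{\cyc}}$-module spanning $W$ by Theorem \ref{Thm-ArithDecompletion}). The forward inclusion and the reduction step are fine.

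The one place your argument does not close is the scalar identity itself, which is the technical heart. \cite[Cor.~5.3]{Por22} is a comparison of continuous group cohomology with locally analytic and Lie-algebra cohomology (this is exactly how it is used in Proposition \ref{Prop-F-FullyFaithful}); it does not assert $R_{\widehat K_{\cyc}}^{\rm la}=R_{K_{\cyc}}$, so as written the key step is unproved. The statement you need is the computation of locally analytic vectors under the Tate--Sen axioms, i.e.\ \cite[Thm.~3.2]{BC16}, or the direct argument the paper gives: if $x\in R_{\widehat K_{\cyc}}$ is $\Gal(K_{\cyc}/K(\zeta_{p^m}))$-analytic, then for each $k$ the normalised Tate trace $R_{m+k}(x)$ is again $\Gal(K_{\cyc}/K(\zeta_{p^m}))$-analytic and is fixed by the open subgroup $\Gal(K_{\cyc}/K(\zeta_{p^{m+k}}))$; an analytic orbit map constant on an open subgroup is constant, so $R_{m+k}(x)\in R_{K(\zeta_{p^m})}$ for all $k$, whence $x=\lim_{k}R_{m+k}(x)\in R_{K(\zeta_{p^m})}$. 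Your closing worry about uniformity over the infinite topological $\calO_K$-basis of $R^+$ is not actually an obstacle: the tower $R_{K(\zeta_{p^m})}\subset R_{\widehat K_{\cyc}}$ extends scalars only in the arithmetic direction, so the normalised traces are $\id_R\widehat\otimes R_m$ with bounds inherited coordinate-wise from the classical case of $\widehat K_{\cyc}$ (compare Lemma \ref{Lem-arithdecompletion}). With the reference corrected, or the trace argument supplied, your proof is the paper's.
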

 
 \begin{proof}
   The proof is similar to that of \cite[Thm. 3.2]{BC16}. We first deal with the trivial case, i.e. $W=\widehat K_{\cyc}$. It is easy to see $R_{K_{\cyc}}\subset R_{\widehat K_{\cyc}}^{\rm la}$. For the other direction, let $R_n: R_{\widehat K_{\cyc}}\to R_{K(\zeta_{p^n})}$ be the normalised Tate trace such that for any $x\in R_{\widehat K_{\cyc}}$, $\lim_{n\to +\infty}R_n(x)=x$. Suppose $x\in R_{\widehat K_{\cyc}}^{\Gal(K_{\cyc}/K)\text{-}\rm la}$ is $\Gal(K_{\cyc}/K(\zeta_{p^m}))$-analytic for some $m$. Then for any $k\geq 1$, we have that $R_{m+k}(x)$ is also $\Gal(K_{\cyc}/K(\zeta_{p^m}))$-analytic and fixed by $\Gal(K_{\cyc}/K(\zeta_{p^{m+k}}))$. Therefore $R_{m+k}(x)$ is also fixed by $\Gal(K_{\cyc}/K(\zeta_{p^m}))$. This shows $x=\lim_{k\to +\infty} R_{m+k}(x)$ is in $R_{K(\zeta_{p^m})}$.
   
   Now we come to the general case. By the description of the Galois action in Proposition \ref{Prop-Petrov}, it is easy to see that $W^{\rm fin}\subset W^{\rm la}$. For the converse direction, let's choose a basis $\{e_1,\cdots,e_d\}$ of $W^{\rm fin}$. Then $W^{\rm la}=\oplus_{i=1}^dR_{\widehat K_{\cyc}}^{\rm la}\cdot e_i=\oplus_{i=1}^d R_{K_{\cyc}}\cdot e_i=W^{\rm fin}$ by \cite[Prop. 2.3]{BC16}.
 \end{proof}

 Under the equivalence $\Rep_{\Gal(K_{\cyc}/K)}^{\rm free}(R_{\widehat K_{\cyc}})\simeq \Rep_{\Gal(K_{\cyc,\infty}/K)}^{\rm free}(R_{\widehat K_{\cyc,\infty} })$, we interpret the decompletion result in \cite{Shi}, \cite{Pet} in terms of locally analytic vectors.
 
 \begin{thm}[\cite{Shi}, \cite{Pet}]\label{Sen-cyclotomic}
   Let $W\in \Rep^{\rm free}_{R_{\widehat K_{\cyc,\infty}}}(\Gal(K_{\cyc,\infty}/K))$. Define 
   \[D_{{\rm Sen},K_{\cyc}}(W):=(W^{\Gal(K_{\cyc,\infty}/K_{\cyc})})^{\rm fin}=(W^{\Gal(K_{\cyc,\infty}/K_{\cyc})})^{\gamma\text{-}{\rm la}},\]
   where by $\gamma\text{-}\rm la$, we simply mean the $\Gal(K_{\cyc}/K)$-locally analytic vectors. Then $D_{{\rm Sen},K_{\cyc}}(W)$ is a finite free $R_{K_{\cyc}}$-module such that $D_{{\rm Sen},K_{\cyc}}(W)\otimes_{R_{K_{\cyc}}}R_{\widehat K_{\cyc,\infty}}=W$. Moreover there is a linear operator
   \[
   \nabla_{\gamma}:D_{{\rm Sen},K_{\cyc}}(W)\to D_{{\rm Sen},K_{\cyc}}(W)
   \]
   which is called the Sen operator and can be defined as $\frac{\log(g)}{\log(\chi(g))}$ for some $g\in \Gal(K_{\cyc,\infty}/K_{\infty})\cong \Gal(K_{\cyc/K})$ close enough to $1$.
 \end{thm}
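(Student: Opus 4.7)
The plan is to reduce the statement to the absolute (arithmetic) case treated by Shimizu and Petrov, using Faltings' almost purity to descend the geometric cyclotomic extension and then identifying finite vectors with locally analytic vectors.

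First I would take invariants under the closed normal subgroup $H := \Gal(K_{\cyc,\infty}/K_{\cyc})$, which is procyclic topologically generated by $\tau$ (cf.\ Convention \ref{Convention-LinearIndependence}). Since $R_{\widehat K_{\cyc,\infty}}/R_{\widehat K_{\cyc}}$ is a pro-(finite \'etale) Galois cover with group $H$ in the almost category (Faltings' almost purity theorem, cf.\ Remark \ref{Rmk-UnipotentEquiv}), the functor $W \mapsto W^H$ induces an equivalence between $\Rep^{\rm free}_{\Gal(K_{\cyc,\infty}/K)}(R_{\widehat K_{\cyc,\infty}})$ and $\Rep^{\rm free}_{\Gal(K_{\cyc}/K)}(R_{\widehat K_{\cyc}})$, and $W^H \otimes_{R_{\widehat K_{\cyc}}} R_{\widehat K_{\cyc,\infty}} \xrightarrow{\cong} W$ equivariantly. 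The quotient $\Gal(K_{\cyc,\infty}/K)/H$ is canonically identified with $\Gal(K_{\cyc}/K)$ via the subgroup $\Gal(K_{\cyc,\infty}/K_\infty)$, which is precisely the identification meant by "$\gamma$" in the statement.

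Next, I would apply the decompletion result of Shimizu \cite{Shi} (reproved for general $X$ by Petrov in Proposition \ref{Prop-Petrov}) to $W^H \in \Rep^{\rm free}_{\Gal(K_{\cyc}/K)}(R_{\widehat K_{\cyc}})$. This produces a finite free $R_{K_{\cyc}}$-submodule $(W^H)^{\rm fin}$ of finite vectors that satisfies $(W^H)^{\rm fin} \otimes_{R_{K_{\cyc}}} R_{\widehat K_{\cyc}} \xrightarrow{\cong} W^H$. By the theorem proved immediately above (identifying finite and locally analytic vectors by a Tate-trace argument against $\Gal(K_{\cyc}/K(\zeta_{p^m}))$-analyticity), we have $(W^H)^{\rm fin} = (W^H)^{\gamma\text{-}\mathrm{la}} = D_{{\rm Sen},K_{\cyc}}(W)$. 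Combining with the descent along $H$ established in the first step gives the claimed freeness and the base-change identity $D_{{\rm Sen},K_{\cyc}}(W) \otimes_{R_{K_{\cyc}}} R_{\widehat K_{\cyc,\infty}} = W$.

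Finally, the Sen operator is constructed from the $p$-adic Lie group structure on $\Gal(K_{\cyc}/K)$. Pick any $g \in \Gal(K_{\cyc}/K)$ close enough to $1$ so that $\log(g)$ converges as an operator on $D_{{\rm Sen},K_{\cyc}}(W)$; since each vector in $D_{{\rm Sen},K_{\cyc}}(W)$ is locally analytic, after possibly shrinking the open neighborhood this is well-defined on any given vector, and the formula $\nabla_\gamma := \log(g)/\log\chi(g)$ is independent of the choice of $g$ by multiplicativity of $\chi$ and additivity of $\log$ on commuting operators. The expected main obstacle is not in constructing $\nabla_\gamma$ itself but in the decompletion step: one must be careful to ensure that Tate's normalized traces on $R_{\widehat K_{\cyc}}$ (used implicitly in Proposition \ref{Prop-Petrov}, and explicitly in the $W^{\rm fin}=W^{\rm la}$ statement) behave well on the module $W^H$, which requires the uniform estimates in the proof of Proposition \ref{Prop-Petrov} together with the identification $R_{\widehat K_{\cyc}}^{\rm la} = R_{K_{\cyc}}$ just proved; this last identification is what upgrades "finite orbits under $\Gal(K_{\cyc}/K)$" to "locally analytic" and gives the intrinsic description of $D_{{\rm Sen},K_{\cyc}}(W)$ appearing in the statement.
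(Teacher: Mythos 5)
Your proposal is correct and follows exactly the route the paper intends: the paper gives no written proof of Theorem \ref{Sen-cyclotomic}, presenting it as an immediate reinterpretation of the Shimizu--Petrov decompletion via the almost-purity equivalence $\Rep^{\rm free}_{\Gal(K_{\cyc}/K)}(R_{\widehat K_{\cyc}})\simeq\Rep^{\rm free}_{\Gal(K_{\cyc,\infty}/K)}(R_{\widehat K_{\cyc,\infty}})$ (taking $\Gal(K_{\cyc,\infty}/K_{\cyc})$-invariants, cf.\ Remark \ref{Rmk-UnipotentEquiv}) together with the identification $W^{\rm fin}=W^{\rm la}$ proved just above. Your write-up simply makes these three steps explicit, so it matches the paper's argument.
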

 
 \subsection{Sen theory in the Kummer case}\label{SSec-Kummer Sen}
 
Following the strategy of \cite{GMW-HT}, we now develop a Sen theory with respect to the Kummer tower, which turns out to be closely related to the prismatic theory. We would like to mention that the results in this subsection are due to Hui Gao, and thank him for allowing us to include them here.
\begin{notation}
  \begin{enumerate}
      \item Let $K_m:=K(\pi^{\frac{1}{p^m}},\zeta_{p^m})$ and $\hat G_m:=\Gal(K_{\cyc,\infty}/K_m)$ for any $m\geq 0$. In particular, put $\hat G = \Gal(K_{\cyc,\infty}/K)$.
      
      \item Given any $\bQ_p$-Banach representation $W$ of $\hat G$, let
      \[W^{\tau\text{-}{\rm la},\gamma^{p^m}=1}:=W^{\Gal(K_{\cyc,\infty}/K_{\cyc})\text{-}\rm la}\cap W^{\Gal(K_{\cyc,\infty}/K_{\infty}(\zeta_{p^m}))=1}\]
      for any $m\geq 0$, where $\tau\in\Gal(K_{\cyc,\infty}/K_{\cyc})$ is described as in Convention \ref{Convention-LinearIndependence}.
  \end{enumerate}
\end{notation}

\begin{construction}[\emph{\cite[\S 4.4]{BC16}}]\label{Construction-Alpha}
\ \
  \begin{enumerate}
      \item Since $c:G_K\to \Zp$ represents a cocycle in $\rH^1(G_K,C(1)) = \{0\}$, there exists $\alpha\in C$ such that $c(g)=g(\alpha)\chi(g)-\alpha$. This shows $g(\alpha)=\frac{\alpha}{\chi(g)}+\frac{c(g)}{\chi(g)}$ and in particular $\alpha\in \widehat K_{\cyc,\infty}^{\hat G-\rm la}$.
      
      \item Similarly as in the beginning of \cite[\S 4.2]{BC16}, for any $n\geq 0$, let $\alpha_n\in K_{\cyc,\infty}$ such that $\|\alpha-\alpha_n\|\leq p^{-n}$. Then there exists some $r(n)\gg 0$ such that if $m\geq r(n)$, then $\|\alpha-\alpha_n\|_{\hat G}=\|\alpha-\alpha_n\|\leq p^{-n}$ and $\alpha-\alpha_n\in \widehat K_{\cyc,\infty}^{\hat G_m\text{-}\rm la}$. We may suppose $\{r(n)\}_{n\geq 0}$ is an increasing sequence.
  \end{enumerate}
\end{construction}
 
 \begin{dfn}[\emph{\cite[\S 4.4]{BC16}, \cite[Def. 7.9]{GMW-HT}}]
   Let $(H,\| \cdot \|)$ be a $\bQ_p$-Banach algebra such that $\| \cdot \|$ is sub-multiplicative and let $W\subset H$ is a $\bQ_p$-subalgebra. For any $n\geq 0$, let $W\{\{T\}\}_n$ denote the vector space consisting of $\sum_{k\geq 0} a_kT^k$ with $a_k\in W$ and $p^{nk}a_k\to 0$ when $k\to +\infty$.
 \end{dfn}

 \begin{prop}\label{R-la}
 \begin{enumerate}
     \item $R_{\widehat K_{\cyc,\infty}}^{\hat G\text{-}\rm la}=\cup_n R_{K_{r(n)}}\{\{\alpha-\alpha_n\}\}_n$. 
     
     \item $R_{\widehat K_{\cyc,\infty}}^{\hat G\text{-}{\rm la},\nabla_{\gamma}=0}=R\otimes_K {K_{\cyc,\infty}}=R_{K_{\cyc,\infty}}$.
     
     \item $R_{\widehat K_{\cyc,\infty}}^{\tau\text{-}{\rm la},\gamma^{p^m}=1} = R\otimes_K K_{\infty}(\zeta_{p^m})=R_{K_{\infty}(\zeta_{p^m})}$.
 \end{enumerate}
\end{prop}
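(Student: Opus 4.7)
My plan proves the three statements in sequence, with (2) and (3) reducing to (1) plus a direct computation of $\nabla_\gamma$ on the generators exhibited in (1). The key to (1) is a relative version of the Berger--Colmez description of $\hat G$-locally analytic vectors in $\widehat K_{\cyc,\infty}$ (see \cite[\S 4.2--4.4]{BC16}, and \cite[\S 3.2]{Gao} for the arithmetic Kummer/cyclotomic mixing). The inclusion $\supseteq$ is immediate from the definitions: every $a \in R_{K_{r(n)}}$ is fixed by the open subgroup $\hat G_{r(n)}$, while $\alpha-\alpha_n \in \widehat K_{\cyc,\infty}^{\hat G_{r(n)}\text{-la}}$ by Construction~\ref{Construction-Alpha}(2), and the growth condition defining $\{\{-\}\}_n$ is exactly calibrated to the analyticity radius $p^{-n}$ of $\alpha-\alpha_n$ so that power series converge in the $\hat G_{r(n)}$-analytic norm. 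For the reverse inclusion, I would extend the BC16 computation from $\widehat K_{\cyc,\infty}$ to $R_{\widehat K_{\cyc,\infty}}$ by exploiting that $R^+$ is topologically free over $\calO_K$ (lift a $\kappa$-basis of $R^+/\pi R^+$); this yields a $\hat G$-stable orthonormal decomposition $R_{\widehat K_{\cyc,\infty}} = R\,\widehat\otimes_K\, \widehat K_{\cyc,\infty}$ with trivial $\hat G$-action on the basis, reducing the identification of locally analytic vectors to a coordinate-wise computation.

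For (2), the easy inclusion $R_{K_{\cyc,\infty}} \subseteq R_{\widehat K_{\cyc,\infty}}^{\hat G\text{-la},\nabla_\gamma=0}$ is immediate since every element of $R_{K_{\cyc,\infty}} = \bigcup_N R_{K_\infty(\zeta_{p^N})}$ is fixed by an open subgroup of $\Gal(K_{\cyc,\infty}/K_\infty)$. Conversely, take $x$ in the RHS and expand via (1) as $x = \sum_k a_k(\alpha-\alpha_n)^k$ with $a_k \in R_{K_{r(n)}}$. For $\gamma \in \Gal(K_{\cyc,\infty}/K_\infty)$ we have $c(\gamma)=0$ (as $\gamma$ fixes $\pi^\flat$), so Construction~\ref{Construction-Alpha}(1) gives $\gamma(\alpha) = \alpha/\chi(\gamma)$, hence $\nabla_\gamma(\alpha) = -\alpha$; meanwhile $\nabla_\gamma$ annihilates $\alpha_n$ and $a_k$ (both of which lie in $R_{K_{\cyc,\infty}}$). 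The Leibniz rule yields
\[ 0 = \nabla_\gamma(x) = -\alpha \cdot \sum_{k\geq 1} k\,a_k(\alpha-\alpha_n)^{k-1}, \]
and since $\alpha$ is not a zero-divisor in $R_{\widehat K_{\cyc,\infty}}$ and power series expansions in the Tate algebra $R_{K_{r(n)}}\{\{T\}\}_n$ are unique, we conclude $a_k = 0$ for all $k \geq 1$, so $x = a_0 \in R_{K_{r(n)}} \subset R_{K_{\cyc,\infty}}$.

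For (3), the direction $\supseteq$ is elementary: any $x \in R_{K_\infty(\zeta_{p^m})}$ lies in some $R_{K(\pi^{1/p^N},\zeta_{p^m})}$ and is manifestly $\tau^{p^N}$-fixed (hence $\tau$-finite, so $\tau$-la) as well as $\gamma^{p^m}$-fixed (for the standard choice of $\gamma$). Conversely, given $x$ in the RHS, the identity $\gamma^{p^m}(x) = x$ means the $\gamma$-orbit of $x$ factors through the finite quotient $\langle\gamma\rangle/\langle\gamma^{p^m}\rangle$, so the $\gamma$-action on $x$ is locally constant, in particular $\gamma$-locally analytic. Combined with the $\tau$-la hypothesis this upgrades $x$ to a $\hat G$-la vector; and since $\gamma^{p^N}(x) = x$ for all $N \geq m$, one has $\nabla_\gamma(x) = 0$. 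Statement (2) then places $x \in R_{K_{\cyc,\infty}}$, and a direct Galois-descent argument for $\gamma^{p^m}$-fixed points along the cyclotomic tower $\bigcup_n R_{K_\infty(\zeta_{p^n})}$ pins $x$ down in $R_{K_\infty(\zeta_{p^m})}$.

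The hardest step is the relative extension in (1): locally analytic vectors do not commute with completed tensor products in general, so upgrading the BC16 computation from $\widehat K_{\cyc,\infty}$ to $R_{\widehat K_{\cyc,\infty}}$ requires a careful Banach-module analysis of $R$ over $K$ (to extract a $\hat G$-stable orthonormal basis) together with a uniform control of analyticity radii across basis coordinates.
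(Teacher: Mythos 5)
Your parts (2) and (3) follow the paper's route, just with the implicit steps spelled out. For (2) the paper likewise differentiates the series expansion from (1); it records the value of $\nabla_\gamma(\alpha-\alpha_n)$ as $-1$ where your computation gives $-\alpha$, but since either is a non-zero-divisor the kernel computation $(R_{K_{r(n)}}\{\{\alpha-\alpha_n\}\}_n)^{\nabla_\gamma=0}=R_{K_{r(n)}}$ comes out the same. For (3) the paper simply says ``take $\gamma^{p^m}$-invariants in (2)''; your argument — that $\gamma^{p^m}$-fixedness gives $\gamma$-local analyticity with $\nabla_\gamma=0$, that $\tau$-la plus $\gamma$-la upgrades to $\hat G$-la, and then Galois descent along the cyclotomic tower — is exactly what that shorthand conceals.

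The genuine divergence is in (1). The paper never decomposes $R$ over $K$: it runs the Berger--Colmez argument directly on $R_{\widehat K_{\cyc,\infty}}$, forming for a $\hat G_n$-analytic $x$ the elements
\[
y_i=\sum_{k\geq 0}(-1)^k(\alpha-\alpha_m)^k\frac{\nabla_{\tau}^{k+i}(x)}{(k+i)!}\binom{k+i}{k},
\]
showing $x=\sum_{i\geq 0}y_i(\alpha-\alpha_m)^i$ with $\nabla_\tau(y_i)=0$ (using $\nabla_\tau(\alpha-\alpha_m)=1$), and then identifying $y_i\in \big(R_{\widehat{K_m(\zeta_{p^\infty})}}\big)^{\Gal(K_m(\zeta_{p^\infty})/K_m)\text{-an}}=R_{K_m}$ via the relative cyclotomic decompletion already proved with normalized Tate traces. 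Your alternative — an orthonormal decomposition $R_{\widehat K_{\cyc,\infty}}=R\,\widehat\otimes_K\,\widehat K_{\cyc,\infty}$ with trivial $\hat G$-action on the $R$-factor — is viable, but the step you flag as hardest is a real lemma you must actually prove: for a potentially orthonormalizable $K$-Banach space $R$ with trivial action, $(R\,\widehat\otimes_K V)^{\hat G_m\text{-an}}=R\,\widehat\otimes_K V^{\hat G_m\text{-an}}$, i.e.\ analyticity is detected coordinatewise with radius uniform over the basis (true because the coefficients of the orbit map $g\mapsto g(x)$ decompose coordinatewise and orthonormality turns the analytic norm into a sup over coordinates). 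The paper's explicit formula sidesteps this uniformity discussion entirely, since $\nabla_\tau^{k}$ and all estimates live on the whole module; on the other hand it leans on the preceding Tate-trace computation of $R_{\widehat K_{\cyc}}^{\rm la}=R_{K_{\cyc}}$, which your approach would recover coordinatewise. If you keep your route, state and prove the completed-tensor lemma rather than leaving it as a closing remark.
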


\begin{proof}
  (1) The proof is essentially the same as \cite[Prop. 4.12]{BC16}. We now recall the proof therein. 

  Suppose $x\in (R_{\widehat K_{\cyc,\infty}})^{G_n\text{-}{\rm an}}$. For $i\geq 0$, we let
  \[
    y_i=\sum_{k\geq 0}(-1)^k(\alpha-\alpha_m)^k\frac{\nabla_{\tau}^{k+i}(x)}{(k+i)!}\binom{k+i}{k},
  \]
  where $\nabla_{\tau} = \frac{\log(\tau^{p^k})}{p^k}$ for $k\gg 0$.
  Then by similar arguments in the proof of \cite[Thm. 4.2]{BC16}, there exists $m\geq n$ such that $y_i\in R_{\widehat K_{\cyc,\infty}}^{G_m\text{-}\rm an}$ for all $i$ and $x=\sum_{i\geq 0}y_i(\alpha-\alpha_m)^i$ in $(R_{\widehat K_{\cyc,\infty}})^{G_m\text{-}\rm an}$. By Construction \ref{Construction-Alpha}, we see that $\tau^{p^t}(\alpha)=\alpha+p^t$ for any $t\geq 0$. As $\alpha_m\in K_{\cyc,\infty}$, we then have $\nabla_{\tau}(\alpha_m)=0$ and $\nabla_{\tau}(\alpha-\alpha_m) = \nabla_{\tau}(\alpha)=\lim_{t\to+\infty}\frac{\tau^{p^t}-1}{p^t}(\alpha)=1$. From this, we deduce that $\nabla_{\tau}(y_i)=0$ and hence that $\tau^{p^m}(y_i)=y_i$. So $y_i\in ( R_{\widehat{K_m(\zeta_{p^{\infty}})}})^{\Gal(K_m(\zeta_{p^{\infty}})/K_m)\text{-}\rm an}=R_{K_m}$. We are done.

  (2) As $\alpha_m\in K_{\cyc,\infty}$, we have $\nabla_{\gamma}(\alpha_m)=0$. Then we obtain from Construction \ref{Construction-Alpha} that $\nabla_{\gamma}(\alpha-\alpha_m) = \nabla_{\gamma}(\alpha)=\lim_{t\to +\infty}\frac{\gamma^{p^t}-1}{\chi(\gamma^{p^t})-1}(\alpha)=-1$. Therefore, for any $n\geq 0$, we have $(R_{K_{r(n)}}\{\{\alpha-\alpha_n\}\})^{\nabla_{\gamma} = 0} = R_{K_{r(n)}}$. Then Item (2) follows from Item (1) immediately.

  (3) This follows from Item (2) by taking $\gamma^{p^m}$-invariants.
\end{proof}
 
We are now ready to study the Sen theory in the Kummer case.
 \begin{thm}
   Let $W\in \Rep^{\rm free}_{R_{\widehat K_{\cyc,\infty}}}(\hat G)$ of rank $d$ and let
   \[
   D_{{\rm Sen},K_{\infty}}(W):=W^{\tau\text{-}{\rm la},\gamma=1}.
   \]
If $W$ admits a basis consisting of $\hat G$-locally analytic vectors, then $D_{{\rm Sen},K_{\infty}}(W)$ is a finite projective $R_{K_{\infty}}$-module of rank $d$. Moreover, there are identifications
   \[
  D_{{\rm Sen},K_{\infty}}(W)\otimes_{R_{K_{\infty}}}R_{\widehat K_{\cyc,\infty}}^{\hat G\text{-}\rm la}=W^{\hat G\text{-}\rm la}=D_{{\rm Sen},K_{\cyc}}(W)\otimes_{R_{K_{\cyc}}}R_{\widehat K_{\cyc,\infty}}^{\hat G\text{-}\rm la}.
   \]
 \end{thm}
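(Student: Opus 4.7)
The proof proceeds in three steps combining Sen theory along the cyclotomic tower with a locally analytic descent argument along the Kummer tower. First, I establish the cyclotomic identification $W^{\hat G\text{-la}} = D_{{\rm Sen},K_{\cyc}}(W) \otimes_{R_{K_{\cyc}}} R_{\widehat K_{\cyc,\infty}}^{\hat G\text{-la}}$. Pick an $R_{K_{\cyc}}$-basis $\{e_1, \ldots, e_d\}$ of $D_{{\rm Sen},K_{\cyc}}(W)$; by Theorem \ref{Sen-cyclotomic}, these generate $W$ over $R_{\widehat K_{\cyc,\infty}}$, and each $e_i$ is automatically $\hat G$-locally analytic (being $\tau$-invariant, hence trivially $\tau$-analytic, and $\gamma$-locally analytic by the classical cyclotomic Sen theory). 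The $\hat G$-action on this basis is thus given by a locally analytic matrix-valued function $g \mapsto U(g) \in \GL_d(R_{K_{\cyc}})$. For $w = \sum a_i e_i \in W$ with $a_i \in R_{\widehat K_{\cyc,\infty}}$, the orbit map $g \mapsto g(w)$ is locally analytic if and only if each $g \mapsto g(a_i)$ is, by inverting the locally analytic matrix $U(g)$.

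Second, I identify $D_{{\rm Sen},K_{\infty}}(W) = (W^{\hat G\text{-la}})^{\gamma=1}$. The inclusion $\supseteq$ is immediate. For $\subseteq$, suppose $w$ is $\tau$-la and fixed by $\Gal(K_{\cyc,\infty}/K_{\infty})$; using the semidirect-product decomposition $\hat G = \Zp\tau \rtimes \Gal(K_{\cyc}/K)$, any $g \in \hat G$ can be written uniquely as $g = \tau^a \gamma'$ with $\gamma' \in \Gal(K_{\cyc,\infty}/K_{\infty})$, so $g(w) = \tau^a(w)$ depends only on the $\tau$-component of $g$ and is therefore locally analytic on $\hat G$.

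Third, and most substantially, I descend along the Kummer direction by constructing a $\gamma$-invariant basis of $W^{\hat G\text{-la}}$. By Steps 1 and 2, this reduces to constructing an invertible matrix $B = (b_{ij}) \in \GL_d(R_{\widehat K_{\cyc,\infty}}^{\hat G\text{-la}})$ satisfying the twisted cocycle equation $\gamma(B) = U(\gamma)^{-1} B$ for $\gamma$ near the identity, where $U(\gamma) = \exp(\log\chi(\gamma)\, \nabla_\gamma)$. The key input is that the parameter $\alpha$ of Construction \ref{Construction-Alpha} satisfies $\nabla_\gamma(\alpha) = -\alpha$: indeed $c(\gamma) = 0$ whenever $\gamma$ fixes $K_{\infty}$, so $\gamma(\alpha) = \alpha/\chi(\gamma)$. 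This suggests formally that $B = \alpha^{\nabla_\gamma}$, since then $\gamma(B) = \gamma(\alpha)^{\nabla_\gamma} = \chi(\gamma)^{-\nabla_\gamma} \alpha^{\nabla_\gamma} = U(\gamma)^{-1} B$. To make this rigorous, I use the Taylor-type decomposition $R_{\widehat K_{\cyc,\infty}}^{\hat G\text{-la}} = \bigcup_n R_{K_{r(n)}}\{\{\alpha - \alpha_n\}\}_n$ from Proposition \ref{R-la}(1) and build $B$ iteratively as a matrix-valued convergent power series in $(\alpha - \alpha_n)$ whose coefficients are polynomials in $\nabla_\gamma$ applied to the basis. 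The columns of $B$ then produce elements $f_j = \sum_i b_{ij} e_i \in W^{\hat G\text{-la},\, \gamma=1}$ forming an $R_{K_{\infty}}$-basis of $D_{{\rm Sen},K_{\infty}}(W)$ of rank $d$, and invertibility of $B$ gives the identification $W^{\hat G\text{-la}} = D_{{\rm Sen},K_{\infty}}(W) \otimes_{R_{K_{\infty}}} R_{\widehat K_{\cyc,\infty}}^{\hat G\text{-la}}$.

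The main obstacle is establishing convergence of the power series defining $B$ in the appropriate Banach norm on $R_{\widehat K_{\cyc,\infty}}^{\hat G\text{-la}}$; this amounts to running the norm estimates of \cite[Thm.~4.2, Prop.~4.12]{BC16} (or equivalently those of \cite{Gao} in the Kummer setting) with coefficients in the relative ring $R_{\widehat K_{\cyc,\infty}}$ rather than $\widehat K_{\cyc,\infty}$. Since Construction \ref{Construction-Alpha} and Proposition \ref{R-la} already provide the relative analogues of $\alpha$, its approximations $\alpha_n$, and the Taylor-type decomposition of locally analytic vectors, these norm estimates should carry over with only cosmetic modifications.
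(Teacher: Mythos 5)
Your Steps 1 and 2 are essentially correct (Step 1 needs the standard fact that $\tau$-locally-analytic together with $\gamma$-locally-analytic implies $\hat G$-locally-analytic for the semidirect product, which is what the paper outsources to \cite[Lem. 8.5]{GMW}). The genuine gap is in Step 3, and it is not the ``cosmetic'' convergence issue you acknowledge. The formal solution $B=\alpha^{\nabla_\gamma}=\exp(\nabla_\gamma\log\alpha)$ simply does not exist for a general Sen operator: $\log\alpha$ is not defined, and $\exp$ of a matrix with no topological nilpotence does not converge. What does converge for $n\gg0$ is the binomial series $(\alpha/\alpha_n)^{\nabla_\gamma}=\sum_{k\geq 0}\binom{\nabla_\gamma}{k}(\alpha/\alpha_n-1)^k$, because $\|\alpha/\alpha_n-1\|$ is small; but this is a different object. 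Already in rank one with $\nabla_\gamma=\tfrac12$ your $B$ would be a literal square root of $\alpha$, whose existence inside $R_{\widehat K_{\cyc,\infty}}^{\hat G\text{-}\rm la}$ is precisely the nontrivial content of the theorem and cannot be produced by a power series in $\alpha$ alone. So the convergence problem is not a matter of rerunning the norm estimates of \cite{BC16} ``with only cosmetic modifications''; as stated, the series defining $B$ diverges.

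Even after the correct replacement of $\alpha$ by $\alpha/\alpha_n$, the cocycle identity $\gamma(B)=U(\gamma)^{-1}B$ holds only for $\gamma$ in an open subgroup (those fixing $\alpha_n$ and close enough to $1$ for $\chi(\gamma)^{-\nabla_\gamma}$ to converge). Hence your columns $f_j$ lie in $W^{\tau\text{-}{\rm la},\gamma^{p^m}=1}$ for some $m\gg0$, not in $W^{\tau\text{-}{\rm la},\gamma=1}$, and they are not yet a generating set for $D_{{\rm Sen},K_\infty}(W)$. The missing step is the descent along the finite extension $R_{K_m}/R_{K(\pi^{1/p^m})}$: the residual action of the finite group $\Gal(K_m/K(\pi^{1/p^m}))$ on $\bigoplus_j R_{K_m}f_j$ must be removed by classical Galois descent, which is also exactly why the theorem asserts only finite projectivity rather than freeness (your claim of an ``$R_{K_\infty}$-basis'' is too strong). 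This is how the paper proceeds: it cites \cite[Lem. 8.5, Prop. 8.9]{GMW} for the locally analytic and $\nabla_\gamma=0$ descents (the latter being the rigorous version of your $(\alpha/\alpha_n)^{\nabla_\gamma}$ construction), uses topological finite generation of $\Gal(K_{\cyc,\infty}/K_\infty)$ to spread out to a finite level $R_{K_m}$, performs finite Galois descent, and only then identifies the result with $W^{\tau\text{-}{\rm la},\gamma=1}$ via \cite[Prop. 2.3]{BC16} and Proposition \ref{R-la} (3).
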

 \begin{proof}
   By \cite[Lem. 8.5]{GMW}, we see that $W^{\hat G\text{-}{\rm la}}$ is a finite free $(R_{\widehat K_{\cyc,\infty}})^{\hat G\text{-}\rm la}$-module with
   \[W^{\hat G\text{-}{\rm la}}\otimes_{R_{\widehat K_{\cyc,\infty}}^{\hat G\text{-}\rm la}}R_{\widehat K_{\cyc,\infty}}\cong W.\] 
  Furthermore, by \cite[Prop. 8.9]{GMW}, we see that $W^{\hat G\text{-}{\rm la},\nabla_{\gamma}=0}$ is a finite free $R_{ K_{\cyc,\infty}}$-module with 
   \[W^{\hat G\text{-}{\rm la},\nabla_{\gamma}=0}\otimes_{R_{K_{\cyc,\infty}}}R_{\widehat K_{\cyc,\infty}}^{\hat G\text{-}\rm la}\cong W^{\hat G\text{-}{\rm la}}.\]
   
   Note that $W^{\hat G\text{-}{\rm la},\nabla_{\gamma}=0}$ admits a $\Gal(K_{\cyc,\infty}/K_{\infty})$-action. As $\Gal(K_{\cyc,\infty}/K_{\infty})$ is topologically finitely generated, we can find a finite free $R_{K_m}$-module $W_m$ of rank $d$ with a $\Gal(K_m/K(\pi^{\frac{1}{p^m}}))$-action for some $m\gg 0$, such that $W_m\otimes_{R_{K_m}}R_{K_{\cyc,\infty}}\cong W^{\hat G\text{-}{\rm la},\nabla_{\gamma}=0}$ is a $\Gal(K_{\cyc,\infty}/K_{\infty})$-equivariant isomorphism. Now by Galois descent, we get a finite projective $R_{K(\pi^{\frac{1}{p^m}})}$-module $W_m^{\gamma=1}$ such that $W_m^{\gamma=1}\otimes_{R_{K(\pi^{\frac{1}{p^m}})}}R_{K_m}\cong W_m$. 
   
   We claim that the finite projective $R_{K_{\infty}}$-module $W_m^{\gamma=1}\otimes_{R_{K(\pi^{\frac{1}{p^m}})}}R_{K_{\infty}}$ is exactly $D_{{\rm Sen},K_{\infty}}(W)$. To see this, we first show that $W^{\tau\text{-}\rm la}=W_m^{\gamma=1}\otimes_{R_{K(\pi^{\frac{1}{p^m}})}}R_{\widehat K_{\cyc,\infty}}^{\tau\text{-}\rm la}$. Note that $W^{\hat G\text{-}{\rm la},\nabla_{\gamma}=0}$ is contained in $W^{\tau\text{-}\rm la}$, which means $W$ admits a basis consisting of $\Gal(K_{\cyc,\infty}/K_{\cyc})$-locally analytic vectors. Moreover as $W^{\tau\text{-}\la}=\bigcup_nW^{\tau^{p^n}\text{-}\an}$ and $W^{\tau^{p^n}\text{-}\an}\subset W^{\tau^{p^{n+1}}\text{-}\an}$, where $\tau^{p^n}\text{-}\an$ means the analytic vectors corresponding to the subgroup of $\Gal(K_{\cyc,\infty}/K_{\cyc})$ generated by $\tau^{p^n}$, we know that $W$ contains a basis contained in $W^{\tau^{p^s}\text{-}\an}$ for some $s>0$. Also, we have $W^{\tau\text{-}\la}=W^{\tau^{p^s}\text{-}\la}$. Then by \cite[Prop. 2.3]{BC16}, we have 
   \[W^{\tau\text{-}\rm la}=W^{\hat G\text{-}{\rm la},\nabla_{\gamma}=0}\otimes_{R_{K_{\cyc,\infty}}}R_{\widehat K_{\cyc,\infty}}^{\tau\text{-}\rm la}=W_m^{\gamma=1}\otimes_{R_{K(\pi^{\frac{1}{p^m}})}}R_{\widehat K_{\cyc,\infty}}^{\tau\text{-}\rm la}.\]
   Now, the claim follows from Proposition \ref{R-la} (3) after taking $\gamma$-invariants. We win.
 \end{proof}
 
 Similar to the cyclotomic case, there is also a linear operator in the Kummer case. Before we move on, we recall some constructions in \cite{GMW-HT}.
 
 \begin{construction}\label{two operators}
   \begin{enumerate}
       \item For any $\hat G$-locally analytic representation $W$, there are two Lie algebra differential operators:
       \begin{enumerate}
           \item $\nabla_{\gamma}:=\frac{\log(g)}{\log(\chi(g))}$ for $g\in \Gal(K_{\cyc,\infty}/K_{\infty})$ close enough to 1. (See also Theorem \ref{Sen-cyclotomic}.)
           \item $\nabla_{\tau}:=\frac{\log(\tau^{p^n})}{p^n}$ for $n\gg 0$. (See also the proof of Proposition \ref{R-la} (1).)
       \end{enumerate}
       
       \item Let $\tilde \lambda:=\prod_{n\geq 0}\varphi^n(\frac{E(u)}{E(0)})\in \calO_{[0,1)}\subset B_{\rm cris}^+$ (cf. \cite{Kisin})\footnote{Note that $\tilde \lambda$ is denoted by $\lambda$ in \cite[Def. 6.10]{GMW-HT}. To distinguish it from the $\lambda:=\theta(\frac{\xi}{E([\pi^{\flat}])})$ defined in this paper, we denote it by $\tilde \lambda$.}, where $\calO_{[0,1)}=\Gamma(D_{[0,1)},\calO_{D_{[0,1)}})$ and $D_{[0,1)}$ is the open unit disc over $W(\kappa)[1/p]$ with coordinate $u$. Then we can define $\mathfrak t:=\frac{t}{p\tilde\lambda}$ which turns out to be in $A_{\inf}$ (cf. \cite[paragraph above Thm. 3.2.2]{Liu07}). Let ${\tilde\lambda}^{\prime}$ be the $u$-derivative of $\tilde\lambda$ in $\calO_{[0,1)}$.
       
       \item
       Define $N_{\nabla}:R_{\widehat K_{\cyc,\infty}}^{\hat G\text{-}\rm la}\to R_{\widehat K_{\cyc,\infty}}^{\hat G\text{-}\rm la}$ by setting $N_{\nabla}:=\frac{1}{p\theta(\mathfrak t)}\nabla_{\tau}$. Recall we always have $K_{\infty}\cap K_{\cyc} = K$ (cf. Convention \ref{Convention-LinearIndependence}). By \cite[Lem. 7.4]{GMW-HT}, $\theta(\mathfrak t)\in \widehat K_{\cyc,\infty}^{\hat G\text{-}{\rm la}}$ is non-zero and hence the operator $N_{\nabla}$ is well-defined.
   \end{enumerate}
 \end{construction}

 Let $W\in \Rep_{R_{\widehat K_{\cyc,\infty}}}^{\rm free}(\hat G)$. Then $D_{{\rm Sen},K_{\infty}}(W)$ consists of locally analytic vectors of $W$ and we have the following operator
 \[
 N_{\nabla}: D_{{\rm Sen},K_{\infty}}(W)\to W^{\hat G\text{-}\rm la}.
 \]

 \begin{thm}\label{Kummer Sen}
   Let $W\in \Rep^{\rm free}_{R_{\hat L}}(\hat G)$. Then there is an $R_{K_{\infty}}$-linear operator
   \[\frac{1}{\theta(u{\tilde\lambda}^{\prime})}N_{\nabla}:D_{{\rm Sen},K_{\infty}}(W)\to D_{{\rm Sen},K_{\infty}}(W).
   \]
    After extending $R_{\widehat K_{\cyc,\infty}}$-linearly, the operator $\frac{1}{\theta(u{\tilde\lambda}^{\prime})}N_{\nabla}: W\to W$ is exactly the Sen operator in Theorem \ref{Sen-cyclotomic}.
 \end{thm}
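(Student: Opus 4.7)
The plan is to adapt Gao's argument from the case $R^+ = \calO_K$ to our relative setting, exploiting the theory of locally analytic vectors developed above and the two natural Lie-algebraic operators $\nabla_\gamma$ (cyclotomic) and $\nabla_\tau$ (Kummer) coming from the $p$-adic Lie group $\hat G = \Gal(K_{\cyc,\infty}/K)$. The heart of the argument is that $D_{\Sen, K_\infty}(W)$ and $D_{\Sen, K_\cyc}(W)$ are two different decompletions of the common space $W^{\hat G\text{-}\mathrm{la}}$, and the claim says that a particular Kummer-side differential operator, after rescaling by $\theta(u\tilde\lambda')$, agrees with the cyclotomic Sen operator on this common space.

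First I would establish the commutator relation $[\nabla_\gamma, \nabla_\tau] = \nabla_\tau$ on $W^{\hat G\text{-}\mathrm{la}}$, which is a formal consequence of $\gamma\tau\gamma^{-1} = \tau^{\chi(\gamma)}$ (cf.\ Convention~\ref{Convention-LinearIndependence}). Next I would compute the action of $\nabla_\gamma$ on the scalar $\theta(\mathfrak t)$ (and on $\theta(u\tilde\lambda')$): since $t = \log[\epsilon]$ satisfies $\gamma(t) = \chi(\gamma) t$ while $\tilde\lambda$ only involves $[\pi^\flat]$ and is therefore fixed by $\Gal(K_{\cyc,\infty}/K_\infty)$, one gets that $\theta(\mathfrak t)$ is a $\nabla_\gamma$-eigenvector with eigenvalue $1$, and similarly for $\theta(u\tilde\lambda')$. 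Combining this with the commutator relation, for any $x \in D_{\Sen, K_\infty}(W) \subset W^{\hat G\text{-}\mathrm{la}, \nabla_\gamma = 0}$,
\[
\nabla_\gamma\Bigl(\tfrac{1}{\theta(u\tilde\lambda')} N_\nabla(x)\Bigr) = -\tfrac{1}{\theta(u\tilde\lambda')} N_\nabla(x) + \tfrac{1}{\theta(u\tilde\lambda')}\cdot \tfrac{1}{p\theta(\mathfrak t)}\nabla_\tau(x) = 0,
\]
so that $\frac{1}{\theta(u\tilde\lambda')} N_\nabla(x)$ remains in $W^{\hat G\text{-}\mathrm{la},\nabla_\gamma = 0}$. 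To see it still lies in $D_{\Sen, K_\infty}(W)$, i.e.\ is killed by $\gamma - 1$ (not merely by $\nabla_\gamma$), I would use Proposition~\ref{R-la} together with the fact that $N_\nabla$ and $\theta(u\tilde\lambda')$ belong to the $\tau$-locally analytic world to descend from $\nabla_\gamma = 0$ to $\gamma = 1$ over $R_{K_\infty}$.

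For the second assertion, I would extend the operator $R_{\widehat K_{\cyc,\infty}}$-linearly to $W$ and compare with the Sen operator $\nabla_\gamma^{\Sen}$ of Theorem~\ref{Sen-cyclotomic} on $D_{\Sen, K_\cyc}(W) \subset W^{\hat G\text{-}\mathrm{la}}$. The key point is that after base change to $R_{\widehat K_{\cyc,\infty}}^{\hat G\text{-}\mathrm{la}}$, both $D_{\Sen, K_\infty}(W) \otimes_{R_{K_\infty}}(\,\cdot\,)$ and $D_{\Sen, K_\cyc}(W)\otimes_{R_{K_\cyc}}(\,\cdot\,)$ recover $W^{\hat G\text{-}\mathrm{la}}$, and the rescaled $N_\nabla/\theta(u\tilde\lambda')$ and $\nabla_\gamma$ both become $R_{\widehat K_{\cyc,\infty}}^{\hat G\text{-}\mathrm{la}}$-linear endomorphisms of the same module. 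One then checks that they agree by writing any element as a series in $(\alpha - \alpha_n)$ via Proposition~\ref{R-la}(1), where $\alpha$ is the cocycle primitive of Construction~\ref{Construction-Alpha}, and using $\nabla_\tau(\alpha) = 1$ and $\nabla_\gamma(\alpha) = -\alpha$ together with the formula for $\theta(u\tilde\lambda')$ to translate directly between the two differentials.

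The main obstacle is the last identification step: matching the two a~priori different operators on the nose. The constant $\theta(u\tilde\lambda')$ must appear with exactly the right value, and this requires a careful bookkeeping of the factors $\theta(\mathfrak t)$, $\tilde\lambda$, $E'(\pi)$, and $\chi$ using the identities of \cite{Gao}. Once this factor is verified, everything else is formal from the Lie-algebraic setup and the decompletion result Proposition~\ref{R-la}; modulo that arithmetic check, Gao's argument in the point case transports verbatim to the relative case, because our toric-tower decompletion theorem (Theorem~\ref{Thm-ToricDecompltion}) and the locally analytic description of $R_{\widehat K_{\cyc,\infty}}$ play exactly the role that Sen's original decompletion plays in \cite{Gao}.
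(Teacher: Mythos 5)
Your proposal follows essentially the same route as the paper's proof, which (following Gao) forms the single operator $\nabla_{\rm Sen}:=\frac{1}{\theta(u{\tilde\lambda}^{\prime})}N_{\nabla}+\nabla_{\gamma}$, invokes \cite[Cor.~3.2.4]{Gao} together with Proposition~\ref{R-la} to see that $\nabla_{\rm Sen}$ is $R_{\widehat K_{\cyc,\infty}}^{\hat G\text{-}\mathrm{la}}$-linear on $W^{\hat G\text{-}\mathrm{la}}$, and then observes that it restricts to $\frac{1}{\theta(u{\tilde\lambda}^{\prime})}N_{\nabla}$ on $D_{{\rm Sen},K_{\infty}}(W)$ and to $\nabla_{\gamma}$ on $D_{{\rm Sen},K_{\cyc}}(W)$; your final ``arithmetic check'' on $\alpha$ is precisely the content of that citation. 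One slip: by your own reasoning ($\tilde\lambda$ involves only $[\pi^{\flat}]$, which $\Gal(K_{\cyc,\infty}/K_{\infty})$ fixes), $\theta(u{\tilde\lambda}^{\prime})$ is \emph{killed} by $\nabla_{\gamma}$ rather than being an eigenvector of eigenvalue $1$ --- only $\theta(\mathfrak t)$ carries eigenvalue $1$ --- though your displayed computation is in fact consistent with the correct eigenvalues and so still yields $0$. For well-definedness, note also that the paper argues more directly that $N_{\nabla}$ is $\Gal(K_{\cyc,\infty}/K_{\infty})$-equivariant on the nose (the $\chi(g)$ coming from $g\tau g^{-1}=\tau^{\chi(g)}$ cancels against $g(\mathfrak t)=\chi(g)\mathfrak t$), which immediately shows preservation of $W^{\tau\text{-}\mathrm{la},\gamma=1}$ and avoids the only partially justified descent you sketch from $\nabla_{\gamma}=0$ to $\gamma=1$.
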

 \begin{proof}
   Note that for any $g\in \Gal(K_{\cyc,\infty}/K_{\infty})$, $g({\mathfrak t}) = \chi(g){\mathfrak t}$ and $g\tau g^{-1} = \tau^{\chi(g)}$. We see that $N_{\nabla}$ commutes with the action of $\Gal(K_{\cyc,\infty}/K_{\infty})$. So $N_{\nabla}:D_{{\rm Sen},K_{\infty}}(W)\to D_{{\rm Sen},K_{\infty}}(W)$ is well-defined. 
   
   By \cite[Cor. 7.7]{GMW-HT} and Proposition \ref{R-la}, we see that the operator $\nabla_{\rm Sen}:=\frac{1}{\theta(u{\tilde\lambda}^{\prime})}N_{\nabla}+\nabla_{\gamma}$ acts $R_{\widehat K_{\cyc,\infty}}^{\hat G\text{-}\rm la}$-linearly on $W^{\hat G\text{-}\rm la}$. Since $\nabla_{\gamma}$ kills $D_{{\rm Sen},K_{\infty}}(W)$, the operator $\nabla_{\rm Sen}$ is equal to $\frac{1}{\theta(u{\tilde\lambda}^{\prime})}N_{\nabla}$ on $W^{\hat G\text{-}\rm la}=D_{{\rm Sen},K_{\infty}}(W)\otimes_{R_{K_{\infty}}}R_{\widehat K_{\cyc,\infty}}^{\hat G\text{-}\rm la}$. Similarly, as $N_{\nabla}$ kills $D_{{\rm Sen},K_{\cyc}}(W)$, the operator $\nabla_{\rm Sen}$ is equal to $\nabla_{\gamma}$ on $W^{\hat G\text{-}\rm la}=D_{{\rm Sen},K_{\cyc}}(W)\otimes_{R_{K_{\cyc}}}R_{\widehat K_{\cyc,\infty}}^{\hat G\text{-}\rm la}$. So we are done.
 \end{proof}
 
 Now we come back to the prismatic theory and show how it is related to the Sen operator. Let $\frakX=\Spf(R^+)$ be a small smooth $p$-adic formal scheme over $\calO_K$ with the adic generic fiber $X=\Spa(R,R^+)$.
 We will apply the above results to $R_{\widehat K_{\cyc,\infty}}$-representations of $\hat G$ coming from enhanced Higgs modules in $\HIG^{\nil}_*(R)$ in the sense of Construction \ref{Construction-F-Local} and Remark \ref{Rmk-F-Local}.
 
 \begin{thm}[Hui Gao]\label{Prismatic-Sen}
  Let $\rF:\HIG^{\nil}_*(R)\to \HIG_{\hat G}(R_{\widehat K_{\cyc,\infty}})$ be the functor defined in Construction \ref{Construction-F-Local} (together with Remark \ref{Rmk-F-Local}). For any $(H,\theta_H,\phi_H)\in\HIG^{\nil}_*(R)$, let $W:=\rF(H,\theta_H,\phi_H)$ be the underlying $R_{\widehat K_{\cyc,\infty}}$-representations of $\hat G$. Assume furthermore that $H$ is finite free over $R$. Then 
  \begin{enumerate}
      \item $D_{{\rm Sen},K_{\infty}}(W)=H\otimes_{R}R_{K_{\infty}}$.
      \item The operator $\frac{-\phi_H}{E'(\pi)}$ is exactly the operator $\frac{1}{\theta(u{\tilde\lambda}^{\prime})}N_{\nabla}$ on $D_{{\rm Sen},K_{\infty}}(W)$ after extending linearly to $R_{K_{\infty}}$. Moreover, $\frac{-\phi_H}{E'(\pi)}$ is the Sen operator of $W$.
  \end{enumerate}
 \end{thm}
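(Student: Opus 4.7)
The plan is to follow the strategy of \cite{Gao} by computing both the decompletion $D_{{\rm Sen},K_\infty}(W)$ and the operator $\frac{1}{\theta(u\tilde\lambda')}N_\nabla$ directly from the explicit formula for the $\hat G$-action on $W$ provided by Lemma \ref{Lem-Restriction} combined with Remark \ref{Rmk-F-Local}. Throughout, we use that $W=H\otimes_R R_{\widehat K_{\cyc,\infty}}$ with $g\in\hat G$ acting on $x\in H$ by
\[
g(x)=(1+\pi E'(\pi)(\zeta_p-1)\lambda c(g))^{-\phi_H/E'(\pi)}(x),
\]
where $c\colon\hat G\to\Zp$ is the Kummer cocycle, which factors through $\hat G$ since it vanishes on $\Gal(\overline K/K_{\cyc,\infty})$.

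For part (1), I fix a basis $\{x_1,\dots,x_d\}$ of the finite free $R$-module $H$ and write any $y\in W$ uniquely as $y=\sum_i x_i\otimes r_i$ with $r_i\in R_{\widehat K_{\cyc,\infty}}$. Since $c(\gamma)=0$ for $\gamma\in\Gal(K_{\cyc,\infty}/K_\infty)$ (such $\gamma$ fix $\pi^\flat$), the formula above shows $\gamma$ acts trivially on each $x_i$, so $\gamma(y)=\sum_i x_i\otimes\gamma(r_i)$ and $\gamma$-invariance of $y$ is equivalent to that of every $r_i$. For $\tau$-local analyticity, the matrix $M(a)$ of $(1+\pi E'(\pi)(\zeta_p-1)\lambda a)^{-\phi_H/E'(\pi)}$ in the basis $\{x_i\}$ is an invertible matrix-valued power series in $a$ (convergent thanks to condition (2)(a) on $\phi_H$ in Definition \ref{Dfn-EnhancedHiggsMod-Local}), so the $\tau$-local analyticity of $y$ is equivalent to that of every $r_i$. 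Combining this with Proposition \ref{R-la}(3) yields $r_i\in R_{\widehat K_{\cyc,\infty}}^{\tau\text{-}{\rm la},\gamma=1}=R_{K_\infty}$, and hence $D_{{\rm Sen},K_\infty}(W)=H\otimes_R R_{K_\infty}$.

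For part (2), I compute $\nabla_\tau$ on $x\in H$ by differentiating at $a=0$: using $c(\tau^a)=a$, one obtains
\[
\nabla_\tau(x)=\frac{d}{da}\Big|_{a=0}(1+\pi E'(\pi)(\zeta_p-1)\lambda a)^{-\phi_H/E'(\pi)}(x)=-\pi(\zeta_p-1)\lambda\phi_H(x),
\]
so $\frac{1}{\theta(u\tilde\lambda')}N_\nabla(x)=-\frac{\pi(\zeta_p-1)\lambda}{p\,\theta(\mathfrak t)\,\theta(u\tilde\lambda')}\phi_H(x)$. The statement therefore reduces to the identity
\[
p\,\theta(\mathfrak t)\,\theta(u\tilde\lambda')=\pi(\zeta_p-1)\lambda E'(\pi)
\]
in $\calO_C$. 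To verify this, write $\tilde\lambda=(E(u)/E(0))\cdot\mu$ with $\mu=\prod_{n\geq 1}\varphi^n(E(u)/E(0))$: differentiating and using $E(\pi)=0$ gives $\theta(u\tilde\lambda')=\pi E'(\pi)\theta(\mu)/E(0)$, while writing $\xi=u_1 E([\pi^\flat])$ with $\theta(u_1)=\lambda$ and using $\theta(t/\xi)=\zeta_p-1$ gives $\theta(\mathfrak t)=\lambda(\zeta_p-1)E(0)/(p\,\theta(\mu))$; multiplying these two yields the desired identity. The remaining claim that $-\phi_H/E'(\pi)$ is the Sen operator of $W$ then follows from Theorem \ref{Kummer Sen}.

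The main technical obstacle is the verification of the identity relating $\theta(\mathfrak t)\theta(u\tilde\lambda')$ to $\pi(\zeta_p-1)\lambda E'(\pi)$, as it requires careful manipulation of elements of $A_{\inf}$ that lie in (or become denominators upon division by) elements of $\ker\theta$; in particular, one needs the $n=0$ factor in $\tilde\lambda$ to cancel cleanly with the factor coming from the $\xi$-divisibility of $t$. A second technical point, the local analyticity of the $\tau^a$-action on $H$, is controlled cleanly by the topological nilpotency condition in Definition \ref{Dfn-EnhancedHiggsMod-Local}(2)(a), which ensures convergence of the relevant power series uniformly in a small disk.
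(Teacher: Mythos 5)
Your proposal is correct and follows essentially the same route as the paper: part (1) is a hands-on version of the paper's appeal to \cite[Prop. 2.3]{BC16} combined with Proposition \ref{R-la}(3), and part (2) computes $\nabla_\tau(x)=-\pi(\zeta_p-1)\lambda\phi_H(x)$ and reduces to the same scalar identity $p\,\theta(\mathfrak t)\,\theta(u\tilde\lambda')=\pi(\zeta_p-1)\lambda E'(\pi)$. The only difference is that you verify this identity directly (correctly, via the factorization $\tilde\lambda=(E(u)/E(0))\mu$ and $\theta(t/\xi)=\zeta_p-1$), whereas the paper cites the calculation in \cite[Thm. 4.3.3(2)]{Gao}.
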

 \begin{proof}
   (1) By the Galois action described in Lemma \ref{Lem-F-Local}, we see that $H\subset W^{\tau\text{-}\an,\gamma=1}\subset D_{{\rm Sen},K_{\infty}}(W)$. In particular, $H$ contains a basis consisting of $\tau$-analytic vectors. Then by \cite[Prop. 2.3]{BC16}, we see that $W^{\tau\text{-}\rm la} = H\otimes_{R}R_{\widehat K_{\cyc,\infty}}^{\tau\text{-}\rm la}$. Since $W^{\gamma=1} = H\otimes_RR_{\widehat K_{\cyc,\infty}}^{\gamma=1}$, we get that 
   \[D_{{\rm Sen},K_{\infty}}(W) = (H\otimes_{R}R_{\widehat K_{\cyc,\infty}}^{\tau\text{-}\rm la})\cap(H\otimes_RR_{\widehat K_{\cyc,\infty}}^{\gamma=1}) = H\otimes_R(R_{\widehat K_{\cyc,\infty}}^{\tau\text{-}{\rm la},\gamma=1}) = H\otimes_RR_{K_{\infty}},\]
   where the last equality follows from Proposition \ref{R-la} (3).
   
   (2) By Lemma \ref{Lem-F-Local}, we see that $\nabla_{\tau}$ acts on $D_{{\rm Sen},K_{\infty}}(W)$ via
   \begin{equation*}
       \begin{split}
           \lim_{n\to+\infty}\frac{\tau^{p^n}-1}{p^n} &= \lim_{n\to+\infty}\frac{1}{p^n}((1+\pi E'(\pi)(\zeta_p-1)\lambda p^n)^{-\frac{\phi_H}{E'(\pi)}}-1)\\
           &= \lim_{n\to+\infty}\sum_{k\geq 1}\prod_{i=0}^{k-1}(\phi_H+iE'(\pi))\frac{(-\pi(\zeta_p-1)\lambda)^kp^{(k-1)n}}{k!}\\
           & = -\pi(\zeta_p-1)\lambda\phi_H.
       \end{split}
   \end{equation*}
   Therefore, we deduce that 
   \[\frac{1}{\theta(u{\tilde\lambda}^{\prime})}N_{\nabla} = \frac{1}{p\theta(u{\tilde\lambda}^{\prime}\mathfrak t)}\nabla_{\tau} = \frac{\pi E'(\pi)(\zeta_p-1)\lambda}{p\theta(u{\tilde\lambda}^{\prime}\mathfrak t)}(-\frac{\phi_H}{E'(\pi)}).\]
   To conclude Item (2), it suffices to show that
   \[\frac{\pi E'(\pi)(\zeta_p-1)\lambda}{p\theta(u{\tilde\lambda}^{\prime}\mathfrak t)} = 1,\]
   which follows exactly from the calculations in the proof of \cite[Thm. 8.2(2)]{GMW-HT}.
 \end{proof}

 \begin{rmk}
   As we have mentioned in the beginning of this section, when $R=K$, Theorem \ref{Prismatic-Sen} proved in \cite{GMW-HT}.
   
   When $R=W(k)[\frac{1}{p}]$ i.e. in the unramified case, Bhatt and Lurie have proved a similar result in \cite{BL22a}. More precisely, by using the prism $(W(k)[[\tilde p]],(\tilde p))$ (corresponding to the cyclotomic case), which is the $\bF_p^{\times}$-invariants of the $q$-de Rham prism, instead of the Breuil--Kisin prism $(W(k)[[u]],(u-p))$ (corresponding to the Kummer case), they proved that the linear operator associated with a Hodge--Tate crystal is exactly the classical Sen operator in the cyclotomic case. Unlike the $q$-de Rham prism, its $\bF_p^{\times}$-invariants behaves quite similarly to the Breuil--Kisin prism. It will be very interesting to figure out whether such a prism exists in the ramified and geometric case. 
 \end{rmk}

\subsection{Arithmetic Higgs bundles}\label{Arithmetic-Higgs-bundles}
 In this subsection, let's assume $\frakX$ is a smooth quasi-compact $p$-adic formal scheme over $\calO_K$ with adic generic fiber $X$. Let $\calX$ be the ringed space introduced in Notation \ref{Notation-RingedSpace}.

 \begin{dfn}\label{Dfn-ArithmeticHiggsBundle}
   By an \emph{arithmetic Higgs bundle} of rank $l$ on $X$, we mean a triple $(\calE,\theta_{\calE},\phi_{\calE})$ satisfying that
   \begin{enumerate}
       \item $(\calE,\theta_{\calE})$ is a Higgs bundle bundle on $\calX$ (see Notation \ref{Notation-RingedSpace}); in other words, $\calE$ is a vector bundle on $\calX$ and $\theta_{\calE}:\calE\to\calE\otimes_{\calO_X}\Omega^1_{X/K}(-1)$ is a Higgs field on $\calE$ (i.e. $\theta_{\calE}$ is $\calO_{\calX}$-linear such that $\theta_{\calE}\wedge\theta_{\calE} = 0$);
       
       \item $\phi_{\calE}$ is an $\calO_{\calX}$-linear endormophism of $\calE$ satisfying $[\phi_{\calE},\theta_{\calE}] = -\theta_{\calE}$; that is, the following diagram
       \begin{equation*}
           \xymatrix@C=0.45cm{
                \calE\ar[r]^{\theta_{\calE}\qquad\quad}\ar[d]_{\phi_{\calE}}&\calE\otimes_{\calO_{X}}\Omega^1_{X/K}(-1)\ar[d]_{\phi_{\calE}-\id_{\calE}} \ar[r]&\cdots\ar[r]&\calE\otimes_{\calO_X} \Omega^d_{X/K}(-d)\ar[d]_{\phi_{\calE}-d\id_{\calE}}\\
                \calE\ar[r]^{\theta_{\calE}\qquad\quad}&\calE\otimes_{\calO_X} \Omega^1_{X/K}(-1) \ar[r]&\cdots\ar[r]&\calE\otimes_{\calO_X} \Omega^d_{X/K}(-d)
              }
       \end{equation*}
       is commutative. We denote its total complex by $\HIG(\calE,\theta_{\calE},\phi_{\calE})$.
   \end{enumerate}
   Let $\HIG^{\rm arith}(X,\calO_{\calX})$ be the category of arithmetic Higgs bundles on $X$.
 \end{dfn}
 Similarly, we have the following definition.
 \begin{dfn}\label{Dfn-ArithmeticHiggsBundle-II}
   Let $L/K$ be any Galois extension of $K$ in $\overline K$ containing $K_{\cyc}$.
   By an \emph{arithmetic Higgs bundle} of rank $l$ on $X_{\widehat L}$, we mean a triple $(\calH,\theta_{\calH},\phi_{\calH})$ satisfying that
   \begin{enumerate}
       \item $(\calH,\theta_{\calH})$ is a Higgs bundle on $X_{\widehat L}$;
       
       \item $\phi_{\calH}$ is an $\calO_{X_{\widehat L}}$-linear endormophism of $\calH$ satisfying $[\phi_{\calH},\theta_{\calH}] = -\theta_{\calH}$; that is, the following diagram
       \begin{equation*}
           \xymatrix@C=0.45cm{
                \calH\ar[r]^{\theta_{\calH}\qquad\quad}\ar[d]_{\phi_{\calH}}&\calH\otimes_{\calO_{X}}\Omega^1_{X/K}(-1)\ar[d]_{\phi_{\calH}-\id_{\calH}} \ar[r]&\cdots\ar[r]&\calH\otimes_{\calO_X} \Omega^d_{X/K}(-d)\ar[d]_{\phi_{\calH}-d\id_{\calH}}\\
                \calH\ar[r]^{\theta_{\calH}\qquad\quad}&\calH\otimes_{\calO_X} \Omega^1_{X/K}(-1) \ar[r]&\cdots\ar[r]&\calH\otimes_{\calO_X} \Omega^d_{X/K}(-d)
              }
       \end{equation*}
       is commutative. We denote its total complex by $\HIG(\calH,\theta_{\calH},\phi_{\calH})$.
   \end{enumerate}

  Let $\HIG^{\rm arith}(X_{\widehat L})$ be the category of arithmetic Higgs bundles on $X_{\widehat L}$.
 \end{dfn}
 The following result follows from Proposition \ref{Prop-Petrov} directly.
 \begin{cor}\label{Cor-Petrov}
   Assume $X$ is a quasi-compact smooth rigid analytic space over $K$. 
   \begin{enumerate}
       \item There exists an equivalence of categories
       \[\HIG_{\Gal(K_{\cyc}/K)}(X,\calO_{\calX})\simeq \HIG_{\Gal(K_{\cyc}/K)}(X_{\widehat K_{\cyc}}),\]
       which preserves ranks, tensor products and dualities. Here, $\HIG_{\Gal(K_{\cyc}/K)}(X,\calO_{\calO_X})$ denotes the category of Higgs bundles on $\calX$ equipped with continuous $\Gal(K_{\cyc}/K)$-actions with are compatible with Higgs fields.
       
       \item There is a faithful functor 
       \[\HIG_{\Gal(K_{\cyc}/K)}(X,\calO_{\calX}) \to \HIG^{\rm arith}(X,\calO_{\calX}).\]
       
       \item For any extension $L$ of $K$ containing $K_{\cyc}$, there is a faithful functor
        \[\HIG^{\rm arith}(X,\calO_{\calX})\to \HIG^{\rm arith}(X_{\widehat L}).\]
   \end{enumerate}
 \end{cor}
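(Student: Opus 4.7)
The plan is to bootstrap from Proposition \ref{Prop-Petrov}, which already gives the equivalence at the level of vector bundles with $\Gal(K_{\cyc}/K)$-action and produces a canonical Sen operator on the $\calX$-side. The content of the corollary is to transport the Higgs field along Petrov's equivalence in (1), to use the Sen operator to produce the linear endomorphism in (2), and to base change in (3).

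For (1), let $\rT:\Vect_{\Gal(K_{\cyc}/K)}(\calX)\simeq\Vect_{\Gal(K_{\cyc}/K)}(X_{\widehat K_{\cyc}})$ denote the equivalence of Proposition \ref{Prop-Petrov}. Since $\rT$ preserves tensor products and dualities, it preserves internal Homs, and in particular carries a $\Gal(K_{\cyc}/K)$-equivariant $\calO_{\calX}$-linear morphism $\theta_{\calE}:\calE\to \calE\otimes_{\calO_X}\Omega^1_{X/K}(-1)$ to an equivariant $\calO_{X_{\widehat K_{\cyc}}}$-linear morphism $\rT(\theta_{\calE}):\rT(\calE)\to \rT(\calE)\otimes_{\calO_X}\Omega^1_{X/K}(-1)$, because $\Omega^1_{X/K}(-1)$ is defined on $X$ (i.e.\ on the common base) and the Tate twist is the same representation on both sides. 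The condition $\theta_{\calE}\wedge\theta_{\calE}=0$ is preserved by functoriality, and rank, tensor products and dualities are preserved because they are at the level of Proposition \ref{Prop-Petrov}. A quasi-inverse is obtained by the same argument applied in the opposite direction.

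For (2), given $(\calE,\theta_{\calE})\in \HIG_{\Gal(K_{\cyc}/K)}(X,\calO_{\calX})$, let $\phi_{\calE}$ be the Sen operator provided by Proposition \ref{Prop-Petrov}. I must check the commutation relation in Definition \ref{Dfn-ArithmeticHiggsBundle}(2). For this, note that on the Tate twist $\calO_{\calX}(-1)$ the action of $\gamma$ close to $1$ is $\exp(-\log\chi(\gamma))$, so the Sen operator on $\calE\otimes\Omega^1_{X/K}(-1)$ is $\phi_{\calE}\otimes\id-\id_{\calE}\otimes\id$. The Galois equivariance of $\theta_{\calE}$ gives $\gamma\circ\theta_{\calE}=\theta_{\calE}\circ\gamma$ for $\gamma$ in an open subgroup, and differentiating at $\gamma\to 1$ in the direction $\log\chi(\gamma)$ (working locally on an affinoid $U\subset X$ and using (\ref{Equ-Petrov})) yields the identity $(\phi_{\calE}-\id)\circ\theta_{\calE}=\theta_{\calE}\circ\phi_{\calE}$, which is the commutativity of the first square in the diagram of Definition \ref{Dfn-ArithmeticHiggsBundle}(2); the analogous relation for higher $q$ follows by extending $\theta_{\calE}$ as a derivation to $\calE\otimes\Omega^q_{X/K}(-q)$, where the Sen operator becomes $\phi_{\calE}-q\id$. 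The functor so defined is faithful because forgetting the $\Gal(K_{\cyc}/K)$-action down to the Sen operator does not collapse morphisms: two $\Gal(K_{\cyc}/K)$-equivariant morphisms of Higgs bundles that agree as $\calO_{\calX}$-linear maps are literally equal.

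For (3), the structure sheaf $\calO_{\calX}=\calO_X\otimes_K K_{\cyc}$ maps into $\calO_{X_{\widehat L}}$ via $K_{\cyc}\hookrightarrow L\hookrightarrow \widehat L$, and this map is flat (indeed injective, since completion of an algebraic extension by a $p$-adic norm is faithful). Base change $(\calE,\theta_{\calE},\phi_{\calE})\mapsto (\calE\otimes_{\calO_{\calX}}\calO_{X_{\widehat L}},\theta_{\calE}\otimes 1,\phi_{\calE}\otimes 1)$ then lands in $\HIG^{\rm arith}(X_{\widehat L})$: the Higgs relation and the bracket relation are preserved because they are $\calO_{\calX}$-linear identities, and the resulting module is locally finite free of the same rank. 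Faithfulness is immediate from the faithful flatness of the base change.

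The only delicate point I expect is the verification in (2) that the Sen operator of Petrov really satisfies the commutation relation with the Higgs field, and in particular that the right sign of Tate twist enters correctly (giving the $\phi_{\calE}-q\id$ in the vertical arrows); this is essentially the translation of Galois equivariance into a Lie-theoretic identity, and is where the local description (\ref{Equ-Petrov}) on affinoids is used.
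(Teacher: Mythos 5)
Your proposal is correct and follows essentially the same route as the paper: (1) is transported directly from Proposition \ref{Prop-Petrov}, (2) defines $\phi_{\calE}$ as Petrov's Sen operator and derives $[\phi_{\calE},\theta_{\calE}]=-\theta_{\calE}$ by differentiating the equivariance of $\theta_{\calE}$ (a verification the paper actually leaves implicit), and (3) is base change plus (faithful) flatness of $R\to R_{\widehat L}$. The one point where your argument for (2) is slightly misaligned: the injectivity on Hom-sets is indeed trivial as you say, but the substantive check is that the functor is well defined on morphisms, i.e.\ that a $\Gal(K_{\cyc}/K)$-equivariant morphism $f:\calE_1\to\calE_2$ satisfies $\phi_{\calE_2}\circ f=f\circ\phi_{\calE_1}$; this is exactly what the paper's inclusion $(E^{\theta_E=0})^{\Gal(K_{\cyc}/K)}\subset E^{\theta_E=0,\phi_E=0}$ (applied to the internal Hom) encodes, and it follows from the same differentiation of (\ref{Equ-Petrov}) that you already use for $\theta_{\calE}$, so the omission is easily repaired.
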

 \begin{proof}
       (1) This follows from Proposition \ref{Prop-Petrov} directly.
       
       (2) For any $(\calH,\theta_{\calH})\in \HIG_{\Gal(K_{\cyc}/K)}(X,\calO_{\calX})$, let $(\calE,\theta_{\calE},\phi_{\calE})\in  \HIG^{\rm arith}(X,\calO_{\calX})$ such that $(\calE,\theta_{\calE}) = (\calH,\theta_{\calH})$ and that $\phi_{\calE}$ is the unique operator determined by (\ref{Equ-Petrov}). Then the functor sending $(\calH,\theta_{\calH})$ to $(\calE,\theta_{\calE},\phi_{\calE})$ is well-defined. We claim it is faithful. 
       For this purpose, we may assume $X = \Spa(R,R^+)$ is affinoid and are reduced to showing that 
       \[(E^{\theta_E = 0})^{\Gal(K_{\cyc}/K)}\subset E^{\theta_E = 0,\phi_E=0},\]
       where $(E,\theta_E,\phi_E)$ denotes the global section of $(\calE,\theta_{\calE},\phi_E)$ on $X$, which also inherits a $\Gal(K_{\cyc}/K)$-action from $(\calH,\theta_{\calH})$. However, for any $x\in (E^{\theta_E = 0})^{\Gal(K_{\cyc}/K)}$, by (\ref{Equ-Petrov}), we have 
       \[\phi_E(x) = \lim_{\gamma\to 1}\frac{\gamma(x)-x}{\log\chi(\gamma)} = 0,\]
       which is exactly what we want.
       
       (3) We do have the desired functors by considering corresponding scalar extensions. To see the faithfulness, we may assume $X = \Spa(R,R^+)$ admits a toric chart such that $(H,\theta_H,\phi_H)$, the global section of $(\calH,\theta_{\calH},\phi_{\calH})\in \HIG^{\rm arith}(X,\calO_{\calX})$ on $X$, has $H$ finite free over $R\otimes_KK_{\cyc}$. So we need to show that
       \[H^{\theta_{H}=0,\phi_{H}=0}\subset (H\otimes_{R}R_{\widehat L})^{\theta_{H}=0,\phi_{H}=0}.\]
       This follows from the faithful flatness of $R\to R_{\widehat L}$.
 \end{proof}
 
 Now, we give a relative version of classical Sen theory.
 \begin{cor}\label{Cor-GReptoArith}
   There exists a faithful functor 
   \[\rD:\Vect(X_{\proet},\OX)\to\HIG^{\rm arith}(X_C)\]
   from the category of generalised representations on $X_{\proet}$ to the category of arithmetic Higgs bundles on $X_C$, which preserves ranks, tensor products and dualities.
 \end{cor}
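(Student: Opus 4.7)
The plan is to construct $\rD$ as a composition of four functors, each already established above. First, applying Theorem \ref{Thm-Simpson} together with Remark \ref{Rmk-Simpson}, one obtains an equivalence
\[\Vect(X_{\proet},\OX)\xrightarrow{\simeq} \HIG_{\Gal(K_{\cyc}/K)}(X_{\widehat K_{\cyc}}),\]
which preserves ranks, tensor products and dualities. Since $X$ is quasi-compact by assumption of the subsection, Corollary \ref{Cor-Petrov}(1) next supplies an equivalence with $\HIG_{\Gal(K_{\cyc}/K)}(X,\calO_{\calX})$, again preserving these structures.

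Then I would apply Corollary \ref{Cor-Petrov}(2) to obtain a faithful functor from $\HIG_{\Gal(K_{\cyc}/K)}(X,\calO_{\calX})$ to $\HIG^{\rm arith}(X,\calO_{\calX})$, where the required Sen-type endomorphism $\phi_{\calE}$ is the one uniquely characterised by (\ref{Equ-Petrov}). Finally, taking $L = \overline K$ (so that $\widehat L = C$) in Corollary \ref{Cor-Petrov}(3) produces a faithful functor $\HIG^{\rm arith}(X,\calO_{\calX}) \to \HIG^{\rm arith}(X_C)$. The desired $\rD$ is defined as the composite of these four functors.

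Faithfulness of $\rD$ is then automatic, since the composition of a sequence of faithful functors is faithful and two of the four steps are in fact equivalences. Preservation of ranks, tensor products and dualities holds at each step separately: for Theorem \ref{Thm-Simpson} and Corollary \ref{Cor-Petrov}(1) it is part of the statement, while for the last two steps it is clear because these functors leave the underlying vector-bundle-with-Higgs-field datum unchanged up to a visibly tensorial scalar extension along $K_{\cyc}\to \widehat K_{\cyc}\to C$, and the Sen-type operator is built canonically from the group action and therefore respects tensor products and dualities.

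I do not foresee any substantive obstacle: the result is a formal corollary of the material already assembled. The one minor point worth verifying is that the endomorphism $\phi_{\calE}$ produced from (\ref{Equ-Petrov}) actually satisfies the commutation relation $[\phi_{\calE},\theta_{\calE}]=-\theta_{\calE}$ demanded by Definition \ref{Dfn-ArithmeticHiggsBundle}. This is automatic: differentiating the $\Gal(K_{\cyc}/K)$-equivariance $\gamma \circ \theta_{\calE} = \theta_{\calE} \circ \gamma$ on $\calE \otimes_{\calO_X} \Omega^1_{X/K}(-1)$ at $\gamma = 1$ yields $\phi_{\calE}\circ\theta_{\calE} - \theta_{\calE} = \theta_{\calE}\circ\phi_{\calE}$ precisely because the Tate twist contributes a factor $\chi(\gamma)^{-1}$ on the target, which differentiates to $-1$.
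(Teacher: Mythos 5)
Your proposal is correct and is essentially the paper's own argument: the paper's proof of this corollary is precisely ``combine the $p$-adic Simpson correspondence (Theorem \ref{Thm-Simpson} with $C$, $G_K$ replaced by $\widehat K_{\cyc}$, $\Gal(K_{\cyc}/K)$ via Remark \ref{Rmk-Simpson}) with Corollary \ref{Cor-Petrov}'', which is exactly your four-step composition. Your closing verification that the Sen operator satisfies the commutation relation of Definition \ref{Dfn-ArithmeticHiggsBundle} (yielding $(\phi_{\calE}-\id)\circ\theta_{\calE}=\theta_{\calE}\circ\phi_{\calE}$, i.e.\ the commutativity of the displayed diagram) is a correct and welcome supplement to the well-definedness of the functor in Corollary \ref{Cor-Petrov}(2), which the paper leaves implicit.
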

 \begin{proof}
   Just combine Theorem \ref{Thm-HTasHiggs-Global} (for replacing $C$ and $G_K$ by $\widehat K_{\cyc}$ and $\Gal(K_{\cyc}/K)$) with Corollary \ref{Cor-Petrov}.
 \end{proof}
 \begin{rmk}
   When $X = \Spa(K,\calO_K)$, we see that $\Vect(X_{\proet},\OX) = \Rep_{G_K}(C)$ is the category of $C$-representations of $G_K$ and that $\HIG^{\rm arith}(X_C) = \Mod_{*}(C)$ is the category of pairs $(V,\phi_V)$ consisting of finite dimensional $C$-spaces together with endomorphisms $\phi_V$ of $V$. Then the functor in Corollary \ref{Cor-GReptoArith} coincides with the classical functor 
   \[\rD:\Rep_{G_K}(C)\to\Mod_*(C)\]
   introduced in \cite{Sen}. Note that even in this case, the above functor is not fully faithful.
 \end{rmk}
 
 Recall that for any $V\in \Rep_{G_K}(C)$, the underlying $C$-space of $\rD(V)$ is $V$ and the induced Sen operator $\phi_V$ is indeed defined over $K$ (cf. \cite[Thm. 5]{Sen}) by using matrix theory and hence $\rD$ upgrades to a functor from $\Rep_{G_K}(C)$ to $\Mod_*(K)$, the category of pairs $(V,\phi_V)$ consisting of finite dimensional $K$-spaces together with endomorphisms $\phi_V$ of $V$. So the following question appears naturally.
 \begin{question}\label{Ques-ArithOverK}
   For a generalised representation $\calL\in\Vect(X_{\proet},\OX)$ with the induced arithmetic Higgs bundle $\rD(\calL)$ over $X_C$, is $\rD(\calL)$ always defined over $X$ itself?
 \end{question}
 On the other hand, not any pairs $(V,\phi_V)$ in $\Mod_*(C)$ with $\phi_V$ defined over $K$ comes from a $C$-representation $V$ of $G_K$ (cf. \cite[Thm. 7]{Sen}). So one may also ask 
 \begin{question}\label{Ques-ArithComeFromGrep}
   Is any arithmetic Higgs bundle $(\calH,\theta_{\calH},\phi_{\calH})$ over $X_C$ of the form $\rD(\calL)$ for some generalised representation $\calL\in\Vect(X_{\proet},\OX)$?
 \end{question}
 \begin{rmk}
     When $\frakX = \Spf(\calO_K)$, Question \ref{Ques-ArithComeFromGrep} was solved by Fontaine \cite{Fon} after classifying all $C$-representations of $G_K$, using the representation theory of certain algebraic group over $K$. His method looks difficult to generalised to the relative case. For example, even for small affine $\frakX$, we do not have a classification of generalised representations on $X_{\proet}$. We even do not know whether the Hodge--Tate weights of a generalised representation at each classical point is constant, unless the generalised representations comes from $\Qp$-local systems (cf. \cite{Shi}).
 \end{rmk}
 
 We are going to give partial answers of the above two questions in sequels. 
 
 \begin{construction}\label{Construction-TwoFunctor}
   For any enhanced Higgs bundle $(\calH,\theta_{\calH},\phi_{\calH})\in\HIG_*^{\nil}(X)$, define 
   \begin{enumerate}
       \item the Kummer case: $\rF_{\infty}(\calH,\theta_{\calH},\phi_{\calH})= (\calH\otimes_{\calO_{\frakX}}\calO_{X_C},-(\zeta_p-1)\lambda\theta_{\calH},-\frac{\phi_{\calH}}{E'(\pi)})\in \HIG^{\rm arith}(X_C)$, and
       
       \item the cyclotomic case: $\rF_{\cyc}(\calH,\theta_{\calH},\phi_{\calH})\in \HIG^{\rm arith}(X_C)$ as the image under the composition of
       \[\HIG_*^{\nil}(X)\xrightarrow{\rF_S} \Vect(X_{\proet},\OX)\xrightarrow{\rD}\HIG^{\rm arith}(X_C),\]
       where $\rF_S$ is the inverse Simpson functor and $\rD$ is defined in Corollary \ref{Cor-GReptoArith}.
   \end{enumerate}
   Then we get two functors 
   \[\rF_{\infty},\rF_{\cyc}: \HIG_*^{\nil}(X)\to \HIG^{\rm arith}(X_C).\]
   Let $\rF$ be as in Construction \ref{Construction-F}. Then we see that $\rF_{\cyc}$ coincides with the composition 
   \[\HIG_*^{\nil}(X)\xrightarrow{\rF}\HIG_{G_K}(X_C)\xrightarrow{\text{Corollary~ \ref{Cor-Petrov}}}\HIG^{\rm arith}(X_C).\]
 \end{construction}

 \begin{thm}\label{Thm-SenOperator}
 There is an equivalence of functors $\rF_{\infty}\simeq \rF_{\cyc}$.
 \end{thm}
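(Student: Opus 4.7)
The plan is to reduce to the small affine case and then match the two functors object-by-object by invoking Theorem \ref{Prismatic-Sen}. Both $\rF_\infty$ and $\rF_\cyc$ are compatible with étale localization on $\frakX$: for $\rF_\infty$ this is immediate from its explicit formula in Construction \ref{Construction-TwoFunctor}(1), while for $\rF_\cyc=\rD\circ\rF_S$ it follows from Theorem \ref{Thm-Simpson}(4), the functoriality of $\rF$ in Construction \ref{Construction-F}, and the étale-local nature of the Petrov construction (Proposition \ref{Prop-Petrov}). It therefore suffices to produce a canonical natural isomorphism $\rF_\infty(\calH,\theta_\calH,\phi_\calH)\cong \rF_\cyc(\calH,\theta_\calH,\phi_\calH)$ in the small affine case and then verify that it is natural enough to glue.

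Now assume $\frakX=\Spf(R^+)$ is small affine with a chosen framing, and write the enhanced Higgs bundle as $(H,\theta_H,\phi_H)\in\HIG^\nil_*(R)$. By Lemma \ref{Lem-F-Local},
\[
\rF(H,\theta_H,\phi_H)=\bigl(H\otimes_R R_C,\; -(\zeta_p-1)\lambda\,\theta_H\bigr)
\]
as an object of $\HIG_{G_K}(X_C)$, with $G_K$-action $g\cdot x=(1+\pi E'(\pi)(\zeta_p-1)\lambda c(g))^{-\phi_H/E'(\pi)}(x)$ for $x\in H$. Applying the Sen functor $\rD$ does not alter the underlying Higgs bundle — it only equips it with the Sen endomorphism of Proposition \ref{Prop-Petrov} — so the Higgs-field parts of $\rF_\cyc(H,\theta_H,\phi_H)$ and $\rF_\infty(H,\theta_H,\phi_H)$ tautologically agree.

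The remaining task is to identify the Sen operators. After reducing to the case $H$ finite free over $R$ via étale descent (as in the proof of Proposition \ref{Prop-F-FullyFaithful}), the $\hat G$-representation $\rV(H,\theta_H,\phi_H)$ attached to $(H,\theta_H,\phi_H)$ by Lemma \ref{Lem-Restriction} and Remark \ref{Rmk-F-Local} is exactly the input of Theorem \ref{Prismatic-Sen}; passing between $\hat G$ acting on $R_{\widehat K_{\cyc,\infty}}$ and $G_K$ acting on $R_C$ does not alter the Sen operator, by Remark \ref{Rmk-UnipotentEquiv} and Faltings' almost purity. Theorem \ref{Prismatic-Sen}(2) then computes this Sen operator to be $-\phi_H/E'(\pi)$, which is precisely the Sen-piece of $\rF_\infty$. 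This yields a canonical identification $\rF_\infty(H,\theta_H,\phi_H)\cong\rF_\cyc(H,\theta_H,\phi_H)$ in $\HIG^{\rm arith}(X_C)$.

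The main obstacle will be ensuring that these local identifications assemble into a natural isomorphism of the two global functors. This reduces to the uniqueness clause in Proposition \ref{Prop-Petrov}: the Sen operator of a given $G_K$-equivariant Higgs bundle is characterized by the relation (\ref{Equ-Petrov}), and both the prismatic formula $-\phi_\calH/E'(\pi)$ (coming from $\rF_\infty$) and the cyclotomic-Sen construction (coming from $\rD\circ\rF$) satisfy it on every small affine open; so they must agree on overlaps and glue to a natural isomorphism $\rF_\infty\simeq\rF_\cyc$ of functors. The genuine content of the proof is thus concentrated in Theorem \ref{Prismatic-Sen}, whose proof rests on the identity $\nabla_{\rm Sen}=\frac{1}{\theta(u\tilde\lambda')}N_\nabla+\nabla_\gamma$ on $\hat G$-locally analytic vectors (Theorem \ref{Kummer Sen}), bridging the Kummer and cyclotomic Sen operators.
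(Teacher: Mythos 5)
Your proposal is correct and follows essentially the same route as the paper's proof: both reduce to comparing Sen operators (the underlying Higgs bundles agreeing tautologically), invoke the uniqueness characterization of the Sen operator in Proposition \ref{Prop-Petrov} to localize to the small affine, finite free case, and then conclude by Theorem \ref{Prismatic-Sen}(2).
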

 \begin{proof}
   Fix an $(\calH,\theta_{\calH},\phi_{\calH})\in\HIG_*^{\nil}(X)$.
   By Construction \ref{Construction-F}, we see that the underlying Higgs bundles of $\rF_{\infty}(\calH,\theta_{\calH},\phi_{\calH})\in\HIG_*^{\nil}(X)$ and $\rF_{\cyc}(\calH,\theta_{\calH},\phi_{\calH})\in\HIG_*^{\nil}(X)$ are same. So it suffices to compare the corresponding Sen operators. In other words, we have to show the Sen operator of the underlying $G_K$-vector bundle of $\rF(\calH,\theta_{\calH},\phi_{\calH})\in\HIG_*^{\nil}(X)$ is exactly $-\frac{\phi_{\calH}}{E'(\pi)}$. By the uniqueness criterion of Sen operator in Proposition \ref{Prop-Petrov}, we may assume $\frakX = \Spf(R^+)$ is small affine such that $(\calH,\theta_{\calH},\phi_{\calH})$ is induced by an enhanced Higgs module $(H,\theta_H,\phi_H)$ over $R$ with $H$ finite free. Then the result follows from Theorem \ref{Prismatic-Sen}.
 \end{proof}
 \begin{cor}\label{Cor-AnwserQuestion}
   Let $\frakX$ be a quasi-compact formal scheme over $\calO_K$ with rigid generic fiber $X$.
   \begin{enumerate}
       \item Let $\calL$ be a generalised representation with the corresponding arithmetic Higgs bundle $\rD(\calL)\in\HIG^{\rm arith}(X_C)$. If $\calL$ belongs to the essential image of the functor 
       \[\Vect((\frakX)_{\Prism},\overline \calO_{\Prism}[\frac{1}{p}])\xrightarrow{\Res}\Vect((\frakX)^{\perf}_{\Prism},\overline \calO_{\Prism}[\frac{1}{p}])\xrightarrow{\simeq}\Vect(X_{\proet},\OX),\]
       then $\rD(\calL)$ is defined over $X$.
       
       \item Let $(\calE,\theta_{\calE},\phi_{\calE})$ be an arithmetic Higgs bundle over $X_C$. If it is enhanced in the sense that it lies in the essential image of $\rF_{\infty}$, then there is a generalised representation $\calL\in\Vect(X_{\proet},\OX)$ such that $\rD(\calL) \cong (\calE,\theta_{\calE},\phi_{\calE})$.
   \end{enumerate}
 \end{cor}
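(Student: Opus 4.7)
The plan is to derive both assertions as essentially formal consequences of Theorem~\ref{Thm-SenOperator}, which identifies the Kummer-side functor $\rF_\infty$ with the cyclotomic-side functor $\rF_\cyc$ on $\HIG^\nil_*(X)$. The whole strategy is to track a single enhanced Higgs bundle around the commutative diagram (\ref{Diag-CommutativeDiagram}) and observe that the two routes producing an arithmetic Higgs bundle agree.

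For part (1), suppose $\calL$ lies in the essential image of $\Res$ composed with the equivalence of Theorem~\ref{Thm-EtaleRealisation}; pick $\bM \in \Vect((\frakX)_\Prism,\overline\calO_\Prism[\tfrac1p])$ with image $\calL$. By Theorem~\ref{Thm-HTasHiggs-Global}, the enhanced Higgs bundle $(\calH,\theta_\calH,\phi_\calH):=\rho(\bM)\in\HIG^\nil_*(X)$ satisfies $\calL \cong \rF_S(\calH,\theta_\calH,\phi_\calH)$. Applying $\rD$ and unwinding Construction~\ref{Construction-TwoFunctor}, this gives $\rD(\calL) = \rF_\cyc(\calH,\theta_\calH,\phi_\calH)$. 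Theorem~\ref{Thm-SenOperator} then yields $\rD(\calL)\simeq \rF_\infty(\calH,\theta_\calH,\phi_\calH)$, and by the very definition of $\rF_\infty$ the latter has underlying module $\calH\otimes_{\calO_\frakX}\calO_{X_C}$, Higgs field $-(\zeta_p-1)\lambda\,\theta_\calH$, and Sen operator $-\phi_\calH/E'(\pi)$. Each of these data is the scalar extension of a datum on $\calH$ over $X$ (the scalars $\lambda, (\zeta_p-1), E'(\pi)$ being constants independent of the geometric variable), so $\rD(\calL)$ is defined over $X$ as claimed.

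For part (2), suppose $(\calE,\theta_\calE,\phi_\calE)\cong\rF_\infty(\calH,\theta_\calH,\phi_\calH)$ for some enhanced Higgs bundle $(\calH,\theta_\calH,\phi_\calH)\in\HIG^\nil_*(X)$. Set $\calL := \rF_S(\calH,\theta_\calH,\phi_\calH)\in\Vect(X_\proet,\OX)$, which is well-defined via the inverse Simpson functor of Theorem~\ref{Thm-HTasHiggs-Global}. Then by the definition of $\rF_\cyc$ we have $\rD(\calL) = \rF_\cyc(\calH,\theta_\calH,\phi_\calH)$, and invoking Theorem~\ref{Thm-SenOperator} once more gives $\rD(\calL)\simeq \rF_\infty(\calH,\theta_\calH,\phi_\calH)\cong(\calE,\theta_\calE,\phi_\calE)$.

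Both arguments are essentially diagram chases, so there is no serious obstacle once Theorem~\ref{Thm-SenOperator} is available. The only subtle point to be careful about is the precise meaning of ``defined over $X$'' in part (1): one should check that the appearance of the scalars $\lambda$ and $\zeta_p-1$ in the formula for $\rF_\infty$ does not prevent descent, which is immediate because these scalars are constants (independent of $X$) and act by rescaling the global Higgs field rather than mixing geometric directions. This is where one must be slightly careful in writing down the descended model, but no new ingredients beyond Theorem~\ref{Thm-SenOperator} and Construction~\ref{Construction-TwoFunctor} are required.
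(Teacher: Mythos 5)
Your proposal is correct and follows essentially the same route as the paper's proof: both parts are the same diagram chase through Theorem \ref{Thm-HTasHiggs-Global} and Construction \ref{Construction-TwoFunctor}, with Theorem \ref{Thm-SenOperator} supplying the identification $\rF_{\cyc}\simeq\rF_{\infty}$. Your extra remark on why the constant scalars $\lambda$, $\zeta_p-1$, $E'(\pi)$ do not obstruct descent in part (1) is a reasonable elaboration of a point the paper leaves implicit.
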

 \begin{proof}
   (1) Assume $\calL$ is associated to $\Res(\bM)$ for some $\bM\in\Vect((\frakX)_{\Prism},\overline \calO_{\Prism}[\frac{1}{p}])$ via the equivalence in Theorem \ref{Thm-EtaleRealisation}. By Theorem \ref{Thm-SenOperator}, we see that $\rD(\calL) \cong \rF_{\cyc}(\rho(\bM))$ is defined over $X$, where $\rho$ is the equivalence defined in Theorem \ref{Thm-HTasHiggs-Global}.
   
   (2) Assume $(\calE,\theta_{\calE},\phi_{\calE})=\rF_{\infty}(\calH,\theta_{\calH},\phi_{\calH})$ for some enhance Higgs bundle $(\calH,\theta_{\calH},\phi_{\calH})\in\HIG_*^{\nil}(X)$. Let $\bM\in\Vect((\frakX)_{\Prism},\overline \calO_{\Prism}[\frac{1}{p}])$ such that $\rho(\bM) = (\calH,\theta_{\calH},\phi_{\calH})$. Let $\calL$ be the generalised representation on $X_{\proet}$ corresponding to $\Res(\bM)$ via the equivalence in Theorem \ref{Thm-EtaleRealisation}. Then by Theorem \ref{Thm-SenOperator}, we see that $(\calE,\theta_{\calE},\phi_{\calE})\cong\rD(\calL)$ as desired.
 \end{proof}
 
 Note that Corollary \ref{Cor-AnwserQuestion} answers Question \ref{Ques-ArithOverK} and Question \ref{Ques-ArithComeFromGrep} partially.
 
 \begin{rmk}\label{Rmk-MW-Question}
   Let $\frakX$ be a quasi-compact formal scheme over $\calO_K$ with rigid generic fiber $X$ as before.
   
   Let $\calL$ be a generalised representation on $X_{\proet}$
   Note that for any classical point $x\in X$, $\calL_x$ is a $\widehat{\overline{k(x)}}$-representation of $\Gal(\overline{k(x)}/k(x))$, where $k(x)$ denotes the residue field of $x$. Let $\phi_x$ be the Sen operator of $\calL_x$.
   
   Assume $\calL$ is induced from some rational Hodge--Tate crystal $\bM\in\Vect((\frakX)_{\Prism},\overline \calO_{\Prism}[\frac{1}{p}])$. Then by Theorem \ref{Thm-SenOperator}, we see from Definition \ref{Dfn-EnhancedHiggsBundle} (2) that $\phi_x$ is topologically nilpotent in the sense that
   \[\lim_{n\to+\infty}E'(\pi)^n\prod_{i=0}^{n-1}(\phi_V-i) = 0.\]
   Therefore, $\calL_x$ is a \emph{nearly Hodge--Tate representation} of $\Gal(\overline{k(x)}/k(x))$ over $\widehat{\overline{k(x)}}$ in the sense of \cite{GMW-HT}. 
   
   However, assume that for any classical point $x\in X$, the representation $\calL_x$ of $\Gal(\overline{k(x)}/k(x))$ over $\widehat{\overline{k(x)}}$ is nearly Hodge--Tate (or even Hodge--Tate). We do not know whether $\calL$ is induced from a rational Hodge--Tate crystal on $(\frakX)_{\Prism}$.
 \end{rmk}

 \begin{rmk}\label{rmk:essential image}
    It is worth mentioning that Theorem \ref{Thm-HTasHiggs-Global} was recently generalised by Ansch\"utz--Heuer--Le Bras to the derived case in 
    \cite{AHLB-b,AHLB23c} by using the Hodge--Tate stack developed by Bhatt--Lurie in \cite{BL22a,BL22b}. In particular, they can give a positive anwser to the question in Remark \ref{Rmk-MW-Question}: A generalised representation $\calL$ on $X_{\proet}$ comes from a rational Hodge--Tate crystal if and only if it is nearly Hodge--Tate at every classical point of $X$ (cf. \cite[Cor. 5.21]{AHLB23c}).
 \end{rmk}


\end{document}